\newtheorem{theorem}{Theorem}[section]
\newtheorem{proposition}[theorem]{Proposition}
\newtheorem{lemma}[theorem]{Lemma}
\newtheorem{conjecture}[theorem]{Conjecture}
\newtheorem{corollary}[theorem]{Corollary}
\newtheorem{definition}[theorem]{Definition}
\theoremstyle{remark}
\newtheorem{remark}{Remark}[section]
\numberwithin{equation}{section}
\numberwithin{figure}{section}
\newcommand{\pvec}[1]{\vec{#1}\mkern2mu\vphantom{#1}'}
\newcommand{\su}{\subseteq}
\newcommand{\sm}{\setminus}
\newcommand{\PP}{\mathbb{P}}
\newcommand{\EE}{\mathbb{E}}
\newcommand{\RR}{\mathbb{R}}
\newcommand{\ZZ}{\mathbb{Z}}
\newcommand{\rank}{\operatorname{rank}}
\newcommand{\mb}{\mathbb}
\newcommand{\mc}{\mathcal}
\newcommand{\on}{\operatorname}
\let\originalleft\left
\let\originalright\right
\renewcommand{\left}{\mathopen{}\mathclose\bgroup\originalleft}
\renewcommand{\right}{\aftergroup\egroup\originalright}
\begin{document}
\title{Resolution of the quadratic Littlewood--Offord problem}
\author{Matthew Kwan\thanks{Institute of Science and Technology Austria (ISTA), 3400 Klosterneuburg, Austria. Email address: \url{matthew.kwan@ist.ac.at}. Supported by ERC Starting Grant ``RANDSTRUCT'' No.\ 101076777.}\and Lisa Sauermann\thanks{Institute for Applied Mathematics, University of Bonn, 53115 Bonn, Germany. Email address: \url{sauermann@iam.uni-bonn.de}. Part of this work was completed while this author was affiliated with the Mathematics Department at the Massachusetts Institute of Technology. Supported in part by NSF Award DMS-2100157 and a Sloan Research Fellowship, and in part by the DFG Heisenberg Program.}}

\maketitle

\begin{abstract}\noindent
Consider a quadratic polynomial $Q(\xi_{1},\dots,\xi_{n})$ of independent
Rademacher random variables $\xi_{1},\dots,\xi_{n}$. To what extent
can $Q(\xi_{1},\dots,\xi_{n})$ concentrate on a single value? This quadratic version of the classical Littlewood--Offord problem was popularised by Costello, Tao and Vu in their study of symmetric random matrices.
In this paper, we obtain an essentially optimal bound for this problem, as conjectured by Nguyen and Vu.

Specifically, if $Q(\xi_{1},\dots,\xi_{n})$ ``robustly depends on
at least $m$ of the $\xi_{i}$'' in the sense that there is no way to pin
down the value of $Q(\xi_{1},\dots,\xi_{n})$ by fixing values for
fewer than $m$ of the variables $\xi_{i}$, then we have $\Pr[Q(\xi_{1},\dots,\xi_{n})=0]\le O(1/\sqrt{m})$. This also implies a similar result in the case where
$\xi_{1},\dots,\xi_{n}$ have arbitrary distributions.

Our proof combines a number of ideas that may be of independent interest, including an \emph{inductive decoupling scheme} that reduces quadratic anticoncentration problems to high-dimensional linear anticoncentration problems. Also, one application of our main result is the resolution of a conjecture of Alon, Hefetz, Krivelevich
and Tyomkyn related to \emph{graph inducibility}.
\end{abstract}

\section{Introduction}

\makeatletter
\newcommand*{\transpose}{%
  {\mathpalette\@transpose{}}%
}
\newcommand*{\@transpose}[2]{%
  \raisebox{\depth}{$\m@th#1\intercal$}%
}
\makeatother

Some of the most fundamental theorems in probability theory (and its applications) are \emph{concentration} inequalities, which show that certain types of random variables are likely to lie in a small interval around their mean. In the other direction, \emph{anticoncentration} inequalities give \emph{upper} bounds on the probability that a random variable falls into a small interval or is equal to a particular value. In this area, one of the most important directions is the (polynomial) \emph{Littlewood--Offord problem}. Roughly speaking, the problem is as follows\footnote{This is the ``discrete'' form of the polynomial Littlewood--Offord problem; one can also consider the ``continuous'' form, where we are interested in the maximum probability that $P(\xi_1,\dots,\xi_n)$ lies in an interval of given length. In the following historical discussion, we specialise all results to this discrete setting.}. Consider an $n$-variable polynomial $P\in \mb R[x_1,\dots,x_n]$, and consider independent Rademacher random variables $\xi_1,\dots,\xi_n\in\{-1,1\}$ (meaning that $\Pr[\xi_{i}=-1]=\Pr[\xi_{i}=1]=1/2$ for each $i$). What upper bounds can we prove on the \emph{point probabilities} of the form $\Pr[P(\xi_1,\dots,\xi_n)=z]$? More specifically, how large can the maximum point probability $\sup_{z\in \RR} \Pr[P(\xi_1,\dots,\xi_n)=z]$ be, without making any strong assumptions about the polynomial $P$?

Historically, the starting point for this problem was the \emph{linear} case, where $P$ is a degree-1 polynomial. Indeed, consider a random variable $X=a_{1}\xi_{1}+\dots+a_{n}\xi_{n}$, where
$a_{1},\dots,a_{n}\in\RR$ are nonzero real numbers and $\xi_{1},\dots,\xi_{n}\in\{-1,1\}$
are independent Rademacher random variables. As part of their study of random polynomials, in 1943
Littlewood and Offord~\cite{LO43} proved that
\[
\sup_{z\in\RR}\Pr[X=z]\le O\left(\frac{\log n}{\sqrt{n}}\right).
\]
Littlewood and Offord's result was famously sharpened in 1945 by Erd\H os~\cite{Erd45},
who found a purely combinatorial proof of what is now usually called the \emph{Erd\H os--Littlewood--Offord
theorem}: under the same assumptions,
\begin{equation}\label{eq:elo}
\sup_{z\in\mb R}\Pr[X=z]\le\binom{n}{\lfloor n/2\rfloor}\cdot 2^{-n}=O\left(\frac{1}{\sqrt{n}}\right).
\end{equation}
This result is best-possible, as can be seen by considering the case
where the coefficients $a_{i}$ are all equal.

\begin{remark}\label{rem:levy}
It is worth noting that the general topic of anticoncentration of
sums of independent random variables was first considered in 1936
by Doeblin and L\'evy~\cite{DL36}, and this led to a parallel line
of research with many similar results (the two lines of research seem
to have not been aware of each others' existence until quite recently).
In particular, in the above setting, the bound $\sup_{z\in\RR}\Pr[X=z]\le O(1/\sqrt{n})$ follows from a general result claimed in a 1939 paper
of Doeblin~\cite{Doe39}, preceding Littlewood and Offord by several
years. However, this general result is also the subject of a 1958
paper of Kolmogorov~\cite{Kol58}, which claims that Doeblin's paper
did not provide a full proof.
\end{remark}

By now, the linear Littlewood--Offord problem is very well understood, and many variations and strengthenings are available. For example, there are very powerful \emph{inverse theorems} that relate the maximum concentration probability to the arithmetic structure of the coefficients $a_1,\dots,a_n$, and these theorems have had a huge impact in random matrix theory (see for example the survey \cite{NV13}). Also, there are versions of the Erd\H os--Littlewood--Offord theorem for general
distributions (i.e., allowing the variables $\xi_{1},\dots,\xi_{n}$ to take non-Rademacher
distributions), assuming that the distributions of the variables $\xi_{i}$ do
not themselves concentrate too strongly. (Approximate results along these lines follow directly from the L\'evy--Doeblin--Kolmogorov theorem mentioned in \cref{rem:levy}, or can be deduced from the Erd\H os--Littlewood--Offord theorem; in some sense the Rademacher case is the ``hardest case''. For exact results see \cite{Jon78,Jus22,LR94}.)

After the linear case, the next case to consider is the \emph{quadratic} case: what bounds can we
give on the point concentration of a quadratic polynomial $Q(\xi_{1},\dots,\xi_{n})$
of independent Rademacher random variables $\xi_{1},\dots,\xi_{n}$?
This question came to the forefront in a 2005 paper by Costello, Tao and
Vu~\cite{CTV06}, when they used such a bound in their proof of Weiss'
conjecture on singularity of random symmetric matrices. Specifically,
they proved that if a quadratic polynomial $Q\in\RR[x_{1},\dots,x_{n}]$
has at least $cn^{2}$ nonzero coefficients\footnote{This is not exactly the assumption that appeared in \cite{CTV06}, but it is a simple assumption which is sufficient for essentially all results in this area; see \cref{rem:nondegeneracy-conditions} for more discussion.} (for some constant $c>0$),
then $X=Q(\xi_{1},\dots,\xi_{n})$ (for independent Rademacher random variables
$\xi_{1},\dots,\xi_{n}$) satisfies
\begin{equation}
\sup_{z\in\RR}\Pr[X=z]\le O\left(\frac{1}{n^{1/8}}\right).\label{eq:CTV}
\end{equation}

\begin{remark}\label{rem:RS}
Parallelling the situation described in \cref{rem:levy}, we remark that Costello,
Tao and Vu were actually not the first to prove this inequality: in
1996, Rosi\'nski and Samorodnitsky~\cite{RS96} had proved essentially
the same result (in fact, a generalisation of it to higher degree
polynomials), in their study of zero-one laws for L\'evy chaos (\emph{chaoses} are polynomials of independent random variables; they are classical and very well-studied objects in probability theory, statistics and applied mathematics). Seemingly unaware of
Rosi\'nski and Samorodnitsky's work, in 2013 Razborov and Viola~\cite{RV13}
considered a similar higher-degree generalisation (for applications in the theory of Boolean functions).
\end{remark}

The authors of \cite{CTV06,RS96} already recognised that \cref{eq:CTV} was likely not optimal. Indeed, just as for the linear Littlewood--Offord problem, one expects a bound of the form $\sup_{z\in\RR}\Pr[X=z]\le O(1/\sqrt{n})$ (this bound is attained in the case $Q(x_{1},\dots,x_{n})=(x_{1}+\dots+x_{n})^{2}$, for example). A conjecture to this effect has been attributed to Nguyen and Vu (see \cite{MNV16,RV13}).

The first improvement on \cref{eq:CTV} was by Costello and Vu~\cite{CV08}, who showed how to adapt the arguments in \cite{CTV06} to prove a bound of the form $O(n^{-1/4})$. Introducing several new ideas, Costello~\cite{Cos13} then managed to obtain the nearly optimal bound $O(n^{-1/2+\varepsilon})$ (for
any constant $\varepsilon>0$). Via a completely different approach, this bound was further refined to
$\exp(O((\log\log n)^{2}))/\sqrt{n}$ by Meka, Nguyen and Vu\footnote{The results in \cite{MNV16} are more general, holding for polynomials of any bounded degree.} (see arXiv version v4 of \cite{MNV16}), before it was observed that the bound $(\log n)^{O(1)}/\sqrt{n}$ follows from a powerful general result of Kane~\cite{Kan14} (see the journal version of \cite{MNV16} for more discussion).

In this paper we finally resolve the quadratic Littlewood--Offord problem (up to constant factors), obtaining an optimal bound of $O(1/\sqrt{n})$. We are also able to make a weaker assumption on $Q$ than in previous
work.
\begin{theorem}\label{thm:main-shiny}
Let $Q\in\RR[x_{1},\dots,x_{n}]$ be a polynomial of degree at most 2, and let $\xi_{1},\dots,\xi_{n}\in\{-1,1\}$ be independent Rademacher random variables.
For some positive integer $m$, suppose that $Q(\xi_{1},\dots,\xi_{n})$ ``robustly depends on at least $m$ of the variables $\xi_{i}$'' in the sense that the value of $Q(\xi_{1},\dots,\xi_{n})$ cannot be determined by specifying any outcomes of any $m-1$ of the variables $\xi_{1},\dots,\xi_n\in \{-1,1\}$. Then,
\[
\sup_{z\in\RR}\Pr[Q(\xi_{1},\dots,\xi_{n})=z]\le \frac{C}{\sqrt{m}},
\]
for some absolute constant $C$.
\end{theorem}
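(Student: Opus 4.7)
After absorbing diagonal terms $A_{ii}\xi_i^{2}=A_{ii}$ into the constant (using $\xi_i^{2}=1$), write
\[
Q(\xi) \;=\; \sum_{i<j}A_{ij}\,\xi_i\xi_j \;+\;\sum_{i}b_{i}\,\xi_i\;+\;c.
\]
The robustness hypothesis says that no partial assignment to at most $m-1$ of the $\xi_i$ can render $Q$ constant in the remaining variables. My plan is to reduce the quadratic anticoncentration problem to a (high-dimensional) linear Littlewood--Offord problem by conditioning on a random subset of variables, then conclude with \cref{eq:elo}, iterating this reduction when the structure of $Q$ is irregular so as to keep the robustness parameter intact.

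\emph{Single decoupling step.} Choose a partition $[n]=S\sqcup T$, for instance by placing each index independently into $S$ with probability $1/2$. Conditional on $\xi_T$, the polynomial $Q(\xi)$ is affine in $\xi_S$ with coefficients $c_{i}(\xi_T)=b_i+\sum_{j\in T}A_{ij}\xi_j$ for $i\in S$. Applying \cref{eq:elo} conditionally gives
\[
\Pr[Q(\xi)=z\mid\xi_T]\le O\!\left(N(\xi_T)^{-1/2}\right),\qquad N(\xi_T):=\bigl|\{i\in S:c_i(\xi_T)\ne 0\}\bigr|,
\]
so it would suffice to show that $N(\xi_T)=\Omega(m)$ except on an event of $\xi_T$-probability $O(1/\sqrt m)$. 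Since each $c_{i}(\xi_T)$ is itself a Rademacher linear form, $\Pr[c_i(\xi_T)=0]\le O(1/\sqrt{r_i})$ where $r_i$ is the number of nonzero entries $A_{ij}$ with $j\in T$; hence the expected number of ``dead'' coordinates $i\in S$ is well controlled whenever the rows of $A$ are uniformly dense.

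\emph{Iteration and the main obstacle.} The delicate case is when $A$ is very irregular---a handful of dense rows or columns on top of a sparse bulk---because the robustness hypothesis is a global condition and does not by itself deliver uniform row density. To address this I would iterate a dichotomy: either (a) $A$ has $\Omega(m)$ rows of support $\Omega(m)$ with well-spread entries, in which case a second round of decoupling combined with a \emph{vector-valued} Erd\H os--Littlewood--Offord estimate for sums $\sum_i\xi_i v_i$ with $v_i\in\RR^d$ (the ``high-dimensional linear anticoncentration'' of the abstract) delivers the bound directly; or (b) a small ``heavy'' set $K\subset[n]$ with $|K|\ll m$ carries most of the quadratic interactions, in which case conditioning on $\xi_K$ leaves a residual polynomial on $[n]\sm K$ whose effective robustness parameter is still $\Omega(m)$ (the global hypothesis applied inside $[n]\sm K$ gives this automatically), and induction closes the loop. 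The chief technical difficulty---and the reason the earlier bounds of \cite{MNV16,Kan14} carried polylogarithmic losses---is arranging the induction so that the robustness parameter drops by at most a multiplicative constant at each stage; I would expect to induct on $n$ together with a rank-like invariant of $A$, with the base case a direct appeal to \cref{eq:elo}.
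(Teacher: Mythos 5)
There is a genuine gap, and it starts at the very first step. Conditional on $\xi_T$, the polynomial $Q$ is \emph{not} affine in $\xi_S$: the terms $A_{ij}\xi_i\xi_j$ with both $i,j\in S$ remain genuinely quadratic, so the conditional application of \cref{eq:elo} with coefficients $c_i(\xi_T)=b_i+\sum_{j\in T}A_{ij}\xi_j$ is not valid as written. The standard way to repair this is the Cauchy--Schwarz decoupling of \cref{lem:decoupling}: replace $\Pr[Q(\vec\xi)=z]$ by $\Pr[Q(\vec\xi[S],\vec\xi[T])=Q(\pvec\xi[S],\vec\xi[T])=z]^{1/2}$ and bound the difference, which is linear in $\xi_T$. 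But this repair is exactly where the square-root loss enters: even with perfect control of the resulting linear problem one only gets $(O(1/\sqrt m))^{1/2}=O(m^{-1/4})$, and the dichotomy you sketch in the iteration step (dense well-spread rows versus a small heavy set, plus a rank-like invariant) is essentially the Costello-style case analysis that historically topped out at $m^{-1/2+o(1)}$; nothing in your outline supplies a mechanism that recovers the lost square root, and the claim that row-density estimates plus a vector-valued Erd\H{o}s--Littlewood--Offord bound ``deliver the bound directly'' is precisely the unproven heart of the matter.

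The actual proof goes a different way: rather than forgetting the quadratic part after one decoupling, it iterates decoupling so that the event becomes membership of $\vec\xi[J^{(k)}]$ in a quadric sitting inside an affine-linear subspace whose codimension grows at each step, so the ``quadratic fraction'' of the problem shrinks geometrically and after about $\log\log n$ rounds the square-root loss costs only a constant factor. Making this work requires two ingredients absent from your plan: a Hal\'asz-type high-dimensional anticoncentration inequality with good dependence on the codimension (\cref{thm-Halasz-FJZ}, \cref{cor:halasz-sub-version}), which in turn needs a robust rank condition on the accumulating linear constraints; and a ``robust rank inheritance'' lemma (\cref{key-lemma-corollary}, proved by the witness-counting argument behind \cref{lem:baby-key}) guaranteeing that this condition survives each decoupling step with high enough probability, together with careful handling of the low-rank/degenerate cases via \cref{thm:quadratic-geometric} and the perturbation formalism of \cref{def:perturbation}. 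Without something playing the role of these two ingredients, your induction cannot close with the optimal exponent $1/2$.
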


\begin{remark}\label{rem:nondegeneracy-conditions}
Recall that the Erd\H os--Littlewood--Offord theorem has an assumption
that each linear coefficient $a_i$ is nonzero. Of course, zero
coefficients can be ignored,
so without such an assumption one immediately
obtains a bound of the form $\sup_{z\in \mb R}\Pr[a_1\xi_1+\dots+a_n\xi_n=z]\le O(1/\sqrt{m})$ where $m$ is the number
of nonzero $a_{i}$.
Unfortunately, the situation is not so simple
in the quadratic case. One could make the very strong assumption that
\emph{every} degree-2 coefficient is nonzero, but this is too strong
of an assumption for most applications\footnote{We also remark that this strong assumption doesn't seem to make the problem much easier.}. Alternatively, one might wish
to consider the very weak assumption that every variable $x_{i}$
features in at least one nonzero term of $Q(x_{i})$, but unfortunately
this is too weak to get a sensible bound: indeed, consider for example the
case where $Q(x_{1},\dots,x_{n})=(1+\xi_{1})(\xi_{1}+\dots+\xi_{n})$,
which is zero whenever $\xi_{1}=-1$, and therefore $\Pr[Q(\xi_{1},\dots,\xi_{n})=0]\ge 1/2$. 
More generally, if it is possible to determine the value of $Q(\xi_{1},\dots,\xi_{n})$ by fixing the outcomes of a small number of variables to certain values, then this automatically leads to a large point probability for $Q(\xi_{1},\dots,\xi_{n})$. So it is necessary to make an assumption guaranteeing that $Q$ ``robustly depends on many of the $\xi_{i}$''.

We believe that our assumption in \cref{thm:main-shiny} captures this ``robust dependence on many of the $\xi_{i}$'' in a natural way.
To compare to previous work: our assumption is slightly weaker than
the assumption in \cite{MNV16} (which says that $Q(x_{1},\dots,x_{n})$ has many
quadratic terms featuring disjoint variables), and is much weaker
than the assumptions in \cite{Cos13,CTV06} (which say, in slightly different ways, that $Q(x_{1},\dots,x_{n})$ has a huge number of nonzero coefficients).
\end{remark}
\vspace{1pt}

\begin{remark}
Of course, we could ask for the optimal constant factor $C$ in \cref{thm:main-shiny}. It is not necessarily clear what to expect: one may guess that the polynomial $Q(x_1,\dots,x_m)=(x_1+\dots+x_m)(x_1+\dots+x_m+2)$ or $Q(x_1,\dots,x_m)=(x_1+\dots+x_m+1)(x_1+\dots+x_m-1)$ (depending on whether $m$ is odd or even) is the worst case, but recent developments on the so-called \emph{Gotsman--Linial conjecture} (see \cref{subsec:further-directions}) suggest that this might be too na\"ive. It is also quite possible that the optimal value for the constant factor in the bound on $\sup_{z\in\RR}\Pr[Q(\xi_{1},\dots,\xi_{n})=z]$ is sensitive to the precise assumption one makes on the quadratic polynomial $Q$ (to ensure that it ``robustly depends on many of the variables $\xi_i$''; see \cref{rem:nondegeneracy-conditions}).
\end{remark}

In \cref{sec:outline} we give a brief summary of the methods that had previously been applied to the (quadratic) Littlewood–Offord problem, their limitations, and the new ideas in our proof of \cref{thm:main-shiny}. In particular, our key contribution is a new \emph{inductive decoupling scheme}: we take the well-known technique of \emph{decoupling}, usually viewed as a tool to inefficiently ``reduce from a quadratic problem to a linear problem'', and reinterpret it as a tool to efficiently ``reduce the relative dimension of the quadratic part of a problem''.  We also develop a new way to study anticoncentration of random vectors, via a technique we call \emph{witness-counting}. We believe both these aspects of our proof may have broader applications.

We also remark that, while the details of the proof of \cref{thm:main-shiny} are rather involved, there is a certain special case of \cref{thm:main-shiny} (the case where the quadratic part of $Q$ has ``bounded rank'') which permits a much simpler proof. This case is already interesting, and we present its proof in \cref{sec:geometric} to serve as an accessible illustration of our inductive decoupling scheme (and the results of \cref{sec:geometric} will also be used later in the paper).

Finally, we remark that just as for the linear Littlewood--Offord problem, one can deduce a version of \cref{thm:main-shiny} in which the variables $\xi_{i}$ are allowed to take essentially any discrete distribution (not just the Rademacher distribution), as follows.

\begin{theorem}\label{thm:general-distributions}
    Fix $0<\delta<1$. Let $Q\in\RR[x_{1},\dots,x_{n}]$ be a polynomial of degree at most $2$, and let $\zeta_{1},\dots,\zeta_{n}\in \RR$ be independent discrete random variables.

    For nonempty subsets $R_1,\dots,R_n$ of the supports of $\zeta_{1},\dots,\zeta_{n}$, respectively,  say that the product $R_1\times \dots\times R_n$ is a \emph{fixing box} for $Q$ if the polynomial $Q$ is constant on $R_1\times \dots\times R_n$. For some positive integer $m$, suppose that for any fixing box $R_1\times \dots\times R_n$ there are at least $m$ indices $i\in \{1,\dots,n\}$ such that $\Pr[\zeta_{i}\in R_i]\le 1-\delta$. Then, we have
\[
\sup_{z\in\RR}\Pr[Q(\zeta_{1},\dots,\zeta_{n})=z]\le \frac{C_\delta}{\sqrt{m}},
\]
for some constant $C_\delta$ only depending on $\delta$.
\end{theorem}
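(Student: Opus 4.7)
The plan is to reduce \cref{thm:general-distributions} to the Rademacher version \cref{thm:main-shiny} via a symmetrization argument, in the spirit of the classical deduction of the Erd\H{o}s--Littlewood--Offord theorem for arbitrary distributions from the Rademacher case. For each $i$, introduce an independent copy $\zeta_i'$ of $\zeta_i$ and condition on the unordered multisets $P_i := \{\zeta_i, \zeta_i'\}$. Let $T := \{i : \zeta_i \ne \zeta_i'\}$. Conditionally, $\zeta_i$ is a determined constant for $i \notin T$, while for $i \in T$ with $P_i = \{a_i, b_i\}$ the variable $\zeta_i$ is uniform on $\{a_i, b_i\}$, so $\zeta_i = \tfrac12(a_i+b_i) + \tfrac12(b_i - a_i)\xi_i$ for an independent honest Rademacher $\xi_i$. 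Hence $Q(\zeta_1, \dots, \zeta_n) = \tilde Q((\xi_i)_{i \in T})$ for a quadratic polynomial $\tilde Q$ whose coefficients depend on $(P_i)_i$, and applying \cref{thm:main-shiny} to $\tilde Q$ conditionally gives
\[
\Pr[Q(\zeta) = z \mid (P_i)_i] \;\le\; \frac{C}{\sqrt{\tilde m \vee 1}},
\]
where $\tilde m$ is the robust-dependence parameter of $\tilde Q$ in the Rademachers $(\xi_i)_{i \in T}$. Taking expectations, the task reduces to showing $\tilde m \ge c_\delta m$ with sufficiently high probability over the pairs.

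A first easy observation is that $|S| \ge m$, where $S := \{i : \max_v \Pr[\zeta_i = v] \le 1-\delta\}$: this is obtained by applying the fixing-box hypothesis to the trivial singleton boxes $\prod \{v_i^*\}$ (with $v_i^*$ any mode of $\zeta_i$), which are fixing boxes since $Q$ is constant on a single point. For $i \in S$ we have $\Pr[i \in T] = 1 - \sum_v \Pr[\zeta_i = v]^2 \ge \delta$, so a Chernoff bound gives $|T \cap S| \ge (\delta/2)m$ with probability $1 - e^{-\Omega_\delta(m)}$.

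The main substantive step is the \emph{robustness transfer}: translating the fixing-box hypothesis on $Q$ into a lower bound on $\tilde m$. Suppose $\tilde m < k$; then there exist $S' \subseteq T$ with $|S'| < k$ and an assignment $(\xi_i^*)_{i \in S'}$ making $\tilde Q$ constant. The induced sets $R_i = \{a_i^*\}$ (for $i \in S'$), $R_i = \{a_i, b_i\}$ (for $i \in T \setminus S'$), $R_i = \{\zeta_i\}$ (for $i \notin T$) form a fixing box for $Q$, so the hypothesis produces at least $m$ informative indices (with $\Pr[\zeta_i \in R_i] \le 1-\delta$). The difficulty is that indices in $T \setminus S'$ carry $R_i$ equal to the random pair $\{a_i, b_i\}$, which can be \emph{uninformative} (i.e.\ $\Pr[\zeta_i \in \{a_i,b_i\}] > 1-\delta$) whenever $\zeta_i$ is concentrated on a value appearing in the sampled pair; such indices do not force $|S'|$ to be large, and this is the main obstacle.

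I would overcome this obstacle by replacing the naive pair symmetrization with a refined two-step coupling that first restricts to a sub-index set $T^* \subseteq T \cap S$ on which the pair $\{a_i,b_i\}$ is guaranteed to be informative (for instance, by resampling the pair conditionally when the drawn pair contains an atom of $\zeta_i$ exceeding a threshold probability). Showing that $|T^*| = \Theta_\delta(m)$ with high probability, and that the fixing-box hypothesis applied to the induced subproblem on $T^*$ forces $|S' \cap T^*| \ge c_\delta |T^*|$, should then yield $\tilde m \ge c_\delta m$ after a union bound over the (exponentially many) choices of $(S',\xi^*)$, absorbed via the $e^{-\Omega_\delta(m)}$ tail estimates. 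Designing this secondary coupling so that the restriction $T^*$ is simultaneously large enough and compatible with the fixing-box formalism is, in my view, the genuinely delicate step of the argument.
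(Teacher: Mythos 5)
Your symmetrization is set up correctly as far as it goes (the observation $|S|\ge m$ via singleton fixing boxes and the Chernoff bound for $|T\cap S|$ are fine), but the step you yourself flag as delicate --- the ``robustness transfer'' from the fixing-box hypothesis to a lower bound on the robust dependence of the conditional polynomial $\tilde Q$ --- is a genuine gap, and the repair you sketch does not work. First, the proposed union bound has no per-event estimate to feed it: for a \emph{fixed} small fixing $(S',\xi^*)$, the event that it pins $\tilde Q$ cannot be bounded using the fixing-box hypothesis, because the $\ge m$ informative coordinates that the hypothesis guarantees for the induced box may all sit at coordinates you did not fix --- either at indices $i\notin T$ (where $R_i$ is the singleton containing the common value of the two copies, which is informative as soon as that value has probability at most $1-\delta$) or at unfixed pair-coordinates --- so no contradiction with $|S'|$ being small arises, and there is no tail probability to sum; moreover the number of fixings $(S',\xi^*)$ grows like $e^{\Theta(m\log(n/m))}$, which the $e^{-\Omega_\delta(m)}$ tails (which anyway concern a single global event, not each fixing) could not absorb when $n\gg m$. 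Second, the restriction to a set $T^*$ of ``informative pairs'' is vacuous in the most basic instance: if every $\zeta_i$ is uniform on a two-point set (in particular the Rademacher case), every sampled pair with $\zeta_i\ne\zeta_i'$ is the whole support and has probability $1>1-\delta$, so $T^*=\emptyset$ however you threshold; and resampling pairs to force informativeness changes the conditional law and destroys the exact conditional-Rademacher structure your reduction rests on. Note also that even granting the reduction to \cref{thm:main-shiny}, what you need is a statement of the form ``robust dependence survives a random restriction (half the coordinates fixed to random values) up to a constant factor, except with probability $O_\delta(1/\sqrt m)$'', and this is not a formal consequence of the hypothesis: a pinning of the restriction by $S'$ pins $Q$ by fixing $T^c\cup S'$, and $|T^c\cup S'|\ge m$ holds automatically once $|T^c|\ge m$, which is the typical situation when $m\ll n$.

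For contrast, the paper avoids this transfer problem altogether. It writes $\zeta_i=\alpha_i+\xi_i\beta_i$ as in \cref{lem-representation} (so a Rademacher is retained in \emph{every} coordinate, not only where two independent copies differ) and reduces to \cref{thm:main} rather than \cref{thm:main-shiny}: the hypothesis of \cref{thm:main} is about nonzero off-diagonal coefficients of the quadratic part, which transforms transparently under the substitution (the coefficient of $\xi_i\xi_j$ is the original one times $\beta_i\beta_j$, nonzero with probability at least $\delta^2$). The fixing-box hypothesis then enters only through a dichotomy (\cref{prop:general-distributions-assumption}): if the quadratic part contains $\ge m/4$ disjoint nonzero coefficients, \cref{thm:main} applies after a Chernoff bound; otherwise, conditioning on the $\le m/2$ variables of a maximum matching, $Q$ splits into independent one-variable summands, and the fixing-box hypothesis forces $\ge m/2$ of these to be $\delta$-anticoncentrated, so the general linear Littlewood--Offord bound (\cref{thm:general-ELO}) finishes; a separate step using property (b) of \cref{lem-representation} handles coordinates with an atom of mass $>1-\delta$. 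If you want to salvage your route, you would essentially have to prove the ``random restriction preserves robust dependence'' lemma, and the natural proof of that goes through the same quadratic-coefficient dichotomy --- i.e.\ you would be reconstructing the paper's argument rather than bypassing it.
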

Note that if $\zeta_1,\dots,\zeta_n$ are independent uniformly random integers in $\{-B,-(B-1),\dots,B-1,B\}$, then $\Pr[Q(\zeta_{1},\dots,\zeta_{n})=z]\cdot (2B+1)^n$ is the number of integer solutions to $Q(x_1,\dots,x_n)=z$ among integers $x_1,\dots,x_n$ with absolute value (``height'') at most $B$. We remark that quantities of this form have been extensively studied in analytic number theory, in the regime where $n$ is constant and $B$ is large (in contrast, our estimates are effective in the regime where $B$ is constant and $n$ is large); see for example \cite{HB02,Pil95} and \cite[Theorem~1.11]{BG17}.

\subsection{An application to edge-statistics}

Apart from the intrinsic value of \cref{thm:main-shiny}, of course it also enables us to improve bounds in any place where quadratic Littlewood--Offord inequalities had previously been applied (see for example \cite{AE14,CTV06,CV10,CV08,FKSS23,GKSS,KST19}).  Here we highlight one particular application: we can resolve a conjecture of Alon, Hefetz, Krivelevich and Tyomkyn~\cite{AHKT20} related to the so-called \emph{graph inducibility problem}. Specifically, let $\mathcal{G}_{n}$ be the set of $n$-vertex graphs, and for a graph $G$ let $N_{G}(k,\ell)$ be the number of sets of $k$ vertices of $G$ inducing exactly $\ell$ edges. Then, define the \emph{edge-inducibility} (with parameters $k$ and $\ell$) by
\[
\on{ind}(k,\ell)=\lim_{n\to\infty}\max_{G\in\mathcal{G}_{n}}\frac{N_{G}(k,\ell)}{\binom{n}{k}}.
\]
This parameter measures, for large graphs,
the maximum possible fraction of
$k$-vertex subsets which induce $\ell$ edges. By considering complete or empty graphs we have 
\[
\on{ind}(k,0)=\on{ind}\bigl(k,\textstyle{\binom{k}{2}}\bigr)=1,
\]
but for $0<\ell<\binom k 2$ we have $\on{ind}(k,\ell)<1$  (this follows easily from \emph{Ramsey's theorem}, which says that large graphs must have large complete or empty subgraphs). In fact,
for $0<\ell<\binom k 2$, and large $k$, we now know that $\on{ind}(k,\ell)$
cannot be much larger than $1/e$; this was the content of the \emph{Edge-Statistics conjecture}, 
proved in a combination of papers by Kwan, Sudakov and Tran~\cite{KST19},
Fox and Sauermann~\cite{FS20}, and Martinsson, Mousset, Noever and
Truji\'c~\cite{MMNT19}. One can use \cref{thm:main-shiny} to prove the following much stronger bound when $\ell$ is far from
zero and far from $\binom{k}{2}$, which was conjectured by Alon, Hefetz, Krivelevich and Tyomkyn (see \cite[Conjecture~6.2]{AHKT20}):
\begin{theorem}\label{thm:AHKT}
For $0<\ell<\binom k 2$ we have
\[\on{ind}(k,\ell)=O\left(\frac 1{\sqrt{\min\bigl(\ell,\binom k2-\ell\bigr)/k}}\right).\]
\end{theorem}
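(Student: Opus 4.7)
The plan is to prove $\on{ind}(k,\ell) \le C\sqrt{k/\ell}$ for $\ell \le \binom{k}{2}/2$; the general case then follows by complementation ($\ell \leftrightarrow \binom{k}{2} - \ell$). As a first step, I reduce to graphs on exactly $2k$ vertices: for any $G$ on $n \ge 2k$ vertices, a uniform random $k$-subset of $V(G)$ can be sampled by first choosing a uniform random $2k$-subset $T \subseteq V(G)$ and then a uniform random $k$-subset of $T$ (verified via $\binom{n}{2k}\binom{2k}{k}=\binom{n}{k}\binom{n-k}{k}$). Hence $N_G(k,\ell)/\binom{n}{k} = \EE_T[N_{G[T]}(k,\ell)/\binom{2k}{k}]$, so it suffices to bound $N_{G'}(k,\ell)/\binom{2k}{k}$ uniformly over graphs $G'$ on $V = [2k]$.

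Fix such a $G'$ and sample $S$ via a \emph{pairing--Rademacher} scheme: pick a uniformly random perfect matching $\mc P = \{\{u_a, v_a\}: a \in [k]\}$ of $V$ and independent Rademachers $\xi_1, \dots, \xi_k \in \{-1, 1\}$, and let $S$ contain $u_a$ when $\xi_a = +1$ and $v_a$ otherwise. A direct count shows $S$ is a uniform random $k$-subset of $V$. Conditional on $\mc P$, the edge count $X = e(G'[S])$ is a quadratic polynomial in $\xi_1, \dots, \xi_k$: edges within a pair contribute $0$, and writing $\mathbbm{1}[i \in S] = (1 + \sigma_a(i)\xi_a)/2$ (with $\sigma_a(u_a) = +1, \sigma_a(v_a) = -1$), the $\xi_a\xi_b$-coefficient (for $a \ne b$) is $\tilde H(a, b)/4$, where $\tilde H(a, b) = \mathbbm{1}[u_au_b \in E] - \mathbbm{1}[u_av_b \in E] - \mathbbm{1}[v_au_b \in E] + \mathbbm{1}[v_av_b \in E]$. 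Applying \cref{thm:main-shiny} conditional on $\mc P$ yields $\Pr[X = \ell \mid \mc P] \le C/\sqrt{m(\mc P)}$. To lower-bound the robustness parameter, note that if fixing $\xi_T$ makes $X$ constant on the remaining variables, then its quadratic part must vanish, so $[k]\setminus T$ is independent in the graph $H^*(\mc P)$ on $[k]$ whose edges are the $\{a,b\}$ with $\tilde H(a,b) \ne 0$. Thus $m(\mc P) \ge \tau(H^*(\mc P)) \ge |E(H^*(\mc P))|/k$ (using $\Delta(H^*) \le k-1$).

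Since $N_{G'}(k,\ell)/\binom{2k}{k} = \Pr[X = \ell] = \EE_{\mc P}[\Pr[X = \ell \mid \mc P]] \le C\,\EE_{\mc P}[1/\sqrt{m(\mc P)}]$, it now suffices to show $\EE_{\mc P}[1/\sqrt{m(\mc P)}] = O(\sqrt{k/\ell})$, and for this it is enough to establish $|E(H^*(\mc P))| = \Omega(\ell)$ with probability $1 - O(\sqrt{k/\ell})$. WLOG $|E(G')| = \Theta(\ell)$: for $|E(G')| < \ell$ we trivially have $\Pr[X=\ell] = 0$, and for $|E(G')|$ much larger than $\ell$ a Chebyshev-type concentration bound for $X$ about its mean $|E(G')|/4$ gives $\Pr[X = \ell] = O(1/k) = O(\sqrt{k/\ell})$ in our range $\ell \le \binom{k}{2}/2$. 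Call a $G'$-edge $e$ \emph{isolated} if it is the only $G'$-edge between the two pairs containing its endpoints; isolated edges produce $|\tilde H| = 1 \ne 0$ in distinct super-pairs, so $|E(H^*(\mc P))|$ is at least the number of isolated edges. A union bound over the three potential colliding edges in $e$'s super-pair shows the probability $e$ is non-isolated under random $\mc P$ is $O(\Delta(G')/k + |E(G')|/k^2)$.

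The main obstacle is handling graphs $G'$ with a few very high-degree vertices, where this union bound is too weak. This is addressed by a separate case analysis: either one conditions on where the high-degree vertices land in $\mc P$ and places them inside the fixed set $T$ (contributing directly to $m(\mc P)$), or one shows that high-degree vertices already produce many distinct nonzero $\tilde H$-values by themselves; the remaining low-degree part of $G'$ then has enough isolated edges by the union-bound estimate. Combining these ingredients shows $|E(H^*(\mc P))| \ge c\ell$ with probability $1 - O(\sqrt{k/\ell})$. Splitting $\EE_{\mc P}[1/\sqrt{m(\mc P)}]$ on this good event (where $1/\sqrt{m(\mc P)} \le \sqrt{k/(c\ell)}$) and its complement (which has probability $O(\sqrt{k/\ell})$ and contributes $\le 1$ since $m(\mc P) \ge 1$ whenever $X$ is nonconstant, with the nonconstant case handled separately) yields the required $\EE_{\mc P}[1/\sqrt{m(\mc P)}] = O(\sqrt{k/\ell})$, completing the proof.
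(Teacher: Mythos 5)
Your framework coincides with the route the paper takes (the paper simply defers to the proof of \cite[Theorem~1.1]{KST19}): complementation, the random-pairing-plus-Rademacher coupling, a conditional application of \cref{thm:main-shiny}, and a combinatorial argument that for most pairings $\mc P$ the conditional quadratic polynomial robustly depends on many variables. Your coupling computation, the formula $\tilde H(a,b)/4$ for the cross-coefficients, the bound $m(\mc P)\ge\tau(H^*(\mc P))$ via vertex covers, and the outer reductions (to $n=2k$ and to $e(G')=\Theta(\ell)$; note Chebyshev gives $O(k/e(G'))$ rather than $O(1/k)$, which still suffices since the target bound is trivial for $\ell=O(k)$) are all sound.

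The gap is in the key combinatorial step, and the intermediate claim you reduce to is false. You claim that $|E(H^*(\mc P))|\ge c\ell$ with probability $1-O(\sqrt{k/\ell})$ whenever $e(G')=\Theta(\ell)$. Take $G'=K_{q,2k-q}$ with $q\asymp\ell/k$ (so $e(G')\asymp\ell$) and $k\ll\ell\ll k^2$. A direct check shows $\tilde H(a,b)\ne 0$ only when both pairs $a,b$ are split between the two sides, so $H^*(\mc P)$ is deterministically a clique on at most $q$ vertices and $|E(H^*(\mc P))|=O(\ell^2/k^2)=o(\ell)$: your ``good event'' has probability $0$, not $1-o(1)$. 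The theorem still holds for such $G'$, but only because $\tau(H^*)\asymp q\asymp\ell/k$; the chain $m\ge|E(H^*)|/k$ is hopelessly lossy here (it would give only $O(k^{3/2}/\ell)$, much worse than $O(\sqrt{k/\ell})$ when $\ell\ll k^2$). So the quantity that must be controlled is the vertex-cover (equivalently, matching) number of $H^*$, not its edge count.

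Relatedly, your engine for producing edges of $H^*$ --- isolated $G'$-edges --- only works when $\Delta(G')=o(k)$ and $e(G')=o(k^2)$. For dense graphs ($\ell=\Theta(k^2)$, e.g.\ two disjoint $k$-cliques) or the bipartite examples above there are essentially no isolated edges and the per-edge union bound is $\Theta(1)$; the sketched dichotomy for ``a few very high-degree vertices'' (``place them in the fixed set'', ``they produce many nonzero $\tilde H$-values by themselves'') is an assertion rather than an argument, and in any case it cannot deliver the false claim $|E(H^*)|\ge c\ell$. Even in the bounded-degree regime, your union bound only controls the expectation of the number of non-isolated edges, whereas you need failure probability $O(\sqrt{k/\ell})$, so a concentration or second-moment argument over the random matching is still required; you also need to rule out, with the same probability, the event that the conditional polynomial is constant and equal to $\ell$. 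Establishing the correct statement --- that with the required probability the random pairing yields $\Omega(\ell/k)$ disjoint pairs $\{a,b\}$ with $\tilde H(a,b)\ne 0$ --- is precisely the nontrivial content of the argument in \cite{KST19} that the paper invokes, and it is missing from your proposal.
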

The deduction of \cref{thm:AHKT} from \cref{thm:main-shiny} is exactly as in the proof of \cite[Theorem~1.1]{KST19} (which uses the weaker quadratic Littlewood--Offord inequality from \cite{MNV16}), so we do not include it here. The idea is that for any large graph $G$, the number of edges in a random set of $k$ vertices of $G$ can be interpreted as a quadratic polynomial of independent Rademacher random variables (via a certain coupling).

\textit{Notation.} As, usual, for a nonnegative integer $n$, we write $[n]:=\{1,\dots,n\}$ (note that for $n=0$, this means $[n]=\emptyset$). For an $n\times n$ matrix $A$ and subsets $I,J\su [n]$, we write $A[I\times J]$ for the $|I|\times |J|$ submatrix of $A$ consisting of the rows with indices in $I$ and the columns with indices in $J$. Similarly for a vector $\vec v\in \RR^n$ and a subset $I\su [n]$ we write $\vec v[I]\in \RR^{I}$ for the vector obtained from $\vec v$ by only taking the coordinates with indices in $I$. Finally, for a vector $\vec v\in \RR^n$ and $i\in [n]$, we write $\vec v[i]\in \RR$ for the $i$-th entry of $\vec v$.

In this paper, we say that $Q\in \mathbb{R}[x_1,\dots,x_n]$ is a quadratic polynomial if its degree is \emph{at most} $2$. For $t>0$, we write $\log(t)$ for the base-2 logarithm of $t$.

\section{Key ideas, in comparison with previous work}\label{sec:outline}

By now there are several different proofs of the $O(1/\sqrt{n})$
bound in the (linear) Erd\H os--Littlewood--Offord theorem. As far as we
know, they all take advantage of at least one of two very special
properties of random variables of the form $X=a_{1}\xi_{1}+\dots+a_{n}\xi_{n}$.
First, Erd\H os' original proof~\cite{Erd45} used the \emph{monotonicity} of $X$
(for every index $i$, changing $\xi_{i}$ from $-1$ to $1$ will always make the value
of $X$ increase, or always make the value of $X$ decrease). Second,
one can take advantage of the fact that $X$ is a \emph{sum of independent
random variables} (each with a very simple distribution), so its Fourier transform is very well-behaved.
(The first Fourier-analytic proof seems to have been by Hal\'asz~\cite{Hal77};
see also the very simple proof of a $O(1/\sqrt n)$ bound due to Croot~\cite{Cro11}).

Unfortunately, in the quadratic setting (where $X=Q(\xi_{1},\dots,\xi_{n})$
for some quadratic polynomial $Q$), both of the above properties
of $X$ may fail in a very strong way. There
are two general approaches that have been most successful so far:
Gaussian approximation, and a technique called \emph{decoupling}.
We briefly discuss both these approaches and their limitations, before
describing our new ideas.

\subsection{Gaussian approximation and combinatorial partitioning}

Whether one is interested in the linear or the quadratic cases of the Littlewood--Offord problem, perhaps the most natural starting point is to try to leverage some of the vast literature
in probability theory on distributional approximation: if one can approximate the entire distribution of
a random variable $X$, then anticoncentration should be an easy corollary.

This angle of attack is especially compelling in the linear case, since $X=a_{1}\xi_{1}+\dots+a_{n}\xi_{n}$ is a sum of independent random variables; it is tempting to try to apply a central limit theorem. One cannot be too na\"ive here, as the limiting distribution of $X$ could actually be very far from Gaussian (consider for example the case where $a_{i}=2^{i}$ for each $i$). However, in their foundational paper, Littlewood and Offord~\cite{LO43}
were in fact able to prove their $O(\log n/\sqrt{n})$ bound via Gaussian
approximation. The key idea was to partition the coefficients $a_{i}$
into $O(\log n)$ ``buckets'' according to their orders of
magnitude, in such a way that Gaussian approximation is effective
within each bucket.

It is far from obvious how to extend this type of strategy to the
higher-degree case (when $X=Q(\xi_{1},\dots,\xi_{n})$ for a
bounded-degree polynomial $Q$), but this is more or less what was
accomplished by Meka, Nguyen and Vu~\cite{MNV16} and by Kane~\cite{Kan14}
in their bounds of the form $\exp(O((\log\log n)^{2}))/\sqrt{n}$
and $(\log n)^{O(1)}/\sqrt{n}$, respectively. Instead of a central
limit theorem, one needs a \emph{Gaussian invariance principle} (which
provides sufficient conditions under which one can approximate a polynomial
of independent Rademacher random variables by a polynomial of independent
Gaussian random variables), and instead of a simple ``bucketing''
argument one needs a \emph{regularity lemma} to describe $Q(\xi_{1},\dots,\xi_{n})$
in terms of a ``low-complexity decision tree''.

These types of methods are very powerful and very flexible, but unfortunately
it seems that one inevitably needs to ``pay logarithmic factors''
in order to deconstruct an arbitrary\footnote{If one makes strong assumptions about the quadratic polynomial, it
is sometimes possible to prove exact bounds via Gaussian approximation;
see for example \cite{KSSS23}.} quadratic polynomial into ``well-behaved pieces'' for which Gaussian
approximation is effective. Even in the linear case, we are not aware of a way to prove an optimal $O(1/\sqrt n)$ bound via Gaussian approximation. Perhaps this is not surprising: in some sense the entire philosophy of the Littlewood--Offord problem is that anticoncentration is an extremely robust phenomenon that holds under much weaker assumptions than central limit theorems, and we should not expect to be able to \emph{use} central limit theorems to prove optimal anticoncentration.

\subsection{Decoupling}\label{subsec:decoupling}

A completely different technique was employed in the papers of Costello, Tao and Vu~\cite{CTV06} and
Rosi\'nski and Samorodnitsky~\cite{RS96} which first studied the quadratic Littlewood--Offord problem. This technique is now usually called \emph{decoupling}, following \cite{CTV06}. Roughly speaking, decoupling is a general technique to ``reduce a quadratic anticoncentration problem to a linear one''\footnote{The term ``decoupling'' refers more generally to a class of techniques used to reduce from dependent situations to independent situations, see for example the book-length treatment in \cite{dlPG12}.}. Below we sketch the basic idea, incorporating an improvement due to Costello and Vu~\cite{CV08}\footnote{\cite{CTV06,RS96} featured a ``more symmetric'' decoupling argument, which gives worse bounds.}.

Let $Q\in \RR[x_1,\dots,x_n]$ be a quadratic polynomial and let $\vec{\xi}=(\xi_{1},\dots,\xi_{n})$ be a sequence of independent Rademacher
random variables. Given a partition of the index set $\{1,\dots,n\}$
into two subsets $I$ and $J$, we can break the random vector $\vec{\xi}$ into two parts 
$\vec{\xi}[I]\in\{-1,1\}^{I}$ and $\vec{\xi}[J]\in\{-1,1\}^{J}$.
Then, a simple application of the Cauchy--Schwarz inequality (see
\cref{lem:decoupling}) shows that if $\pvec\xi[I]$ is an independent copy of $\vec{\xi}[I]$,
we have
\begin{align}
\Pr\Bigl[Q(\vec \xi)=z\Bigr]=\Pr\Bigl[Q(\vec{\xi}[I],\vec{\xi}[J])=z\Bigr] & \le\Pr\Bigl[Q(\vec{\xi}[I],\vec{\xi}[J])=z\text{ and }Q(\pvec \xi[I],\vec{\xi}[J])=z\Bigr]^{1/2}\label{eq:decoupling-basic-step}\\
 & \le \Pr\Bigl[Q(\vec{\xi}[I],\vec{\xi}[J])-Q(\pvec \xi[I],\vec{\xi}[J])=0\Bigr]^{1/2}.\label{eq:linear-quadratic-decoupling}
\end{align}
Now, $Q(\vec{\xi}[I],\vec{\xi}[J])-Q(\pvec \xi[I],\vec{\xi}[J])$
can be interpreted as a \emph{linear} function of $\vec{\xi}[J]$,
with coefficients that depend on $(\vec{\xi}[I],\pvec \xi[I])$ (since the terms that are quadratic in $\vec \xi[J]$ cancel out). Furthermore, this linear function typically has many nonzero coefficients (since it is unlikely that most of the ``cross terms'' between $\vec \xi[I]$ and $\pvec \xi[I]$ cancel out). So, after conditioning on a typical outcome of $(\vec{\xi}[I],\pvec \xi[I])$, one can apply the Erd\H os--Littlewood--Offord theorem, to obtain
a bound of the form
$\Pr[Q(\vec \xi)=z]\le (O(1/\sqrt n))^{1/2}=O(n^{-1/4})$.

The great advantage of decoupling is that the resulting \emph{linear} anticoncentration problem is much easier: one has access to the much wider range of tools available to study sums of independent random variables. However, the inequality between \cref{eq:decoupling-basic-step} and \cref{eq:linear-quadratic-decoupling} is rather lossy, and one tends to end up with bounds that are at least ``a square root away'' from best-possible\footnote{But we remark that there are some circumstances where one can ``do decoupling in Fourier space'' in such a way that the resulting square-root loss in Fourier space corresponds to a much smaller loss in physical space, see \cite{FS20,KSSS23}.}.

Costello's paper~\cite{Cos13}, proving the first bound of the form $n^{-1/2+o(1)}$ for the quadratic Littlewood--Offord problem, combined decoupling with a structural dichotomy. Namely, Costello discovered that if the quadratic polynomial $Q$ is in a certain sense ``robustly irreducible'', then number-theoretic arguments give the stronger bound $\Pr[Q(\vec{\xi}[I],\vec{\xi}[J])-Q(\pvec \xi[I],\vec{\xi}[J])=0]\le n^{-1+o(1)}$, and so even after the ``square-root loss'' of decoupling one has $\Pr[Q(\vec{\xi}[I],\vec{\xi}[J])=z]\le n^{-1/2+o(1)}$. He then gave a separate argument to handle the case where $Q$ does not satisfy the robust irreducibility condition (i.e., when $Q$ essentially splits into linear factors), based on the Szemer\'edi--Trotter theorem from discrete geometry.

We cannot conclusively rule out the possibility that one could prove an optimal $O(1/\sqrt n)$ bound with a similar kind of case analysis, but this seems to be extremely difficult. In particular, we are not aware of any suitable candidate for a condition on $Q$ which ensures that $\Pr[Q(\vec{\xi}[I],\vec{\xi}[J])-Q(\pvec \xi[I],\vec{\xi}[J])=0]\le O(1/n)$ (Costello's notion of robust irreducibility, and similar notions of ``robust rank'' or ``robust sum-of-squares complexity'', are only suitable for an $n^{-1+o(1)}$ bound on such probabilities). Also, Costello's Szemer\'edi--Trotter-based proof for the nearly-reducible case does not seem to easily generalise to other simple-looking families of quadratic polynomials (for example, polynomials which can be written as the sum of four squares; see the discussion in \cite[Section~10]{FKS23}).

\subsection{A geometric point of view, and an inductive decoupling scheme}
\label{subsec:decoupling-scheme}

In this paper, we consider a different perspective on decoupling: instead of using decoupling to immediately reduce from a quadratic problem to a linear one, we reinterpret decoupling as a tool to obtain a problem with a linear and a quadratic part, and to inductively ``reduce the proportion of our problem that is quadratic''. To explain this, it is helpful to take a geometric perspective.

Specifically, for a quadratic polynomial $Q\in \RR[x_1,\dots,x_n]$ and a random vector $\vec{\xi}\in \{1,-1\}^n$, note that $\Pr[Q(\vec \xi)=z]$ can be interpreted as the probability that $\vec \xi$ lies in the quadric (quadratic variety) $\mc Z$ given by $\mc Z=\{\vec x \in \RR^n: Q(\vec x)=z\}\subseteq\RR^n$. Similarly, the expression $\Pr\bigl[Q(\vec{\xi}[I],\vec{\xi}[J])=z\text{ and }Q(\pvec \xi[I],\vec{\xi}[J])=z\bigr]$ appearing in \cref{eq:decoupling-basic-step} can be interpreted as the probability that $\vec \xi[J]$ lies in the variety
\[\mc Z^{(1)}=\Bigr\{\vec x\in \RR^J: Q(\vec{\xi}[I],\vec x)=z\text{ and }Q(\pvec \xi[I],\vec x)=z\Bigr\}=\mathcal Z_{\vec \xi[I]}\cap \mathcal Z_{\pvec \xi[I]}\subseteq \RR^J,\]
where, for $\vec u\in \RR^I$, we write $\mathcal Z_{\vec u}$ for the set of all $\vec x\in \RR^J$ with $(\vec u,\vec x)\in \mc Z$ (typically, this is a quadric in $\RR^J$). Using this language, \cref{eq:decoupling-basic-step} can be restated as
\begin{equation}
\Pr\Bigl[Q(\vec \xi)=z\Bigr]=\Pr\Bigl[\vec \xi\in \mc Z\Bigr]\le \Pr\Bigl[\vec \xi[J]\in \mc Z^{(1)}\Bigr]^{1/2}.\label{eq:decoupling-restatement}
\end{equation}
The next step leading to the traditional decoupling inequality \cref{eq:linear-quadratic-decoupling} can geometrically be phrased as observing that $\mc Z^{(1)}$ lies inside the affine-linear subspace
\[\mc W^{(1)}=\Bigr\{\vec x \in \RR^J :Q(\vec{\xi}[I],\vec{x})-Q(\pvec \xi[I],\vec{x})=0\Bigr\}.\]
Indeed, this yields the probability bound
\[\Pr\Bigl[Q(\vec \xi)=z\Bigr]\le \Pr\Bigl[\vec \xi[J]\in \mc Z^{(1)}\Bigr]^{1/2}\le \Pr\Bigl[\vec \xi[J]\in \mc W^{(1)}\Bigr]^{1/2}\] stated in \cref{eq:linear-quadratic-decoupling}. 
One can then forget about $\mc Z^{(1)}$ and restrict one's attention to the affine-linear subspace $\mc W^{(1)}$, where the relevant probabilities are  easier to analyse (under suitable assumptions on $Q$,
one can show that $\Pr[\vec \xi[J]\in \mc W^{(1)}]\le O(1/\sqrt{n})$, leading to the bound $\Pr[Q(\vec \xi)=z]\le O(n^{-1/4})$ described in \cref{subsec:decoupling}).

However, it turns out that this ``forgetting'' of $\mathcal{Z}^{(1)}$ is
precisely the cause of the square-root loss usually associated
with decoupling. Indeed, $\mathcal W^{(1)}$ is a variety with codimension
1 (being defined by a single equation), while $\mathcal{Z}^{(1)}$ is typically\footnote{There are certain degenerate situations where $\mc Z^{(1)}$ does not have codimension at least 2. We will ignore degeneracies of this type throughout this outline, but they do cause challenges in the actual proof.} a variety with codimension (at least) 
2 (being defined by two equations), so
intuitively we should be much less likely to have $\vec{\xi}[J]\in\mathcal{Z}^{(1)}$
than $\vec{\xi}[J]\in\mathcal{W}^{(1)}$. More specifically, in order to have $\vec{\xi}[J]\in\mathcal{Z}^{(1)}$,
we need $\vec{\xi}[J]$ to satisfy two different equations simultaneously, and we might expect each of these to be satisfied with probability $O(1/\sqrt n)$. So for typical
outcomes of $\vec \xi[I],\pvec \xi[I]$ (which determine $\mc Z^{(1)}$) we might expect $\Pr\bigl[\vec{\xi}[J]\in\mathcal{Z}^{(1)}\,\big|\,\vec \xi[I],\pvec \xi[I]\bigr]\le\left(O(1/\sqrt{n})\right)^{2}$. If we could prove this, we would be able to deduce \cref{thm:main-shiny} (recalling \cref{eq:decoupling-restatement}).

So, we choose not to ``forget'' $\mathcal{Z}^{(1)}$, and our task
is to show that $\Pr[\vec{\xi}[J]\in\mathcal{Z}^{(1)}]\le\left(O(1/\sqrt{n})\right)^{2}$.
While this new task may seem harder than the previous one (as $\mathcal{Z}^{(1)}$ seems like a more complicated object than $\mathcal{Z}$), the key observation is that we have ``reduced the relative proportion
of the quadratic part of our problem''. Indeed, at the start we were interested in a
quadric $\mathcal{Z}$ described by a single quadratic equation, but now we are interested in $\mathcal{Z}^{(1)}$, which can be interpreted as a quadric \emph{inside} the affine-linear subspace $\mathcal{W}^{(1)}\subseteq \RR^J$. That is to say, $\mathcal{Z}^{(1)}$ is described by one linear and one quadratic equation, so now ``only half of our problem is quadratic''.

Crucially, it is possible to iterate this entire procedure: we next fix a partition
$I^{(2)}\cup J^{(2)}$ of $J$, and consider the variety
\[\mathcal{Z}^{(2)}=\mathcal{Z}^{(1)}_{\vec \xi[I^{(2)}]}\cap\mathcal{Z}^{(1)}_{\pvec\xi[I^{(2)}]}\su \RR^{J^{(2)}}\]
(where, for $\vec u\in \RR^{I^{(2)}}$, we write $\mc Z^{(1)}_{\vec u}$ for the set of all $\vec x\in \RR^{J^{(2)}}$ with $(\vec u,\vec x)\in \mc Z^{(1)}$). Now, decoupling, analogously to the inequality in \cref{eq:decoupling-restatement}, yields
\begin{equation}\Pr\Bigl[\vec \xi[J]\in \mc Z^{(1)}\,\Big|\,\vec \xi[I],\pvec \xi[I]\Bigr]\le \Pr\Bigl[\vec \xi[J^{(2)}]\in \mc Z^{(2)}\,\Big|\,\vec \xi[I],\pvec \xi[I]\Bigr]^{1/2}.\label{eq:decoupling-illustration-2}\end{equation}

So, it suffices to show that $\Pr\bigl[\vec \xi[J^{(2)}]\in \mc Z^{(2)}\,\big|\,\vec \xi[I],\pvec \xi[I]\bigr]\le \left(O(1/\sqrt n)\right)^4$ for typical outcomes of $\vec \xi[I]$ and $\pvec \xi[I]$.
Now, for $\mathcal{Z}^{(2)}$ to be nonempty, it must be the case that $\mathcal{W}^{(1)}_{\vec \xi[I^{(2)}]}\cap\mathcal{W}^{(1)}_{\pvec\xi[I^{(2)}]}$ is nonempty, or equivalently that $\mathcal{W}^{(1)}_{\vec \xi[I^{(2)}]}=\mathcal{W}^{(1)}_{\pvec \xi[I^{(2)}]}$ (it is not hard to see that $\mathcal{W}^{(1)}_{\vec \xi[I^{(2)}]}$ and $\mathcal{W}^{(1)}_{\pvec \xi[I^{(2)}]}$ are parallel translates of each other).
That is to say, $\vec \xi[I^{(2)}]-\pvec{\xi}[I^{(2)}]$ must lie in a certain affine-linear subspace which typically has codimension 1; this happens with probability $O(1/\sqrt n)$ by the (linear) Erd\H os--Littlewood--Offord theorem.

If we also condition on outcomes of $\vec \xi[I^{(2)}]$ and $\pvec{\xi}[I^{(2)}]$ such that $\mathcal{W}^{(1)}_{\vec \xi[I^{(2)}]}=\mathcal{W}^{(1)}_{\pvec \xi[I^{(2)}]}$, it is not hard to see that $\mc Z^{(2)}$ is typically a quadric inside the affine-linear subspace $\mathcal W^{(2)}$ of $\mathcal{W}^{(1)}_{\vec \xi[I^{(2)}]}=\mathcal{W}^{(1)}_{\pvec \xi[I^{(2)}]}\su \RR^{J^{(2)}}$ given by the linear equation $Q(\vec \xi[I],\vec \xi[I^{(2)}],\vec x)-Q(\vec \xi[I],\pvec \xi[I^{(2)}],\vec x)=0$ (in much the same way that $\mc Z^{(1)}$ is a quadric inside the affine-linear subspace $\mc W^{(1)}$). That is to say, $\mc Z^{(2)}$ is typically a variety of codimension (at least) 3, described by two linear equations and one quadratic equation. So we might expect that (for typical outcomes of $\vec \xi[I],\pvec{\xi}[I],\vec \xi[I^{(2)}],\pvec{\xi}[I^{(2)}]$ for which $\mathcal{W}^{(1)}_{\vec \xi[I^{(2)}]}=\mathcal{W}^{(1)}_{\pvec \xi[I^{(2)}]}$)
\begin{equation}\Pr\Bigl[\vec \xi[J^{(2)}]\in\mathcal{Z}^{(2)}\,\Big|\,\vec \xi[I],\pvec{\xi}[I],\vec \xi[I^{(2)}],\pvec{\xi}[I^{(2)}]\Bigr]\le\left(O(1/\sqrt{n})\right)^{3}.\label{eq:conditional-hope}\end{equation}

If we were able to prove \cref{eq:conditional-hope}, we would obtain a bound of the form
\[\Pr\Bigl[\vec \xi[J^{(2)}]\in \mc Z^{(2)}\Bigr]\le O(1/\sqrt{n})\cdot \left(O(1/\sqrt{n})\right)^{3}=\left(O(1/\sqrt{n})\right)^{4},\]
which would imply \cref{thm:main-shiny}, tracing back through our decoupling inequalities \cref{eq:decoupling-restatement} and \cref{eq:decoupling-illustration-2}. We have made progress by ``reducing the proportion of our problem that is quadratic'': If, instead of \cref{eq:conditional-hope}, we were only able to prove that
\[\Pr\Bigl[\vec \xi[J^{(2)}]\in\mathcal{Z}^{(2)}\,\Big|\,\vec \xi[I],\pvec{\xi}[I],\vec \xi[I^{(2)}],\pvec{\xi}[I^{(2)}]\Bigr]\le \Pr\Bigl[\vec \xi[J^{(2)}]\in\mathcal{W}^{(2)}\,\Big|\,\vec \xi[I],\pvec{\xi}[I],\vec \xi[I^{(2)}],\pvec{\xi}[I^{(2)}]\Bigr]\le\left(O(1/\sqrt{n})\right)^{2}\]
(``forgetting'' the quadratic part of the problem and only bounding the probability that $\vec \xi[J^{(2)}]$ lies in the affine-linear subspace $\mathcal{W}^{(2)}$ of codimension $2$), we would end up with a final bound of the form  $\Pr[\vec \xi\in \mc Z]\le O(n^{-3/8})$, which is much better than the $O(n^{-1/4})$ bound we obtained with a single decoupling step.

In general, after $k$ steps of this scheme, we will have considered $k$ ``nested'' partitions of the form $J^{(i-1)}=I^{(i)}\cup J^{(i)}$, and defined $k$ varieties $\mc Z^{(1)},\dots,\mc Z^{(k)}$. We will have applied the decoupling inequality $k$ times, and considered various conditional probabilities that the vectors $\vec \xi[I^{(i)}]-\pvec{\xi}[I^{(i)}]$ lie in certain affine-linear subspaces (to ensure that certain intersections $\mathcal{W}^{(i-1)}_{\vec \xi[I^{(i)}]}\cap\mathcal{W}^{(i-1)}_{\pvec\xi[I^{(i)}]}$ are nonempty). After all this, we find ourselves in a position where if we ``forget the quadratic part of the problem'' we obtain a bound of the form $\Pr[Q(\vec \xi)=z]\le O(n^{-1/2+1/2^{k+1}})$. That is to say, if $k$ is at least $\log \log n$, the quadratic part of the problem is so insignificant that ``forgetting'' it only costs us a constant factor in the final bound.

At an extremely high level, this explains our strategy to prove \cref{thm:main-shiny}. However, we omitted a number of important details in this outline. In particular, our strategy heavily depends on being able to obtain suitable upper bounds on probabilities that certain random vectors fall into certain affine-linear subspaces, and there are crucial ``robust rank'' nondegeneracy conditions that must be satisfied in order for such bounds to hold. This is not just a technicality; we require significant new ideas to maintain these robust rank properties during our iterative decoupling scheme, which we discuss next.

\begin{remark}\label{rem:high-degree-issues}
\cref{thm:main-shiny} concerns quadratic polynomials, but one may be interested in a generalisation to cubic polynomials, or to polynomials of any fixed degree. Decoupling still makes sense for general polynomials: for example, if $\mathcal Z$ is a cubic (variety), then decoupling yields an inequality of the form $\Pr[\vec \xi\in \mc Z]\le \Pr[\vec\xi[J]\in \mc Z^{(2)}]^{1/2}$, where $\mc Z^{(2)}$ is the intersection of a quadric and a cubic. As observed by Rosi\'nski and Samorodnitsky~\cite{RS96} and Razborov and Viola~\cite{RV13}, the basic type of decoupling argument described in \cref{subsec:decoupling} generalises quite straightforwardly to higher degrees (but the bounds get worse as the degree increases). However, it is less clear how to generalise the inductive decoupling scheme described in this subsection. Roughly speaking, in the degree-$d$ case, instead of affine-linear subspaces (obtained as intersections of affine-linear hyperplanes), one must work with intersections of degree-$(d-1)$ varieties, which are much more complicated objects. We hope that the relevant complexities can be handled with some kind of multiple-level induction, but so far we were not able to accomplish this.
\end{remark}

\subsection{High-dimensional anticoncentration inequalities, and witness-counting}
\label{subsec:witness-count}

Our proof, as outlined in the previous subsection, relies on bounds on probabilities that random vectors lie in certain affine-linear subspaces. More specifically, for a suitably nondegenerate affine-linear subspace $\mc W\subseteq \RR^n$ of codimension $k$, and a uniformly random vector $\vec{\xi}\in \{1,-1\}^n$, we need a probability bound of the form $\Pr[\vec \xi\in \mc W]\le O(n^{-k/2})$. Intuitively, this is because $\vec \xi$ needs to simultaneously satisfy $k$ different linear equations, each of which is satisfied with probability roughly $n^{-1/2}$. More formally, such a bound follows from a high-dimensional version of the Erd\H os--Littlewood--Offord theorem.

The first such high-dimensional version was due to Hal\'asz~\cite{Hal77}. In linear-algebraic language, it can be phrased as follows: for any fixed $k$, if $M\in \mb R^{k\times n}$ is a matrix that ``robustly has rank $k$'' in the sense that (for some fixed $\delta>0$) one cannot delete $\delta n$ columns of $M$ to obtain a matrix with rank less than $k$, then for a uniformly random vector $\vec \xi\in\{-1,1\}^n$ we have
\[\sup_{\vec w\in \mb R^k}\Pr[M\vec \xi=\vec w]\le O(n^{-k/2}).\]
Note that some kind of ``robust rank $k$'' condition is necessary here: for example, if $n$ is even and $M\in \RR^{2\times n}$ has rows $(1,\dots,1)\in \RR^n$ and $(1,\dots,1,0,0)\in \RR^n$, then it is easy to check that $\Pr[M\vec \xi=\vec 0]$ has order of magnitude $1/\sqrt n$.

Several extensions and variants of Halasz' inequality have since been proved (see for example \cite{FJZ22,FKS21b,HO,TV06}); in particular, Ferber, Jain and Zhao~\cite{FJZ22} proved a version of Halasz' theorem with a much better dependence on $k$ (allowing $k$ to vary with $n$, instead of viewing it as a constant). We state (a corollary of) this theorem as \cref{thm-Halasz-FJZ}.

Of course, whenever we want to apply any Hal\'asz-type theorem, we need a ``robust rank'' condition to hold. So, in order to execute the strategy described in the last subsection, at each step of the decoupling scheme we need a ``robust rank inheritance'' lemma, proving that a robust rank condition is likely to hold for the next step, given that it holds for the current step. The key ingredient for our robust rank inheritance lemma is a new high-dimensional anticoncentration inequality for the probability that a random vector falls in a small ball in the \emph{Hamming} norm. We believe this inequality (and the techniques in its proof) to be of independent interest; an important special case is as follows.

\begin{lemma}\label{lem:baby-key}
For any fixed positive integer $r$, there are constants $C_r>0$ and $c_r>0$ only depending on $r$ such that the following holds. Consider a matrix $A\in \RR ^{m\times n}$ which has rank at least $r$ after deletion of any $t$ rows and $t$ columns (for some positive integer $t$).
Then for a sequence $\vec{\xi}=(\xi_{1},\dots,\xi_{n})\in\{-1,1\}^{n}$ of independent Rademacher random variables,
we have
\[
\sup_{\vec v\in \mb R^m}\Pr[A\vec{\xi}\text{ differs from }\vec{v}\text{ in fewer than }c_r t\text{ coordinates}]\le C_r \cdot t^{-r/2}.
\]
\end{lemma}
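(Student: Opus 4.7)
My plan is a witness-counting argument that reduces this Hamming-ball anticoncentration statement to the (rank-$r$) high-dimensional Hal\'asz inequality~\cref{thm-Halasz-FJZ}. Let $Y(\vec\xi)=|\{i\in[m]:(A\vec\xi)[i]=\vec v[i]\}|$ be the number of coordinates where $A\vec\xi$ and $\vec v$ agree, so the event to bound is $\{Y\ge m-c_rt\}$. For each $R\su[m]$ with $|R|=r$, let $W_R$ denote the event $(A\vec\xi)[R]=\vec v[R]$; then $\binom{Y}{r}=\sum_{|R|=r}\mbm{1}_{W_R}$, and Markov's inequality yields
\[
\Pr\bigl[Y\ge m-c_rt\bigr]\;\le\;\frac{\sum_{|R|=r}\Pr[W_R]}{\binom{m-c_rt}{r}}.
\]
The robust-rank hypothesis forces $m\ge r+t$, so if we choose $c_r\le 1/2$ the denominator is $\Omega_r(\binom{m}{r})$, and the problem reduces to proving $\sum_R\Pr[W_R]=O_r\bigl(\binom{m}{r}/t^{r/2}\bigr)$.

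I would split the sum by a rank dichotomy on the submatrix $A[R\times[n]]$. Call $R$ \emph{good} if $A[R\times[n]]$ has rank $r$ after deletion of any $\lfloor t/2\rfloor$ columns, and \emph{bad} otherwise. For a good $R$, applying \cref{thm-Halasz-FJZ} to the $r\times n$ matrix $A[R\times[n]]$ gives $\Pr[W_R]\le C'_r/t^{r/2}$, so the good $R$'s contribute at most $\binom{m}{r}\cdot C'_r/t^{r/2}$ to the sum. For bad $R$ I only have the trivial bound $\Pr[W_R]\le 1$, so the crux becomes showing that the number of bad $R$'s is $O_r\bigl(\binom{m}{r}/t^{r/2}\bigr)$.

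This bad-set bound is a purely linear-algebraic claim. Each bad $R$ is witnessed by a column set $C\su[n]$ with $|C|=\lfloor t/2\rfloor$ and $\rank A[R\times([n]\sm C)]<r$, i.e., by a nontrivial vanishing linear combination of the rows in $R$ that ``lives'' outside $C$. The global robustness of $A$ should prevent too many row subsets from carrying such small-support relations: heuristically, if many bad $R$'s shared a witnessing $C$, one could combine a small row deletion with $C$ to destroy all rank-$r$ witnesses and contradict the hypothesis. I expect to formalise this by induction on $r$, peeling off one row of a bad $R$ to reduce to a rank-$(r-1)$ configuration on which the inductive hypothesis (together with \cref{thm-Halasz-FJZ} in dimension $r-1$) applies. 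The main obstacle is precisely this peeling step: one has to route the deleted rows and columns carefully so that the residual $(r-1)$-configuration still enjoys robust rank with parameter $\Omega(t)$, losing at most a constant factor per inductive layer, and one must handle bad $R$'s whose certificates $(R,C)$ interact non-trivially. Once the bad-set bound is in place, $\sum_R\Pr[W_R]=O_r\bigl(\binom{m}{r}/t^{r/2}\bigr)$, and the Markov step above closes the proof.
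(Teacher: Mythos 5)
There is a genuine gap, and it is located exactly where you flag the crux: the ``bad-set bound'' you need is false, and in fact your very first Markov inequality is already too weak in general. Take $A=I_m$ (the identity, $n=m=3t$) and $r=1$: after deleting any $t$ rows and $t$ columns the remaining matrix still contains $m-2t=t$ diagonal entries, so the hypothesis of \cref{lem:baby-key} holds (even with $r$ as large as $m-2t$), yet every singleton $R=\{i\}$ is \emph{bad} in your sense (its unique nonzero entry is killed by deleting one column), so the number of bad $R$'s is $m$, far exceeding $O(m/\sqrt{t})$. Worse, no repair of the bad-set count can save the scheme: for $\vec v\in\{-1,1\}^m$ each coordinate of $A\vec\xi=\vec\xi$ agrees with $\vec v$ independently with probability $1/2$, so $\sum_{|R|=r}\Pr[W_R]=2^{-r}\binom{m}{r}$ while $\binom{m-c_rt}{r}=\Theta_r(\binom{m}{r})$; your displayed Markov bound therefore never drops below a constant of order $2^{-r}$, whereas the lemma demands $C_rt^{-r/2}$. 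The underlying reason is that when the (robust) rank of $A$ is much larger than $r$, the typical number of agreements $Y$ is already $\Theta(m)$, so the first moment of the number of agreeing $r$-subsets is comparable to the total number of $r$-subsets, and fixed-size-$r$ witnesses carry no information about the rare event $\{Y\ge m-c_rt\}$.

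The paper's proof (Lemma \ref{key-lemma} with $k=0$, after greedily extracting $\lceil t/r\rceil$ disjoint nonsingular $r\times r$ blocks) fixes precisely this by letting the witness \emph{length} grow with the actual robust rank rather than staying at $r$. It splits into two regimes: if $A$ robustly has rank at least $L=\lceil 2r\log s\rceil$, it counts witness sequences of length $L$ built so that each new row strictly increases the rank, and beats the entropy of choices using Odlyzko's bound $2^{-L}$ (\cref{lem:odlyzko}); otherwise it uses \cref{lem:drop-robust} and \cref{cor:drop-robust} to locate, \emph{inside the set of agreeing coordinates}, an integer $z$ with $r\le z\le L$ and at least $(s/(12z^2))^z$ witness sequences of length $z$ whose $z\times n$ submatrices lie in $\mathcal H^{z\times n}(s/(12z^2))$, so that Hal\'asz (\cref{cor:halasz-sub-version}) gives each such sequence probability about $s^{-z/2}$, and only then does Markov over length-$z$ sequences close the gap. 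If you want to keep your good/bad dichotomy, you would have to make ``good'' mean ``robust rank exactly $z$'' for a variable $z$, handle the interaction between the agreeing set and the witness family (your peeling/induction would need to produce many witnesses \emph{within} $H_{\vec\xi}$, not just bound how many row subsets are degenerate), and add an Odlyzko-type argument for the regime where the rank exceeds any bounded threshold; at that point you have essentially reconstructed the paper's argument.
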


The assumption in \cref{lem:baby-key} says that $A$ robustly has rank at least $r$, but in a stronger sense than typical Hal\'asz-type theorems: the rank needs to remain at least $r$ after \emph{row} deletion as well as column deletion. As a result, we are able to obtain a stronger conclusion, namely that for any vector $\vec v$, it is unlikely that $A\vec \xi$ agrees with $\vec v$ in almost all its coordinates. (It is not hard to see that for this stronger conclusion, such a row-deletion assumption is indeed necessary).

We prove \cref{lem:baby-key} (and our more general robust rank inheritance lemma) in \cref{sec:key-lemma} using a \emph{witness-counting} technique, which we outline here. First, note that the most na\"ive strategy to prove \cref{lem:baby-key} would be to simply take a union bound over all sets $I$ of $m-c_r t$ coordinates in which $A\vec \xi$ and $\vec v$ could agree. For some specific $I$, the probability that $A\vec \xi$ and $\vec v$ agree on the coordinates indexed by $I$ can be easily understood using existing tools (i.e., Hal\'asz' inequality and its variants) and is at most on the order of $t^{-r/2}$. However, it is far too wasteful to simply take a union bound summing over all possibilities for $I$ (the number of possibilities is exponential in $t$). 

Instead, for each $I$ we consider small ``witness'' subsets $I'\subseteq I$, such that the submatrix of $A$ consisting of the rows with indices in $I'$ still has (robustly) high rank. Note that for each $I'\su I$, whenever $A\vec \xi$ and $\vec v$ agree on the coordinates indexed by $I$, they also in particular agree on the coordinates indexed by $I'$. Given the high-rank property, for each ``witness'' subset $I'$ we can still easily bound the probability that $A\vec \xi$ and $\vec v$ agree on the coordinates indexed by $I'$ (using Hal\'asz' inequality and its variants).

There are still too many possible ``witness'' subsets $I'$ to be able to simply take a union bound over all possibilities for $I'$, but (roughly speaking) we can show that whenever $A\vec \xi$ and $\vec v$ agree on a large set of coordinates $I$, then they must agree on the coordinates of \emph{many} ``witness'' subsets $I'\su I$.  We can show this is unlikely by computing the expected number of ``witness'' coordinate-subsets on which $A\vec \xi$ and $\vec v$ agree, and applying Markov's inequality.

\begin{remark}\label{rem:small-ball-issues}
One might be interested in adapting \cref{thm:main-shiny} to study small-ball probabilities of the form $\sup_{z\in \RR}\Pr[|Q(\xi_{1},\dots,\xi_{n})-z|\le 1]$ instead of point probabilities $\sup_{z\in \RR}\Pr[Q(\xi_{1},\dots,\xi_{n})=z]$. The robust rank inheritance lemma seems to be the main point of difficulty for such an adaptation; it is not clear how to extend the witness-counting arguments to the small-ball setting (e.g., in the setting of \cref{lem:baby-key}, we would be interested in the event that there are fewer than $c_r t$ coordinates $i$ for which $|(A\vec \xi-\vec v)[i]|\le 1$).
\end{remark}

Before ending this overview section, we remark that there is a way to sidestep the robust rank inheritance issue in the special case where $Q$ has ``bounded rank'', meaning that the quadratic part of $Q$ can be written as $\vec x^\transpose A\vec x$ for some symmetric matrix $A$ of rank $O(1)$. Indeed, in this case we can reduce our entire problem to a certain \emph{bounded-dimensional} geometric anticoncentration problem (involving a quadric), where the robust rank conditions for Halasz' inequality are always automatically satisfied when following the strategy in the previous subsection. In \cref{sec:geometric}, we give a simple self-contained proof of an essentially optimal anticoncentration bound in this setting. We believe this to be a good illustration of the basic principles of our inductive decoupling scheme, with minimal technicalities (the results of \cref{sec:geometric} will actually also be used later in the paper).

\section{Preliminaries}

First, as outlined in \cref{sec:outline}, we need a ``high-dimensional'' version of the Erd\H os--Littlewood--Offord theorem. This will require a robust rank condition, defined as follows.

\begin{definition}\label{def:Halasz}
For integers $0\le k\le n$ and a real number $s\ge 0$, let $\mathcal{H}^{k\times n}(s)$ be the set of matrices $M\in\RR^{k\times n}$ such that $\on{rank} M[[k]\times J]=k$ for all subsets $J\su [n]$ of size $|J|\ge n-s$, i.e., the set of matrices having rank $k$ after any deletion of up to $s$ columns.
\end{definition}

We clearly have $\mathcal{H}^{k\times n}(s)\su \mathcal{H}^{k\times n}(s')$ if $s\ge s'$. Furthermore, for a partition $[n]=S\cup I$ with $|S|\le s$, for every matrix $M\in \mathcal{H}^{k\times n}(s)$ we also have $M[[k]\times I]\in \mathcal{H}^{k\times I}(s-|S|)$. Also note that in the case $k=0$, the (unique) empty matrix $M\in\RR^{0\times n}$ is contained in $\mathcal{H}^{0\times n}(s)$ for all $s\ge 0$.

For integers $0\le k\le n$ and $t\ge 0$, we say that a matrix $M\in \RR^{k\times n}$ contains $t$ disjoint nonsingular $k\times k$ submatrices if there exist disjoint subsets $J_1,\dots, J_t\su [n]$ of size $|J_1|=\dots=|J_t|=k$ such that $\on{rank} M[[k]\times J_i]=k$ for $i=1,\dots,t$. This is the case if and only if among the column vectors $\vec{a}_1,\dots,\vec{a}_n$ of $M$ we can form $t$ disjoint bases of $\RR^{k}$ (here, the vectors $\vec{a}_1,\dots,\vec{a}_n$ are considered with multiplicities, i.e., a vector in $\RR^{k}$ can occur in two different bases if it occurs twice among $\vec{a}_1,\dots,\vec{a}_n$).

\begin{lemma}\label{lem-halasz-equivalent}
For integers $0\le k\le n$ and a real number $s\ge 0$, the following statements hold:
\begin{compactitem}
\item[(i)] If $M\in \RR^{k\times n}$ contains $t$ disjoint nonsingular $k\times k$ submatrices for an integer $t>s$, then $M\in \mathcal{H}^{k\times n}(s)$.
\item[(ii)] If $M\in \mathcal{H}^{k\times n}(s)$ and $k\ge 1$, then $M$ contains $\lceil s/k\rceil$ disjoint nonsingular $k\times k$ submatrices.
\end{compactitem}
\end{lemma}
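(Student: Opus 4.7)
The plan is to handle the two parts separately; both are short, with part (i) being a pigeonhole argument and part (ii) a greedy construction.

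For part (i), I would fix disjoint $J_1, \dots, J_t \subseteq [n]$ of size $k$ with each $M[[k] \times J_i]$ nonsingular, and take any $J \subseteq [n]$ with $|J| \geq n-s$. Since $|[n] \setminus J| \leq s < t$ (using that $t$ is an integer strictly larger than $s$), and the $J_i$ are pairwise disjoint, at least one $J_{i^*}$ must avoid $[n] \setminus J$ entirely, i.e.\ $J_{i^*} \subseteq J$. Then $M[[k] \times J]$ contains the nonsingular submatrix $M[[k] \times J_{i^*}]$, so $\on{rank} M[[k] \times J] = k$, giving $M \in \mathcal{H}^{k \times n}(s)$.

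For part (ii), I would build the subsets greedily. Suppose inductively that I have already produced disjoint $J_1, \dots, J_{i-1} \subseteq [n]$ of size $k$ with each $M[[k] \times J_j]$ nonsingular. Let $J = [n] \setminus (J_1 \cup \dots \cup J_{i-1})$, so $|J| = n - (i-1)k$. As long as $(i-1)k \leq s$, we have $|J| \geq n - s$, so the hypothesis $M \in \mathcal{H}^{k \times n}(s)$ gives $\on{rank} M[[k] \times J] = k$. Hence the columns of $M$ indexed by $J$ span $\RR^k$, and I can select $k$ of them forming a basis to obtain $J_i \subseteq J$ with $M[[k] \times J_i]$ nonsingular.

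The iteration runs successfully for every integer $i$ with $(i-1)k \leq s$, so in particular for $i = \lceil s/k \rceil$, since $\lceil s/k \rceil - 1 < s/k$ holds in all cases (whether or not $s/k$ is an integer). Thus the greedy procedure yields at least $\lceil s/k \rceil$ disjoint nonsingular $k \times k$ submatrices. Neither part involves any real obstacle; the only thing requiring care is the ceiling arithmetic in part (ii), where one should double-check that the $\lceil s/k \rceil$-th iteration really does satisfy $(i-1)k \leq s$.
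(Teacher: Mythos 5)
Your proposal is correct and follows essentially the same route as the paper: a pigeonhole/disjointness argument for (i) and a greedy selection of bases for (ii), with the same ceiling arithmetic $\lceil s/k\rceil-1<s/k$ justifying that each greedy step deletes at most $s$ columns. No gaps.
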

\begin{proof}
    For (i), let $J_1,\dots, J_t\su [n]$ be disjoint subsets of size $|J_1|=\dots=|J_t|=k$ such that $\on{rank} M[[k]\times J_i]=k$ for $i=1,\dots,t$. Then for any subsets $J\su [n]$ of size $|J|\ge n-s>n-t$, we have $J_i\su J$ for some $i\in \{1,\dots,t\}$ and hence $k\ge \on{rank} M[[k]\times J]\ge \on{rank} M[[k]\times J_i]=k$, meaning that $\on{rank} M[[k]\times J]=k$.

    For (ii), we can greedily find disjoint subsets $J_1,\dots, J_{\lceil s/k\rceil}\su [n]$ of size $|J_1|=\dots=|J_{\lceil s/k\rceil}|=k$ such that $\on{rank} M[[k]\times J_i]=k$ for $i=1,\dots,\lceil s/k\rceil$. Indeed, after having chosen $J_1,\dots, J_{i-1}$ for some $i\le \lceil s/k\rceil$, we have $|J_1\cup \dots\cup J_{i-1}|=(i-1)k<s$ and hence $M$ still has rank $k$ after deleting the columns with indices in $J_1\cup \dots\cup J_{i-1}$. So we can find a subset $J_i\su [n]\sm (J_1\cup \dots\cup J_{i-1})$ of size $|J_i|=k$ with $\on{rank} M[[k]\times J_i]=k$.
\end{proof}

Hal\'asz~\cite{Hal77} proved that for fixed constants $\delta>0$ and $k\in \mb{N}$, if $M\in \mathcal{H}^{k\times n}(\delta n)$ then $\Pr[M\vec \xi=\vec w]\le O(n^{-k/2})$. We will use the following quantitative version of Hal\'asz' theorem, which is a special case\footnote{Specifically, to deduce \cref{thm-Halasz-FJZ} from \cite[Theorem 1.11]{FJZ22}, we take the even number $\ell$ in \cite[Theorem 1.11]{FJZ22} to be $\ell=t$ if $t$ is even, and $\ell=t-1$ if $t$ is odd. In both cases, we can verify the inequality $2^{-\ell}\binom{\ell}{\ell/2}\le t^{-1/2}$, and we obtain the desired bound by taking $\mathcal{A}_1,\dots,\mathcal{A}_\ell$ in \cite[Theorem 1.11]{FJZ22} to be a partition of the columns of $M$ into $\ell$ disjoint subsets each containing a basis of $\RR^k$ (then $r_1=\dots=r_{\ell}=k$).} of a result of Ferber, Jain and Zhao~\cite[Theorem 1.11]{FJZ22} (see also \cite{HO} for a previous result with a weaker dependence on $k$).

\begin{theorem}\label{thm-Halasz-FJZ}
Let $0\le k\le n$ and $t\ge 1$ be integers. Consider a matrix $M\in \RR^{k\times n}$ containing $t$ disjoint nonsingular $k\times k$ submatrices, and a vector $\vec{w}\in \RR^k$. Letting $\vec{\xi}=(\xi_{1},\dots,\xi_{n})\in\{-1,1\}^n$ be a sequence of independent Rademacher random variables, we then have
\[\PP[M\vec{\xi}=\vec{w}]\le t^{-k/2}.\]
\end{theorem}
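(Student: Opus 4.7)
The plan is to follow the standard Fourier-analytic route to Hálasz-type anticoncentration inequalities, this time arranged so as to extract a sharp $t^{-k/2}$ factor from the $t$ disjoint nonsingular submatrices via a single application of Hölder's inequality across the bases.

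The first step is a Fourier representation of the point probability. After reducing (by a standard rational-approximation and limiting argument) to the case where $M$ has entries in some common lattice $\lambda^{-1}\ZZ$, the random vector $M\vec\xi$ takes values in a fixed lattice $L\subseteq \RR^k$, and writing the indicator $\mathbbm{1}[M\vec\xi=\vec w]$ via characters of $L$ yields
\[\Pr[M\vec\xi=\vec w] \le \frac{1}{\on{vol}(F)}\int_F\prod_{j=1}^n\bigl|\cos(2\pi\langle M_j,\vec\theta\rangle)\bigr|\,d\vec\theta,\]
where $M_j$ denotes the $j$-th column of $M$ and $F$ is a fundamental domain of the dual lattice $L^*$.

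Next, since $|\cos|\le 1$, one discards all columns of $M$ outside $J_1\cup\dots\cup J_t$, and applies Hölder's inequality with exponent $t$ across the $t$ bases:
\[\int_F\prod_{i=1}^t\prod_{j\in J_i}\bigl|\cos(2\pi\langle M_j,\vec\theta\rangle)\bigr|\,d\vec\theta \le \prod_{i=1}^t\left(\int_F\prod_{j\in J_i}\bigl|\cos(2\pi\langle M_j,\vec\theta\rangle)\bigr|^t\,d\vec\theta\right)^{1/t}.\]
Within the $i$-th factor, the invertibility of $B_i=M[[k]\times J_i]$ allows the change of variables $\vec\phi=B_i^\transpose\vec\theta$, turning the $k$ linear forms $\langle M_j,\vec\theta\rangle$ for $j\in J_i$ into the standard coordinates $\phi_1,\dots,\phi_k$. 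The integral then factorises into $k$ copies of the one-dimensional integral $\int_0^1|\cos(2\pi x)|^t\,dx$; for even $t$ this equals $\binom{t}{t/2}2^{-t}$, which is at most $t^{-1/2}$, so each Hölder factor is bounded by $t^{-k/(2t)}$, and multiplying over $i=1,\dots,t$ yields the desired bound $t^{-k/2}$.

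The main obstacles are bookkeeping rather than conceptual. Setting up the Fourier integral rigorously requires handling the case of irrational entries of $M$ by approximation on a sequence of increasingly fine lattices, and the changes of variables in each Hölder factor introduce Jacobian factors $|\det B_i|^{-1}$ that one must show cancel against $\on{vol}(F)$; with a suitable choice of fundamental domain this becomes a straightforward computation that is naturally compatible with the $t$ bases. Finally, the case of odd $t$ is handled by replacing $t$ with $t-1$ and absorbing the resulting constant, exactly as in \cite{FJZ22}.
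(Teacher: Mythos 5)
Your argument is correct in substance, but it takes a genuinely different route from the paper, which does not prove \cref{thm-Halasz-FJZ} from scratch at all: there it is quoted as a special case of \cite{FJZ22}, with a footnote explaining the specialisation (choose an even $\ell\in\{t,t-1\}$, partition the columns into $\ell$ groups each containing a basis, and check $2^{-\ell}\binom{\ell}{\ell/2}\le t^{-1/2}$). You instead give a self-contained Esseen/Hal\'asz-type proof: Fourier inversion over the lattice generated by the columns (after a rational-approximation step, which indeed works if one perturbs $(M,\vec w)$ inside the rationally defined affine subspace that preserves the solution set $\{\vec\epsilon\in\{-1,1\}^n: M\vec\epsilon=\vec w\}$ while keeping the $B_i=M[[k]\times J_i]$ nonsingular), then H\"older with exponent $t$ across the $t$ bases, then a change of variables in each factor. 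The bookkeeping you defer does close up: with entries in $\lambda^{-1}\ZZ$ and $F=[0,\lambda)^k$, the matrix $\lambda B_i^{\transpose}$ is integral, so $B_i^{\transpose}F$ is a fundamental domain of a sublattice of the period lattice $\ZZ^k$ of the integrand, giving $\int_F\prod_{j\in J_i}|\cos(2\pi\langle M_j,\vec\theta\rangle)|^t\,d\vec\theta=\lambda^k\bigl(\int_0^1|\cos(2\pi x)|^t\,dx\bigr)^k$, and the Jacobians $|\det B_i|^{-1}$ and the normalisation $\on{vol}(F)=\lambda^k$ cancel exactly, leaving $\bigl(\int_0^1|\cos(2\pi x)|^t dx\bigr)^k\le t^{-k/2}$. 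Your route buys a short, self-contained proof of precisely the statement needed (it is essentially the mechanism underlying Hal\'asz's inequality and the proof in \cite{FJZ22}); the paper's route buys brevity and the full strength of the more general theorem it cites.

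One phrasing needs correction: for odd $t$ you cannot ``replace $t$ with $t-1$ and absorb the resulting constant'', because the target bound $t^{-k/2}$ has no constant to spare. The fix is immediate and costs nothing: keep H\"older with exponent $t$ (which is valid for odd $t$), bound $|\cos(2\pi x)|^t\le|\cos(2\pi x)|^{t-1}$, and verify $2^{-(t-1)}\binom{t-1}{(t-1)/2}\le t^{-1/2}$ for odd $t$ (equality at $t=1$; for $t\ge 3$ the left-hand side is at most $\sqrt{2/(\pi(t-1))}\le t^{-1/2}$). This is exactly the numerical inequality checked in the paper's footnote, so the per-coordinate bound $t^{-1/2}$, and hence the clean $t^{-k/2}$, survives the odd case without loss.
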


\begin{corollary}\label{cor:halasz}
Let $0\le d\le k\le n$ and $t\ge 1$ be integers. Consider vectors $\vec{a}_{1},\dots,\vec{a}_{n}\in\mathbb{R}^{k}$
such that one can form $t$ disjoint bases of $\RR^k$ from the vectors $\vec{a}_{1},\dots, \vec{a}_{n}$, and let $\mathcal{W}\su \RR^k$ be a $d$-dimensional affine-linear subspace.
Letting $\vec{\xi}=(\xi_{1},\dots,\xi_{n})\in\{-1,1\}^{n}$ be a sequence of independent Rademacher
random variables, we then have
\[
\Pr[\xi_{1}\vec{a}_{1}+\dots+\xi_{n}\vec{a}_{n}\in\mathcal{W}]\le t^{-(k-d)/2}.
\]
\end{corollary}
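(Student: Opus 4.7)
The plan is to reduce \cref{cor:halasz} to \cref{thm-Halasz-FJZ} by ``quotienting out'' the direction of the affine-linear subspace $\mathcal{W}$. First, I would let $V\su \RR^k$ be the $d$-dimensional linear subspace parallel to $\mathcal{W}$, and let $\pi\colon \RR^k \to \RR^{k-d}$ be any surjective linear map with kernel $V$ (for instance, orthogonal projection onto $V^\perp$ followed by a choice of isomorphism with $\RR^{k-d}$). Under $\pi$ the entire subspace $\mathcal{W}$ collapses to a single point $\vec{w}^\ast \in \RR^{k-d}$, so the event $\xi_1\vec{a}_1+\dots+\xi_n\vec{a}_n\in \mathcal{W}$ coincides with the event $\xi_1\pi(\vec{a}_1)+\dots+\xi_n\pi(\vec{a}_n) = \vec{w}^\ast$.

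The only step that requires a small argument is to verify that the projected vectors $\vec{b}_i := \pi(\vec{a}_i)\in \RR^{k-d}$ still contain $t$ disjoint bases of $\RR^{k-d}$. This is immediate from surjectivity of $\pi$: if $\vec{a}_{i_1},\dots,\vec{a}_{i_k}$ form a basis of $\RR^k$, then $\vec{b}_{i_1},\dots,\vec{b}_{i_k}$ span $\RR^{k-d}$, and after discarding $d$ redundant vectors they contain a basis of $\RR^{k-d}$. Applying this to each of the $t$ disjoint bases among $\vec{a}_1,\dots,\vec{a}_n$ (which are by hypothesis disjoint) yields $t$ disjoint bases among $\vec{b}_1,\dots,\vec{b}_n$.

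Finally, letting $M\in \RR^{(k-d)\times n}$ be the matrix whose columns are $\vec{b}_1,\dots,\vec{b}_n$, \cref{lem-halasz-equivalent}(i) ensures that $M$ contains $t$ disjoint nonsingular $(k-d)\times(k-d)$ submatrices, so \cref{thm-Halasz-FJZ} (with its ``$k$'' replaced by $k-d$) applied to $M$ and $\vec{w}^\ast$ gives
\[
\Pr[\xi_1\vec{a}_1+\dots+\xi_n\vec{a}_n \in \mathcal{W}] \;=\; \Pr[M\vec{\xi} = \vec{w}^\ast] \;\le\; t^{-(k-d)/2},
\]
which is the desired bound. There is no substantive obstacle here; the corollary is essentially a change-of-coordinates restatement of \cref{thm-Halasz-FJZ} that replaces ``$\vec\xi$ hits an affine subspace of codimension $k-d$ in $\RR^k$'' by ``a projected random vector hits a single point in $\RR^{k-d}$''.
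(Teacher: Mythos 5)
Your proposal is correct and is essentially the same argument as the paper's: the paper also quotients by the linear subspace parallel to $\mathcal{W}$ via a linear map $\phi:\RR^k\to\RR^{k-d}$ with that kernel, observes that the images of the $t$ disjoint bases are spanning sets and hence contain $t$ disjoint bases of $\RR^{k-d}$, and applies \cref{thm-Halasz-FJZ} to the resulting matrix and point. No differences worth noting.
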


\begin{proof}
Write $\tilde{\mathcal{W}}$ for the $d$-dimensional linear subspace
parallel to the affine-linear subspace $\mathcal{W}$ (i.e., $\mathcal{W}=\tilde{\mathcal{W}}+\vec{v}$
for some $\vec{v}\in\RR^{k}$), and
consider a linear map $\phi:\RR^{k}\to\RR^{k-d}$ with kernel $\tilde{\mc W}$ (so $\phi(\mathcal{W})$ consists of a single point $\vec{p}\in \RR^{k-d}$). Writing $M\in\RR^{(k-d)\times n}$ for the matrix whose columns
are the vectors $\phi(\vec{a}_{1}),\dots,\phi(\vec{a}_{n})$, we have
\[
\Pr\left[\xi_{1}\vec{a}_{1}+\dots+\xi_{n}\vec{a}_{n}\in\mathcal{W}\right]=\Pr\left[\xi_{1}\phi(\vec{a}_{1})+\dots+\xi_{n}\phi(\vec{a}_{n})=\vec{p}\right]=\Pr[M\vec{\xi}=\vec{p}]\le t^{-(k-d)/2}.
\]
by \cref{thm-Halasz-FJZ} (note that for each of the $t$ disjoint bases formed among the vectors $\vec{a}_{1},\dots, \vec{a}_{n}$, the image under $\phi$ is a spanning set of $\RR^{k-d}$ and hence contains a basis of $\RR^{k-d}$, so we can find $t$ disjoint bases among the columns of $M$ and consequently $M$ contains $t$ disjoint nonsingular $(k-d)\times (k-d)$ submatrices).
\end{proof}

\begin{corollary}\label{cor:halasz-sub-version}
    Let $1\le k\le n$ be integers and $s >0$ be a real number. Consider a matrix $M\in \mathcal{H}^{k\times n}(s)$ and a vector $\vec{w}\in \RR^k$. Letting $\vec{\xi}=(\xi_{1},\dots,\xi_{n})\in\{-1,1\}^n$ be a sequence of independent Rademacher random variables, we then have
\[\PP[M\vec{\xi}=\vec{w}]\le (s/k)^{-k/2}.\]
\end{corollary}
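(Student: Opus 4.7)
The plan is to combine \cref{lem-halasz-equivalent}(ii) with \cref{thm-Halasz-FJZ} in a direct way. The two preliminary results have essentially been set up so that this corollary follows by a short chain of inequalities, so I do not anticipate any serious obstacle; the only thing to watch is the edge case where $s<k$ and to verify that $\lceil s/k\rceil\ge 1$ so that \cref{thm-Halasz-FJZ} is applicable.

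First, I would apply \cref{lem-halasz-equivalent}(ii) to the given matrix $M\in \mathcal{H}^{k\times n}(s)$: since $k\ge 1$, this produces $t:=\lceil s/k\rceil$ disjoint nonsingular $k\times k$ submatrices inside $M$. Note that because $s>0$ and $k\ge 1$, we indeed have $t=\lceil s/k\rceil\ge 1$, so the hypotheses of \cref{thm-Halasz-FJZ} are satisfied.

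Second, I would plug $t=\lceil s/k\rceil$ into \cref{thm-Halasz-FJZ} to obtain
\[
\Pr[M\vec{\xi}=\vec{w}]\le \lceil s/k\rceil^{-k/2}.
\]
Finally, since $\lceil s/k\rceil\ge s/k$, we have $\lceil s/k\rceil^{-k/2}\le (s/k)^{-k/2}$, which yields the claimed bound. (If $s<k$, the right-hand side $(s/k)^{-k/2}$ exceeds $1$ so the statement is trivial; but the argument above still applies uniformly.)
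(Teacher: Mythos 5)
Your proposal is correct and is essentially identical to the paper's own proof: apply \cref{lem-halasz-equivalent}(ii) to extract $\lceil s/k\rceil$ disjoint nonsingular $k\times k$ submatrices, then invoke \cref{thm-Halasz-FJZ} with $t=\lceil s/k\rceil$ and use $\lceil s/k\rceil\ge s/k$. Your extra check that $t\ge 1$ is a harmless (and reasonable) bit of added care.
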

\begin{proof}
    By \cref{lem-halasz-equivalent}(ii) $M$ contains $\lceil s/k\rceil$ disjoint nonsingular $k\times k$ submatrices, so we can apply \cref{thm-Halasz-FJZ}.
\end{proof}

Next, we recall some basic facts about linear forms. A \emph{linear form} $g\in \mathbb{R}[x_1,\dots,x_n]$ is a linear polynomial with constant term zero. That is, we can write $g(\vec{x})=\vec{v}\cdot \vec{x}=\vec{v}^{\transpose}\vec{x}=\vec{x}^{\transpose}\vec{v}$, where $\vec{v}\in\RR^n$ is the \emph{coefficient vector} of $g$. Note that for two linear forms $g_1,g_2\in \mathbb{R}[x_1,\dots,x_n]$ with coefficient vectors $\vec v_1,\vec v_2\in \RR^n$ we have
\begin{equation}\label{eq:product-linear-forms}
g_1(\vec{x})\cdot g_2(\vec{x})=\frac{1}{2}g_1(\vec{x})\cdot g_2(\vec{x})+\frac{1}{2}g_2(\vec{x})\cdot g_1(\vec{x})=\frac{1}{2}\vec{x}^{\transpose}\vec{v}_1\vec{v}_2^{\transpose}\vec{x}+\frac{1}{2}\vec{x}^{\transpose}\vec{v}_2\vec{v}_1^{\transpose}\vec{x}=\vec{x}^{\transpose}A\vec{x},
\end{equation}
where $A\in \RR^{n\times n}$ is the symmetric matrix given by $A=\frac{1}{2}(\vec{v}_1\vec{v}_2^{\transpose}+\vec{v}_2\vec{v}_1^{\transpose})$. More generally, every quadratic form (i.e., every quadratic polynomial with no linear and no constant term), can be written in the form $\vec{x}^{\transpose}A\vec{x}$ for a unique symmetric matrix $A\in \RR^{n\times n}$.

Another important ingredient for our proof is the following \emph{decoupling lemma}.

\begin{lemma}\label{lem:decoupling}
    If an event $\mc E(X,Y)$ depends on independent random objects $X,Y$, and $X'$ is an independent copy of $X$, then $\Pr[\mc E(X,Y)]\le \Pr[\mc E(X,Y)\text{ and }\mc E(X',Y)]^{1/2}$.
\end{lemma}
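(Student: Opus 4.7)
The plan is to condition on $Y$ and reduce the statement to the inequality $\EE[f(Y)]^2\le \EE[f(Y)^2]$, where $f(y):=\Pr[\mc E(X,y)]$. This is just Cauchy--Schwarz (equivalently, Jensen's inequality applied to the convex function $t\mapsto t^2$), so once the conditioning step is set up, no real work remains.

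More concretely, the first step is to define $f(y):=\Pr[\mc E(X,y)]$, which makes sense as a deterministic function of $y$ because $X$ is independent of $Y$. By the law of total probability I then have
\[
\Pr[\mc E(X,Y)]=\EE[f(Y)].
\]
The second step uses the independence structure: since $X$ and $X'$ are independent copies of each other and both are independent of $Y$, conditional on $Y=y$ the events $\mc E(X,y)$ and $\mc E(X',y)$ are independent, each of probability $f(y)$. Hence
\[
\Pr[\mc E(X,Y)\text{ and }\mc E(X',Y)\mid Y=y]=f(y)^{2},
\]
and taking expectations in $Y$ gives $\Pr[\mc E(X,Y)\text{ and }\mc E(X',Y)]=\EE[f(Y)^{2}]$.

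The third step is just Cauchy--Schwarz, which yields $\EE[f(Y)]^{2}\le \EE[f(Y)^{2}]$. Combining the three identities produces $\Pr[\mc E(X,Y)]^{2}\le \Pr[\mc E(X,Y)\text{ and }\mc E(X',Y)]$, and taking square roots gives the claim. There is no real obstacle: the only subtlety worth mentioning is verifying that $f$ is measurable (so that $\EE[f(Y)]$ and $\EE[f(Y)^{2}]$ make sense), which follows from Fubini applied to the indicator of $\mc E$ on the product space, together with the independence of $X$ and $Y$.
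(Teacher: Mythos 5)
Your proof is correct and follows essentially the same route as the paper: condition on $Y$, use the conditional independence of $\mc E(X,Y)$ and $\mc E(X',Y)$ to write the joint probability as $\EE\bigl[\Pr[\mc E(X,Y)\mid Y]^2\bigr]$, and conclude by Cauchy--Schwarz (Jensen). The only difference is cosmetic, namely your explicit introduction of the function $f$ and the remark on measurability.
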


\cref{lem:decoupling} is a slight variant of a lemma of Costello, Tao and Vu~\cite[Lemma~4.7]{CTV06}, who popularised decoupling as a tool for polynomial anticoncentration. The particular statement of \cref{lem:decoupling} appears (for example) as \cite[Lemma~14]{Cos13}. For the reader's convenience, we include the proof (which is a simple application of the Cauchy--Schwarz inequality).

\begin{proof}[Proof of \cref{lem:decoupling}]
    By the Cauchy--Schwarz inequality, we have
    \begin{align*}
    \Pr[\mc E(X,Y)\text{ and }\mc E(X',Y)]&=\EE_Y\Bigl[\Pr[\mc E(X,Y)\text{ and }\mc E(X',Y)\,|\, Y]\Bigr]=\EE_Y\Bigl[\Pr[\mc E(X,Y)\,|\, Y]^2\Bigr]\\
    &\ge \EE_Y\Bigl[\Pr[\mc E(X,Y)\,|\, Y]\Bigr]^2=\Pr[\mc E(X,Y)]^2.
    \end{align*}
    Taking square roots on both sides gives the desired inequality.
\end{proof}

We will also need a simple lemma usually attributed to Odlyzko~\cite{Odl88}.

\begin{lemma}\label{lem:odlyzko}
Consider a matrix $M\in \RR^{k\times n}$ with $\rank M=k$ and a vector $\vec{w}\in \RR^k$. For a sequence of independent Rademacher random variables $\vec{\xi}=(\xi_{1},\dots,\xi_{n})\in\{-1,1\}^n$, we then have $\PP[M\vec{\xi}=\vec{w}]\le 2^{-k}$.
\end{lemma}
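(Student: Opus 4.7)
The plan is to exploit the fact that $\on{rank} M=k$ forces the existence of a nonsingular $k\times k$ submatrix of $M$, which lets us pin down a specific block of the Rademacher coordinates in terms of the remaining ones. Specifically, since $M\in \RR^{k\times n}$ has rank $k$, I can choose a subset $J\su [n]$ with $|J|=k$ such that $M[[k]\times J]$ is nonsingular. Let $\vec{a}_1,\dots,\vec{a}_n\in \RR^k$ denote the columns of $M$, so that $M\vec{\xi}=\sum_{i=1}^n \xi_i \vec{a}_i$.

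Next I would condition on the outcome of $\vec{\xi}[[n]\sm J]$. Under this conditioning, the event $M\vec{\xi}=\vec{w}$ rearranges to the linear system
\[
M[[k]\times J]\,\vec{\xi}[J]\;=\;\vec{w}-\sum_{i\in [n]\sm J}\xi_i\vec{a}_i,
\]
whose right-hand side is a fixed vector in $\RR^k$. Since $M[[k]\times J]$ is invertible, this system has at most one solution $\vec{\xi}[J]\in \RR^J$ (possibly none, if the unique real solution has entries outside $\{-1,1\}$; this only helps us). As $\vec{\xi}[J]$ is independent of $\vec{\xi}[[n]\sm J]$ and uniform on $\{-1,1\}^J$, the conditional probability that $\vec{\xi}[J]$ equals this prescribed vector is at most $2^{-|J|}=2^{-k}$. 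Taking the expectation over $\vec{\xi}[[n]\sm J]$ via the law of total probability yields $\Pr[M\vec{\xi}=\vec{w}]\le 2^{-k}$, as desired. There is no real obstacle here; the only subtlety is making sure one conditions on the ``correct'' block (the coordinates \emph{outside} the nonsingular submatrix), so that the remaining randomness is free to realise the unique solution with probability at most $2^{-k}$.
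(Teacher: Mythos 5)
Your proposal is correct and is essentially the same argument as the paper's: the paper brings the system $M\vec{\xi}=\vec{w}$ into row echelon form and exposes the $n-k$ free variables, which is exactly your step of selecting a nonsingular $k\times k$ submatrix and conditioning on the coordinates outside it, after which the remaining $k$ Rademacher entries have at most one admissible value. No gap to report.
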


\begin{proof}
We can interpret $M\vec{\xi}=\vec{w}$ as a system of linear equations (in the variables $\xi_{1},\dots,\xi_{n}$). Bringing this system into row echelon form, it has $n-k$ free variables (which determine the values of the $k$ remaining variables). After exposing the $n-k$ entries of $\vec{\xi}$ corresponding to the free variables, there is at most one possibility for each of the remaining $k$ entries of $\vec{\xi}$ satisfying this system of equations. Thus, the probability of having $M\vec{\xi}=\vec{w}$ is at most $2^{-k}$.
\end{proof}

Finally, we will need a simple numerical inequality.
\begin{lemma}\label{lem:simple-inequality}
For real numbers $a,b,c\ge0$ with $a^{2}\le ab+c$, we have $a\le b+\sqrt{c}$.
\end{lemma}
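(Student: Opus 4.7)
The plan is to treat the hypothesis $a^2 \le ab + c$ as a quadratic inequality in the variable $a$, namely $a^2 - ab - c \le 0$, and then apply the quadratic formula. Since $b^2 + 4c \ge 0$, the roots of $a^2 - ab - c = 0$ (as a polynomial in $a$) are real, and the inequality $a^2 - ab - c \le 0$ forces $a$ to lie between them. In particular, $a$ is at most the larger root:
\[
a \le \frac{b + \sqrt{b^2 + 4c}}{2}.
\]

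Next, I would bound $\sqrt{b^2+4c}$ by $b+2\sqrt{c}$. This uses only $b,c\ge 0$: indeed,
\[
(b+2\sqrt{c})^2 = b^2 + 4b\sqrt{c} + 4c \ge b^2 + 4c,
\]
and since both sides are nonnegative, taking square roots gives $\sqrt{b^2+4c}\le b+2\sqrt{c}$. Plugging this into the previous bound yields
\[
a \le \frac{b + (b + 2\sqrt{c})}{2} = b + \sqrt{c},
\]
as desired.

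There is no real obstacle here; the only minor point of care is making sure the hypotheses $a,b,c\ge 0$ are used correctly (they ensure that the expressions under the square roots are nonnegative, and that $(b+2\sqrt{c})^2 \ge b^2 + 4c$ via the nonnegative cross term). As an alternative one-line proof, one could argue by contradiction: if $a > b+\sqrt{c}$, then $a > b$ so $a-b>0$, and then $(a-b)^2 > c$ while at the same time $a(a-b) \le c$ combined with $a-b \le a$ gives $(a-b)^2 \le a(a-b)\le c$, a contradiction. Either approach is short, so I would simply include the quadratic-formula version for clarity.
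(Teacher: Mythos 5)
Your proof is correct and follows essentially the same route as the paper: apply the quadratic formula to $a^2-ab-c\le 0$ to get $a\le (b+\sqrt{b^2+4c})/2$, then bound $\sqrt{b^2+4c}\le b+2\sqrt{c}$ (the paper phrases this last step as the subadditivity $\sqrt{x+y}\le\sqrt{x}+\sqrt{y}$, which is the same estimate you verify by squaring).
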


\begin{proof}Note that for all $x,y\ge 0$ we have the inequality $\sqrt{x+y}\le\sqrt{x}+\sqrt{y}$. Now, by the quadratic formula, $a^{2}\le ab+c$ implies $a\le(b+\sqrt{b^{2}+4c})/2\le (b+\sqrt{b^2}+\sqrt{4c})/2=b+\sqrt{c}$.
\end{proof}

\section{Inductive decoupling from a geometric point of view}\label{sec:geometric}

Note that for a quadratic polynomial $Q$ and a sequence $\vec{\xi}=(\xi_{1},\dots,\xi_{n})\in\{-1,1\}^n$ of independent Rademacher random variables, the event $Q(\vec \xi)=0$ is precisely the event that $\vec \xi$ falls in the vanishing locus of $Q$. Moreover, if the quadratic part of $Q$ has rank less than $r$ (i.e., if $Q$ can be expressed as a linear combination of $r-1$ squares of linear forms, plus an additional linear form, plus a constant term), then it is possible to interpret the event $Q(\vec \xi)=0$ as the event that $\xi_1\vec a_1+\dots +\xi_n\vec a_n\in \mathcal Z$, where $\vec a_1,\dots,\vec a_n\in \RR^{r}$ are vectors in $r$-dimensional space, and $\mc Z$ is the vanishing locus of some $r$-variable quadratic polynomial (indeed, the entries of each $\vec a_i$ correspond to the coefficients of $\xi_i$ in each of the linear forms described above).

In this section, we obtain an essentially optimal bound (in \cref{thm:quadratic-geometric} below) on probabilities of the form $\Pr[\xi_1\vec a_1+\dots +\xi_n\vec a_n\in \mathcal Z]$, where $\vec a_1,\dots,\vec a_n\in \RR^{r}$ are vectors satisfying some robust nondegeneracy condition and $\mc Z$ is a quadric in $\RR^r$ (or more generally, a quadric inside some affine-linear subspace of $\RR^r$), under the assumption that the dimension $r$ does not grow with $n$. This may be viewed as a warm-up to the full proof of \cref{thm:main-shiny}: it is proved via the same inductive decoupling scheme, but has fewer technicalities.

Using the connection described in the first paragraph above (with an additional ``dropping to a subspace'' argument, of the type we will later see in \cref{sec:low-rank}), one can use \cref{thm:quadratic-geometric} to prove \cref{thm:main-shiny} in the special case where the quadratic part of $Q$ has bounded rank. Actually, \cref{thm:quadratic-geometric} is also an ingredient in the full proof of our main theorem (\cref{thm:main-shiny}).
We also remark that the result of this section fits nicely in the ``geometric Littlewood--Offord'' framework of Fox, Kwan and Spink~\cite{FKS23}. In particular, the case of \cref{thm:quadratic-geometric} where $\mathcal Z\subseteq \RR^4$ is a sphere in four dimensions was explicitly raised as the simplest open case of the main problem in \cite{FKS23}.

Before stating the main result of this section, we record some relevant terminology.

\newcommand{\zl}{\on{V}}

\begin{definition}
As usual, for a polynomial $P\in \RR[x_1,\dots,x_d]$ we write $\zl(P)=\{\vec{x}\in \RR^d: P(\vec x)=0\}$ for the vanishing locus of $P$. A \emph{quadric} $\mathcal{Z}\subsetneq\RR^{d}$ is the vanishing locus $\mathcal{Z}=\zl(P)$ of some nonzero quadratic polynomial $P\in\RR[x_{1},\dots,x_{d}]$. We say that a quadric $\mathcal{Z}\subsetneq\RR^{d}$ is \emph{irreducible} if $\mathcal{Z}=\zl(P)$ for an irreducible quadratic polynomial $P\in \RR[x_1,\dots,x_d]$.

For a $d$-dimensional affine-linear subspace $\mathcal{W}\su \RR^{r}$,
we say that $\mathcal{Z}\subsetneq\mathcal{W}$ is a \emph{quadric
on $\mathcal{W}$} if it is the image of some quadric $\zl(P)\subsetneq\RR^{d}$
under an affine-linear isomorphism $\phi:\RR^{d}\to\mathcal{W}$. Equivalently, $\mathcal{Z}\subsetneq\mathcal{W}$ is a quadric on $\mathcal{W}$ if and only if $\mathcal{Z}=\mathcal{W}\cap \zl(P)$ for some quadratic polynomial $P\in \RR[x_1,\dots,x_r]$ with $\mathcal{W}\not\subseteq \zl(P)$.  We say that a quadric $\mathcal{Z}\subsetneq\mathcal{W}$ is \emph{irreducible} if it is the image of some irreducible quadric $\zl(P)\subsetneq\RR^{d}$
under an affine-linear isomorphism $\phi:\RR^{d}\to\mathcal{W}$.
\end{definition}

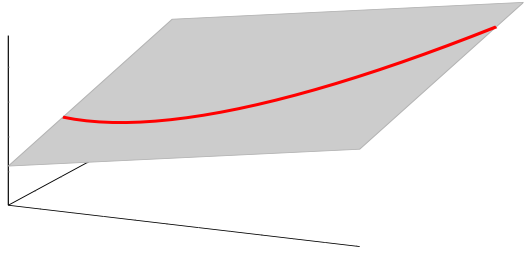
\begin{figure}[t]
\begin{center}
\begin{tikzpicture}[yscale=0.7]
\begin{axis}[mesh/ordering=y varies,colormap/blackwhite,axis lines*=middle,xtick distance=0.09,ytick distance=0.16,ztick distance=0.6,xticklabels=\empty,yticklabels=\empty,zticklabels=\empty,tickwidth=0]
    \addplot3 [surf,fill=black!20,faceted color=black!30] coordinates {
        (0,0,0.7) (0,1,1.7)   (1,0,1.7)
 
        (1,1,2.7)
    };
	\addplot3 [red,ultra thick,domain=0:1,samples y=1] (x,x^2-x/2+1/3,x^2+x/2+1/3+0.7);
	\addplot3 [domain=0:1] (0,0,0.27);
\end{axis}
\end{tikzpicture} 
\end{center}\caption{\label{fig:quadric}An example of a quadric on a 2-dimensional affine plane in $\mb R^3$.}
\end{figure}

As an example, see \cref{fig:quadric} showing a quadric on a 2-dimensional affine plane in $\RR^{3}$. Now we are ready to state the main result of this section.

\begin{theorem}
\label{thm:quadratic-geometric}Let $0\le d<r$ be integers. Let $\mathcal{Z}\subsetneq\mathcal{W}$
be a quadric on a $(d+1)$-dimensional affine-linear
subspace $\mathcal{W}\su\RR^{r}$. Consider vectors $\vec{a}_{1},\dots,\vec{a}_{n}\in\mathbb{R}^{r}$ such that one can form $t$ disjoint bases of $\mathbb{R}^{r}$ from the vectors $\vec{a}_{1},\dots, \vec{a}_n$, where $t$ is a positive integer divisible by $2^d$. Let $(\xi_1,\dots,\xi_n)\in\{-1,1\}^{n}$ be a sequence of independent Rademacher random variables. Then 
\[
\Pr\left[\xi_{1}\vec{a}_{1}+\dots+\xi_{n}\vec{a}_{n}\in\mathcal{Z}\right]\le\frac{2^{dr+1}}{t^{(r-d)/2}}.
\]
\end{theorem}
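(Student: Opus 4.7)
I plan to prove \cref{thm:quadratic-geometric} by strong induction on $(d,t)$, ordered so that $(d',t')<(d,t)$ iff $d'<d$, or $d'=d$ and $t'<t$. The base cases are $d=0$ and $t=2^{d}$. When $d=0$, $\mathcal Z$ lies on a $1$-dimensional affine-linear subspace and so has at most two points; \cref{cor:halasz} applied to each point (and a union bound) gives $\Pr[X\in \mathcal Z]\le 2t^{-r/2}$, matching $2^{0\cdot r+1}/t^{r/2}$. When $t=2^{d}$ with $d\ge 1$, the target $2^{dr+1}/t^{(r-d)/2}=2^{dr/2+d^{2}/2+1}$ exceeds $4$, so the bound is trivial.

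For the inductive step (with $d\ge 1$ and $t\ge 2^{d+1}$), write $\mathcal Z=\mathcal W\cap \zl(P)$ with $P(\vec x)=\vec x^{\transpose}A\vec x+\vec b^{\transpose}\vec x+c$, and let $\tilde{\mathcal W}$ denote the linear subspace of $\RR^{r}$ parallel to $\mathcal W$. I first dispose of a preliminary ``rank-zero'' case: if the bilinear form $(\vec v,\vec w)\mapsto \vec v^{\transpose}A\vec w$ vanishes on $\tilde{\mathcal W}\times \tilde{\mathcal W}$, then $P$ restricted to $\mathcal W$ is affine-linear (and nonconstant since $\mathcal Z\subsetneq \mathcal W$), so $\mathcal Z$ is actually a $d$-dimensional affine subspace of $\RR^{r}$, and \cref{cor:halasz} gives $\Pr[X\in \mathcal Z]\le t^{-(r-d)/2}$, well within target. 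Otherwise, I apply decoupling: partition $[n]=I\sqcup J$ so that $\{\vec a_{i}\}_{i\in I}$ and $\{\vec a_{j}\}_{j\in J}$ each contain $t/2$ disjoint bases of $\RR^{r}$ (note $2^{d-1}\mid t/2$), set $X_{I}=\sum_{i\in I}\xi_{i}\vec a_{i}$, $X_{J}=\sum_{j\in J}\xi_{j}\vec a_{j}$, and let $X_{I}'$ be an independent copy of $X_{I}$. By \cref{lem:decoupling},
\[
\Pr[X\in \mathcal Z]^{2}\le \EE_{X_{I},X_{I}'}\bigl[\Pr[X_{J}\in \mathcal Z'\mid X_{I},X_{I}']\bigr],\qquad \mathcal Z':=(\mathcal Z-X_{I})\cap (\mathcal Z-X_{I}').
\]
For $\mathcal Z'$ to be nonempty we need $\vec Y:=X_{I}-X_{I}'\in \tilde{\mathcal W}$, an event of probability $\le t^{-(r-d-1)/2}$ by \cref{cor:halasz} (applied to $\vec Y$, whose $2|I|$ coefficient vectors $\{\pm\vec a_{i}\}_{i\in I}$ contain $t$ disjoint bases).

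On the event $\vec Y\in \tilde{\mathcal W}$, set $\mathcal W':=\mathcal W-X_{I}$. A direct calculation shows that $L(\vec y):=P(\vec y+X_{I})-P(\vec y+X_{I}')=2\vec Y^{\transpose}A\vec y+(\text{const in }\vec y)$ is affine-linear in $\vec y$ (its quadratic parts cancel), so $\mathcal Z'=\mathcal W'\cap \zl(P(\cdot+X_{I}))\cap \zl(L)$. In the generic sub-case where $L|_{\mathcal W'}$ is nonconstant, $\mathcal W'':=\mathcal W'\cap \zl(L)$ is a $d$-dimensional affine-linear subspace containing $\mathcal Z'$, with $\mathcal Z'$ either equal to $\mathcal W''$ itself or a quadric on $\mathcal W''$; in either case the inductive hypothesis at level $(d-1,t/2)$ (or \cref{cor:halasz}) gives $\Pr[X_{J}\in \mathcal Z'\mid X_{I},X_{I}']\le 2^{(d-1)r+1}/(t/2)^{(r-d+1)/2}$. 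In the degenerate sub-case $L|_{\mathcal W'}\equiv 0$, $\mathcal Z'=\mathcal Z-X_{I}$ remains a quadric on the $(d+1)$-dimensional $\mathcal W'$, and the inductive hypothesis at level $(d,t/2)$ bounds $\Pr[X_{J}\in \mathcal Z'\mid X_{I},X_{I}']\le 2^{dr+1}/(t/2)^{(r-d)/2}$; crucially, $L|_{\mathcal W'}\equiv 0$ forces $A\vec Y\perp \tilde{\mathcal W}$, which (since $A$ acts nontrivially on $\tilde{\mathcal W}\times \tilde{\mathcal W}$ in the remaining case) is a nontrivial additional linear constraint on $\vec Y\in \tilde{\mathcal W}$, pushing the codimension of this sub-event to at least $r-d$ and its probability to at most $t^{-(r-d)/2}$.

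Summing the two sub-case contributions (both of order $2^{O(dr)}/t^{r-d}$), careful constant accounting gives $\Pr[X\in \mathcal Z]^{2}\le 2^{2dr+2}/t^{r-d}$ (with room to spare for $d\ge 1$ and $r>d$), yielding the claimed bound. The main obstacle is the degenerate sub-case $L\equiv 0$ on $\mathcal W'$: naively bounding its probability using only $\vec Y\in \tilde{\mathcal W}$ overshoots the target by a factor of $\sqrt t$. The resolution has two complementary ingredients: the preliminary rank dichotomy separates out the genuinely cylindrical quadrics (treated as plain affine subspaces via \cref{cor:halasz}), and in the remaining case the nontriviality of $A$ on $\tilde{\mathcal W}\times \tilde{\mathcal W}$ ensures the extra constraint $A\vec Y\perp \tilde{\mathcal W}$ provides the missing $t^{-1/2}$ factor, while using the inductive hypothesis at smaller $t$ supplies the required inner conditional bound for the quadric on $\mathcal W'$.
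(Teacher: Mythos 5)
Your overall scheme is the paper's inductive decoupling argument in only lightly disguised form, and most of it is sound: the decoupling step, the bound $\Pr[\vec Y\in\tilde{\mc W}]\le t^{-(r-d-1)/2}$ via the doubled family $\{\pm\vec a_i\}_{i\in I}$, and the treatment of the degenerate sub-case $L|_{\mc W'}\equiv 0$ via the radical of the bilinear form (an extra codimension inside $\tilde{\mc W}$, then induction at level $(d,t/2)$) is a legitimate alternative to the paper's separate handling of the event $X_I=X_I'$ and of translation-invariant quadrics. The genuine gap is in your generic sub-case, in the possibility you mention only in passing: $\mc Z'=\mc W''$. You claim that ``in either case'' the conditional probability is at most $2^{(d-1)r+1}/(t/2)^{(r-d+1)/2}$, citing \cref{cor:halasz}; but when $\mc Z'=\mc W''$ is a full $d$-dimensional affine subspace, \cref{cor:halasz} only gives $(t/2)^{-(r-d)/2}$, which exceeds your claimed bound by a factor of order $\sqrt t$. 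Multiplying this by $\Pr[\vec Y\in\tilde{\mc W}]\le t^{-(r-d-1)/2}$ yields a contribution of order $t^{-(r-d)+1/2}$ to $\Pr[X\in\mc Z]^2$, overshooting the target $2^{2dr+2}t^{-(r-d)}$ by $\sqrt t$. And this case genuinely occurs: $\mc Z'=\mc W''$ happens exactly when $P|_{\mc W}$ vanishes on the hyperplane $\mc W''+X_I$ of $\mc W$, i.e.\ when $P|_{\mc W}$ has an affine factor, i.e.\ when $\mc Z$ is a union of (at most two) $d$-dimensional affine subspaces of $\mc W$. Concretely, if $P|_{\mc W}=\ell_1\ell_2$ with linear parts $\tilde\ell_1\ne\tilde\ell_2$ on $\tilde{\mc W}$, then whenever $\vec Y\in\tilde{\mc W}$ satisfies $\tilde\ell_1(\vec Y)=0\ne\tilde\ell_2(\vec Y)$ one gets $L(\vec y)=\ell_1(\vec y+X_I)\,\tilde\ell_2(\vec Y)$ on $\mc W'$, so you are in the generic sub-case, yet $\mc Z'=\mc W''=\{\ell_1(\cdot+X_I)=0\}\cap\mc W'$, and the conditional probability really can be of order $(t/2)^{-(r-d)/2}$. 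Your preliminary ``rank-zero'' reduction does not exclude this (the quadratic part $\tilde\ell_1\tilde\ell_2$ is nonzero on $\tilde{\mc W}$), and your degenerate sub-case does not catch it either (there $L|_{\mc W'}$ is nonconstant).

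The true contribution of these outcomes is in fact acceptable, because $\tilde\ell_1(\vec Y)=0$ is an extra linear constraint on $\vec Y$ inside $\tilde{\mc W}$, so the conditioning event has probability $O(t^{-(r-d)/2})$ — but your argument never isolates this, since you push a single uniform conditional bound across the whole generic sub-case. The clean repair is exactly what the paper does: dispose of reducible quadrics at the outset by writing $\mc Z$ as a union of two $d$-dimensional affine subspaces and applying \cref{cor:halasz} to each (giving $2t^{-(r-d)/2}$, well within the target), and then observe that when $P|_{\mc W}$ has no affine factor one always has $\mc Z'\subsetneq\mc W''$, so your generic sub-case bound becomes correct; this is precisely the role of irreducibility in \cref{lem:geometry-lemma}. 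One further small wrinkle: your induction at level $(d,t/2)$ requires $2^{d}\mid t/2$, i.e.\ $2^{d+1}\mid t$, which is not guaranteed by $2^d\mid t$; you would need to round down to a multiple of $2^d$ or adjust the induction statement — a cosmetic fix, but it should be stated.
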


If $\mathcal{Z}\subsetneq\mathcal{W}$
is a quadric on a $(d+1)$-dimensional affine-linear
subspace $\mathcal{W}$, then $\mathcal{Z}$ has dimension at most $d$ (we do not formally define what ``dimension'' means in this context, as this is not needed for our arguments, but appeal to the reader's intuition). Note that the form of the bound in the above theorem (with $t^{(r-d)/2}$ in the denominator) is the same as in \cref{cor:halasz} for $d$-dimensional affine-linear subspaces.

For the proof of \cref{thm:quadratic-geometric}, we will rely on the following
algebraic fact. 
\begin{lemma}
\label{lem:geometry-lemma}Let $d\ge 2$, and let $\mathcal{Z}\subsetneq\mathbb{R}^{d}$ be an irreducible quadric. Then, at least one of the following holds: 
\begin{compactitem}
\item[(i)] There is a direction $\vec{v}\in \RR^d\setminus\{\vec 0\}$ such that $\mathcal{Z}+\mathbb{R}\vec{v}=\mathcal{Z}$
(i.e., $\mathcal{Z}$ is invariant under translation along the direction
$\vec{v}$), or 
\item [(ii)] for any vectors $\vec{x},\vec{y}\in \RR^d$ with $\vec{x}\ne\vec{y}$, the intersection $(\mathcal{Z}-\vec{x})\cap(\mathcal{Z}-\vec{y})$
is a quadric on a $(d-1)$-dimensional affine-linear subspace $\mathcal{W}_{\vec{x},\vec{y}}\subsetneq\mathbb{R}^{d}$.
\end{compactitem}
\end{lemma}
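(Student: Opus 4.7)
The plan is to write the defining polynomial as $P(\vec{x}) = \vec{x}^\transpose A \vec{x} + \vec{b}^\transpose \vec{x} + c$ with $A\in\RR^{d\times d}$ symmetric. If $A=0$, then $P$ is affine-linear, $\mathcal{Z}$ is a hyperplane, and any nonzero vector in its parallel linear direction witnesses (i); so I may assume $A\neq 0$, i.e.\ $P$ has degree exactly $2$. For each $\vec{u}\in\RR^d$ I define $P_{\vec{u}}(\vec{z}) := P(\vec{z}+\vec{u})$, so that $\mathcal{Z}-\vec{u} = \zl(P_{\vec{u}})$; translation is a ring automorphism of $\RR[\vec{z}]$, so each $P_{\vec{u}}$ is irreducible and all $P_{\vec{u}}$ share the same quadratic part $\vec{z}^\transpose A \vec{z}$. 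A direct expansion then gives
\[
P_{\vec{x}}(\vec{z}) - P_{\vec{y}}(\vec{z}) \;=\; 2(\vec{x}-\vec{y})^\transpose A \vec{z} \;+\; \gamma(\vec{x},\vec{y})
\]
for some constant $\gamma(\vec{x},\vec{y})\in\RR$.

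Assuming (i) fails, I would fix $\vec{x}\ne\vec{y}$ and prove (ii), splitting into two cases according to whether $A(\vec{x}-\vec{y})$ vanishes. If $A(\vec{x}-\vec{y})\ne\vec{0}$, then $P_{\vec{x}}-P_{\vec{y}}$ is a nontrivial affine form, so its zero set $\mathcal{W}_{\vec{x},\vec{y}}$ is an affine hyperplane of dimension $d-1$, and $(\mathcal{Z}-\vec{x})\cap(\mathcal{Z}-\vec{y}) = \zl(P_{\vec{x}}) \cap \mathcal{W}_{\vec{x},\vec{y}}$, which will be a quadric on $\mathcal{W}_{\vec{x},\vec{y}}$ as long as $P_{\vec{x}}$ does not vanish identically on $\mathcal{W}_{\vec{x},\vec{y}}$. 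To rule this out, I would invoke the standard fact that any polynomial vanishing on the hyperplane $\{L=0\}$ is divisible by the linear form $L$; such a linear factor would contradict the irreducibility of the degree-$2$ polynomial $P_{\vec{x}}$.

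In the remaining case $A(\vec{x}-\vec{y})=\vec{0}$, set $\vec{v}:=\vec{x}-\vec{y}\ne\vec{0}$, so $P_{\vec{x}}-P_{\vec{y}}\equiv\gamma(\vec{x},\vec{y})$. If $\gamma(\vec x,\vec y)\ne 0$ then $(\mathcal{Z}-\vec{x})\cap(\mathcal{Z}-\vec{y})=\emptyset$, which is the zero set of the nonzero constant polynomial $1$ and hence a (trivial) quadric on any $(d-1)$-dimensional affine-linear subspace. If $\gamma(\vec x,\vec y)=0$, a short check using $A\vec{v}=\vec{0}$ shows $\vec{b}^\transpose\vec{v}=0$, whence $P(\vec{z}+t\vec{v})=P(\vec{z})$ identically in $\vec{z}$ and $t$; this gives $\mathcal{Z}+\RR\vec{v}=\mathcal{Z}$, so (i) in fact holds, contradicting our assumption. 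The only subtle step in this plan is showing in the first case that $P_{\vec{x}}$ does not vanish on $\mathcal{W}_{\vec{x},\vec{y}}$, which is precisely where the irreducibility hypothesis enters.
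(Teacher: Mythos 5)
Your proof is correct and follows essentially the same route as the paper's: both intersect $\mathcal{Z}-\vec{x}$ with the zero set of the affine-linear difference $P(\cdot+\vec{y})-P(\cdot+\vec{x})$, use the failure of (i) to show this difference is nontrivial, and use irreducibility of $P$ to rule out the quadric containing that hyperplane (you via the divisibility-by-a-linear-form fact, the paper via an irreducible-components argument). A small bonus of your write-up is that you explicitly handle the degenerate case where the difference is a nonzero constant (e.g.\ translating a parabola along its axis), observing that the intersection is then empty and that the empty set still qualifies as a quadric on any $(d-1)$-dimensional subspace under the paper's definitions --- an edge case the paper's proof passes over.
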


\begin{proof}
Let $\mathcal{Z}=\zl(P)$ for a nonzero quadratic polynomial $P\in \RR[x_1,\dots,x_d]$, and suppose that (i) does not hold. Note that then $\mathcal{Z}-\vec x=\{\vec{w}\in \RR^d: P(\vec w+\vec x)=0\}$ for any $\vec x\in \RR^d$.  For any $\vec{x},\vec{y}\in \RR^d$ with $\vec{x}\ne\vec{y}$, consider the linear polynomial $L_{\vec{x},\vec{y}}:\vec w\mapsto P(\vec{w}+\vec{y})-P(\vec{w}+\vec{x})$ and write $\mathcal{W}_{\vec{x},\vec{y}}=\zl(L_{\vec{x},\vec{y}})$, so $(\mathcal{Z}-\vec{x})\cap(\mathcal{Z}-\vec{y})=(\mathcal{Z}-\vec{x})\cap\mathcal{W}_{\vec{x},\vec{y}}$. This already shows that $(\mathcal{Z}-\vec{x})\cap(\mathcal{Z}-\vec{y})$ is the vanishing locus of a quadratic polynomial on $\mathcal{W}_{\vec{x},\vec{y}}$. To verify (ii), we will check that $L_{\vec{x},\vec{y}}$ is not the zero polynomial (which shows that $\dim \mathcal{W}_{\vec{x},\vec{y}}=d-1$), and that $\mathcal{W}_{\vec{x},\vec{y}}\not \su \mathcal{Z}-\vec x$ (which implies $(\mathcal{Z}-\vec{x})\cap(\mathcal{Z}-\vec{y})=(\mathcal{Z}-\vec{x})\cap\mathcal{W}_{\vec{x},\vec{y}}\subsetneq \mathcal{W}_{\vec{x},\vec{y}}$).

First, the reason $L_{\vec{x},\vec{y}}$ cannot be the zero polynomial
is that (i) does not hold. Indeed, if $L_{\vec{x},\vec{y}}$ were
the zero polynomial, then for all $\vec{w}\in\RR^{d}$ and $\lambda\in\ZZ$
we would have $P(\vec{w})=P(\vec{w}+\lambda(\vec{x}-\vec{y}))$.
That is to say, for all $\vec{w}\in\RR^{d}$ the quadratic polynomial
$\lambda\mapsto P(\vec{w})-P(\vec{w}+\lambda(\vec{x}-\vec{y}))$ would
have infinitely many zeroes, so would be the zero polynomial, meaning
that $P(\vec{w})=P(\vec{w}+\lambda(\vec{x}-\vec{y}))$ for all $\lambda\in\RR$ and $\vec{w}\in\RR^{d}$. Hence we would have $\mathcal{Z}+\lambda(\vec{x}-\vec{y})=\mathcal{Z}$ for all $\lambda\in\RR$, so (i) would hold for $\vec{v}=\vec{x}-\vec{y}$.

Finally, it remains to show $\mathcal{W}_{\vec{x},\vec{y}}\not \su \mathcal{Z}-\vec x$. Indeed, otherwise $\mathcal{W}_{\vec{x},\vec{y}}$ would be an irreducible component of $\mathcal{Z}-\vec x$, but by our assumptions $\mathcal{Z}-\vec x$  is irreducible (since $\mathcal{Z}$ is). We also have $\mathcal{Z}-\vec x\ne \mathcal{W}_{\vec{x},\vec{y}}$, since $\mathcal{Z}-\vec x$ not invariant under translation along the direction of any nonzero vector (since $\mathcal{Z}$ does not satisfy (i)). So we indeed have $\mathcal{W}_{\vec{x},\vec{y}}\not \su \mathcal{Z}-\vec x$.
\end{proof}

Now we prove \cref{thm:quadratic-geometric}.

\begin{proof}[Proof of \cref{thm:quadratic-geometric}]
We proceed by induction on $d$. In the base case $d=0$, our quadric
$\mathcal{Z}$ consists of at most two points, so the theorem statement holds by \cref{cor:halasz} (taking  $\mathcal{W}$ in \cref{cor:halasz} to be $0$-dimensional, i.e., a single point). Assume now
that $d\ge1$ (and $r\ge d+1$), and that the theorem statement holds
for smaller values of $d$. Let $\vec{X}=\xi_{1}\vec{a}_{1}+\dots+\xi_{n}\vec{a}_{n}$. 

\textbf{Step 1: The reducible case.} First, it is easy to handle the case where the quadric $\mathcal{Z}\subsetneq \mathcal{W}$ is reducible, i.e., where the irreducible components of $\mathcal{Z}$ are two affine-linear subspaces of dimension $d$. Indeed, suppose
that $\mathcal{Z}=\mathcal{V}_{1}\cup\mathcal{V}_{2}$ for two $d$-dimensional
affine-linear subspaces $\mathcal{V}_{1},\mathcal{V}_{2}\subsetneq\mathcal{W}$.
By \cref{cor:halasz} (i.e., by the version of Hal\'asz' theorem in \cite{FJZ22}), we have
\[
\Pr[\vec{X}\in\mathcal{Z}]\le\Pr[\vec{X}\in\mathcal{V}_{1}]+\Pr[\vec{X}\in\mathcal{V}_{2}]\le \frac{2}{t^{(r-d)/2}},
\]
which implies the desired result.

\textbf{Step 2: The translation-invariant case.} It is also easy to handle the case where there is a direction $\vec{v}\ne\vec 0$
such that $\mathcal{Z}+\mathbb{R}\vec{v}=\mathcal{Z}$, because then
we can project our entire problem along the direction of $\vec{v}$ to obtain a lower-dimensional problem (noting that then we also have $\mathcal{W}+\mathbb{R}\vec{v}=\mathcal{W}$). Indeed,
consider a linear map $\phi:\RR^r\to \RR^{r-1}$ with kernel $\on{span}(\vec v)$, and observe that  $\phi(\mathcal{Z})$
is a quadric on the $(d-1)$-dimensional affine-linear subspace $\phi(\mathcal{W})$. 
Then,
we have 
\[
\Pr[\vec{X}\in\mathcal{Z}]=\Pr[\phi(\vec{X})\in\phi(\mathcal{Z})]\le\frac{2^{(d-1)(r-1)+1}}{t^{(r-d)/2}},
\]
by our induction hypothesis (noting that among $\phi(\vec{a}_1),\dots, \phi(\vec{a}_n)$ one can still form $t$ disjoint bases), and the desired result follows.

\textbf{Step 3: Decoupling.} Now, we can assume that $\mathcal{Z}\subseteq \mathcal{W}$ is an irreducible quadric and that there is
no direction $\vec{v}\ne\vec 0$ such that $\mathcal{Z}+\mathbb{R}\vec{v}=\mathcal{Z}$.
Let $\tilde{\mathcal{W}}\su \RR^r$
be  the $(d+1)$-dimensional linear subspace parallel to $\mathcal{W}$ (i.e., $\mathcal{W}=\tilde{\mathcal{W}}+\vec{w}$
for some $\vec{w}\in\RR^{r}$). For any $\vec{x}\ne\vec{y}$ with $\vec x-\vec y\in \tilde{\mc W}$, by \cref{lem:geometry-lemma} (with an affine-linear isomorphism $\RR^{d}\to \mc W-\vec x$),
 the intersection $(\mathcal{Z}-\vec{x})\cap(\mathcal{Z}-\vec{y})$
is a quadric on a $d$-dimensional affine-linear subspace $\mathcal{W}_{\vec{x},\vec{y}}\subsetneq \tilde{\mathcal{W}}+\vec w_{\vec{x},\vec{y}}$ for some $\vec w_{\vec{x},\vec{y}}\in \RR^r$.
The next step is to split our random variable into two parts and use
the decoupling lemma (\cref{lem:decoupling}) to relate our probability $\Pr[\vec{X}\in\mathcal{Z}]$
to probabilities that certain random variables lie in quadrics
of the form $(\mathcal{Z}-\vec{x})\cap(\mathcal{Z}-\vec{y})$ (these probabilities can then be bounded via the induction hypothesis).

Let $[n]=I\cup J$ be a partition of the index set $[n]$ into two subsets $I,J$ such that one can form $t/2$ disjoint bases from the vectors $\vec a_i$ for $i\in I$, and one can also form $t/2$ disjoint bases from the vectors $\vec a_j$ for $j\in J$. Let $\vec{X}_{I}=\sum_{i\in I}\xi_{i}\vec{a}_{i}$
and let $\vec{X}_{J}=\sum_{j\in J}\xi_{j}\vec{a}_{j}$
(so $\vec{X}=\vec{X}_I+\vec{X}_J$) and let $\vec{X}_I'$ be
an independent copy of the random variable $\vec{X}_I$. By decoupling (\cref{lem:decoupling}) we have
\begin{align}
\Pr[\vec{X}\in\mathcal{Z}]^{2} & \le\Pr[\vec{X}_I+\vec{X}_J\in\mathcal{Z}\text{ and }\vec{X}_I'+\vec{X}_J\in\mathcal{Z}]\nonumber \\
 & =\Pr[\vec{X}_J\in(\mathcal{Z}-\vec{X}_I)\cap(\mathcal{Z}-\vec{X}_I')\text{ and }\vec{X}_I\ne\vec{X}_I']+\Pr[\vec{X}_I+\vec{X}_J\in\mathcal{Z}\text{ and }\vec{X}_I'=\vec{X}_I].\label{eq:geom-decoupling}
\end{align}

\textbf{Step 4: Dealing with degenerate intersection.} We now study the second term in \cref{eq:geom-decoupling} (which can be
thought of as a lower-order ``error term'' corresponding to the possibility
that decoupling does not actually reduce the dimension of our problem).

For any outcome of $\vec{X}_I$, we have $\Pr[\vec{X}_I'=\vec{X}_I\,|\,\vec{X}_I]\le(t/2)^{-r/2}$
by Hal\'asz' theorem (\cref{cor:halasz}, taking  $\mathcal{W}$ to be a single point). So, 
\begin{equation}
\Pr[\vec{X}_I+\vec{X}_J\in\mathcal{Z}\text{ and }\vec{X}_I'=\vec{X}_I]\le \Pr[\vec{X}_I+\vec{X}_J\in\mathcal{Z}]\cdot (t/2)^{-r/2}=\Pr[\vec{X}\in\mathcal{Z}]\cdot (t/2)^{-r/2}.\label{eq:geom-error-term}
\end{equation}

\textbf{Step 5: Inductively bounding the main term.} Now we deal with the first term in \cref{eq:geom-decoupling}. Recalling that $\tilde{\mathcal{W}}\su \RR^r$ is the $(d+1)$-dimensional linear subspace parallel to $\mathcal{W}$, note that it is impossible to have
$\vec{X}_J\in(\mathcal{Z}-\vec{X}_I)\cap(\mathcal{Z}-\vec{X}_I')$
if $\vec{X}_I-\vec{X}_I'\not\in\tilde{\mathcal{W}}$ (since then $(\mathcal{Z}-\vec{X}_I)\cap(\mathcal{Z}-\vec{X}_I')\su (\mathcal{W}-\vec{X}_I)\cap(\mathcal{W}-\vec{X}_I')=\emptyset$). So, we
combine our induction hypothesis with a bound on the probability that
$\vec{X}_I-\vec{X}_I'\in\tilde{\mathcal{W}}$.

For all outcomes of $\vec{X}_I'$, by Hal\'asz'
theorem (\cref{cor:halasz}) we have
\[\Pr[\vec{X}_I-\vec{X}_I'\in\tilde{\mathcal{W}}\,|\,\vec{X}_I']=\Pr[\vec{X}_I\in\tilde{\mathcal{W}}+\vec{X}_I'\,|\,\vec{X}_I']\le(t/2)^{-(r-d-1)/2}.
\]
Also, for any outcomes of $\vec{X}_I,\vec{X}_I'$ such that $\vec{X}_I\ne \vec{X}_I'$ and $\vec{X}_I-\vec{X}_I'\in\tilde{\mathcal{W}}$, by the discussion in Step 3, the intersection $(\mathcal{Z}-\vec{X}_I)\cap(\mathcal{Z}-\vec{X}_I')$ is a quadric on a $d$-dimensional affine-linear subspace $\mathcal{W}_{\vec{X}_I,\vec{X}_I'}\su \RR^r$. So by our induction hypothesis we have
\[\Pr[\vec{X}_J\in(\mathcal{Z}-\vec{X}_I)\cap(\mathcal{Z}-\vec{X}_I')\,|\,\vec{X}_I,\vec{X}_I']\le\frac{2^{(d-1)r+1}}{(t/2)^{(r-d+1)/2}}.\]
Thus, we obtain
\begin{equation}\label{eq:geom-main-term}
    \Pr[\vec{X}_J\in(\mathcal{Z}-\vec{X}_I)\cap(\mathcal{Z}-\vec{X}_I')\text{ and }\vec{X}_I\ne\vec{X}_I']\le (t/2)^{-(r-d-1)/2}\cdot\frac{2^{(d-1)r+1}}{(t/2)^{(r-d+1)/2}}=\frac{2^{(d-1)r+1}}{(t/2)^{r-d}}\le\frac{2^{dr+1}}{t^{r-d}}.
\end{equation}

\textbf{Step 6: Concluding.} We can now deduce from (\ref{eq:geom-decoupling}), (\ref{eq:geom-error-term}) and (\ref{eq:geom-main-term})
that
\[\Pr[X\in\mathcal{Z}]^{2} \le\Pr[X\in\mathcal{Z}]\cdot(t/2)^{-r/2}+\frac{2^{dr+1}}{t^{r-d}}=\Pr[X\in\mathcal{Z}]\cdot \frac{2^{r/2}}{t^{r/2}}+\frac{2^{dr+1}}{t^{r-d}}.\]
So, by \cref{lem:simple-inequality} we have (also using that $d\ge 1$ and $r\ge d+1\ge 2$)
\[
\Pr[X\in\mathcal{Z}]\le \frac{2^{r/2}}{t^{r/2}}+\frac{2^{(dr+1)/2}}{t^{(r-d)/2}}\le\frac{2^{dr+1}}{t^{(r-d)/2}},
\]
as desired.
\end{proof}

\section{Proof strategy for the general case}
Now we turn to the general (not necessarily low-rank) case of \cref{thm:main-shiny}. Actually, we consider the following variation on \cref{thm:main-shiny}, with a slightly more technical assumption on $Q$ (namely, we need to assume that the \emph{quadratic part} of $Q$ ``robustly depends on many different variables''). This assumption is very similar to assumptions in some previous Littlewood--Offord-type theorems~\cite{MNV16,RV13}.

\begin{theorem}\label{thm:main}
Let $Q\in\RR[x_{1},\dots,x_{n}]$ be a multivariate quadratic polynomial with quadratic part $\vec x^\transpose A\vec x$ for some symmetric matrix $A\in \RR^{n\times n}$. Let $s\ge 1$ be an integer, and assume that for every subset $S\subseteq[n]$ with $|S|\ge n-s$, the submatrix $A[S\times S]$ has at least one nonzero entry outside its diagonal. Then for a sequence $\vec{\xi}\in\{-1,1\}^{n}$ of independent Rademacher random variables, we have
\[
\Pr[Q(\vec{\xi})=0]\le\frac{C'}{\sqrt{s}},
\]
for some absolute constant $C'$.
\end{theorem}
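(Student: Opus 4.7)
The plan is to implement the inductive decoupling scheme outlined in \cref{subsec:decoupling-scheme}, using the witness-counting machinery from \cref{subsec:witness-count} to maintain robust rank conditions across iterations, and terminating via the bounded-rank result \cref{thm:quadratic-geometric}. Concretely, I would fix $k=\Theta(\log\log s)$ and construct nested partitions $[n]=I^{(1)}\cup J^{(1)}$, $J^{(1)}=I^{(2)}\cup J^{(2)}$, and so on, chosen so that at each level the hypothesis on $A$ (nonzero off-diagonal entry in every principal submatrix of size at least $n-s$) survives on $J^{(i)}$ with the robustness parameter decaying by at most a constant factor per level. Recursively setting $\mc Z^{(0)}=\zl(Q)$ and
\[\mc Z^{(i)}=\mc Z^{(i-1)}_{\vec\xi[I^{(i)}]}\cap \mc Z^{(i-1)}_{\pvec\xi[I^{(i)}]}\su \RR^{J^{(i)}},\]
$k$-fold iteration of \cref{lem:decoupling} yields $\Pr[Q(\vec\xi)=0]^{2^k}\le \Pr\bigl[\vec\xi[J^{(k)}]\in \mc Z^{(k)}\bigr]$. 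Typically $\mc Z^{(i)}$ sits inside an affine-linear subspace $\mc W^{(i)}\su \RR^{J^{(i)}}$ of codimension $2^i-1$ (cut out by the successive ``symmetric differences'' of $Q$ obtained through decoupling), together with a single residual quadratic equation.

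The technical heart is a robust-rank-inheritance lemma asserting that, with conditional probability at least $1-O(s^{-r})$ for arbitrarily large fixed $r$ (over the randomness of the $\vec\xi[I^{(j)}]$ and $\pvec\xi[I^{(j)}]$ for $j\le i$), the $(2^i-1)\times J^{(i)}$ matrix defining $\mc W^{(i)}$ lies in $\mathcal{H}^{(2^i-1)\times J^{(i)}}(cs)$ for a fixed constant $c>0$. A naive union bound over bad sets of columns would be exponential in $s$ and far too lossy; instead, one applies the witness-counting strategy of \cref{lem:baby-key}, where bad events correspond to a matrix $A\vec\xi$ agreeing with a fixed vector on almost all coordinates, and these are controlled by summing, over small ``witness'' subsets of coordinates of suitably chosen size, the probabilities supplied by Hal\'asz' inequality (\cref{thm-Halasz-FJZ}) and then using Markov's inequality. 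Once robust rank is secured, \cref{cor:halasz-sub-version} furnishes a bound $O(s^{-1/2})$ for the conditional probability that $\vec\xi[I^{(i)}]-\pvec\xi[I^{(i)}]$ lies in the offset affine-linear subspace required to make $\mc W^{(i-1)}_{\vec\xi[I^{(i)}]}\cap \mc W^{(i-1)}_{\pvec\xi[I^{(i)}]}$ nonempty, contributing an extra factor $O(s^{-1/2})$ per level.

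After $k=\Theta(\log\log s)$ levels, the residual quadratic in $\mc Z^{(k)}$ can be handled either by simply discarding it (bounding $\Pr[\vec\xi[J^{(k)}]\in \mc Z^{(k)}]\le \Pr[\vec\xi[J^{(k)}]\in \mc W^{(k)}]\le O(s^{-(2^k-1)/2})$ via \cref{cor:halasz-sub-version}) or, to save the crucial last constant factor, by projecting to the low-dimensional image of its quadratic part and invoking \cref{thm:quadratic-geometric}. Taking the $2^k$-th root of the accumulated bound, and absorbing the lower-order ``degenerate intersection'' error terms exactly as in Step~4 of the proof of \cref{thm:quadratic-geometric} using \cref{lem:simple-inequality}, produces the target $\Pr[Q(\vec\xi)=0]=O(1/\sqrt{s})$; the doubly-exponential losses are tamed because $2^k=\on{poly}(\log s)$ is overwhelmed by the polynomial improvement per level. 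I expect the principal obstacle to be the robust-rank-inheritance step: one must choose the scale of ``witness'' subsets so that they are simultaneously numerous enough for a useful Markov bound, small enough for Hal\'asz' inequality to apply at the desired power of $s$, and compatible with the combinatorial ``nonzero off-diagonal entry'' hypothesis on $A$; coordinating these choices consistently across all $k$ coupled decoupling levels is where the bulk of the technical work will concentrate.
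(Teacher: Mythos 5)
Your high-level plan reproduces the heuristic outline of \cref{subsec:decoupling-scheme}, but it misses the two places where the actual difficulty lies, and its quantitative bookkeeping does not close. First, the nondegeneracy you propose to propagate -- ``a nonzero off-diagonal entry of $A$ in every large principal submatrix, surviving on $J^{(i)}$'' -- is not the right condition and cannot be arranged by choosing the nested partitions in advance. What must be controlled is degeneracy of $Q$ \emph{relative to the random linear constraints accumulated so far}: for $Q(\vec x)=(x_1+\dots+x_{n/2})(x_{n/2+1}+\dots+x_n)$ and the constraint $x_1+\dots+x_{n/2}=0$, the hypothesis on $A$ holds on every large principal submatrix, yet $Q$ vanishes identically on the constraint subspace (\cref{eq:example-perturbation}). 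The paper handles this by formulating the induction for the quantity $f(k,s)$ with the $(M,M)$-perturbation condition ($*$) of \cref{def:function-f}, and, crucially, by running a separate low-rank branch at \emph{every} level (via \cref{lem:matrix-splitting}, \cref{prop:low-rank-case} and \cref{thm:quadratic-geometric}) whenever perturbations of $A$ fail to robustly have rank $\ge 2$ relative to the current constraint matrix. In your plan \cref{thm:quadratic-geometric} appears only as an optional last-step refinement, and there is no mechanism at all for the degenerate/low-rank case, which is exactly when your rank-inheritance step has no hypothesis to work with.

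Second, the inheritance lemma you describe is both the wrong statement and quantitatively too weak. The bad event is not that $A\vec\xi$ agrees with a \emph{fixed} vector on most coordinates (the setting of \cref{lem:baby-key}), but that $A[J\times I](\vec\xi[I]-\pvec\xi[I])$ agrees on most coordinates with some unknown vector in the row span of the current constraint matrix; reducing to a fixed vector is part of the content of \cref{key-lemma-corollary} and needs the joint $\mathcal M_2$-structure of $(T,U,A)$. Moreover a failure probability $O(s^{-r})$ for a fixed (however large) $r$ cannot suffice: the main term at level $k$ is of order $s^{-(k+1)/2}$ and $k=\Theta(\log\log s)\to\infty$, so the exponent of the error must grow with $k$. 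The paper achieves this by bounding the rank-failure event \emph{jointly} with the codimension-$k$ coincidence event, obtaining exponent $(k+2)/2$ (and the ``$+2$'' is exactly what makes the recursion in \cref{thm:recursion} close). Relatedly, your accounting is internally inconsistent: after $i$ levels only about $i$ of the accumulated linear constraints act on $\vec\xi[J^{(i)}]$ (their linear parts are spanned by the $i$ vectors $A[J^{(i)}\times I^{(j)}](\vec\xi[I^{(j)}]-\pvec\xi[I^{(j)}])$), so $\mc W^{(i)}$ has codimension about $i$, not $2^i-1$; the remaining constraints are coincidence conditions on already-exposed variables, and at level $i$ they form a codimension-$\Theta(i)$ event contributing a factor $s^{-\Theta(i)/2}$, not the single factor $O(s^{-1/2})$ per level you claim. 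With your stated factors the exponents do not sum to the required $(2^k-1)/2$ before taking the $2^k$-th root, so the final bound would degenerate; the recursion of \cref{thm:recursion}, unrolled in \cref{cor:induction}, is precisely the correct version of this bookkeeping.
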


The reason that the assumption in \cref{thm:main} only pays attention to the nondiagonal entries of $A$ is that the diagonal entries correspond to square terms of the form $x_i^2$. If $x_i\in \{-1,1\}$ then $x_i^2$ is always equal to 1, so such square terms can be treated as constants (and therefore they ``do not really contribute'' to the quadratic part of $Q$).

In \cref{sec:arbitrary-distributions} we will show how to deduce \cref{thm:main-shiny} (and \cref{thm:general-distributions} for general distributions) from \cref{thm:main}. For most of the rest of the paper, we will focus on proving \cref{thm:main}.

At a high level, the strategy to prove \cref{thm:main}
is similar to the proof of \cref{thm:quadratic-geometric}: in order to estimate the probability $\Pr[Q(\vec{\xi})=0]$ for a given quadratic polynomial $Q\in \RR[x_1,\dots,x_n]$, we inductively estimate probabilities of the form $\Pr[Q(\vec{\xi})=0\text{ and }M\vec{\xi}=\vec{w}]$ for a given matrix $M\in \RR^{k\times n}$ and a given vector $\vec{w}\in \RR^k$ (i.e., we estimate the probability that $\vec{\xi}$ lies in a given quadric on a given affine-linear subspace). However, there are additional difficulties in comparison to the proof of \cref{thm:quadratic-geometric} in the previous section.

Most importantly, recall that in \cref{thm:quadratic-geometric} we worked with a random vector $\xi_1\vec a_1+\dots \xi_n\vec a_n$ which has ``a lot of anticoncentration in each direction'' (we assumed that there are many disjoint bases among the vectors $\vec a_1,\dots,\vec a_n$). This was only possible since the dimension of our space was much less than $n$: to be precise, that proof approach can only obtain bounds of the form $O(1/\sqrt n)$ when we have $\Omega(n)$ disjoint bases among the vectors $\vec a_1,\dots,\vec a_n$, which is only possible when we are working in $O(1)$-dimensional space.

In the proof of \cref{thm:main} we work directly with the random vector $\vec \xi\in \{-1,1\}^n$ in $n$-dimensional space. We cannot ensure good anticoncentration in all directions, so we need to restrict the affine-linear subspaces we can consider. Specifically, we prove bounds on $\Pr[Q(\vec{\xi})=0\text{ and }M\vec{\xi}=\vec{w}]$ only when the matrix $M$ satisfies a Hal\'asz-type robust rank condition (as in \cref{def:Halasz}). This means that we now need to maintain this robust rank condition as $M$ changes over the course of the induction.

Also, we encounter much more delicate nondegeneracy issues than in the proof of \cref{thm:quadratic-geometric}. In particular, even if $Q$ satisfies the nondegeneracy condition in \cref{thm:main} (robustly depending on many variables), it may become very degenerate when restricted to a subspace of the form $\{\vec x :M\vec x=\vec w\}$ for a matrix $M\in \RR^{k\times n}$ and a vector $\vec{w}\in \RR^k$. 
For example, consider the case where $n$ is divisible by 2 and $Q\in \mb R[x_1,\dots,x_n]$, $M\in \mb R^{1\times n}$ and $\vec w\in\mb R^1$ are defined by
\begin{equation}Q(\vec x)=(x_1+\dots+x_{n/2})(x_{n/2+1}+\dots+x_{n}),\quad\quad M=(1,\dots,1,0,\dots,0),\quad\quad \vec w=0\label{eq:example-perturbation}\end{equation}
(where the first $n/2$ entries of $M\in \mb R^{1\times n}$ are ``$1$'' and the last $n/2$ entries are ``$0$''). In this case $Q$ is always zero on the subspace $\{\vec x\in \mb R^n:M\vec x=\vec w\}$. So, in order to be able to obtain a sensible bound, we need to ensure that $Q$ satisfies a nondegeneracy condition \emph{with respect to $M$}.

In the rest of this section we describe the strategy of the proof of \cref{thm:main} in a bit more detail, stating several key lemmas and definitions along the way. First, we elaborate on the ``nondegeneracy with respect to $M$'' condition mentioned above. To formulate this condition (as well as other similar conditions appearing later in the proof), we define the notion of a \emph{$(T,U)$-perturbation}, as follows.

\begin{definition}\label{def:perturbation}
For matrices $A\in\RR^{n\times m}$ and $T\in \RR^{k\times m}$ and $U\in\RR^{k\times n}$, a \emph{$(T,U)$-perturbation of $A$} is a matrix $A'\in \RR^{n\times m}$ of the form $A'=A+LT+U^{\transpose}R$ for some matrices $L\in\RR^{n\times k}$
and $R\in\RR^{k\times m}$, i.e., some matrix that can be obtained from $A$ by adding linear combinations of rows of $T$ to its rows, and adding linear combinations of rows of $U$ to its columns.
\end{definition}

As the degenerate $k=0$ case of the above definition, note that if $T\in \mb R^{0\times m}$ and $U\in \mb R^{0\times n}$ are ``empty matrices'', then $A'\in \RR^{n\times m}$ is a $(T,U)$-perturbation of $A$ if and only if $A'=A$.

For $Q,M,\vec w$ as defined in the example in \cref{eq:example-perturbation}, we can write $Q(\vec x)=\vec x^\transpose A \vec x$ for a symmetric matrix $A\in \RR^{n\times n}$ (with a block structure where the bottom left block and the top right block, each of size $(n/2)\times (n/2)$, have all entries being $1/2$, and all entries outside these blocks are $0$). It turns out that this matrix $A$ is an $(M,M)$-perturbation of the zero matrix in $\RR^{n\times n}$. Roughly speaking, this is the reason for the degenerate behaviour of $Q$ on the subspace $\{\vec x :M\vec x=\vec w\}$. In general, the notion of an $(M,M)$-perturbation gives a condition under which two $n\times n$ matrices give rise to quadratic polynomials which are ``essentially the same'' on an affine-linear subspace of the form  $\{\vec x :M\vec x=\vec w\}$, as follows.

\begin{lemma}\label{lem:perturbation}
    Fix a matrix $M\in \RR^{k\times n}$, a vector $\vec w\in \RR^k$, and a quadratic polynomial $Q\in \RR[x_1,\dots,x_n]$ with quadratic part $\vec x^\transpose A\vec x$ (where $A\in \RR^{n\times n}$).
    
    Consider an $(M,M)$-perturbation $A'$ of $A$, and let $A^*\in \RR^{n\times n}$ be a matrix which agrees with $A'$ on its off-diagonal entries. Then, there is a quadratic polynomial $Q^*\in [x_1,\dots,x_n]$ with quadratic part $\vec x^\transpose A^*\vec x$ such that $Q^*(\vec{\xi})=Q(\vec{\xi})$ for all $\vec{\xi}\in \{-1,1\}^n$ with $M\vec{\xi}=\vec{w}$. In particular, for a sequence $\vec{\xi}\in\{-1,1\}^{n}$ of independent Rademacher random variables, we have
    \[\Pr[Q(\vec{\xi})=0\text{ and }M\vec{\xi}=\vec{w}]=\Pr[Q^*(\vec{\xi})=0\text{ and }M\vec{\xi}=\vec{w}].\]
\end{lemma}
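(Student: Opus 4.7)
The plan is to unpack the definition of an $(M,M)$-perturbation and observe that its ``correction terms'' become linear on $\{\vec x:M\vec x=\vec w\}$ and then constant on $\{-1,1\}^n$. By \cref{def:perturbation} we can write $A' = A + LM + M^\transpose R$ for some $L\in \RR^{n\times k}$ and $R\in \RR^{k\times n}$, so
\[
\vec x^\transpose A'\vec x - \vec x^\transpose A\vec x \;=\; \vec x^\transpose LM\vec x + \vec x^\transpose M^\transpose R\vec x \;=\; (\vec x^\transpose L)(M\vec x) + (M\vec x)^\transpose R\vec x.
\]
The crucial observation is that $M\vec x$ appears as a factor in both remaining terms, so on the affine subspace $\{\vec x : M\vec x = \vec w\}$ this difference collapses to the linear form $(L\vec w)^\transpose\vec x + \vec w^\transpose R\vec x$.

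Using this, I would first define
\[
\widetilde Q(\vec x) := Q(\vec x) + \vec x^\transpose(LM+M^\transpose R)\vec x - (L\vec w)^\transpose \vec x - \vec w^\transpose R\vec x,
\]
which is a quadratic polynomial whose quadratic part is exactly $\vec x^\transpose A'\vec x$, and which satisfies $\widetilde Q(\vec x) = Q(\vec x)$ for every $\vec x$ with $M\vec x = \vec w$ (by the previous display).

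Second, I would bridge the gap between $A'$ and $A^*$ using the Rademacher constraint. Since $A^*$ agrees with $A'$ off the diagonal, the difference $D := A^* - A'$ is diagonal; writing its diagonal entries as $d_1,\dots,d_n$, we get $\vec x^\transpose D\vec x = \sum_i d_i x_i^2$, and on $\{-1,1\}^n$ this equals the constant $\sum_i d_i$. Hence setting
\[
Q^*(\vec x) := \widetilde Q(\vec x) + \vec x^\transpose D\vec x - \sum_{i=1}^n d_i
\]
gives a quadratic polynomial whose quadratic part is $\vec x^\transpose A'\vec x + \vec x^\transpose D\vec x = \vec x^\transpose A^*\vec x$, and which satisfies $Q^*(\vec \xi) = \widetilde Q(\vec \xi) = Q(\vec \xi)$ for every $\vec \xi\in\{-1,1\}^n$ with $M\vec \xi = \vec w$. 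The probability identity then follows immediately, since the events $\{Q(\vec\xi)=0,\,M\vec\xi=\vec w\}$ and $\{Q^*(\vec\xi)=0,\,M\vec\xi=\vec w\}$ coincide pointwise.

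The argument is essentially bookkeeping, so there is no serious obstacle; the only conceptual point worth stressing is the clean two-stage reduction, where the linear constraint $M\vec\xi=\vec w$ absorbs the off-diagonal perturbation $LM+M^\transpose R$ into a linear (hence trivially adjustable) correction, and the Rademacher constraint $\vec\xi\in\{-1,1\}^n$ then absorbs the remaining diagonal discrepancy into a constant. This is exactly what makes the notion of an $(M,M)$-perturbation, together with the freedom to change the diagonal of the quadratic matrix, the natural equivalence relation on quadratic polynomials ``relative to $M\vec\xi=\vec w$''.
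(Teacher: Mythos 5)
Your proof is correct and follows essentially the same route as the paper: write $A'=A+LM+M^\transpose R$, absorb the cross terms into a linear correction using $M\vec x=\vec w$, and absorb the diagonal discrepancy into a constant using $\xi_i^2=1$. The paper performs both corrections in a single step (defining $Q^*$ directly from $A^*=A+LM+M^\transpose R+D$), while you split it into two stages, but the computation and the resulting polynomial are the same.
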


\begin{proof}[Proof]
Let $A^*=A'+D=A+LM+M^{\transpose}R+D$ for some matrices $L\in\RR^{n\times k}$ and $R\in\RR^{k\times n}$ and a diagonal matrix $D\in \RR^{n\times n}$, and let $\lambda_1,\dots,\lambda_n\in \RR$ be the diagonal entries of $D$. Then
\[\vec x^\transpose A^*\vec x=\vec x^\transpose A\vec x+\vec x^\transpose LM\vec x+\vec x^\transpose M^{\transpose}R\vec x+\vec x^\transpose D\vec x=\vec x^\transpose A\vec x+(M\vec x)^\transpose L^\transpose \vec x+(M\vec x)^\transpose R \vec x+(\lambda_1x_1^2+\dots+\lambda_nx_n^2).\]
Thus, for every $\vec{\xi}\in \{-1,1\}^n$ with $M\vec{\xi}=\vec{w}$, we have
\[\vec \xi^\transpose A^*\vec \xi=\vec \xi^\transpose A\vec \xi+\vec w^\transpose L^\transpose \vec \xi+\vec w^\transpose R \vec x+(\lambda_1\xi_1^2+\dots+\lambda_n\xi_n^2)=\vec \xi^\transpose A\vec \xi+\vec w^\transpose (L^\transpose+R) \vec \xi+(\lambda_1+\dots+\lambda_n).\]
Hence, we can define the desired quadratic polynomial $Q^*\in [x_1,\dots,x_n]$ with quadratic part $\vec x^\transpose A^*\vec x$ by
\[Q^*(\vec x)=Q(\vec x)+\vec x^\transpose A^*\vec x-\vec x^\transpose A\vec x-\vec w^\transpose (L^\transpose+R) \vec x-(\lambda_1+\dots+\lambda_n),\]
and we indeed have $Q^*(\vec{\xi})=Q(\vec{\xi})$ for all $\vec{\xi}\in \{-1,1\}^n$ with $M\vec{\xi}=\vec{w}$.
\end{proof}

Now, recall that our strategy to prove \cref{thm:main} is to inductively upper-bound probabilities of the form $\Pr[Q(\vec{\xi})=0\text{ and }M\vec{\xi}=\vec w]$, assuming that $M$ satisfies a robust rank condition, and assuming a nondegeneracy condition on $Q$ with respect to $M$. It will be convenient to introduce some notation for the maximum possible such probability, as follows.

\begin{definition}\label{def:function-f}
For an integer $k\ge 0$ and real number $s\ge 0$, let us define 
\[f(k,s)=\sup_{(n,Q,M,\vec w)} \Pr[Q(\vec{\xi})=0\text{ and }M\vec{\xi}=\vec w],\]
where the supremum is taken over all quadruples $(n,Q,M,\vec w)$, where $n$ is a positive integer, $Q\in\RR[x_{1},\dots,x_{n}]$ is a quadratic polynomial, $M\in\mathcal{H}^{k\times n}(s)$ and $\vec w\in \RR^k$, such that the following condition holds:

\begin{compactitem}
    \item[($*$)] If we write the quadratic part of $Q(\vec{x})$ as $\vec{x}^{\transpose}A\vec{x}$ for a symmetric matrix $A\in \RR^{n\times n}$, then for every subset $S\subseteq[n]$ with $|S|\ge n-s$, and every $(M,M)$-perturbation $A'$ of $A$, the submatrix $A'[S\times S]$ has at least one nonzero entry outside the diagonal.
\end{compactitem}

For each such quadruple $(n,Q,M,\vec w)$, the probability above is taken with respect to a sequence of independent Rademacher random variables $\vec{\xi}\in\{-1,1\}^{n}$.
\end{definition}

Note that we always have $f(k,s)\le 1$, since $f(k,s)$ is defined as a supremum of certain probabilities (which are all at most $1$). Also note that for $0\le s'\le s$, we always have $f(k,s)\le f(k,s')$ (since the supremum in the definition of $f(k,s')$ is taken over a wider range of quadruples $(n,Q,M,\vec w)$ than for $f(k,s)$).

Note that for a polynomial $Q\in \RR[x_1,\dots,x_n]$ and an integer $s\ge 1$ as in \cref{thm:main}, condition ($*$) is satisfied for $k=0$, the empty matrix $M\in \RR^{0\times n}$ and the empty vector $\vec{w}\in \RR^0$. Indeed, writing the quadratic part of $Q$ as $\vec{x}^{\transpose}A\vec{x}$ for a symmetric matrix $A\in \RR^{n\times n}$, the only $(M,M)$ perturbation $A'$ of $A$ is $A'=A$, and by the assumption in \cref{thm:main} the matrix $A'[S\times S]=A[S\times S]$ has at least one nonzero entry outside the diagonal for every subset $S\subseteq[n]$ with $|S|\ge n-s$. 
Therefore we have (noting that the condition $M\vec \xi=\vec w\in \RR^0$ is vacuous)
\[\Pr[Q(\vec \xi)=0]=\Pr[Q(\vec{\xi})=0\text{ and }M\vec{\xi}=\vec w]\le f(0,s).\]
Thus, proving \cref{thm:main} amounts to showing that $f(0,s)\le C'/\sqrt{s}$ for some absolute constant $C'$.

Now, the following recursive upper bound on $f(k,s)$ is the main ingredient in our proof of \cref{thm:main}.

\begin{theorem}\label{thm:recursion}
    For any integer $k\ge 0$ and any real number $s>0$, we have
    \[f(k,s)\le \max\left\{s_*^{-(k+1)/2}, \quad s_*^{-(k+2)/2}+s_*^{-k/4}\cdot f(k+1,s_*)^{1/2}\right\},\]
    where $s_*=s/(k+2)^{500}$.
\end{theorem}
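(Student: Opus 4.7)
Fix a quadruple $(n,Q,M,\vec w)$ attaining (up to $\varepsilon$) the supremum in the definition of $f(k,s)$, and write the quadratic part of $Q$ as $\vec x^{\transpose}A\vec x$. The plan is to perform one decoupling step, transforming the problem into an instance naturally bounded by $f(k+1,s_*)$. The preparatory step is to choose a partition $[n]=I\cup J$ so that both $M[[k]\times I]\in\mathcal{H}^{k\times I}(\Omega(s))$ and $M[[k]\times J]\in\mathcal{H}^{k\times J}(\Omega(s))$ (such a partition exists by the robustness of $M$, losing only a bounded constant factor), and so that the off-diagonal block $A[I\times J]$, combined with the nondegeneracy condition~($*$) from \cref{def:function-f}, is inherited in a form that supports the recursion. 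Letting $\vec\xi'[I]$ be an independent copy of $\vec\xi[I]$, \cref{lem:decoupling} yields
\[
\Pr\!\left[Q(\vec\xi)=0\text{ and }M\vec\xi=\vec w\right]^{2}\le\Pr\!\left[\mathcal{E}(\vec\xi[I],\vec\xi[J])\cap\mathcal{E}(\vec\xi'[I],\vec\xi[J])\right],
\]
where $\mathcal{E}(X,Y)=\{Q(X,Y)=0\}\cap\{M(X,Y)=\vec w\}$. For any fixed outcome of $(\vec\xi[I],\vec\xi'[I])$, the inner event is empty unless $M[[k]\times I](\vec\xi[I]-\vec\xi'[I])=0$; when nonempty, it reduces to $\{Q'(\vec\xi[J])=0\}\cap\{M'\vec\xi[J]=\vec w'\}$ where $Q'(\vec y)=Q(\vec\xi[I],\vec y)$ has quadratic part $\vec y^{\transpose}A[J\times J]\vec y$, and
\[
M'=\begin{pmatrix}M[[k]\times J]\\\vec v^{\transpose}\end{pmatrix},\qquad\vec v=2\,A[I\times J]^{\transpose}(\vec\xi[I]-\vec\xi'[I]),
\]
with $\vec w'\in\RR^{k+1}$ the corresponding right-hand side.

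The crux is to establish, for ``most'' outcomes of $(\vec\xi[I],\vec\xi'[I])$ satisfying $M[[k]\times I](\vec\xi[I]-\vec\xi'[I])=0$, that $M'\in\mathcal{H}^{(k+1)\times J}(s_*)$ and that condition~($*$) transfers to $(Q',M')$. Both are statements about how the new random row $\vec v$ interacts with the structure of $M[[k]\times J]$ and the quadratic form $A[J\times J]$; I would prove them using (a generalisation of) the witness-counting method exemplified by \cref{lem:baby-key}, which controls the probability that $\vec v$ (jointly with $M[[k]\times J]$) fails the required robust rank condition by at most $O(s_*^{-(k+2)/2})$. On the ``good'' complementary event the conditional probability is at most $f(k+1,s_*)$, while the marginal probability of the outer constraint $M[[k]\times I]\vec\xi[I]=M[[k]\times I]\vec\xi'[I]$ is bounded by $O(s_*^{-k/2})$ via \cref{cor:halasz-sub-version}. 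The ``bad'' outcomes contribute at most $f(k,s)\cdot s_*^{-(k+2)/2}$, so combining gives
\[
f(k,s)^{2}\le f(k,s)\cdot s_*^{-(k+2)/2}+s_*^{-k/2}\cdot f(k+1,s_*),
\]
and \cref{lem:simple-inequality} then produces $f(k,s)\le s_*^{-(k+2)/2}+s_*^{-k/4}f(k+1,s_*)^{1/2}$.

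The alternative branch $f(k,s)\le s_*^{-(k+1)/2}$ is the fallback applicable in structurally degenerate cases, for instance when the desired partition $I\cup J$ cannot be constructed (e.g., $n$ is too small relative to $s$), or when $\vec v$ is robustly always dependent on the rows of $M[[k]\times J]$; in such situations one uses condition~($*$) on $A$ to extract a single additional linear equation from the quadratic part and applies \cref{cor:halasz-sub-version} directly to the resulting $(k+1)$-row linear system. The main obstacle I foresee is the robust rank inheritance itself: showing that $\vec v=2\,A[I\times J]^{\transpose}(\vec\xi[I]-\vec\xi'[I])$ typically lies in sufficiently general position relative to the rows of $M[[k]\times J]$ that $M'\in\mathcal{H}^{(k+1)\times J}(s_*)$. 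This demands much more than bounding $\Pr[\vec v=\vec 0]$ (which would follow from Hal\'asz applied to $A[I\times J]$); one needs uniform control over all possible bad row/column deletions of $M'$, which is precisely the motivation for the witness-counting technique and the reason for the generous polynomial loss $(k+2)^{500}$ hidden in $s_*$.
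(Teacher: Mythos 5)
Your main branch reproduces the paper's argument essentially verbatim: the same decoupling step, the same reduction of the conditional event to a quadruple with $k+1$ linear constraints, the same use of a witness-counting robust-rank-inheritance bound of order $s_*^{-(k+2)/2}$ for the joint event that the kernel condition holds but the new matrix fails the Hal\'asz condition, the bound $s_*^{-k/2}$ from \cref{cor:halasz-sub-version} for the kernel condition, and the final application of \cref{lem:simple-inequality}. Up to the fact that you defer the inheritance lemma (which the paper proves as \cref{key-lemma-corollary}), this part is sound.

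The genuine gap is the other branch of the maximum, $s_*^{-(k+1)/2}$, and the dichotomy that produces it. In the paper this branch is not a fallback for ``$n$ too small'' or ``$\vec v$ robustly dependent on the rows of $M[[k]\times J]$''; it is the case where some matrix obtained from an $(M,M)$-perturbation of $A$ by changing diagonal entries has rank at most $2$ on a large principal submatrix. In that case your proposed remedy --- ``extract a single additional linear equation from the quadratic part and apply \cref{cor:halasz-sub-version} to a $(k+1)$-row linear system'' --- does not work: the event $Q(\vec\xi)=0$ is genuinely quadratic and is not contained in an affine hyperplane (think of $Q$ with quadratic part of rank $2$, e.g.\ a product of two independent linear forms), so no extra linear row is available, and condition ($*$) only guarantees a nonzero off-diagonal entry, not a usable linear constraint. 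Obtaining the exponent $(k+1)/2$ there requires anticoncentration for a \emph{quadric on a codimension-$k$ affine subspace} with bounded-rank quadratic part, which is exactly \cref{prop:low-rank-case}, resting on \cref{lem:perturbation} and the geometric \cref{thm:quadratic-geometric}; your proposal never invokes this machinery. Moreover, the complementary ``robust rank at least $3$'' assumption is not cosmetic: it is what lets \cref{lem:matrix-splitting} produce a partition with $(M[[k]\times I],M[[k]\times J],A[J\times I])\in\mathcal{M}_2^{k,I,J}(\cdot)$ --- the actual hypothesis of the inheritance lemma, which your weaker requirement that both $M[[k]\times I]$ and $M[[k]\times J]$ lie in $\mathcal H^{k\times\cdot}(\Omega(s))$ does not supply --- and it is what makes condition ($*$) transfer to the new quadruple \emph{deterministically}: since $M_{\vec\xi[I],\pvec\xi[I]}$ has only one row more than $M[[k]\times J]$, an $(M',M')$-perturbation plus diagonal changes can lower the rank of $A[J\times J]$ on a large principal submatrix by at most $2$, leaving rank at least $1$ and hence a nonzero off-diagonal entry. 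Your plan to establish both the Hal\'asz condition for $M'$ and the transfer of ($*$) probabilistically ``for most outcomes'' by witness counting has no identified hypothesis to run on (witness counting controls the appended random row relative to $M[[k]\times J]$, not the nondegeneracy of $A[J\times J]$ modulo $(M',M')$-perturbations), so without the low-rank/high-rank dichotomy and the bounded-rank geometric input the recursion cannot be closed.
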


Using this recursive bound, we can obtain an upper bound for $f(k,s)$ inductively (the formula for this upper bound is rather complicated, so we postpone this calculation to \cref{sec:calculation-deduction}). With another straightforward (though somewhat technical) calculation, one can then show that in the $k=0$ case this upper bound implies \cref{thm:main}. This latter calculation can also be found in \cref{sec:calculation-deduction}.

To prove \cref{thm:recursion}, we need to upper-bound $\Pr[Q(\vec{\xi})=0\text{ and }M\vec{\xi}=\vec w]$ for
$Q,M,\vec w$
satisfying the conditions in \cref{def:function-f}, in terms of a probability of the same form but with slightly different parameters (most importantly, with ``$k+1$'' in place of ``$k$''). To do so, we use our inductive decoupling scheme outlined in \cref{subsec:decoupling-scheme}: 
we consider a partition $[n]=I\cup J$ of the index set into two parts $I$ and $J$, and, using the decoupling inequality in \cref{lem:decoupling}, we will obtain an upper bound on $\Pr[Q(\vec{\xi})=0\text{ and }M\vec{\xi}=\vec w]$ involving conditional probabilities of the form
\begin{equation}\Pr\Bigl[Q_{\vec \xi[I]}(\vec{\xi}[J])=0\text{ and }M_{\vec \xi[I],\pvec{\xi}[I]}\vec{\xi}[J]=\vec w_{\xi[I],\pvec\xi[I]}\,\Big|\,\xi[I],\pvec{\xi}[I]\Bigr],\label{eq:conditioning-preview}\end{equation}
where $M_{\vec \xi[I],\pvec{\xi}[I]}\in \RR^{(k+1)\times J}$ is a $(k+1)\times |J|$ matrix depending on $\vec \xi[I]$ and $\pvec{\xi}[I]$, and $Q_{\vec \xi[I]}\in \RR[x_j: j\in J]$ is a quadratic polynomial whose coefficients depend on $\vec \xi[I]$, and $\vec w_{\xi[I],\pvec\xi[I]}\in \RR^{k+1}$ is a vector depending on $\vec \xi[I]$ and $\pvec{\xi}[I]$.

In our proof of \cref{thm:recursion}, we wish to upper-bound conditional probabilities of the form in \cref{eq:conditioning-preview} by $f(k+1,s_*)$.
Recalling \cref{def:function-f}, this requires that $M_{\vec \xi[I],\pvec{\xi}[I]}\in \mathcal H^{(k+1)\times n}(s_*)$ (i.e, that $M_{\vec \xi[I],\pvec{\xi}[I]}$ robustly has rank at least $k+1$). So, as previously outlined, an important ingredient in the proof is a ``robust rank inheritance'' lemma, which implies that this robust rank condition for $M_{\vec \xi[I],\pvec{\xi}[I]}$ holds with sufficiently high probability.

Now, the way in which $M_{\vec \xi[I],\pvec{\xi}[I]}$ is derived from $M$ depends on $Q$: specifically, one can check that $M_{\vec \xi[I],\pvec{\xi}[I]}$ is obtained from $M[[k]\times J]$ by adding the vector $2A[J\times I](\vec{\xi}[I]-\pvec{\xi}[I])$ as an additional row, where $\vec x^\transpose A \vec x$ is the quadratic part of $Q(\vec x)$. The statement of our robust rank inheritance lemma requires an assumption on $Q$; to specify this assumption we need another definition.

\begin{definition}\label{def:M-definition}
    For integers $r\ge 1$ and $0\le k\le m\le n$ and $s\ge 0$, let $\mathcal{M}_r^{k,m,n}(s)\subseteq\RR^{k\times m}\times\RR^{k\times n}\times\RR^{n\times m}$ be the set of triples of matrices $(T,U,A)\in \RR^{k\times m}\times\RR^{k\times n}\times\RR^{n\times m}$ for which there exist disjoint subsets
$I_{1},\dots,I_{s}\subseteq[m]$ and disjoint subsets $J_{1},\dots,J_{s}\subseteq[n]$ of size $|I_1|=\dots=|I_s|=|J_1|=\dots=|J_s|=k+r$ such that
\begin{compactitem}
    \item[(a)] For $t=1,\dots,s$, the submatrix $T[[k]\times I_t]$ has rank $k$.
    \item[(b)] For $t=1,\dots,s$, the submatrix $U[[k]\times J_t]$ has rank $k$. 
    \item[(c)] For $t=1,\dots,s$, every $(T[[k]\times I_t],U[[k]\times J_t])$-perturbation of the matrix $A[J_t\times I_t]$ has rank at least $r$.
\end{compactitem}
\end{definition}

\begin{remark}\label{rem:M-implies-H}
If $(T,U,A)\in \mathcal{M}_r^{k,m,n}(s)$ for any $r\ge 1$, then we automatically have $T\in \mathcal{H}^{k\times m}(s)$ and $U\in \mathcal{H}^{k\times n}(s)$. Also note that the property $(T,U,A)\in \mathcal{M}_r^{k,m,n}(s)$ is invariant under rescaling any of the matrices $T$, $U$ and $A$.
\end{remark}

In our proof of \cref{thm:recursion}, we consider a partition $[n]=I\cup J$ as outlined above, and take the matrix ``$A$'' in \cref{def:M-definition} to be the matrix $A[J\times I]$ together with $T=M[[k]\times I]$ and $U=M[[k]\times J]$. In this case,  $(M[[k]\times I],M[[k]\times J],A[J\times I])\in \mathcal{M}_r^{k,I,J}(s)$ says (roughly speaking) that $A'[J\times I]$ robustly
has rank at least $r$ for any $(M,M)$-perturbation $A'$ of $A$.

Now, our robust rank inheritance lemma is as follows.

\begin{lemma}\label{key-lemma-corollary}
Let $s\ge 1$ and $0\le k\le m\le n$ be integers. Consider $(T,U,A)\in\mathcal{M}_2^{k,m,n}(s)$, and vectors $\vec{y}\in\RR^{k}$
and $\vec{b}\in\RR^{n}$. Let $\vec{\xi}\in\{-1,1\}^{m}$
be a sequence of independent Rademacher random variables, and write $U_{\vec \xi}'\in \RR^{(k+1)\times n}$ for the (random) matrix obtained by appending the vector $A\vec{\xi}-\vec{b}$ as an additional row to $U$. Then we have
\[
\Pr[T\vec{\xi}=\vec{y}\text{ and }U_{\vec \xi}'\notin \mathcal H^{(k+1)\times n}(s/6)]\le \left(\frac{s}{10^{61}(k+2)^{20}}\right)^{-(k+2)/2}.
\]
\end{lemma}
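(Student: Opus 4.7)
The plan is to unpack the event $U'_{\vec{\xi}} \notin \mathcal{H}^{(k+1) \times n}(s/6)$ into a Hamming-closeness statement that can then be attacked by a witness-counting argument. Since $(T,U,A) \in \mathcal{M}_2^{k,m,n}(s)$ implies $U \in \mathcal{H}^{k\times n}(s)$ by \cref{rem:M-implies-H}, every submatrix $U[[k] \times J]$ with $|J| \ge n - s/6$ already has rank $k$. So $U'_{\vec\xi}\notin \mathcal{H}^{(k+1)\times n}(s/6)$ occurs if and only if there exist $\vec c \in \RR^k$ and $J \su [n]$ with $|J|\ge n-s/6$ such that $(A\vec\xi - \vec b - U^{\transpose}\vec c)[J]=\vec 0$, equivalently such that the vector $A\vec\xi - \vec b - U^{\transpose} \vec c$ has at most $s/6$ nonzero coordinates. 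The bad event is therefore a ``Hamming ball'' condition for $A\vec\xi - \vec b$ around the column space $\mathrm{Col}(U^{\transpose})\su\RR^n$, conjoined with the linear constraint $T\vec\xi = \vec y$.

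The next step is to use the $s$ disjoint pairs $(I_t, J_t)$ provided by $(T,U,A)\in \mathcal{M}_2^{k,m,n}(s)$. For each $t \in [s]$, call $t$ a \emph{witness} if there exists $\vec c_t \in \RR^k$ with $(A\vec\xi - \vec b - U^{\transpose} \vec c_t)[J_t] = \vec 0$. Pairwise disjointness of $J_1,\dots,J_s$ combined with the Hamming-distance bound forces at least $5s/6$ witnesses whenever the bad event occurs, so Markov's inequality reduces the problem to a per-witness estimate:
\[
\Pr\bigl[T\vec\xi = \vec y,\ U'_{\vec\xi}\notin \mathcal{H}^{(k+1)\times n}(s/6)\bigr] \le \frac{6}{5s}\sum_{t=1}^{s} \Pr\bigl[T\vec\xi = \vec y \text{ and } t \text{ is a witness}\bigr].
\]

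For the per-witness probabilities, one rewrites the event ``$t$ is a witness'' as the codimension-$2$ linear constraint $N_t\vec\xi = \vec w_t$, where $\alpha_t\colon \RR^{J_t} \to \RR^{J_t}/\mathrm{Row}(U[[k]\times J_t])$ is the (two-dimensional) quotient map and $N_t := \alpha_t \circ A[J_t\times [m]] \in \RR^{2\times m}$. Condition (c) in the definition of $\mathcal{M}_2^{k,m,n}(s)$ unpacks to say that the restriction of $N_t$ to $\ker T[[k]\times I_t]$ is an isomorphism onto $\RR^2$; consequently the $(k+2)\times m$ matrix $\tilde M_t$ obtained by stacking $T$ on top of $N_t$ is nonsingular on the columns $I_t$. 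One then wants to bound the per-witness probability $\Pr[\tilde M_t\vec\xi = \tilde{\vec y}_t]$ by the quantitative Halász inequality of Ferber--Jain--Zhao (\cref{thm-Halasz-FJZ} or \cref{cor:halasz-sub-version}), obtaining something of order $\bigl(s/(C(k+2)^{O(1)})\bigr)^{-(k+2)/2}$; substituting back into the Markov estimate then yields the target bound, with the constants $10^{61}$ and $(k+2)^{20}$ absorbing the resulting polynomial losses.

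The main obstacle, and the technical heart of the lemma, lies in this Halász step: one needs a genuine \emph{robust} rank condition on $\tilde M_t$, i.e., that $\tilde M_t$ contains $\Omega(s/(k+2)^{O(1)})$ disjoint nonsingular $(k+2)\times(k+2)$ submatrices, whereas the $\mathcal{M}_2$-hypothesis only directly hands us a single such block, on columns $I_t$. The resolution is to leverage the symmetric structure of the $s$ indices simultaneously: roughly, only a bounded fraction of the $I_{t'}$ with $t' \ne t$ can fail to yield a fresh nonsingular $(k+2)\times(k+2)$ block of $\tilde M_t$ without simultaneously violating the $\mathcal{M}_2$-type rank condition for some perturbation of $A$. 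Making this precise appears to require the full witness-counting / robust-rank machinery developed in \cref{sec:key-lemma} (of which \cref{lem:baby-key} is the prototype), invoked with $r=2$; carefully tracking the parameters through this reduction is what dictates the polynomial factor $(k+2)^{20}$ and the absolute constant $10^{61}$ appearing in the final bound.
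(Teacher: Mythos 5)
Your opening reductions are fine and match the paper: unpacking $U'_{\vec\xi}\notin\mathcal H^{(k+1)\times n}(s/6)$ as the existence of $\vec c\in\RR^k$ and $J$ with $|J|\ge n-s/6$ such that $(A\vec\xi-\vec b-U^{\transpose}\vec c)[J]=\vec 0$, and the averaging over the disjoint sets $J_t$ (at least $5s/6$ of which must be ``witnesses''). The gap is in the per-witness step. Once you weaken ``$t$ is a witness'' to the single codimension-$2$ condition $N_t\vec\xi=\vec w_t$ (quotienting by the row span of $U[[k]\times J_t]$), you need the stacked matrix $\tilde M_t$ to contain $\Omega(s/\mathrm{poly}(k))$ disjoint nonsingular $(k+2)\times(k+2)$ submatrices for \cref{thm-Halasz-FJZ} to give a bound of order $(s/\mathrm{poly}(k))^{-(k+2)/2}$. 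That robust-rank claim is false in general: the hypothesis $(T,U,A)\in\mathcal M_2^{k,m,n}(s)$ constrains only the diagonal blocks $A[J_t\times I_t]$ and says nothing about the cross blocks $A[J_t\times I_{t'}]$ for $t'\ne t$, which may all vanish. In that case $N_t$ is supported on the $k+2$ columns of $I_t$, so $\tilde M_t$ has at most $\lfloor (k+2)/2\rfloor$ disjoint nonsingular $(k+2)\times(k+2)$ blocks, and in concrete examples $\Pr[T\vec\xi=\vec y\text{ and }N_t\vec\xi=\vec w_t]$ is of order $2^{-O(k)}(s/k)^{-k/2}$; after summing over $t$ and dividing by $5s/6$ you only get roughly $s^{-k/2}$, missing the required exponent $(k+2)/2$ in $s$. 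The loss is already built into passing from ``at least $5s/6$ simultaneous witnesses'' to a union of single-witness events: in the block-diagonal example the bad event is exponentially unlikely precisely because the witness events are (essentially independent) conjunctions, and no single-witness bound can recover this. Your suggestion that the cross-indices $I_{t'}$ must supply fresh nonsingular blocks ``without violating the $\mathcal M_2$-type rank condition'' has no basis in the hypotheses, and invoking the machinery of \cref{sec:key-lemma} to repair the rank of $\tilde M_t$ is not what that machinery does.

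The paper's per-$t$ reduction keeps strictly more information. For each $t$ it picks a $k$-subset $J_t'\su J_t$ with $U[[k]\times J_t']$ nonsingular, normalises (adding multiples of rows of $U$ to the columns of $A$, which does not change the event) so that $A[J_t'\times[m]]=0$, and notes that then there is a \emph{unique} vector $\vec v_t$ in the row span of $U$ that can agree with $A\vec\xi-\vec b$ on any set containing $J_t$. The event retained for index $t$ is therefore the global Hamming-ball event that $A\vec\xi-\vec b-\vec v_t$ has at most $s/6$ nonzero coordinates, together with $T\vec\xi=\vec y$, and this is bounded by \cref{key-lemma} with $r=2$: the witness counting there runs over sequences of \emph{rows of $A$} (producing, via \cref{cor:halasz-sub-version} and \cref{lem:odlyzko}, the exponent $(k+2)/2$), not over the indices $t$. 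To repair your argument you would need to retain, for each witness $t$, the constraint on the coordinates outside $J_t$ as well -- i.e.\ pin down $\vec c_t$ via the nonsingular block $U[[k]\times J_t']$ and keep the full Hamming condition -- and then apply \cref{key-lemma} as the paper does.
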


We will prove \cref{key-lemma-corollary} in \cref{sec:key-lemma}. We remark that on the right-hand side of the inequality, both the ``$+2$'' in
the exponent $(k+2)/2$ and the ``$+2$'' in the denominator are due to the ``2'' in the assumption $(T,U,A)\in\mathcal{M}_2^{k,m,n}(s)$;
in general we would get a ``$+r$'' in both of these places if we assumed
that $(T,U,A)\in\mathcal{M}_r^{k,m,n}(s)$. The exponent $(k+2)/2$ here leads to the exponent $(k+2)/2$ appearing in \cref{thm:recursion}, and the ``$+2$'' there is crucial in order to deduce \cref{thm:main} from \cref{thm:recursion} (just having ``$+1$'' there would not suffice).

In order to actually apply \cref{key-lemma-corollary} in the proof of \cref{thm:recursion}, we would like to show
that there is a partition $[n]=I \cup J$ such that $(M[[k]\times I],M[[k]\times J],A[J\times I])\in\mathcal{M}_2^{k,I,J}(s)$ for a suitable value of $s$.
Such a partition might not in general exist, because we are making no assumption that $A$ itself even has rank at least 2. However, if every $(M,M)$-perturbation of $A$ robustly has rank at least 2 even after changing the diagonal entries, we can find a suitable partition $[n]=I \cup J$ using the following lemma.

\begin{lemma}\label{lem:matrix-splitting}
Let $0\le k\le n$ be integers and let $s\ge 4k+8$ be a real number. Let $M\in\mathcal{H}^{k\times n}(s)$ and let $A\in \RR^{n\times n}$ be a symmetric matrix such that $\on{rank} A^*[S\times S]\ge 2$ for any subset $S\subseteq [n]$ of size $|S|\ge n- s$ and any matrix $A^*\in \RR^{n\times n}$ that agrees with some $(M,M)$-perturbation of $A$ in all off-diagonal entries. Then there is a partition $[n]= I\cup J$, with $|I|\le s$, such that
\[(M[[k]\times I],M[[k]\times J],A[J\times I])\in\mathcal{M}_2^{k,I,J}(\lfloor s/(4k+8)\rfloor).\]
\end{lemma}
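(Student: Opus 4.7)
I would prove the lemma by greedy construction. Setting $\sigma = \lfloor s/(4k+8) \rfloor$, I iteratively choose disjoint pairs $(I_t,J_t)$ for $t=1,\ldots,\sigma$, each consisting of disjoint $(k+2)$-subsets, then take $I = I_1\cup\cdots\cup I_\sigma$ and $J = [n]\setminus I$. Since $|I|=\sigma(k+2)\le s/4$, the size bound $|I|\le s$ is automatic. At step $t$, set $S_t = [n]\setminus\bigcup_{i<t}(I_i\cup J_i)$, so $|[n]\setminus S_t|\le 2(t-1)(k+2)\le s/2\le s$. Consequently both the hypothesis on $A$ (applied with $S=S_t$) and the membership $M[[k]\times S_t]\in \mathcal{H}^{k\times S_t}(s/2)$ continue to hold throughout the process; by \cref{lem-halasz-equivalent}, $M[[k]\times S_t]$ contains many disjoint nonsingular $k\times k$ submatrices. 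The remainder of the proof is then to show: at each step, we can find disjoint $(k+2)$-subsets $I_t,J_t\subseteq S_t$ satisfying conditions (a)--(c) of \cref{def:M-definition}.

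Conditions (a) and (b) are cheap consequences of the abundance of nonsingular $k\times k$ submatrices of $M[[k]\times S_t]$. The technical heart is condition (c), which I would first recast intrinsically. Writing $T_I=M[[k]\times I_t]$ and $T_J=M[[k]\times J_t]$, a $(T_I,T_J)$-perturbation of $A[J_t\times I_t]$ adds a matrix $L T_I + T_J^{\transpose}R$; restricted to $\ker T_J\times \ker T_I$, this addition is identically zero. Since $|I_t|=|J_t|=k+2$ and $T_I,T_J$ have rank $k$, both $\ker T_I$ and $\ker T_J$ are $2$-dimensional. Condition (c) therefore becomes: the induced bilinear form $(\vec v,\vec u)\mapsto \vec v^{\transpose}A[J_t\times I_t]\vec u$ on $\ker T_J\times \ker T_I$ is nondegenerate (equivalently, has full rank $2$).

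To realise this condition, I would translate the hypothesis on $A$ into a global statement with the same flavour on $S_t$. Writing $T=M[[k]\times S_t]$, a short duality argument shows that the hypothesis (rank $\ge 2$ after every $(M,M)$-perturbation and every off-diagonal modification) is equivalent to the existence of vectors $\vec u_1,\vec u_2,\vec v_1,\vec v_2\in \ker T$ with $\on{supp}(\vec u_\alpha)\cap \on{supp}(\vec v_\beta)=\emptyset$ for all $\alpha,\beta\in\{1,2\}$, such that the $2\times 2$ matrix $(\vec v_\alpha^{\transpose}A\vec u_\beta)_{\alpha,\beta}$ is nonsingular. The kernel-containment kills the $(M,M)$-perturbation freedom, while the disjoint-support condition neutralises the freely adjustable diagonal. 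Such witness vectors almost give us what we need: if the $\vec u_\alpha$ are supported on a $(k+2)$-subset $I_t$ with $M[[k]\times I_t]$ of rank $k$, and analogously the $\vec v_\beta$ on a $(k+2)$-subset $J_t$ disjoint from $I_t$ with $M[[k]\times J_t]$ of rank $k$, then condition (c) follows.

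The main obstacle, and the hardest part of the argument, is \emph{support control}: the witnesses coming directly from the hypothesis may have supports that are far too large to fit into $(k+2)$-subsets. I would address this via a sparsification procedure exploiting that $\ker T$ has dimension $|S_t|-k$, which is very large — there is ample room to replace $\vec u_1,\vec u_2$ by a basis of a $2$-dimensional subspace $V_I\subseteq \ker T$ supported inside some $(k+2)$-subset $I_t\subseteq S_t$ for which $M[[k]\times I_t]$ has rank $k$ (such $V_I$ is exactly $\ker M[[k]\times I_t]$ viewed inside $\RR^{S_t}$), and similarly for $\vec v_1,\vec v_2$ and $J_t\subseteq S_t\setminus I_t$. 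The delicate point is to perform these two reductions in a coordinated way that preserves both the disjointness $I_t\cap J_t=\emptyset$ and the nonsingularity of $(\vec v_\alpha^{\transpose} A \vec u_\beta)_{\alpha,\beta}$; this is where the $\mathcal{H}^{k\times n}(s)$-structure of $M$ must be used in full strength, via successive pivoting moves on pairs of candidate subsets, controlling both the rank of the relevant $M$-submatrices and the determinant of the $2\times 2$ bilinear-form matrix. Once a suitable $(I_t,J_t)$ is constructed, all three conditions (a)--(c) hold by design, and iterating $\sigma$ times completes the proof.
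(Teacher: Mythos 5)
Your greedy reduction and size bookkeeping are fine (they match the paper's Step 1), and your reformulation of condition (c) of \cref{def:M-definition} — that, given (a) and (b), it is equivalent to nondegeneracy of the bilinear form $(\vec v,\vec u)\mapsto \vec v^{\top}A[J_t\times I_t]\vec u$ on $\ker M[[k]\times J_t]\times\ker M[[k]\times I_t]$ — is correct, and is just another way of stating the full-rank criterion the paper verifies in its Step 3. But the two steps that carry all the difficulty are asserted rather than proved. The ``short duality argument'' claiming that the hypothesis is equivalent to the existence of $\vec u_1,\vec u_2,\vec v_1,\vec v_2\in\ker T$ with disjoint supports and $(\vec v_\alpha^{\top}A\vec u_\beta)_{\alpha,\beta}$ nonsingular does not exist in the form you describe: minimum rank over an affine family of matrices is not certified by such witnesses in general, and the statement is false if you only invoke the hypothesis for the single set $S_t$. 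For instance (take $k=0$), let the off-diagonal support of $A[S_t\times S_t]$ be a star $\{(1,j),(j,1):j\ge 2\}$ with at least two nonzero spokes: no diagonal completion has rank at most $1$, so the single-set hypothesis holds, yet for any disjoint $P,Q$ the nonzero entries of $A[Q\times P]$ lie in a single row or a single column, so its rank is at most $1$ and no witnesses exist. The lemma's hypothesis rules this out only because it is imposed for \emph{all} subsets $S$ of size at least $n-s$ (delete the centre of the star), and extracting witnesses from that full hypothesis is essentially the content of the lemma, not a formality you can quote.

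Second, even granting witnesses with large support, your ``sparsification via successive pivoting moves'' has no mechanism behind it: replacing $\vec u_1,\vec u_2$ by a basis of $\ker M[[k]\times I_t]$ for some $(k+2)$-set $I_t$ will in general destroy the nonsingularity of the $2\times 2$ matrix, and you give no argument that a coordinated choice of $I_t$ and $J_t$ (disjoint, with the rank-$k$ conditions) exists. The paper does not sparsify global witnesses at all; it builds the small sets directly: choose disjoint $k$-sets $I_1',J_1'$ with nonsingular $M$-submatrices, use these to gauge away (by an $(M,M)$-perturbation) the rows of $A$ indexed by $J_1'$ and the columns indexed by $I_1'$, apply the hypothesis to get one nonzero off-diagonal entry $a'_{j,i}$ of the gauged matrix on the remaining indices, and then — this is the key trick your sketch is missing — adjust the diagonal so that every $2\times 2$ submatrix $A^*[\{j,h\}\times\{i,h\}]$ is singular and apply the hypothesis \emph{again to the modified matrix} to produce a second row index $j'$ and a second column index $i'$ with $A'[\{j,j'\}\times\{i,i'\}]$ nonsingular and, automatically, $i'\ne j'$; this is exactly what secures the disjointness you flag as delicate. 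Without an argument of this kind, or a genuine substitute, your proposal proves only the easy equivalences and leaves the heart of \cref{lem:matrix-splitting} open.
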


We prove \cref{lem:matrix-splitting} in \cref{sec:splitting}. Roughly speaking, we are able to \emph{greedily} find the desired subsets $I_1,\dots,I_s,J_1,\dots,J_s$ in the definition of $\mathcal{M}_2^{k,m,n}(s)$ in \cref{def:M-definition}.

To summarise, the proof of \cref{thm:recursion} combines a number of ingredients. If $A$ does not satisfy the assumption in \cref{lem:matrix-splitting} (i.e., if some $(M,M)$-perturbation of $A$ does not robustly have rank at least 2 after changing the diagonal entries), then we complete the proof using \cref{thm:quadratic-geometric} (in the low-rank case, there are various types of degeneracies to consider, which can be gracefully handled with a geometric point of view). Otherwise, we apply \cref{lem:matrix-splitting} to find a suitable partition $I\cup J$, and we apply the decoupling inequality in \cref{lem:decoupling} to $(\vec \xi[I],\vec \xi[J])$. We manipulate the resulting expression in roughly the same way as in the proof of \cref{thm:quadratic-geometric}, and then bound relevant quantities using \cref{key-lemma-corollary} and \cref{cor:halasz-sub-version}. After proving \cref{thm:recursion}, we can obtain \cref{thm:main} with  somewhat technical calculations (presented in \cref{sec:calculation-deduction}).

In the next section we state and prove a consequence of \cref{thm:quadratic-geometric} which will be suitable to handle the low-rank case of \cref{thm:recursion}. In \cref{sec:key-lemma} we prove \cref{key-lemma-corollary} and in \cref{sec:splitting} we prove \cref{lem:matrix-splitting}. The proof of \cref{thm:recursion} then appears in \cref{sec:recursion}.

\section{The low-rank case}\label{sec:low-rank}

In this section, we show how to use \cref{thm:quadratic-geometric} to deduce the following proposition, which handles the low-rank case of \cref{thm:recursion}.

\begin{proposition}\label{prop:low-rank-case}
Consider integers $0\le k\le n$ and $s\ge 0$ and $r\ge 1$. Let $M\in\mathcal{H}^{k\times n}(s)$, let $\vec w\in \RR^k$ and let $Q\in\RR[x_{1},\dots,x_{n}]$ be a quadratic polynomial. Writing the quadratic part of $Q(\vec{x})$ as $\vec{x}^{\transpose}A\vec{x}$ for a symmetric matrix $A$, assume that $\on{rank} A\le r-1$ and that there is no subset $I\su [n]$ of size $|I|\ge n-s$ such that the matrix $A[I\times I]$ is an $(M[[k]\times I],M[[k]\times I])$-perturbation of the zero matrix in $\mathbb{R}^{I\times I}$.

Then for a sequence $\vec{\xi}\in\{-1,1\}^{n}$
of independent Rademacher random variables we have
\begin{equation}\label{eq:low-rank-case}
\Pr[Q(\vec \xi)=0\text{ and }M\vec{\xi}=\vec{w}]\le\left(\frac{s}{2^{3r^{2}}(k+r)^2}\right)^{-(k+1)/2}.
\end{equation}
\end{proposition}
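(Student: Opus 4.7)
The strategy is to geometrize and reduce to \cref{thm:quadratic-geometric}. Since $\rank A \le r-1$, I would decompose $A = V^\transpose D V$ with $V \in \RR^{(r-1) \times n}$ and $D \in \RR^{(r-1) \times (r-1)}$ symmetric (via spectral decomposition). Writing $L_0$ for the linear part of $Q$, we then have $Q(\vec x) = (V\vec x)^\transpose D (V\vec x) + L_0(\vec x) + c$ for some constant $c$. Define $\Phi: \RR^n \to \RR^{k+r}$ by $\Phi(\vec x) = (M\vec x,\, L_0(\vec x),\, V_1 \vec x,\, \dots,\, V_{r-1}\vec x)$, with matrix $B \in \RR^{(k+r) \times n}$. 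Then the event $\{Q(\vec \xi) = 0 \text{ and } M\vec \xi = \vec w\}$ is equivalent to $\{\Phi(\vec \xi) \in \mc Z\}$ for the quadric $\mc Z$ on the $r$-dimensional affine-linear subspace $\mc W = \{\vec y \in \RR^{k+r} : y_1 = w_1,\ldots, y_k = w_k\}$ cut out by the polynomial $\tilde Q(\vec y) = y_{k+1} + \sum_j D_{jj} y_{k+j+1}^2 + c$. The surviving linear term $y_{k+1}$ ensures the restriction $\tilde Q|_{\mc W}$ is nonzero, so $\mc Z$ is a genuine quadric on $\mc W$.

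Next, I would translate the proposition's hypothesis into a robust-rank condition on $B$. By \cref{lem:perturbation}, the probability is invariant under modifying $A$ by any $(M,M)$-perturbation (with arbitrary diagonal changes), so I may normalize so that $V$ and $L_0$ are orthogonal in $\RR^n$ to the row span of $M$, and drop any resulting dependent rows to make $B$ have full row rank $k^* \le k + r$. The key algebraic fact is that $A[I \times I]$ being an $(M[[k]\times I], M[[k]\times I])$-perturbation of zero is equivalent to the quadratic form $\vec x[I]^\transpose A[I\times I] \vec x[I]$ vanishing identically on $\ker M[[k]\times I]$; the proposition's hypothesis thereby forces the row span of $B[I]$ to contain at least one $V_j[I]$ outside the row span of $M[[k]\times I]$, so $\rank B[I] \ge k+1$ for every $I$ with $|I| \ge n - s$. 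A more careful induction on $r$, together with an adaptive choice of the decomposition $V$ (shrinking it whenever a combination of its rows becomes dependent on rows of $M$ after restriction to some $I$), upgrades this into $B \in \mathcal{H}^{k^* \times n}(s_0)$ for some $s_0 \gtrsim s/(k+r)$.

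Finally, by \cref{lem-halasz-equivalent}(ii) the columns of $B$ then contain at least $\lceil s_0/k^* \rceil$ disjoint bases of $\RR^{k^*}$; rounding down to the nearest multiple of $2^{k^* - k - 1}$ to meet the divisibility hypothesis of \cref{thm:quadratic-geometric} costs at most a factor of $2^r$, giving some $t \ge s / (2^{O(r)}(k+r)^2)$. Applying \cref{thm:quadratic-geometric} to the quadric $\mc Z$ on $\mc W$ with ambient dimension $k^*$ and $d = k^* - k - 1 \le r - 1$ yields
\[\Pr[\Phi(\vec \xi) \in \mc Z] \le \frac{2^{(k^* - k - 1)k^* + 1}}{t^{(k+1)/2}},\]
and the desired bound $(s/(2^{3r^2}(k+r)^2))^{-(k+1)/2}$ then follows by a routine estimate, using $k^* \le k + r$ to control the numerator.

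The main obstacle is the robust-rank step: the hypothesis directly guarantees only $\rank B[I] \ge k + 1$, whereas the matrix $B$ globally may have rank as large as $k^* = k + r$, and this higher rank might not persist on all subsets $|I| \ge n - s$. Bridging this gap requires either careful case analysis on which linear combinations of the $V_j$'s become dependent on the rows of $M$ after restriction, or an induction on $r$ that absorbs ``extra'' degeneracies of $A$ into a sub-problem with strictly smaller rank bound. This is exactly analogous to the ``reducible'' and ``translation-invariant'' case split appearing in the proof of \cref{thm:quadratic-geometric} itself, and is where the technical weight of the proof will lie.
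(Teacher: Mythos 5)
Your reduction to \cref{thm:quadratic-geometric} is the same geometrization the paper uses, and you have correctly located the crux: the hypothesis only yields $\rank B[[k+r]\times I]\ge k+1$ robustly, while \cref{thm:quadratic-geometric} needs many disjoint bases of the \emph{full} ambient space, i.e.\ robust full row rank of $B$. But this crux is exactly where your argument stops: the ``more careful induction on $r$, together with an adaptive choice of the decomposition $V$'' is not carried out, and as sketched it would not go through. The difficulty is that when a row of $B$ fails the robust-rank test, it is only \emph{approximately} dependent on the other rows: it agrees with a linear combination of them except in up to $s/r$ coordinates. You cannot simply ``shrink $V$'' by deleting that row, because the discrepancy on those coordinates changes the event $\{\Phi(\vec\xi)\in\mc Z\}$. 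The paper's resolution (\cref{lem:low-rank-preparation}) is to take the minimum number $\ell$ of retained linear forms for which $Q$ can be written as $P\bigl(g^{(1)},\dots,g^{(\ell+k)},(x_i)_{i\in S}\bigr)$ with an exceptional variable set $S$ of size at most $s(r-\ell)/r$: each time a form is discarded, the coordinates where the approximate dependence fails are moved into $S$, and at the end one conditions on $\vec\xi[S]$ and works only with $\vec\xi[I]$, $I=[n]\setminus S$. Minimality of $\ell$ is what converts ``rank $\ge k+1$ robustly'' into the needed $M'[[\ell+k]\times I]\in\mc H^{(\ell+k)\times I}(s/r)$. Your proposal contains no analogue of this conditioning mechanism, so the claimed upgrade $B\in\mc H^{k^*\times n}(s_0)$ is unsupported.

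A second, related gap: you certify that $\mc Z$ is a proper quadric on $\mc W$ via the surviving linear coordinate $y_{k+1}$ coming from the linear part $L_0$. This works for the initial, un-shrunk $B$, but after the rank-repair step the coefficient vector of $L_0$ may lie in (or be absorbed into) the span of the rows of $M$ and the remaining forms; then the linear term becomes constant on $\mc W$ and can no longer guarantee $\mc Z\subsetneq\mc W$. The paper instead proves (condition (iv) of \cref{lem:low-rank-preparation}) that $P$ has a nonzero coefficient on some monomial $y_jy_{j'}$ with $j,j'\le\ell$, i.e.\ a genuinely quadratic term in the free coordinates, and this is precisely where the hypothesis that $A[I\times I]$ is not an $(M[[k]\times I],M[[k]\times I])$-perturbation of zero is used (via \cref{eq:product-linear-forms}). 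Your proposal uses that hypothesis only for the weaker ``rank $\ge k+1$'' statement and never to secure nondegeneracy of the quadric after shrinking. So while the skeleton (geometrize, count disjoint bases via \cref{lem-halasz-equivalent}, round $t$ for divisibility, apply \cref{thm:quadratic-geometric} with codimension $k+1$) matches the paper, the two steps carrying the actual technical weight are missing.
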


In our proof of \cref{thm:recursion}, we will use \cref{prop:low-rank-case} only for $r=5$, but we still state it here for general $r$ (as the proof works for any $r$).

Deducing \cref{prop:low-rank-case} from \cref{thm:quadratic-geometric} mostly consists of translating from ``geometric'' to ``algebraic'' language.
To give a some idea of how this works: note that under the assumptions of \cref{prop:low-rank-case}, we can write \[Q(\vec x)=\lambda_{1}\cdot (g^{({1})}(\vec{x}))^2+\dots+\lambda_{r-1} \cdot (g^{({r-1})}(\vec{x}))^2+g^{(r)}(\vec x)+c\] for some $\lambda_1,\dots,\lambda_{r-1},c\in \RR$ and some linear forms $g^{({1})},\dots,g^{({r-1})},g^{(r)} \in \mathbb{R}[x_1,\dots,x_n]$. That is to say, we can write
\[Q(\vec x)=P(g^{(1)}(\vec{x}),\dots,g^{(r)}(\vec{x}))=P(x_1\vec a_1+\dots+x_n\vec a_n)\]
for the quadratic polynomial $P(\vec{y})=\lambda_1 y_1^2+\dots+\lambda_{r-1} y_{r-1}^2+y_r+c$, where for $j=1,\dots,n$, the vector $\vec a_j\in \mb R^r$ records the coefficients of $x_j$ in the linear forms $g^{({1})},\dots,g^{(r)}$. So, the event $\{Q(\vec \xi)=0\}$ can be interpreted as the event that $\xi_1\vec a_1+\dots+\xi_n\vec a_n$ falls in the vanishing locus of the polynomial $P$. The joint event $\{Q(\vec \xi)=0\text{ and }M\vec\xi=\vec w \}$ can be given a similar geometric interpretation: roughly speaking, we consider $k$ additional linear forms $g^{({r+1})},\dots,g^{({r+k})}\in \mathbb{R}[x_1,\dots,x_n]$ corresponding to the $k$ rows of $M$,
augmenting $\vec a_1,\dots,\vec a_n$ accordingly with $k$ additional entries each, and then we consider the event that the $(r+k)$-dimensional random vector $\xi_1\vec a_1+\dots+\xi_n\vec a_n$ falls into a certain quadric $\mc Z$ (still described by the same polynomial $P$) on the codimension-$k$ affine-linear subspace $\mc W\subseteq \RR^{r+k}$ given by $\mc W=
\{\vec y \in \RR^{r+k} : \vec y [\{r+1,\dots,r+k\}=\vec w\}$ (this subspace $\mc W\subseteq \RR^{r+k}$ corresponds to the system of equations $M\vec\xi=\vec w$).

This more-or-less explains how to reduce \cref{prop:low-rank-case} to the setting of \cref{thm:quadratic-geometric}. The only complication is that it may not be possible to find many disjoint bases of $\RR^{r+k}$ among the vectors $\vec a_1,\dots,\vec a_n\in \RR^{r+k}$, as is necessary to apply \cref{thm:quadratic-geometric} (in fact, the vectors $\vec a_1,\dots,\vec a_n\in \RR^{r+k}$ might not span $\RR^{r+k}$ at all). In this case, we need to ``drop to a subspace'' that is robustly spanned by most of the $\vec a_1,\dots,\vec a_n$ (i.e., we need to identify some large subset $I\subseteq [n]$, such that we can find many disjoint bases of $\on{span}(\vec a_i:i\in I)$ among the vectors $\vec a_i$ for $i\in I$). This, roughly speaking, corresponds to considering only some subset of the linear forms $g^{({1})},\dots,g^{(r+k)}$, such that the remaining linear forms are ``close to'' being linear combinations of these linear forms (i.e., each of the remaining linear forms agrees with such a linear combination in its coefficients of the variables $x_i$ for $i\in I$). The quadratic polynomial $P(\vec{y})$ considered above then needs to be changed appropriately in order to reflect these relations between the linear forms $g^{({1})},\dots,g^{(r+k)}$.

The following lemma encapsulates this translation discussed above, going from the setting of \cref{prop:low-rank-case} to the setting of \cref{thm:quadratic-geometric}.

\begin{lemma}\label{lem:low-rank-preparation}
Consider integers $0\le k\le n$ and $s\ge 0$ and $r\ge 1$. Let $M\in \mathcal{H}^{k\times n}(s)$ and let $Q\in \mathbb{R}[x_1\dots,x_n]$ be a quadratic polynomial. Writing the quadratic part of $Q(\vec{x})$ as $\vec{x}^{\transpose}A\vec{x}$ for a symmetric matrix $A\in \mathbb{R}^{n\times n}$, assume that $\on{rank} A\le r-1$ and that there is no subset $I\su [n]$ of size $|I|\ge n-s$ such that the matrix $A[I\times I]$ is an $(M[[k]\times I],M[[k]\times I])$-perturbation of the zero matrix in $\mathbb{R}^{I\times I}$.

Then, we can find a positive integer $\ell\le r$, a partition $[n]=I\cup S$ with $|S|\le s$, linear forms $g^{(1)},\dots,g^{({\ell+k})} \in \mathbb{R}[x_1,\dots,x_n]$, and a quadratic polynomial $P\in \mathbb{R}[y_1,\dots,y_{\ell+k}, (x_i)_{i\in S}]$ such that the following conditions hold.
\begin{compactitem}
    \item[(i)] $Q(x_1,\dots,x_n)=P(g^{(1)}(\vec{x}),\dots,g^{({\ell+k})}(\vec{x}), (x_i)_{i\in S})$.
    \item[(ii)]  For each $j=1,\dots,k$, the coefficient vector of $g^{({\ell+j})}$ is precisely the $j$-th row of $M$.
    \item[(iii)]  Writing $M'\in \mathbb{R}^{(\ell+k)\times n}$ for the $(\ell+k)\times n$ matrix whose $j$-th row is the coefficient vector of $g^{(j)}$ for $j=1,\dots,\ell+k$, we have $M'[[\ell+k]\times I]\in \mathcal{H}^{(\ell+k)\times I}(s/r)$.
    \item[(iv)]  There exist $j,j'\in [\ell]$ such that the coefficient of $y_jy_{j'}$ in $P$ is nonzero.
\end{compactitem}
\end{lemma}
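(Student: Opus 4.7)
The plan is to construct the required tuple via an iterative rank-reduction procedure on the list of auxiliary linear forms. For the initial setup, since $\rank A\le r-1$ the quadratic form $\vec{x}^{\transpose}A\vec{x}$ can be written as $F\bigl(h^{(1)}(\vec{x}),\dots,h^{(\rho)}(\vec{x})\bigr)$ for some linear forms $h^{(1)},\dots,h^{(\rho)}$ with $\rho=\rank A\le r-1$ and some quadratic form $F$; if the linear part of $Q$ does not already lie in the span of the coefficient vectors of these $h^{(i)}$ together with the rows of $M$, add one more linear form $h^{(\rho+1)}$ whose coefficient vector catches it. This yields an initial value $\ell_0\le r$ and a representation $Q(\vec{x})=P_0\bigl(h^{(1)}(\vec{x}),\dots,h^{(\ell_0)}(\vec{x}),(M\vec{x})_1,\dots,(M\vec{x})_k\bigr)$ for some quadratic polynomial $P_0$. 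Set $g^{(j)}:=h^{(j)}$ for $j\le\ell_0$, let $g^{(\ell_0+j)}$ be the linear form whose coefficient vector is the $j$-th row of $M$, and initialise $I=[n]$, $S=\emptyset$, $P=P_0$. Conditions (i) and (ii) then hold.

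The iteration proceeds as follows. Let $M'\in\RR^{(\ell+k)\times n}$ be the matrix whose rows are the coefficient vectors of the current $g^{(1)},\dots,g^{(\ell+k)}$. If $M'[[\ell+k]\times I]\in\mathcal{H}^{(\ell+k)\times I}(s/r)$, stop. Otherwise there exist $J\su I$ with $|I\sm J|\le s/r$ and a nonzero $\vec{c}\in\RR^{\ell+k}$ with $\vec{c}^{\transpose}M'[[\ell+k]\times J]=\vec{0}$. Since at every iteration $|[n]\sm J|\le s$ (the budget suffices for up to $r$ steps), $M\in\mathcal{H}^{k\times n}(s)$ forces $\rank M[[k]\times J]=k$, so $\vec{c}$ cannot be supported on its last $k$ entries alone, and some $c_j\ne 0$ with $j\in[\ell]$. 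After relabelling take $c_\ell\ne 0$; solving the relation for the $J$-restriction of the coefficient vector of $g^{(\ell)}$ and extending to $\RR^n$ gives an identity
\[g^{(\ell)}(\vec{x})=\sum_{j'<\ell}\alpha_{j'}\,g^{(j')}(\vec{x})+\sum_{j'=1}^{k}\beta_{j'}(M\vec{x})_{j'}+\sum_{i\in I\sm J}\gamma_i\,x_i\]
valid on all of $\RR^n$. Substitute into $P$ to eliminate $y_\ell$; update $\ell\to\ell-1$, $I\to J$, $S\to S\cup(I\sm J)$; discard $g^{(\ell)}$ and relabel so that the rows of $M$ still occupy the last $k$ positions. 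Conditions (i) and (ii) are preserved.

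The procedure must stop with $\ell\ge 1$. Each iteration reduces $\ell$ by one and adds at most $s/r$ elements to $S$, so if it ran for $\ell_0$ iterations it would end with $\ell=0$ and $|S|\le\ell_0\,s/r\le s$, meaning that $Q(\vec{x})$ would be expressed purely in $(M\vec{x})_1,\dots,(M\vec{x})_k$ and $(x_i)_{i\in S}$. The quadratic part of such a polynomial is a linear combination of terms of the three types $(M\vec{x})_j(M\vec{x})_{j'}$, $(M\vec{x})_j\,x_i$, and $x_i x_{i'}$ with $i,i'\in S$; restricting to vectors supported on $I$ kills the last two types, and the contribution of the first type to the symmetric matrix of the resulting quadratic form gives $A[I\times I]=M[[k]\times I]^{\transpose}CM[[k]\times I]$ for some matrix $C$. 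This can be rewritten as $LM[[k]\times I]$ with $L=M[[k]\times I]^{\transpose}C$, so $A[I\times I]$ is an $(M[[k]\times I],M[[k]\times I])$-perturbation of zero; since $|I|\ge n-s$, this contradicts the hypothesis. Hence termination occurs with $\ell\ge 1$ and $|S|<s$, establishing (i)--(iii).

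Finally, for (iv), suppose for contradiction that the terminal $P$ has no monomial $y_jy_{j'}$ with $j,j'\in[\ell]$. Then the quadratic part of $P$ decomposes as $\sum_{j\le\ell}y_jL_j\bigl((y_{j'})_{j'>\ell},(x_i)_{i\in S}\bigr)+Q''\bigl((y_{j'})_{j'>\ell},(x_i)_{i\in S}\bigr)$ for some linear forms $L_j$ and quadratic form $Q''$. Substituting $y_{j'}=(M\vec{x})_{j'-\ell}$ for $j'>\ell$ and restricting to vectors supported on $I$, each $L_j$ becomes a linear combination of entries of $M[[k]\times I]\vec{x}$, and the same bookkeeping as above exhibits $A[I\times I]$ as an $(M[[k]\times I],M[[k]\times I])$-perturbation of zero (the first summand contributes expressions of the form $\vec{u}\vec{v}^{\transpose}M[[k]\times I]+M[[k]\times I]^{\transpose}\vec{v}\vec{u}^{\transpose}$, the second contributes $M[[k]\times I]^{\transpose}C'M[[k]\times I]$), again contradicting the hypothesis. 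So (iv) holds. The main obstacle throughout is the careful bookkeeping in the iterative step: verifying that at every iteration enough columns remain so that $\rank M[[k]\times J]=k$ (guaranteeing the pivot $j\in[\ell]$ exists), and that the cumulative loss $|S|$ stays within the budget $s$.
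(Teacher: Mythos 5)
Your proposal is correct and is essentially the paper's argument: the paper simply organizes your iterative elimination as an extremal choice (taking the minimal $\ell$ admitting a representation with $|S|\le s\cdot(r-\ell)/r$), while your pivot step (using $M\in\mathcal{H}^{k\times n}(s)$ to force the vanishing linear combination to involve one of the first $\ell$ rows), the budget of $s/r$ per elimination, and the perturbation computations ruling out $\ell=0$ and ruling out a $P$ with no $y_jy_{j'}$ term ($j,j'\in[\ell]$) coincide with the paper's proofs of (iii), of $\ell\ge 1$, and of (iv). The only (cosmetic) fix needed is that in your elimination identity the correction terms $\gamma_i x_i$ must range over all $i\in[n]\setminus J$, i.e.\ including the old set $S$ rather than just $I\setminus J$, which is harmless since those variables are already admissible arguments of the updated polynomial $P$.
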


We remark that the vectors $\vec a_i$ for $i\in I$ in the informal explanation above correspond to the columns of the matrix $M'[[\ell+k]\times I]$ in \cref{lem:low-rank-preparation}. Condition (iii) ensures that we can find many disjoint bases of $\mathbb{R}^{\ell+k}$ among these vectors $\vec a_i$ for $i\in I$ (recall that this is required to apply  \cref{thm:quadratic-geometric} to these vectors). Condition (iv) ensures that the polynomial $P$ does not vanish on our entire subspace $\mathcal{W}$ (corresponding to a system of equations of the form $M\vec{\xi}=\vec{w}$).

\begin{proof}
Let us consider the minimum number $\ell\in\{0,1,\dots,r\}$ such that there exists a partition $[n]=I\cup S$ with $|S|\le s\cdot (r-\ell)/r$, linear forms $g^{(1)},\dots,g^{({\ell+k})} \in \mathbb{R}[x_1,\dots,x_n]$, and a quadratic polynomial $P\in \mathbb{R}[y_1,\dots,y_{\ell+k}, (x_i)_{i\in S}]$, such that conditions (i) and (ii) hold (i.e., we have $Q(x_1,\dots,x_n)=P(g^{(1)}(\vec{x}),\dots,g^{({\ell+k})}(\vec{x}), (x_i)_{i\in S})$ and for each $j=1,\dots,k$, the coefficient vector of $g^{({\ell+j})}$ is precisely the $j$-th row of $M$).

First, in order to see that this minimum number $\ell$ is well-defined, let us check that $\ell=r$ satisfies the conditions. Indeed, given that $\operatorname{rank} A\le r-1$,  we can write \[Q(\vec x)=\lambda_{1}\cdot (g^{({1})}(\vec{x}))^2+\dots+\lambda_{r-1} \cdot (g^{({r-1})}(\vec{x}))^2+g^{(r)}(\vec x)+c\] for some $\lambda_1,\dots,\lambda_{r-1},c\in \RR$ and some linear forms $g^{({1})},\dots,g^{({r-1})},g^{(r)} \in \mathbb{R}[x_1,\dots,x_n]$. That is to say, defining $g^{({r+1})},\dots,g^{({r+k})} \in \mathbb{R}[x_1,\dots,x_n]$ to be the linear forms whose coefficient vectors are given by the rows of $M$ as in condition (ii), we can write
\[Q(\vec x)=P(g^{(1)}(\vec{x}),\dots,g^{(r+k)}(\vec{x})),\]
where $P(y_1,\dots,y_{r+k})=\lambda_1 y_1^2+\dots+\lambda_{r-1} y_{r-1}^2+y_r+c$ (formally this is a polynomial in $r+k$ variables, despite the fact that $y_{r+1},\dots,y_{r+k}$ do not actually appear in it). Note that (ii) holds by definition, and taking $I=[n]$ and $S=\emptyset$, condition (i) holds as well and we have $|S|=0\le s\cdot (r-r)/r$. Thus, $\ell=r$ has the required properties, and the minimum number $\ell$ above is well-defined.

Now let $\ell$ be this minimum number, and choose the partition $[n]=I\cup S$, the linear forms $g^{(1)},\dots,g^{({\ell+k})} \in \mathbb{R}[x_1,\dots,x_n]$, and the quadratic polynomial $P$ accordingly with the properties above. Note that $|S|\le s\cdot (r-\ell)/r\le s$, and consequently $|I|\ge n-s$. It remains to show that $\ell\ge 1$ and that conditions (iii) and (iv) hold.

Let us first show (iv). Suppose for contradiction that in the quadratic polynomial $P$, the coefficient of $y_jy_{j'}$ is zero for all $j,j'\in [\ell]$. Then the quadratic part of $Q(x_1,\dots,x_n)=P(g^{(1)}(\vec{x}),\dots,g^{({\ell+k})}(\vec{x}), (x_i)_{i\in S})$ can be written as a linear combination of terms of the form $g^{({j})}(\vec{x})x_h$ with $j\in \{\ell+1,\dots,\ell+k\}$ and $h\in [n]$, and terms of the form $x_hx_{i}$ with $h\in [n]$ and $i\in S$. By \cref{eq:product-linear-forms}, this leads to a representation of the symmetric matrix $A$ as a linear combination of matrices of the form $\vec{v}_j\vec{e}_h^{\,\transpose}$, $\vec{e}_h\vec{v}_j^{\,\transpose}$, $\vec{e}_i\vec{e}_{h}^{\,\transpose}$ and $\vec{e}_h\vec{e}_{i}^{\,\transpose}$ for $j\in \{\ell+1,\dots,\ell+k\}$, $h\in [n]$ and $i\in S$, where $\vec{e}_1,\dots,\vec{e}_n$ are the standard basis vectors in $\RR^n$, and $\vec{v}_{\ell+1},\dots,\vec{v}_{\ell+k}\in \mathbb{R}^n$ denote the coefficient vectors of $g^{({\ell+1})},\dots,g^{({\ell+k})}$ (i.e., the row vectors of $M$, by condition (ii)). 
Hence the matrix $A[I\times I]$ is a linear combination of matrices of the form $\vec{v}_j[I]\vec{e}_h^{\,\transpose}[I]$ and $\vec{e}_h[I]\vec{v}_j^{\,\transpose}[I]$ 
for $j\in \{\ell+1,\dots,\ell+k\}$ and $h\in I$. Recalling that $\vec{v}_{\ell+1},\dots,\vec{v}_{\ell+k}$ are the row vectors of the matrix $M$, this means that $A[I\times I]$ is a $(M[[k]\times I],M[[k]\times I])$-perturbation of the zero matrix, contradicting our assumption. So indeed, for some monomial $y_jy_{j'}$ with $j,j'\in [\ell]$, the coefficient of $y_jy_{j'}$ in $P$ must be nonzero. This also automatically implies that $\ell\ge 1$.

It now remains to show (iii), i.e., to show that for the matrix $M'\in \mathbb{R}^{(\ell+k)\times n}$ whose rows are given by the coefficient vectors of $g^{(1)},\dots,g^{({\ell+k})}$ we have $M'[[\ell+k]\times I]\in \mathcal{H}^{(\ell+k)\times I}(s/r)$. Assume the contrary; then there exists a nontrivial linear combination of the rows of $M'[[\ell+k]\times I]$ yielding a vector with at most $s/r$ nonzero entries. By (ii), the rows of the matrix $M'[\{\ell+1,\dots,\ell+k\}\times [n]]$ agree with the rows of the matrix $M$, and we therefore have $M'[\{\ell+1,\dots,\ell+k\}\times [n]]\in \mathcal{H}^{k\times n}(s)$ (recalling that $M\in \mathcal{H}^{k\times n}(s)$). Using $|S|\le s\cdot (r-\ell)/r\le s\cdot (r-1)/r$, we can conclude that  $M'[\{\ell+1,\dots,\ell+k\}\times I]\in \mathcal{H}^{k\times I}(s-|S|)\su \mathcal{H}^{k\times I}(s/r)$, so our linear combination cannot only involve rows with indices in $\{\ell+1,\dots,\ell+k\}$. Thus, we may assume without loss of generality that it involves the first row of $M'[[\ell+k]\times I]$, meaning that this first row differs in at most $s/r$ entries from some linear combination of the other rows of $M'[[\ell+k]\times I]$. In other words, we can write $g^{({1})}$ as a linear combination of $g^{(2)},\dots,g^{({\ell+k})}$ and $x_i$ for $i\in S\cup S'$, for some subset $S'\su [n]$ of size $|S'|\le s/r$. But this means that $Q(x_1,\dots,x_n)=P(g^{(1)}(\vec{x}),\dots,g^{({\ell+k})}(\vec{x}), (x_i)_{i\in S})$ can be written as $P^*(g^{(2)}(\vec{x}),\dots,g^{({\ell+k})}(\vec{x}), (x_i)_{i\in S\cup S'})$ for some quadratic polynomial $P^*\in \mathbb{R}[y_2,\dots,y_{\ell+k}, (x_i)_{i\in S\cup S'}]$. Since $|S\cup S'|\le |S|+|S'|\le s\cdot (r-\ell)/r+s/r=s\cdot (r-\ell+1)/r$, this contradicts the minimality of $\ell$ (we now have a suitable representation of $Q$ in terms of the $(\ell-1)+k$  linear forms $g^{(2)},\dots,g^{({\ell+k})}$).
So we must have $M'[[\ell+k]\times I]\in \mathcal{H}^{(\ell+k)\times I}(s/r)$, as desired.
\end{proof}

Using \cref{lem:low-rank-preparation} and \cref{thm:quadratic-geometric}, we now prove \cref{prop:low-rank-case}.

\begin{proof}[Proof of \cref{prop:low-rank-case}]
First, note that we may assume that $s\ge 2^{3r^2}(k+r)^2$, since otherwise \cref{eq:low-rank-case} is trivially true. Now, apply \cref{lem:low-rank-preparation} to obtain a positive integer $\ell\le r$, a partition $[n]=I\cup S$, linear forms $g^{(1)},\dots,g^{(\ell+k)}\in \mathbb{R}[x_1,\dots,x_n]$ and a quadratic polynomial $P\in \mathbb{R}[y_1,\dots,y_{\ell+k}, (x_i)_{i\in S}]$ satisfying conditions (i) to (iv) in the statement of the lemma.

Our plan is to show that the bound in \cref{eq:low-rank-case} holds even if we condition on an arbitrary outcome of $\vec \xi[S]$ (leaving only the randomness in $\vec \xi[I]$).
For any outcome of $\vec \xi[S]$, when plugging in $x_i=\vec{\xi}[i]$ for $i\in S$ into the polynomial $Q(\vec x)$ and the linear forms $g^{(1)}(\vec x),\dots,g^{({\ell+k})}(\vec x)$, we obtain a polynomial $Q_{\vec \xi[S]}(\vec x[I])$ and linear functions $g^{(1)}_{*}(\vec x[I])+c^{(1)}_{\vec \xi[S]},\dots,g^{({\ell+k})}_{*}(\vec x[I])+c^{(\ell+k)}_{\vec \xi[S]}$ (where $g^{(1)}_{*}(\vec x[I]),\dots,g^{({\ell+k})}_{*}(\vec x[I])$ are the linear forms obtained from $g^{(1)}(\vec x),\dots,g^{({\ell+k})}(\vec x)$ by omitting all terms with variables $x_i$ for $i\in S$, and where $c^{(1)}_{\vec \xi[S]},\dots,c^{({\ell+k})}_{\vec \xi[S]}\in \RR$ are real numbers that may depend on $\vec \xi[S]$) and by (i) we obtain that $Q_{\vec \xi[S]}(\vec{x}[I])=P\bigl(g^{(1)}_{*}(\vec x[I])+c^{(1)}_{\vec \xi[S]},\dots,g^{({\ell+k})}_{*}(\vec x[I])+c^{(\ell+k)}_{\vec \xi[S]}, (\xi[i])_{i\in S}\bigr)$. For any outcome of $\vec \xi[S]$, we can furthermore write $P\bigl(y_1+c^{(1)}_{\vec \xi[S]},\dots,y_{\ell+k}+c^{(\ell+k)}_{\vec \xi[S]},(\xi[i])_{i\in S}\bigr)=P_{\vec \xi[S]}(y_1,\dots,y_{\ell+k})$ for a polynomial $P_{\vec \xi[S]}\in \RR[y_1,\dots,y_{\ell+k}]$ whose coefficients may depend on $\vec \xi[S]$. Then we always have $Q_{\vec \xi[S]}(\vec{x}[I])=P_{\vec \xi[S]}\bigl(g^{(1)}_{\vec \xi[S]}(\vec{x}[I]]),\dots,g^{({\ell+k})}_{\vec \xi[S]}(\vec{x}[I])\bigr)$. Recall that the coefficient vectors of the linear forms $g^{(1)}_{*}(\vec x[I]),\dots,g^{({\ell+k})}_{*}(\vec x[I])$ are obtained by restricting the coefficient vectors of $g^{(1)}(\vec x),\dots,g^{({\ell+k})}(\vec x)$ to the index set $I$. In particular, by condition (ii), for $j=1,\dots,k$ the coefficient vector of $g^{({\ell+j})}_{*}(\vec x[I])$ is the restriction $M[[j]\times I]$ of the $j$-th row of $M$ to $I$. We then have
\begin{align*}
&\Pr\Bigl[Q(\vec \xi)=0\text{ and }M\vec{\xi}=\vec{w}\,\Big|\,\vec \xi[S]\Bigr]\\
&\qquad =\Pr\Bigl[Q_{\vec \xi[S]}(\vec \xi[I])=0\text{ and }M[[k]\times I]\vec{\xi}[I]=\vec{w}_{\vec \xi[S]}\,\Big|\,\vec \xi[S]\Bigr]\\
&\qquad =\Pr\Bigl[P_{\vec \xi[S]}(g^{(1)}_{*}(\vec{\xi}[I]),\dots,g^{({\ell+k})}_{*}(\vec{\xi}[I]))=0\text{ and }g^{({\ell+j})}_{*}(\vec{\xi}[I])=\vec{w}_{\vec \xi[S]}[j]\text{ for }j=1,\dots,k\,\Big|\,\vec \xi[S]\Bigr]
\end{align*}
for any outcome of $\vec \xi[S]$, where $\vec{w}_{\vec \xi[S]}=\vec{w}-M[[k]\times S]\vec{\xi}[S]\in \RR^k$.
We wish to show that for all outcomes of $\vec \xi[S]$, the above conditional probability is bounded by the right-hand side of \cref{eq:low-rank-case}.

Recall that $P\in \mathbb{R}[y_1,\dots,y_{\ell+k}, (x_i)_{i\in S}]$ is a quadratic polynomial, and by condition (iv), for some $j,j'\in [\ell]$ the coefficient of $y_j y_{j'}$ in $P$ is nonzero. This means that the coefficient of $y_jy_{j'}$ in $P_{\vec \xi[S]}(y_1,\dots,y_{\ell+k})\in \mathbb{R}[y_1,\dots,y_{\ell+k}]$ is still nonzero, for any outcome of $\vec \xi[S]$.

Since the coefficient vectors of the linear forms $g^{(1)}(\vec x),\dots,g^{({\ell+k})}(\vec x)\in \RR[x_1,\dots,x_n]$ are the rows of the matrix $M'$ in condition (iii), the coefficient vectors of $g^{(1)}_{*}(\vec x[I]),\dots,g^{({\ell+k})}_{*}(\vec x[I])\in \RR[x_i: i\in I]$ are the rows of the matrix $M'[[\ell+k]\times I]$. Now, define the vectors $\vec{a}_i\in \mathbb{R}^{\ell+k}$, for $i\in I$, to be the columns of the matrix $M'[[\ell+k]\times I]$. Then for any outcome of $\vec{\xi}[I]$, the vector $\bigl(g^{(1)}_{*}(\vec{\xi}[I]),\dots,g^{({\ell+k})}_{*}(\vec{\xi}[I])\bigr)$ agrees with $\sum_{i\in I} \vec{\xi}[i]\vec{a}_i$.

Furthermore, as $M'[[\ell+k]\times I]\in \mathcal{H}^{(\ell+k)\times I}(s/r)$, by \cref{lem-halasz-equivalent}(ii) the matrix $M'[[\ell+k]\times I]$ must contain $\lceil s/(r(\ell+k))\rceil\ge \lceil s/(k+r)^2\rceil$ disjoint nonsingular $(\ell+k)\times (\ell+k)$ submatrices, so among the vectors $\vec{a}_i$ for $i\in I$ we can form $\lceil s/(k+r)^2\rceil$ disjoint bases of $\mathbb{R}^{\ell+k}$. Consider the largest integer $t\le \lceil s/(k+r)^2\rceil$ which is divisible by $2^{\ell-1}$, and note that $t\ge \lceil s/(k+r)^2\rceil/2\ge s/(2(k+r)^2)$, since $\lceil s/(k+r)^2\rceil\ge 2^{3r^2}\ge 2^{\ell-1}$.

Now, for any outcome of $\vec \xi[S]$, we obtain
\begin{align*}
&\Pr\Bigl[Q(\vec \xi)=0\text{ and }M\vec{\xi}=\vec{w}\,\Big|\,\vec \xi[S]\Bigr]\\
&\qquad=
\Pr\Biggl[P_{\vec \xi[S]}\biggl(\sum_{i\in I}\vec \xi[i]\vec{a}_i\biggr)=0\text{ and }\sum_{i\in I}\vec \xi[i]\vec{a}_i[\ell+j]=\vec{w}_{\vec \xi[S]}[j]\text{ for }j=1,\dots,k\,\Bigg|\,\vec \xi[S]\Biggr]\\
&\qquad=\Pr\Biggl[P_{\vec \xi[S]}\biggl(\sum_{i\in I}\vec \xi[i]\vec{a}_i\biggr)=0\text{ and }\sum_{i\in I}\vec \xi[i]\vec{a}_i\in \mathcal{W}_{\vec \xi[S]}\,\Big|\,\vec \xi[S]\Biggr]=\Pr\Biggl[\sum_{i\in I}\vec \xi[i]\vec{a}_i\in \mathcal{Z}_{\vec \xi[S]}\,\Bigg|\,\vec \xi[S]\Biggr],
\end{align*}
where $\mathcal{W}_{\vec \xi[S]}\su \RR^{\ell+k}$ is the $\ell$-dimensional affine-linear subspace consisting of the points $\vec{y}\in \RR^{\ell+k}$ with $\vec{y}[\ell+j]=\vec{w}_{\vec \xi[S]}[j]$ for $j=1,\dots,k$, and $\mathcal{Z}_{\vec \xi[S]}\su \mathcal{W}_{\vec \xi[S]}$ is the subset of $\mathcal{W}_{\vec \xi[S]}$ given by $\mathcal{Z}_{\vec \xi[S]}=\{\vec{y}\in \mathcal{W}_{\vec \xi[S]}: P_{\vec \xi[S]}(\vec{y})=0\}$. 

We claim that for any outcome of $\vec \xi[S]$, we have $\mathcal{Z}_{\vec \xi[S]}\subsetneq \mathcal{W}_{\vec \xi[S]}$, i.e., $\mathcal{Z}_{\vec \xi[S]}$ is a quadric on $\mathcal{W}_{\vec \xi[S]}$. Indeed, if we had $\mathcal{Z}_{\vec \xi[S]}= \mathcal{W}_{\vec \xi[S]}$, then the polynomial $P_{\vec \xi[S]}$ would be identically zero on the entire subspace $\mathcal{W}_{\vec \xi[S]}\su \RR^{\ell+k}$. Note that on the space $\mathcal{W}_{\vec \xi[S]}$, we can identify $P_{\vec \xi[S]}$ with some quadratic polynomial in the variables $y_{1},\dots,y_{\ell}$ (obtained by substituting $y_{\ell+j}=\vec{w}_{\vec \xi[S]}[j]$ for $j=1,\dots,k$). This polynomial has a nonzero coefficient for some monomial of the form $y_jy_{j'}$ with $j,j'\in [\ell]$, since $P_{\vec \xi[S]}$ also has a nonzero coefficient for such a monomial. Hence the polynomial $P_{\vec \xi[S]}$ cannot vanish on the entire subspace $\mathcal{W}_{\vec \xi[S]}$, so indeed $\mathcal{Z}_{\vec \xi[S]}\subsetneq \mathcal{W}_{\vec \xi[S]}$.

Recalling that among the vectors $\vec{a}_i$ for $i\in I$ we can form $t$ disjoint bases of $\mathbb{R}^{\ell+k}$, we can now apply \cref{thm:quadratic-geometric} (after conditioning on any outcome of $\vec \xi[S]$) and obtain the bound
\begin{align*}
&\Pr\Bigl[Q(\vec \xi)=0\text{ and }M\vec{\xi}=\vec{w}\,\Big|\, \vec \xi[S]\Bigr]=\Pr\Biggl[\sum_{i\in I}\vec \xi[i]\vec{a}_i\in \mathcal{Z}_{\vec \xi[S]}\,\Bigg|\, \vec \xi[S]\Biggr]\\
&\qquad\qquad\qquad\qquad\le \frac{2^{(\ell-1)(\ell+k)+1}}{t^{((\ell+k)-(\ell-1))/2}}\le \frac{2^{(\ell-1)\ell(k+1)+1}}{t^{(k+1)/2}}\le \frac{2^{\ell^2(k+1)}}{(s/(2(k+r)^2))^{(k+1)/2}}\\
&\qquad\qquad\qquad\qquad\le \left(\frac{2^{3\ell^2}(k+r)^2}{s}\right)^{(k+1)/2}\!\!= \left(\frac{s}{2^{3r^2}(k+r)^2}\right)^{-(k+1)/2}
\!,\end{align*}
as desired.
\end{proof}

\section{Robust rank inheritance}\label{sec:key-lemma}

In this section, we prove \cref{key-lemma-corollary} (the robust rank inheritance lemma). Actually, we will deduce \cref{key-lemma-corollary} from the following somewhat simpler statement with just two matrices $T,A$ instead of three matrices $T,U,A$ (to deduce \cref{key-lemma-corollary}, we will take $r=2$ in the statement below).

\begin{lemma}\label{key-lemma}
Let $r\ge 1$ and $s\ge 1$ and $0\le k\le m\le n$ be integers. Consider matrices $T\in \RR^{k\times m}$ and $A\in \RR^{n\times m}$, and vectors $\vec w\in\RR^{k}$ and $\vec v\in\RR^{n}$. Assume that there exist disjoint subsets $I_1,\dots,I_s\su [m]$ and disjoint subsets $J_1,\dots,J_s\su [n]$ of size $|I_1|=\dots=|I_s|=|J_1|=\dots=|J_s|=k+r$ such that:
\begin{compactitem}
    \item[(i)] For $t=1,\dots,s$, the submatrix $T[[k]\times I_t]$ has rank $k$. 
    \item[(ii)] For $t=1,\dots,s$, every $(T[[k]\times I_t],0)$-perturbation of the matrix $A[J_t\times I_t]$ has rank at least $r$.
\end{compactitem}
Then, for a sequence of independent Rademacher random variables $\vec{\xi}=(\xi_{1},\dots,\xi_{m})\in\{-1,1\}^{m}$,
we have
\begin{equation}\label{eq:key-lemma-equation}
\Pr[T\vec{\xi}=\vec{w}\text{ and }A\vec{\xi}-\vec{v}\text{ has at most }s/6\text{ nonzero coordinates}]\le\left(\frac{s}{10^{60}(k+r)^{20}}\right)^{-(k+r)/2}.
\end{equation}
\end{lemma}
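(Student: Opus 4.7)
The plan is to use the \emph{witness-counting} technique described in \cref{subsec:witness-count}.

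First, I would use condition (ii) to select, for each $t\in[s]$, an $r$-element subset $J_t^*\subseteq J_t$ such that the rows of $A[J_t^*\times I_t]$ are linearly independent modulo the row span of $T[[k]\times I_t]$. This makes the square matrix $\begin{pmatrix} T[[k]\times I_t]\\ A[J_t^*\times I_t]\end{pmatrix}\in\RR^{(k+r)\times(k+r)}$ nonsingular, since condition (ii) is equivalent to saying that the row image of $A[J_t\times I_t]$ in the quotient $\RR^{k+r}/\on{rowspan}(T[[k]\times I_t])$ has dimension $r$, so $r$ rows can be selected whose images span.

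Next, the event in \cref{eq:key-lemma-equation} forces at most $s/6$ coordinates of $A\vec\xi-\vec v$ to be nonzero. Since the $J_t^*$'s are disjoint $r$-subsets of $[n]$, this means at least $5s/6$ of them are entirely contained in the agreement set of $A\vec\xi$ and $\vec v$. Hence, letting $F_t$ be the witness event that $T\vec\xi=\vec w$ and $A[J_t^*\times[m]]\vec\xi=\vec v[J_t^*]$, the main event implies $F_t$ holds for at least $5s/6$ values of $t$. More generally, for a size-$\ell$ subset $S\subseteq[s]$, define $F_S$ as the conjunction of the $F_t$ over $t\in S$. The main event then implies $F_S$ holds for at least $\binom{5s/6}{\ell}$ subsets $S$ of size $\ell$, and Markov's inequality yields
\[\Pr[\text{main event}]\le\frac{1}{\binom{5s/6}{\ell}}\sum_{|S|=\ell}\Pr[F_S].\]

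To bound each $\Pr[F_S]$, I would apply a Hal\'asz-type inequality (\cref{cor:halasz-sub-version} or \cref{thm-Halasz-FJZ}) to the combined matrix $B_S=\begin{pmatrix} T\\ A[\bigcup_{t\in S}J_t^*\times[m]]\end{pmatrix}$, which has $k+r\ell$ rows. The aim is to show $B_S$ contains $\Theta(s/\on{poly}(k+r))$ disjoint nonsingular $(k+r\ell)\times(k+r\ell)$ submatrices, giving $\Pr[F_S]\le (s/\on{poly}(k+r))^{-(k+r\ell)/2}$. For each $t'\in[s]\setminus S$ the column block $B_S[\cdot\times I_{t'}]$ contains the nonsingular $(k+r)\times(k+r)$ submatrix from the first step (a rank-$(k+r)$ contribution), and combining $\ell$ disjoint such $I_{t'}$-contributions would give the required $(k+r\ell)\times(k+r\ell)$ submatrices. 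Balancing the binomial ratio $\binom{s}{\ell}/\binom{5s/6}{\ell}\le(6/5)^\ell$ against the Hal\'asz bound (by taking $\ell$ a small constant, or tuning it depending on $s$) should yield the target inequality, with the $(k+r)^{20}$ factor absorbing polynomial losses.

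The main obstacle is the last step: establishing the robust-rank property for $B_S$ when $\ell\ge 2$. Conditions (i) and (ii) only give information about the ``diagonal'' blocks $A[J_t\times I_t]$ relative to $T[[k]\times I_t]$, not about the ``off-diagonal'' blocks $A[J_t^*\times I_{t'}]$ for $t\ne t'$; a priori these could be in the row span of $T[[k]\times I_{t'}]$ and collapse rank when combining column contributions across distinct $I_{t'}$'s. I expect resolving this requires a delicate linear-algebraic argument, possibly a greedy or probabilistic refinement of the choice of $J_t^*$, or a more structural argument directly tracking cross-block dependencies (exploiting that both the $I_t$'s and $J_t$'s are disjoint), in order to exhibit enough disjoint nonsingular $(k+r\ell)\times(k+r\ell)$ submatrices and justify the polynomial factor $(k+r)^{20}$ appearing in the denominator of the bound.
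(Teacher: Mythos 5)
You have the right double-counting skeleton (witness events plus Markov), and you have correctly located the weak point, but the gap is fatal to the scheme as designed rather than a technicality about choosing the sets $J_t^*$ more cleverly. Consider the extreme case where $A$ vanishes outside the diagonal blocks $A[J_t\times I_t]$ (this is perfectly consistent with hypotheses (i) and (ii)). Then already for $\ell=1$ the stacked matrix $B_{\{t\}}$ loses all of its ``$A$-rank'' once the $k+r$ columns of $I_t$ are deleted, so it contains only $O((k+r)/r)$ disjoint nonsingular $(k+r)\times(k+r)$ submatrices and no Hal\'asz-type bound of the form $(s/\mathrm{poly}(k+r))^{-(k+r)/2}$ is available; indeed the true probability of $F_t$ is of order $s^{-k/2}2^{-r}$ (the $A$-rows involve only the variables in $I_t$, so they contribute at most an Odlyzko factor $2^{-r}$), far above the target. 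No refinement of the choice of $J_t^*$ changes this, and grouping into $\ell$-subsets with $\ell=O(1)$ does not help either: in this example $\Pr[F_S]\approx s^{-k/2}2^{-r\ell}$, so after dividing by $\binom{5s/6}{\ell}$ you only gain the factor $s^{-r/2}$ you are missing once $\ell$ grows like $\log s$ --- i.e.\ you are forced into long witness sequences and an Odlyzko-type count, and your plan of ``taking $\ell$ a small constant'' provably cannot close the argument.

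This is exactly the tension the paper's proof is built around, and the missing idea is that the witnesses must be chosen \emph{adaptively from the agreement set, relative to the global matrix} $A$, with a \emph{variable} length, rather than being fixed in advance inside the diagonal blocks. The paper first splits into two regimes. If $A[J_*\times[m]]$ (with $J_*=J_1\cup\dots\cup J_s$) still has rank at least $L=\lceil 2(k+r)\log s\rceil$ after deleting any $s/2$ rows, the witnesses are length-$L$ sequences of rows chosen so that each row strictly increases the rank, each occurring with probability at most $2^{-L}$ by \cref{lem:odlyzko}, and the ratio of available to total witnesses yields $(5/6)^L\le s^{-(k+r)/2}$. Otherwise $A$ has rank at most $L$ on a large row set, and \cref{lem:drop-robust,cor:drop-robust} manufacture, inside the random agreement set, an integer $z$ with $r\le z\le L$ and at least $s^z/(12z^2)^z$ row-sequences $(h_1,\dots,h_z)$ for which the stacked matrix $\bigl(\begin{smallmatrix}A[\{h_1,\dots,h_z\}\times[m]]\\ T\end{smallmatrix}\bigr)$ lies in $\mathcal{H}^{(k+z)\times m}(s/(12z^2))$ --- this is where the robust-rank property you need is actually produced, not assumed blockwise --- after which \cref{cor:halasz-sub-version} and a geometric-series sum over $z$ give the bound. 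Your Step 1 observation (that (i) and (ii) make the $(k+r)\times(k+r)$ block nonsingular) is correct and matches \cref{eq:assumption-key-lemma-consequence}, but without the rank dichotomy and the adaptive choice of $z$ the off-diagonal-block obstruction you flagged cannot be repaired.
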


Note that if $k=0$, then condition (i) is vacuous, and in condition (ii), there are no nontrivial $(T[[k]\times I_t],0)$-perturbations. This case of \cref{key-lemma} implies \cref{lem:baby-key} (the ``Hamming norm'' anticoncentration inequality mentioned in \cref{sec:outline}); the formal deduction can be found at the end of this section.

Also, note that assumptions (i) and (ii) in particular imply that for all $t=1,\dots,s$ we have
\begin{equation}\label{eq:assumption-key-lemma-consequence}
\on{rank}\begin{pmatrix}A[J_t\times I_t]\\ T[[k]\times I_t]\end{pmatrix}\ge k+r.
\end{equation}
Indeed, if this rank were at most $k+r-1$, then one would be able to form a basis of the row span of the above matrix using the $k$ rows of $T[[k]\times I_t]$  and up to $r-1$ rows of $A[J_t\times I_t]$ (recall that by assumption (i) the rows of the matrix $T[[k]\times I_t]$ are linearly independent). But then it would be possible to add linear combinations of the rows of $T[[k]\times I_t]$ to the rows of $A[J_t\times I_t]$ to obtain a matrix of rank at most $r-1$, contradicting assumption (ii).

In order to prove \cref{key-lemma} we will employ a \emph{witness-counting} approach (as outlined in \cref{subsec:witness-count}). We need to bound the probability that $T\vec{\xi}=\vec{w}$ and $A\vec{\xi}-\vec{v}$ has more than $n-s/6$ zero coordinates. Note that we can use Hal\'asz' inequality (\cref{thm-Halasz-FJZ}) to upper-bound the probability
that $T\vec{\xi}=\vec{w}$ and a particular set of coordinates of
$A\vec{\xi}-\vec{v}$ are zero. However, it is far too wasteful to simply
take a union bound over all subsets of $n-s/6$ coordinates. Instead, we consider certain types of ``witness'' sequences $(h_1,\dots,h_z)\in [n]^z$ of indices where the coordinates of $A\vec{\xi}-\vec{v}$ are zero. If $A\vec{\xi}-\vec{v}$ has many zero coordinates, there must be many such ``witness'' sequences, but on the other hand we can bound the expected number of such ``witness'' sequences by considering the probability that $(A\vec{\xi}-\vec{v})[\{h_1,\dots,h_z\}]=\vec 0$ (and $T\vec{\xi}=\vec{w}$) for a given such sequence $(h_1,\dots,h_z)$.

The following lemma states that under the assumptions of \cref{key-lemma}, one can find a large submatrix of $A$ which ``has its
rank robustly'' (the assumptions of the assumptions of \cref{key-lemma} guarantee that $A$ robustly has rank \emph{at least}
$r$, but the rank of $A$  may actually be much larger than $r$, and that larger rank may be ``fragile''). More precisely, we can find such a submatrix even within any large specified set $H$ of rows.

\begin{lemma}\label{lem:drop-robust}
Let $r,k,s,m,n\in \mathbb{Z}$, the matrices $T\in \RR^{k\times m}$ and $A\in \RR^{n\times m}$, and the sets $J_1,\dots,J_s\su [n]$ be as in \cref{key-lemma}. Then for any subset $H\su J_1\cup\dots\cup J_s$ of size $|H|\ge (k+r)s-(2/3)s$, there are subsets $J\subseteq H$ and $I\subseteq [m]$ with $|J|\ge |H|-s/6$ and $|I|\ge m-s/6$ and an integer $z\ge r$ such that
\begin{equation}\label{eq:drop-robust-lemma}
\on{rank}\begin{pmatrix}A[J\times I]\\ T[[k]\times I]\end{pmatrix}=\on{rank}\begin{pmatrix}A[J'\times I']\\ T[[k]\times I']\end{pmatrix}=k+z
\end{equation}
for all subsets $J'\su J$ and $I'\su I$ of sizes $|J'|\ge |J|-s/(12z^2)$ and $|I'|\ge |I|-s/(12z^2)$.
\end{lemma}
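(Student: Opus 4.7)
The strategy is an iterative pruning argument. Initialize $J_0 = H$ and $I_0 = [m]$, and let $z_0 \ge 0$ be defined by $\on{rank}\begin{pmatrix}A[J_0\times I_0]\\ T[[k]\times I_0]\end{pmatrix} = k + z_0$. At each step $i$, if the robustness condition in \cref{eq:drop-robust-lemma} already holds for $(J_i, I_i, z_i)$, set $(J, I, z) := (J_i, I_i, z_i)$ and stop; otherwise, by failure of that condition there exist $J_{i+1} \subseteq J_i$ and $I_{i+1} \subseteq I_i$ with $|J_i \setminus J_{i+1}|, |I_i \setminus I_{i+1}| \le s/(12 z_i^2)$ on which the combined matrix has rank at most $k + z_i - 1$; denote this rank by $k + z_{i+1}$ and iterate. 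Since the nonnegative integer $z_i$ strictly decreases, the process terminates in finitely many steps.

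\textbf{Deletion budget.} The total number of rows removed from $H$ across all steps is at most $\sum_j s/(12 z_j^2)$, where the $z_j$ range over the strictly decreasing finite sequence of values attained. Since these are distinct positive integers, $\sum_j 1/z_j^2 \le \sum_{z \ge 1} 1/z^2 = \pi^2/6 < 2$, so the total row deletions are strictly less than $s/6$, yielding $|J| \ge |H| - s/6$. The symmetric argument for columns gives $|I| \ge m - s/6$.

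\textbf{The bound $z \ge r$.} I claim $z_i \ge r$ at every step, so in particular the terminal $z$ satisfies $z \ge r$. The hypothesis $|H| \ge (k+r)s - (2/3)s$, combined with $H \subseteq J_1 \cup \dots \cup J_s$ and $|J_1 \cup \dots \cup J_s| = (k+r)s$ (by disjointness), implies that at most $(2/3)s$ elements of $J_1 \cup \dots \cup J_s$ are missing from $H$; again by disjointness of the $J_t$, at most $(2/3)s$ of them can fail the inclusion $J_t \subseteq H$, so at least $s/3$ satisfy $J_t \subseteq H$. At step $i$, the rows removed from $H$ so far total strictly less than $s/6$ (by the deletion budget), and disjointness of the $J_t$ means this destroys at most $s/6$ of those $\ge s/3$ good sets. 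Symmetrically, columns removed from $[m]$ total strictly less than $s/6$, affecting at most $s/6$ of the corresponding $I_t$'s. Hence at least $s/3 - s/6 - s/6 > 0$ indices $t$ satisfy both $J_t \subseteq J_i$ and $I_t \subseteq I_i$; by \cref{eq:assumption-key-lemma-consequence}, any such $(J_t, I_t)$ forces the combined matrix on $(J_i, I_i)$ to have rank at least $k + r$, establishing $z_i \ge r$.

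The main obstacle is making the constants line up: it is precisely the bound $\pi^2/6 < 2$ that keeps the cumulative deletion budget strictly inside $s/6$, which in turn lets the pigeonhole inequality $s/3 - s/6 - s/6 > 0$ preserve at least one $(J_t, I_t)$ witness at every step. This chain of tight inequalities is what dictates the specific constants $2/3$, $1/6$, and $1/(12 z^2)$ appearing in the lemma statement.
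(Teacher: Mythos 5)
Your proposal is correct and is essentially the paper's own argument: the paper packages the same iterative rank-dropping idea via an extremal choice of the minimal $z$ admitting a low-rank pair within the tail-sum budget $f(z)=\sum_{y>z}s/(12y^2)$ (so robustness follows from minimality rather than from termination of an explicit pruning loop), and it uses the same surviving-block witness $(J_t,I_t)$ together with the rank consequence of the hypotheses to force $z\ge r$. One small repair: as literally written, $s/3-s/6-s/6=0$ is not $>0$, so in the witness count you must use the strict bound your budget paragraph already gives (cumulative deletions are at most $s\pi^2/72<s/6$ on each side), and note that the argument is really an induction along the steps, since the convergent budget needs all earlier $z_j\ge 1$, which is exactly what the step-by-step bound $z_i\ge r$ supplies.
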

\begin{proof}
For every integer $z\ge 0$, define $f(z)=\sum_{y=z+1}^{\infty} s/(12y^2)$ and note that $f(z)\le \sum_{y=1}^{\infty} s/(12y^2)< s/6$ for all $z\ge 0$. Now consider the minimum integer $z\ge 0$ such that there exist subsets $J\subseteq H$ and $I\subseteq[m]$ with $|J|\ge |H|-f(z)$ and $|I|\ge m-f(z)$ and
\[\on{rank}\begin{pmatrix}A[J\times I]\\ T[[k]\times I]\end{pmatrix}\le k+z\]
(such an integer $z$ exists, for example $z=m$ satisfies the condition when taking $J= H$ and $I=[m]$). We claim that $z\ge r$. Indeed, note that $|J|\ge |H|-f(z)> (k+r)s-(2/3)s-s/6=|J_1\cup\dots\cup J_s|-(5s/6)$ and $|I|\ge m-f(z)> m-s/6$, so there are strictly fewer than $s$ indices $t\in \{1,\dots,s\}$ such that $I_t\setminus I\ne \emptyset$ or $J_t\setminus J\ne \emptyset$. Hence there must be some index $t\in \{1,\dots,s\}$ with $I_t\su I$ and $J_t\su J$, and we have
\[k+z\ge \on{rank}\begin{pmatrix}A[J\times I]\\ T[[k]\times I]\end{pmatrix}\ge \on{rank}\begin{pmatrix}A[J_t\times I_t]\\ T[[k]\times I_t]\end{pmatrix}\ge k+r,\]
where the last step follows from the assumptions in \cref{key-lemma}, see \cref{eq:assumption-key-lemma-consequence}. So we indeed have $z\ge r\ge 1$.

Finally, for any subsets $I'\su I$ and $J'\su J$ of sizes $|I'|\ge |I|-s/(12z^2)$ and $|J'|\ge |J|-s/(12z^2)$, we have $|I'|\ge m-f(z)-s/(12z^2)=m-f(z-1)$ and $|J'|\ge |H|-f(z)-s/(12z^2)=|H|-f(z-1)$, so by the choice of $z$ we must have
\[k+z\ge \on{rank}\begin{pmatrix}A[J\times I]\\ T[[k]\times I]\end{pmatrix}\ge \on{rank}\begin{pmatrix}A[J'\times I']\\ T[[k]\times I']\end{pmatrix}\ge k+(z-1)+1=k+z,\]
implying \cref{eq:drop-robust-lemma}.
\end{proof}

We can use the robust-rank submatrix guaranteed by \cref{cor:drop-robust} to find many sequences $(h_1,\dots,h_z)$ to be used as ``witnesses'', as follows.

\begin{corollary}\label{cor:drop-robust}
Let $r,k,s,m,n\in \mathbb{Z}$, the matrices $T\in \RR^{k\times m}$ and $A\in \RR^{n\times m}$, and the sets $J_1,\dots,J_s\su [n]$ be as in \cref{key-lemma}. Then for any subset $H\su J_1\cup\dots\cup J_s$ of size $|H|\ge (k+r)s-(2/3)s$, there is some integer $z\ge r$ such that there are at least $s^z/(12z^2)^z$ sequences $(h_1,\dots,h_z)\in H^z$ with
\begin{equation}\label{eq:Halasz-good-z-tuple}
\begin{pmatrix}A[\{h_{1},\dots,h_{z}\}\times[m]]\\
T
\end{pmatrix}\in \mc H^{(k+z)\times m}(s/(12z^2)).
\end{equation}
\end{corollary}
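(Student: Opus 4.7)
The plan is to apply \cref{lem:drop-robust} to the given subset $H\su J_1\cup\dots\cup J_s$ to obtain subsets $J\subseteq H$ and $I\subseteq[m]$ together with an integer $z\ge r$ satisfying \cref{eq:drop-robust-lemma}. Writing $s_0=s/(12z^2)$ for brevity, I claim it suffices to exhibit at least $s_0^z$ tuples $(h_1,\dots,h_z)\in J^z$ for which
\[\begin{pmatrix}A[\{h_1,\dots,h_z\}\times I]\\ T[[k]\times I]\end{pmatrix}\in\mathcal{H}^{(k+z)\times I}(s_0).\]
Indeed, since $|I|\ge m-s/6\ge m-s_0$, any $I^*\su[m]$ with $|I^*|\ge m-s_0$ satisfies $|I^*\cap I|\ge|I|-s_0$, and the rank-$(k+z)$ condition over $I^*\cap I\su I$ implies the rank condition over $I^*\su[m]$ demanded by \cref{eq:Halasz-good-z-tuple}.

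I would then proceed by induction on $i\in\{0,1,\dots,z\}$, showing that the number of tuples $(h_1,\dots,h_i)\in J^i$ for which $\begin{pmatrix}A[\{h_1,\dots,h_i\}\times I]\\ T[[k]\times I]\end{pmatrix}\in\mathcal{H}^{(k+i)\times I}(s_0)$ is at least $s_0^i$. The base case $i=0$ reduces to checking that $T[[k]\times I]\in\mathcal{H}^{k\times I}(s_0)$, which I would derive from condition (i) of \cref{key-lemma} (providing $s$ disjoint nonsingular $k\times k$ submatrices of $T$ whose column supports lie in $I_1\cup\dots\cup I_s$, nearly all of which are contained in $I$ because $|I|\ge m-s/6$) together with \cref{lem-halasz-equivalent}(i). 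The inductive step would follow from the key claim that, for each $(i-1)$-good tuple $(h_1,\dots,h_{i-1})\in J^{i-1}$, at least $s_0$ choices of $h_i\in J$ extend it to an $i$-good tuple, giving a total of at least $s_0\cdot s_0^{i-1}=s_0^i$ good $i$-tuples.

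To prove this inductive claim, I would argue by contradiction. Let $V_{i-1}\subseteq\RR^I$ denote the row span of $\begin{pmatrix}A[\{h_1,\dots,h_{i-1}\}\times I]\\ T[[k]\times I]\end{pmatrix}$ (of dimension $k+i-1$ by the inductive hypothesis), and let $B\su J$ be the ``bad'' set of $h$'s for which $A[h\times I]$ lies within Hamming distance $s_0$ of $V_{i-1}$. Assuming $|B|>|J|-s_0$, for each $h\in B$ I would fix a decomposition $A[h\times I]=L_h+w_h$ with $L_h\in V_{i-1}$ and $|w_h|_0\le s_0$. Applying \cref{eq:drop-robust-lemma} with $J'=B$ (valid since $|B|\ge|J|-s_0$) and then performing row reductions to replace $A[h\times I]$ by $w_h$ for each $h\in B$, the rank of the resulting matrix is preserved at $k+z$, forcing the sparse family $\{w_h:h\in B\}$ to have rank $z-i+1$ modulo $V_{i-1}$ on every $I'\su I$ with $|I'|\ge|I|-s_0$.

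The main obstacle is the final step: deriving a contradiction between the sparsity of the $w_h$'s (each supported on at most $s_0$ coordinates of $I$) and their robust rank $z-i+1$ modulo $V_{i-1}$ across arbitrary column deletions of size at most $s_0$. The goal is to locate a column subset $I^*\su I$ with $|I|-|I^*|\le s_0$ whose removal causes the rank to drop below $z-i+1$, contradicting the consequence of \cref{eq:drop-robust-lemma} noted above. This is transparent when the union of supports of a ``witness'' $(z-i+1)$-subset of the $w_h$'s has size at most $s_0$; the harder case -- when the $w_h$'s have highly dispersed supports -- will require a more delicate argument, perhaps invoking the minimality of $z$ from the proof of \cref{lem:drop-robust} or iteratively refining the sparse representatives $w_h$ until their supports can be jointly controlled. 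This is the technical crux of the proof.
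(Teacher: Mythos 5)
Your reduction to the column set $I$ via \cref{lem:drop-robust}, your base case, and the overall counting scheme are sound, but the proof as written has a genuine gap exactly where you flag it: the inductive step --- that every tuple $(h_1,\dots,h_{i-1})\in J^{i-1}$ whose associated matrix lies in $\mathcal{H}^{(k+i-1)\times I}(s_0)$, with $s_0=s/(12z^2)$, admits at least $s_0$ extensions $h_i\in J$ preserving this property with $i$ in place of $i-1$ --- is never established; you only sketch directions and call it ``the technical crux''. Moreover, the obstacle you describe is a misdiagnosis: you aim to push the rank of the sparse residuals below $z-i+1$ modulo $V_{i-1}$, which would indeed seem to require controlling the union of several dispersed supports, but \cref{eq:drop-robust-lemma} pins the rank at \emph{exactly} $k+z$ on every pair of large subsets, so a drop by a single unit already gives the contradiction. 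Concretely: if the bad set $B\su J$ (rows within Hamming distance $s_0$ of $V_{i-1}$) had $|B|>|J|-s_0$, observe that $h_1,\dots,h_{i-1}\in B$ (they are at distance $0$), so $V_{i-1}$ is contained in the row span $R$ of the matrix formed by $A[B\times I]$ together with $T[[k]\times I]$; hence each residual $w_h=A[h\times I]-v_h$ with $v_h\in V_{i-1}$ also lies in $R$. Since $\dim R=k+z>k+i-1$ by \cref{eq:drop-robust-lemma} (with $J'=B$, $I'=I$), some $w_{h^*}\ne 0$; deleting the at most $s_0$ columns of its support gives $I'\su I$ with $|I'|\ge|I|-s_0$ and $\dim R|_{I'}\le k+z-1$, contradicting \cref{eq:drop-robust-lemma} applied to $(B,I')$. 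With this observation your induction closes and your route becomes a complete (and correct) proof.

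For comparison, the paper's own argument sidesteps the need for a robust-rank invariant at intermediate steps: it counts sequences whose rows, together with $T[[k]\times I]$, merely \emph{span} the row space of the matrix formed by $A[J\times I]$ and $T[[k]\times I]$ (a greedy basis extension, where \cref{eq:drop-robust-lemma} with $I'=I$ guarantees at least $s/(12z^2)$ admissible new rows at each of the $z$ steps), and only at the end transfers the robust rank condition \cref{eq:Halasz-good-z-tuple} to the selected rows through the equality of row spans. Your per-step invariant is the strictly harder statement to maintain; the paper's two-phase argument obtains the same count with less work.
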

\begin{proof}
    Let $J\subseteq H$ and $I\subseteq [m]$ and $z\ge r$ be as in \cref{lem:drop-robust}. We first claim that there are at least $s^z/(12z^2)^z$ sequences $(h_1,\dots,h_z)\in J^z\su H^z$ such that
    \begin{equation}\label{eq:Halasz-good-z-tuple-with-I}
    \on{row-span}\begin{pmatrix}A[\{h_{1},\dots,h_{z}\}\times I]\\
    T[[k]\times I]
    \end{pmatrix}
    =\on{row-span}\begin{pmatrix}A[J\times I]\\
    T[[k]\times I]
    \end{pmatrix}
    \end{equation}
    Indeed, the matrix on the right-hand side has rank $k+z$ by \cref{eq:drop-robust-lemma}, and the rows of $T[[k]\times I]$ are linearly independent by assumption (i) in \cref{key-lemma} (since $|I|\ge m-s/6$, there needs to be at least one index $t\in \{1,\dots,s\}$ with $I_t\su I$, meaning that $\on{rank} T[[k]\times I]=T[[k]\times I_t]=k$). So we can form a basis of the row span of the matrix on the right-hand side by taking the rows of $T[[k]\times I]$ and adding, one by one, $z$ different rows of $A[J\times I]$. By \cref{eq:drop-robust-lemma}, at every step we have at least $s/(12z^2)$ choices for a new row to add to our basis (indeed, the index set $J'\su J$ of all rows in the span of the already selected basis elements must have size $|J'|<|J|-s/(12z^2)$, since this span has dimension less than $k+z$ and so otherwise we would have a contradiction to \cref{eq:drop-robust-lemma} with $I'=I$). Thus, there are indeed at least $s^z/(12z^2)^z$ choices for the sequence $(h_1,\dots,h_z)\in J^z$ of indices of the rows of $A[J\times I]$ selected in this process. For each such sequence \cref{eq:Halasz-good-z-tuple-with-I} holds.

    It remains to show that every sequence $(h_1,\dots,h_z)\in J^z$ satisfying \cref{eq:Halasz-good-z-tuple-with-I} also satisfies \cref{eq:Halasz-good-z-tuple}. So, consider any $(h_1,\dots,h_z)\in J^z$ satisfying \cref{eq:Halasz-good-z-tuple-with-I}. To show \cref{eq:Halasz-good-z-tuple}, it suffices to show that
    \begin{equation}\label{eq:Halasz-good-z-tuple-with-I-2}
    \begin{pmatrix}A[\{h_{1},\dots,h_{z}\}\times I]\\
    T[[k]\times I]
    \end{pmatrix}\in \mc H^{(k+z)\times I}(s/(12z^2)).
    \end{equation}
    Suppose the contrary; then there exists a subset $I'\su I$ of size $|I'|\ge |I|-s/(12z^2)$ such that
    \[\on{rank} \begin{pmatrix}A[\{h_{1},\dots,h_{z}\}\times I']\\
    T[[k]\times I']
    \end{pmatrix}<k+z.\]
    But now by \cref{eq:Halasz-good-z-tuple-with-I} we have
    \[\on{row-span}\begin{pmatrix}A[J\times I']\\
    T[[k]\times I']
    \end{pmatrix}
    =\on{row-span}\begin{pmatrix}A[\{h_{1},\dots,h_{z}\}\times I']\\
    T[[k]\times I']
    \end{pmatrix}\]
    and hence
    \[\on{rank} \begin{pmatrix}A[J\times I']\\
    T[[k]\times I']
    \end{pmatrix}
    =\on{rank} \begin{pmatrix}A[\{h_{1},\dots,h_{z}\}\times I']\\
    T[[k]\times I']
    \end{pmatrix}<k+z,\]
    contradicting \cref{eq:drop-robust-lemma}. So we indeed have \cref{eq:Halasz-good-z-tuple-with-I-2}, as desired.
\end{proof}

Now we prove \cref{key-lemma}. We need two slightly different implementations of our witness-counting strategy described above, with different conditions for the ``witness'' sequences $(h_1,\dots,h_z)$ we are counting. In the case where $A$ ``robustly has very high rank'', we can get away with rather crude arguments using Odlyzko's lemma. In the complementary case,
we need to use \cref{cor:drop-robust} and Hal\'asz' inequality (we cannot simply use \cref{cor:drop-robust} and Hal\'asz' inequality in both cases, because the conclusions of \cref{lem:drop-robust} and \cref{cor:drop-robust} are very weak when $z$ is very large).

\begin{proof}[Proof of \cref{key-lemma}]
Let\footnote{Recall that $\log s$ denotes the base-2 logarithm of $s$.} $L=\lceil 2(k+r)\log s\rceil$, let $J_*=J_1\cup\dots\cup J_s\su [n]$ and note that $|J_*|=(k+r)s$. Furthermore note that \eqref{eq:key-lemma-equation} trivially holds if $s\le 10^{60}(k+r)^{20}$, so we may assume that $s>10^{60}(k+r)^{20}\ge 10^{60}$ and hence $\log s\le s^{1/20}$. Then we in particular have
\begin{equation}\label{eq:bound-for-L}
L=\lceil 2(k+r)\log s\rceil\le 4(k+r)\cdot \log s\le \frac{s^{1/20}}{250}\cdot s^{1/20}=\frac{s^{1/10}}{250}.
\end{equation}

\textbf{Step 1: The robust-high-rank case.} First, suppose that for every subset $J'\subseteq J_*$ with $|J'|\ge |J_*|-s/2$, we have $\rank A[J'\times [m]]\ge L$.

Then, for every subset $S\su J_*$ of size $|S|<L$, the row span of $A[S\times [m]]$ has dimension less than $L$ and so it can contain at most $|J_*|-s/2$ rows of the matrix $A[J_*\times [m]]$. This means that there are at least $s/2$ indices $h\in J_*$ such that $\on{rank} A[(S\cup \{h\})\times [m]]=\on{rank} A[S\times [m]]+1$. Let $H_S\su J_*$ denote the set of the $\lceil s/2\rceil$ smallest such indices $h\in J_*$.

For our double-counting argument, we will consider sequences $(h_{1},\dots,h_{L})\in J_*^L$ with $h_{t}\in H_{\{h_1,\dots,h_{t-1}\}}$ for $t=1,\dots,L$. For every such sequence, we have $\on{rank} A[\{h_{1},\dots,h_{t}\}\times [m]]=\on{rank} A[\{h_{1},\dots,h_{t-1}\}\times [m]]+1$ for $t=1,\dots,L$ and hence  $\on{rank} A[\{h_{1},\dots,h_{L}\}\times [m]]=L$. Furthermore, note that the total number of such sequences is exactly $\lceil s/2\rceil^L$ (since after choosing $h_1,\dots,h_{t-1}$ we have exactly $|H_{\{h_1,\dots,h_{t-1}\}}|=\lceil s/2\rceil$ choices for $h_t$).

We claim that for every outcome of $\vec{\xi}\in \{-1,1\}^m$ such that $A\vec{\xi}-\vec{v}$ has at most $s/6$ nonzero coordinates, there are at least $(s/3)^L$ sequences $(h_{1},\dots,h_{L})\in J_*^L$ with $h_{t}\in H_{\{h_1,\dots,h_{t-1}\}}$ for $t=1,\dots,L$, such that $(A\vec{\xi}-\vec{v})[\{h_1,\dots,h_z\}]=\vec 0$.  Indeed, choosing $h_{1},\dots,h_{L}$ one at a time, at every step we need to choose $h_{t}\in H_{\{h_1,\dots,h_{t-1}\}}$ with $(A\vec{\xi}-\vec{v})[h_{t}]=0$. At most $s/6$ of the $\lceil s/2\rceil$ elements of $H_{\{h_1,\dots,h_{t-1}\}}$ fail this condition, so we indeed have at least $\lceil s/2\rceil-s/6\ge s/3$ choices at every step. Hence are indeed at least $(s/3)^L$ such sequences $(h_{1},\dots,h_{L})\in J_*^L$.

Thus, the expected number of sequences $(h_{1},\dots,h_{L})\in J_*^L$ with $h_{t}\in H_{\{h_1,\dots,h_{t-1}\}}$ for $t=1,\dots,L$ such that $(A\vec{\xi}-\vec{v})[\{h_1,\dots,h_z\}]=\vec 0$ is at least $\Pr[A\vec{\xi}-\vec{v}\text{ has at most }s/6\text{ nonzero coordinates}]\cdot (s/3)^L$. On the other hand, for every given sequence $(h_{1},\dots,h_{L})\in J_*^L$ with $h_{t}\in H_{\{h_1,\dots,h_{t-1}\}}$ for $t=1,\dots,L$, having $(A\vec{\xi}-\vec{v})[\{h_1,\dots,h_z\}]=\vec 0$ is equivalent to $A[\{h_{1},\dots,h_{L}\}\times [m]]\vec{\xi}=\vec{v}[\{h_{1},\dots,h_{L}\}]$ and by Odlyzko's lemma (\cref{lem:odlyzko}), this happens with probability at most $2^{-L}$ (recalling that $\on{rank} A[\{h_{1},\dots,h_{L}\}\times [m]]=L$). Hence the expected number of such sequences $(h_{1},\dots,h_{L})$ is at most $\lceil s/2\rceil^L\cdot 2^{-L}$. Thus, we obtain
\[\Pr[A\vec{\xi}-\vec{v}\text{ has at most }s/6\text{ nonzero coordinates}]\le \frac{\lceil s/2\rceil^L\cdot 2^{-L}}{(s/3)^L}\le \frac{(5s/9)^L\cdot 2^{-L}}{(s/3)^L}=(5/6)^L\le s^{-(k+r)/2},\]
recalling that $L=\lceil 2(k+r)\log s\rceil$ and using that $(5/6)^4<1/2$. This in particular implies \cref{eq:key-lemma-equation}.

\textbf{Step 2: Covering events for the low-rank case.} Now, we may assume that for some subset $J'\subseteq J_*$ with $|J'|\ge |J_*|-s/2$, we have $\rank A[J'\times [m]]\le L$.

For every outcome of $\vec{\xi}\in \{-1,1\}^m$, let $H_{\vec{\xi}}\su J'$ be the set of indices $h\in J'$ with $(A\vec{\xi}-\vec{v})[h]=0$. Note that whenever $A\vec{\xi}-\vec{v}$ has at most $s/6$ nonzero coordinates, we have $|H_{\vec{\xi}}|\ge |J'|-s/6\ge |J_*|-s/2-s/6=(k+r)s-(2/3)s$, and so by \cref{cor:drop-robust} there is some integer $z\ge r$ such that there are at least $s^z/(12z^2)^z$ sequences $(h_1,\dots,h_z)\in H_{\vec{\xi}}^z$ satisfying \cref{eq:Halasz-good-z-tuple}. Note that for each such sequence, by \cref{eq:Halasz-good-z-tuple} we must in particular have
\[k+z=\on{rank} \begin{pmatrix}A[\{h_{1},\dots,h_{z}\}\times[m]]\\
T
\end{pmatrix}\le k+\on{rank} A[\{h_{1},\dots,h_{z}\}\times[m]]\le k+\on{rank} A[J'\times[m]]\le k+L,\]
and consequently $z\le L$.

Thus, for every outcome of $\vec{\xi}\in \{-1,1\}^m$ such that $A\vec{\xi}-\vec{v}$ has at most $s/6$ nonzero coordinates,  there is some integer $z\in \{r,r+1,\dots,L\}$ such that there are at least $s^z/(12z^2)^z$ sequences $(h_1,\dots,h_z)\in J_*^z$ satisfying \cref{eq:Halasz-good-z-tuple} and $(A\vec{\xi}-\vec{v})[\{h_1,\dots,h_z\}]=\vec 0$. For every $z\in \{r,r+1,\dots,L\}$, let $\mathcal{E}_z$ be the event that $T\vec{\xi}=\vec{w}$ and that there are at least $s^z/(12z^2)^z$ sequences $(h_1,\dots,h_z)\in J_*^z$ satisfying \cref{eq:Halasz-good-z-tuple} and $(A\vec{\xi}-\vec{v})[\{h_1,\dots,h_z\}]=\vec 0$. Then
\begin{equation}\label{eq:sum-union-bound}
    \Pr[T\vec{\xi}=\vec{w}\text{ and }A\vec{\xi}-\vec{v}\text{ has at most }s/6\text{ nonzero coordinates}]\le \sum_{z=r}^{L} \Pr[\mathcal{E}_z]
\end{equation}
and it remains to bound the probability of the events $\mathcal{E}_z$ for $z=r,\dots,L$.

\textbf{Step 3: Double-counting. } Let $z\in \{r,r+1,\dots,L\}$ (and note that then in particular $z\ge r\ge 1$).

For every sequence $(h_1,\dots,h_z)\in J_*^z$ satisfying \cref{eq:Halasz-good-z-tuple}, we have the equivalence
\[T\vec{\xi}=\vec{w}\text{ and }(A\vec{\xi}-\vec{v})[\{h_1,\dots,h_z\}]=\vec 0
\quad \iff \quad 
\begin{pmatrix}A[\{h_{1},\dots,h_{z}\}\times[m]]\\
T
\end{pmatrix}
\vec{\xi}=
\begin{pmatrix}\vec{v}[\{h_{1},\dots,h_{z}\}]\\
\vec{w}
\end{pmatrix}.
\]
So, by the version of Hal\'asz' inequality in \cref{cor:halasz-sub-version}, we have
\[\Pr\bigl[T\vec{\xi}=\vec{w}\text{ and }(A\vec{\xi}-\vec{v})[\{h_1,\dots,h_z\}]=\vec 0\bigr]\le\left(\frac{s}{12z^{2}(z+k)} \right)^{-(k+z)/2}\le (4(k+z)^2)^{k+z}\cdot s^{-(k+z)/2}\label{eq:preserve-halasz-bound}
\]
for every sequence $(h_1,\dots,h_z)\in J_*^z$ satisfying \cref{eq:Halasz-good-z-tuple}.

Hence, the expected number of sequences $(h_1,\dots,h_z)\in J_*^z$ satisfying \cref{eq:Halasz-good-z-tuple} such that $(A\vec{\xi}-\vec{v})[\{h_1,\dots,h_z\}]=\vec 0$ and $T\vec{\xi}=\vec{w}$ hold, is at most $|J_*|^z\cdot (4(k+z)^2)^{k+z}\cdot s^{-(k+z)/2}$. On the other hand, whenever the event $\mathcal{E}_z$ occurs, there are at least $s^z/(12z^2)^z$ such sequences.
We deduce that
\[
    \Pr[\mathcal{E}_z]\le \frac{|J_*|^z\cdot (4(k+z)^2)^{k+z}\cdot s^{-(k+z)/2}}{s^z/(12z^2)^z}\le \frac{((k+r)s)^z (48(k+z)^4)^{k+z}}{s^z\cdot s^{(k+z)/2}}\le \frac{(48(k+z)^5)^{k+z}}{s^{(k+z)/2}}
\]
for every $z\in \{r,\dots,L\}$.

\textbf{Step 4: Summing up.} We now obtain
\[\sum_{z=r}^{L} \Pr[\mathcal{E}_z]\le  \sum_{z=r}^{L}\frac{(48(k+z)^5)^{k+z}}{s^{(k+z)/2}}= \sum_{z=r}^{L}a(z),\quad \text{ where }a(z)=\frac{(48(k+z)^5)^{k+z}}{s^{(k+z)/2}}\text{ for }z=r,\dots,L.\]
For $z=r,\dots,L-1$ we compute (recalling that $k\le \lceil 2(k+r)\log s\rceil=L\le s^{1/10}/250$ by \cref{eq:bound-for-L} and hence $k+z+1\le k+L\le 2L\le s^{1/10}/100$)
\[\frac{a(z+1)}{a(z)}=\frac{48(k+z+1)^5}{s^{1/2}}\cdot \left(\frac{k+z+1}{k+z}\right)^{5(k+z)}
	\le \frac{48(s^{1/10}/100)^5}{s^{1/2}}\cdot e^5\le \frac{48\cdot 3^5}{100^5}\le \frac{1}{2}.\]
So, we have $a(z)\le 2^{-(z-r)}a(r)$ for $z=r,\dots,L$ (so $a(z)$ is bounded by a geometric series with common ratio $1/2$), and consequently
\[\sum_{z=r}^{L} \Pr[\mathcal{E}_z]\le  \sum_{z=r}^{L}a(z)\le \sum_{z=r}^{L}2^{-(z-r)}a(r)\le 2\cdot a(r)=\frac{2\cdot (48(k+r)^5)^{k+r}}{s^{(k+r)/2}}\le \left(\frac{s}{10^{60}(k+r)^{20}}\right)^{-(k+r)/2}.\]
Together with \cref{eq:sum-union-bound}, this implies \cref{eq:key-lemma-equation}.
\end{proof}

Next, we deduce \cref{key-lemma-corollary} from \cref{key-lemma}.

\begin{proof}[Proof of \cref{key-lemma-corollary}]Recalling that $(T,U,A)\in \mathcal{M}^{k,m,n}_2(s)$, let $I_1,\dots,I_s,J_1,\dots,J_s$ be index sets as in the definition of $\mathcal{M}^{k,m,n}_2(s)$ in \cref{def:M-definition} (recall in particular that $I_1,\dots,I_s\su [m]$ are disjoint sets and $J_1,\dots,J_s\su [n]$ are disjoint sets).

Note that $U\in \mathcal{H}^{k\times n}(s)$, due to condition (b) in  \cref{def:M-definition}. Write $\mathcal{W}_{U}\subseteq\RR^{n}$ for the row span of $U$, and note that we have $U_{\vec \xi}'\notin \mathcal H^{(k+1)\times n}(s/6)$ if and only if $A\vec \xi-\vec b$ agrees in at least $n-s/6$ coordinates with a vector $\vec v\in \mc W_U$. Hence
\begin{align*}
 & \Pr\Bigl[T\vec{\xi}=\vec{y}\text{ and }U_{\vec \xi}'\notin \mathcal H^{(k+1)\times n}(s/6)\Bigr]\\
& \quad\!\!=\Pr\Bigl[T\vec{\xi}=\vec{y}\text{ and there is }S\subseteq[n]\text{ with }|S|\ge n-s/6\text{ and }(A\vec{\xi}-\vec{b})[S]=\vec{v}[S]\text{ for some }\vec{v}\in\mathcal{W}_{U}\Bigr]\\
 &\quad\!\!\le\frac{2}{s}\sum_{t=1}^{s}\Pr\Bigl[T\vec{\xi}=\vec{y}\text{ and there is }J_t\su S\subseteq[n]\text{ with }|S|\ge n-s/6\text{ and }(A\vec{\xi}-\vec{b})[S]=\vec{v}[S]\text{ for some }\vec{v}\in\mathcal{W}_{U}\Bigr],
\end{align*}
where the inequality follows by observing that every set $S\subseteq[n]$ of size $|S|\ge n-s/6$ satisfies $J_t\su S$ for at least $s-s/6\ge s/2$ indices $t\in \{1,\dots,s\}$. So it suffices to show that for every fixed $t=1,\dots,s$ we have
\begin{align}
&\Pr\Bigl[T\vec{\xi}=\vec{y}\text{ and there is }J_t\su S\subseteq[n]\text{ with }|S|\ge n-s/6\text{ and }(A\vec{\xi}-\vec{b})[S]=\vec{v}[S]\text{ for some }\vec{v}\in\mathcal{W}_{U}\Bigr]\notag\\
&\qquad \le \left(\frac{s}{10^{60}(k+2)^{20}}\right)^{-(k+2)/2}.
\label{eq:for-each-h}
\end{align}
By condition (b) in \cref{def:M-definition} we can choose a subset $J_{t}'\subseteq J_{t}$ of size $|J_t'|=k$ such that $U[[k]\times J_{t}']$ is nonsingular . Without loss of generality we may assume that $A[J_{t}'\times[m]]$
is the all-zero matrix, since the event on the left-hand side of \cref{eq:for-each-h} does not change if we
add linear combinations of the rows of $U$ to the columns of $A$.
But this assumption means that $(A\vec{\xi}-\vec{b})[J_{t}']=-\vec{b}[J_{t}']$,
and recalling that $U[[k]\times J_{t}']$ is nonsingular, there is only
exactly one vector $\vec{v}_{t}\in\mathcal{W}_{U}$ with $\vec{v}_{t}[J_{t}']=-\vec{b}[J_{t}']$. Hence the event on the left-hand side of \cref{eq:for-each-h} can only happen with $\vec{v}=\vec{v}_{t}$, and we obtain
\begin{align*}
&\Pr\Bigl[T\vec{\xi}=\vec{y}\text{ and there is }J_t\su S\subseteq[n]\text{ with }|S|\ge n-s/6\text{ and }(A\vec{\xi}-\vec{b})[S]=\vec{v}[S]\text{ for some }\vec{v}\in\mathcal{W}_{U}\Bigr]\\
 & \qquad=\Pr\Bigl[T\vec{\xi}=\vec{y}\text{ and there is }J_t\su S\subseteq[n]\text{ with }|S|\ge n-s/6\text{ such that }(A\vec{\xi}-\vec{b})[S]=\vec{v}_{t}[S]\Bigr]\\
 & \qquad\le \Pr[T\vec{\xi}=\vec{y}\text{ and }A\vec{\xi}-\vec{b}-\vec{v}_{t}\text{ has at least }n-s/6\text{ zero coordinates}]\\
 & \qquad=\Pr[T\vec{\xi}=\vec{y}\text{ and }A\vec{\xi}-\vec{b}-\vec{v}_{t}\text{ has at most }s/6\text{ nonzero coordinates}].
\end{align*}
The desired bound \cref{eq:for-each-h} now follows from \cref{key-lemma} with $r=2$ (note that conditions (i) and (ii) in \cref{key-lemma} hold by conditions (a) and (c) in \cref{def:M-definition}, respectively).
\end{proof}

We end this section by deducing \cref{lem:baby-key} (the ``Hamming norm'' anticoncentration inequality stated in the outline in \cref{sec:outline}) from \cref{key-lemma}.

\begin{proof}[Proof of \cref{lem:baby-key}]
    Let $C_r=(10r)^{30r}$ and $c_r=1/(6r)$. As in the statement of the lemma, let $A\in \RR ^{m\times n}$ be a matrix which has rank at least $r$ after deletion of any $t$ rows and $t$ columns.
    
    We claim that for $s=\lceil t/r\rceil$ we can find disjoint subsets $I_1,\dots,I_s\su [n]$ and disjoint subsets $J_1,\dots,J_s\su [m]$ of size $|I_1|=\dots=|I_s|=|J_1|=\dots=|J_s|=r$ such that $\rank A[J_\ell \times I_\ell]=r$ for $\ell=1,\dots,s$. Indeed, we can find such subsets greedily: after having found $I_1,\dots,I_\ell\su [n]$ and $J_1,\dots,J_\ell\su [m]$ for some $\ell<t/r$, we can delete all columns with indices in $I_1\cup \dots \cup I_\ell$ and all rows with indices in $J_1\cup \dots \cup J_\ell$ from $A$ and the resulting matrix must have rank at least $r$, so there must be subsets $I_{\ell+1}\su  [n]\setminus (I_1\cup \dots \cup I_\ell)$ and $J_{\ell+1}\su  [m]\setminus (J_1\cup \dots \cup J_\ell)$ with $\rank A[J_\ell \times I_\ell]=r$ and $|J_{\ell+1}|=|I_{\ell+1}|=r$.

    Having found such subsets $I_1,\dots,I_s\su [n]$ and $J_1,\dots,J_s\su [m]$, we can now apply \cref{key-lemma} with $k=0$ and $T\in \RR ^{0\times n}$ being the empty matrix and $\vec w\in \RR ^{0}$ being the empty vector. Note that condition (i) is the vacuously true, and condition (ii) is true because for every $\ell=1,\dots,s$ the only $(T[[k]\times I_t],0)$-perturbation of the matrix $A[J_t\times I_t]$ is the matrix $A[J_t\times I_t]$ itself, which has rank $r$. Thus, for a sequence $\vec{\xi}=(\xi_{1},\dots,\xi_{n})\in\{-1,1\}^{n}$ of independent Rademacher random variables, and for any vector $\vec v\in \RR^m$, we have
\begin{align*}
\Pr[A\vec{\xi}\text{ differs from }\vec{v}\text{ in fewer than }t/(6r)\text{ coordinates}]&\le \Pr[A\vec{\xi}-\vec{v}\text{ has at most }s/6\text{ nonzero coordinates}]\\
&\le\left(\frac{s}{10^{60}r^{20}}\right)^{-r/2}\le \left(\frac{t}{10^{60}r^{21}}\right)^{-r/2}\le (10r)^{30r}\cdot t^{-r/2}
\end{align*}
(noting that $T\vec{\xi}=\vec{w}$ holds vacuously for all $\vec{\xi}\in\{-1,1\}^{n}$).
\end{proof}

\section{Splitting the index set}\label{sec:splitting}

In this section, we prove \cref{lem:matrix-splitting}, splitting our index set $[n]$ into disjoint subsets $I$ and $J$ such that the conditions in \cref{def:M-definition} are satisfied. This amounts to finding disjoint subsets $I_1,\dots,I_s,J_1,\dots,J_s\su [n]$ satisfying certain robust-rank-2 conditions. Most of the work is in finding a single pair of subsets $I_1,J_1$ satisfying the desired property; we will then be able to find our subsets in a greedy fashion, in a similar way to the proof of \cref{lem-halasz-equivalent}(ii).

\begin{proof}[Proof of \cref{lem:matrix-splitting}]
Let $0\le k\le n$, $s\ge 4k+8$, $M\in\mathcal{H}^{k\times n}(s)$ and $A\in \RR^{n\times n}$ be as in the statement of the lemma. Recall that $A$ is a symmetric matrix and that we are assuming $\on{rank} A^*[S\times S]\ge 2$ for any subset $S\subseteq [n]$ of size $|S|\ge n- s$ and any matrix $A^*\in \RR^{n\times n}$ that agrees with some $(M,M)$-perturbation of $A$ in all off-diagonal entries. 

\textbf{Step 1: Setup.} For $\ell=\lfloor s/(4k+8)\rfloor$, we wish to find disjoint subsets $I_1,\dots, I_\ell, J_1,\dots,J_\ell\su [n]$ of size $|I_1|=\dots=|I_\ell|=|J_1|=\dots=|J_\ell|=k+2$ such that the following three conditions hold for $t=1,\dots, \ell$:
\begin{compactitem}
    \item[(a)] The submatrix $M[[k]\times I_t]$ has rank $k$.
    \item[(b)] The submatrix $M[[k]\times J_t]$ has rank $k$. 
    \item[(c)] Every $(M[[k]\times I_t],M[[k]\times J_t])$-perturbation of the matrix $A[J_t\times I_t]$ has rank at least $2$.
\end{compactitem}
This suffices, since we can then take $I=I_1\cup \dots\cup I_\ell$ and $J=[n]\setminus I$ to obtain a partition $[n]=I\cup J$ satisfying the conclusion of the lemma (the conditions (a)--(c) in Definition \ref{def:M-definition} precisely correspond to conditions (a) to (c) above, and we have $k\le |I|\le |J|$ since $|I|=\ell(k+2)$ and $|J|\ge |J_1\cup \dots\cup J_\ell|=\ell(k+2)$).

To find such disjoint subsets $I_1,\dots, I_\ell, J_1,\dots,J_\ell\su [n]$, it suffices to show that for any subset $R\subseteq [n]$ with $|R|> n-s/2$, we can find disjoint subsets $I_1,J_1\subseteq R$ of size $|I_1|=|J_1|=k+2$ such that conditions (a)--(c) above hold for $t=1$. Indeed, then we can then construct the desired subsets $I_1,\dots, I_\ell, J_1,\dots,J_\ell\su [n]$ greedily (at every step choosing $I_t,J_t\subseteq [n]\setminus (I_1\cup\dots\cup I_{t-1}\cup J_1\cup\dots\cup J_{t-1}$). Showing this will be our objective for the rest of the proof, so let us fix a subset $R\subseteq [n]$ with $|R|> n-s/2$.

Since $M\in\mathcal{H}^{k\times n}(s)$, we can find disjoint subsets $I_1',J_1'\su R$ of size $|I_1'|=|J_1'|=k$ with $\rank M[[k]\times I_1']=\rank M[[k]\times J_1']=k$. We will obtain $I_1,J_1$ by augmenting $I_1'$ and $J_1'$ with two additional indices each (we will find suitable indices for this using our assumption on $A$).

\textbf{Step 2: Augmenting the index sets.} Since the $k\times k$ matrices $M[[k]\times I_1']$ and $M[[k]\times J_1']$ are both nonsingular, we can add linear combinations of the rows of $M$ to the rows and columns of $A$ to obtain an $(M,M)$-perturbation $A'$ of $A$ such that all entries of $A'[J_1'\times [n]]$ and all entries of $A'[[n]\times I_1']$ are zero. Define  $R'=R\setminus (I_1'\cup J_1')$ and note that $|R'|\ge n-s/2-2k\ge n-s+4$. By our assumption on $A$, the submatrix $A'[R'\times R']$ must have some nonzero off-diagonal entry $a'_{j,i}$ with distinct $i,j\in R'$ (if all off-diagonal entries were zero, by modifying the diagonal entries of $A'$ we would be able to find a matrix $A^*\in \RR^{n\times n}$ agreeing with the $(M,M)$-perturbation $A'$ of $A$ in all off-diagonal entries such that $\rank A^*[R'\times R']=0$, contradicting our assumption on $A$).

Now, let $A^*$ be the matrix obtained from $A'$ by adjusting the diagonal entries $a'_{h,h}$ for $h\in R'\setminus \{i,j\}$ in such a way that each $2\times 2$ submatrix of the form $A^*[\{j,h\}\times \{i,h\}]$ (for any $h\in R'\setminus \{i,j\}$) is singular. Since $A^*$ agrees with the $(M,M)$-perturbation $A'$ of $A$ in all off-diagonal entries, we have $\rank A^*[S\times S]\ge 2$ for every subset $S\su [n]$ of size $|S|\ge n-s$, and in particular for the subset $S=R'\setminus \{i,j\}$. Hence we obtain $\rank A^*[(R'\setminus \{i\})\times  (R'\setminus \{j\})]\ge 2$, and also note that the $(j,i)$-entry of $A^*$ is $a^*_{j,i}=a'_{j,i}\ne 0$.

Now, if a matrix has rank at least 2, then for any nonzero row (respectively, column), we can find a second linearly independent row (respectively, column). Applying this fact to the matrix $A^*[(R'\setminus \{i\})\times  (R'\setminus \{j\})]$ and the row with index $j$, we can find an index $j'\in R'\setminus \{i,j\}$ such that $\rank A^*[\{j,j'\}\times  (R'\setminus \{j\})]=2$ (i.e., such that the row with index $j'$ is linearly independent from the row with index $j$). Applying the fact again to the matrix $A^*[\{j,j'\}\times  (R'\setminus \{j\})]$ and the column with index $i$, we can find an index $i'\in R'\setminus \{j,i\}$ such that $\rank A^*[\{j,j'\}\times  \{i,i'\}]=2$. Now, we must have $j'\ne i'$ (since $\rank A^*[\{j,h\}\times \{i,h\}]\le 1$ for all $h\in R'\setminus \{j,i\}$). So, $\{j,j'\}$ and $\{i,i'\}$ are disjoint subsets of $R'=R\setminus (I_1'\cup J_1')$, and we have $\rank A'[\{j,j'\}\times  \{i,i'\}]=\rank A^*[\{j,j'\}\times  \{i,i'\}]=2$. Defining $I_1=I_1'\cup \{i,i'\}$ and $J_1=J_1'\cup \{i,i'\}$, we obtain disjoint subsets of $R$ of size $k+2$.

\textbf{Step 3: Proving the full-rank condition.} We now need to show that the subsets $I_1,J_1\su R\su [n]$ satisfy conditions (a)--(c) above for $t=1$. This is equivalent to showing that the $(2k+2)\times (2k+2)$ matrix
\[
\begin{pmatrix}A[J_{1}\times I_{1}] & M[[k]\times J_{1}]^{\transpose}\\
M[[k]\times I_{1}] & 0
\end{pmatrix}
\]
has full rank $2k+2$. But note that the above matrix can be reduced to 
\[
\begin{pmatrix}0 & 0 & M[[k]\times J_{1}']^{\transpose}\\
0 & A'[\{j,j'\}\times \{i,i'\}] & M[[k]\times \{j,j'\}]^{\transpose}\\
M[[k]\times I_{1}'] & M[[k]\times \{i,i'\}] & 0
\end{pmatrix}
\]
by elementary row and column operations (precisely the row and column operations that
were used to obtain $A'$ from $A$ in the previous step of the proof).
Recalling that the matrices $M[[k]\times J_{1}']$, $M[[k]\times I_{1}']$
and $A'[\{j,j'\}\times \{i,i'\}]$ are nonsingular, the desired
result follows.
\end{proof}

\section{Proof of the recursive bound in \texorpdfstring{\cref{thm:recursion}}{Theorem \ref{thm:recursion}}}\label{sec:recursion}

In this section, we finally prove \cref{thm:recursion}, using the results established in the previous three sections.

\begin{proof}[Proof of \cref{thm:recursion}] As in the statement of the theorem, let $k\ge 0$ be an integer, $s> 0$ be a real number and define $s_*=s/(k+2)^{500}$. To show the desired bound on $f(k,s)$, we need to show that for any quadruple $(n,Q,M,\vec w)$ as in \cref{def:function-f}, we have
\begin{equation}\label{eq:recursion-to-show}
\Pr[Q(\vec{\xi})=0\text{ and }M\vec{\xi}=\vec w]\le \max\left\{s_*^{-(k+1)/2}, \quad s_*^{-(k+2)/2}+s_*^{-k/4}\cdot f(k+1,s_*)^{1/2}\right\},
\end{equation}
where the probability is taken with respect to a sequence of independent Rademacher random variables $\vec{\xi}\in\{-1,1\}^{n}$. This is clearly true if $s_*\le 1$, so we may assume without loss of generality that $s_*>1$ and hence $s>(k+2)^{500}\ge 2^{500}$.

So let $n$ be let a positive integer, $Q\in\RR[x_{1},\dots,x_{n}]$ be a quadratic polynomial, let $M\in\mathcal{H}^{k\times n}(s)$ and $\vec w\in \RR^k$.  Let us write the quadratic part of $Q(\vec{x})$ as $\vec{x}^{\transpose}A\vec{x}$ for a symmetric matrix $A\in \RR^{n\times n}$, and assume that for every subset $S\subseteq[n]$ with $|S|\ge n-s$, and every $(M,M)$-perturbation $A'$ of $A$, the submatrix $A'[S\times S]$ has at least one nonzero entry outside the diagonal (this is condition ($*$) in \cref{def:function-f}).

As the first step of the proof, we treat the case where the matrix $A$ does not robustly have rank at least $2$. In the remaining steps of the proof we can then assume that the matrix $A$ does have rank at least $2$ robustly.

\textbf{Step 1: The low-rank case.} Suppose that there is a matrix $B \in \RR^{n\times n}$ which can be obtained from an $(M,M)$-perturbation of $A$ by changing its diagonal entries, and a set $S$ of size $|S|\ge n-2s/3$, such that $\on{rank} B[S\times S]\le 2$. Note that then $B^\transpose$ can also be obtained from an $(M,M)$-perturbation of $A^\transpose=A$ by changing its diagonal entries, and satisfies $\on{rank} B^\transpose[S\times S]\le 2$. Consider the symmetric matrix $A^*=\frac{1}{2}(B+B^\transpose)$; note that $A^*$ can be obtained from an $(M,M)$-perturbation of $\frac{1}{2}(A+A)=A$ by changing its diagonal entries, and we have $\on{rank} A^*[S\times S]\le 4$.

Let $Q^*\in \RR[x_1,\dots,x_n]$ be a quadratic polynomial with quadratic part $\vec x^\transpose A^*\vec x$ such that $Q(\vec \xi)=Q^*(\vec \xi)$ for all $\vec{\xi}\in \{-1,1\}^n$ with $M\vec \xi=\vec w$ (such a polynomial $Q^*$ exists by \cref{lem:perturbation}).

Now, let $T=[n]\setminus S$, and note that it suffices to prove the probability bound in \cref{eq:recursion-to-show} conditioned on every possible outcome for $\vec \xi[T]$. For any given outcome of $\vec \xi[T]$, we can write $Q^*(\vec \xi)$ as $Q^*_{\vec \xi[T]}(\vec \xi[S])$, for some quadratic polynomial $Q^*_{\vec \xi[T]}$ in the variables $\vec{\xi}[i]$ for $i\in S$, with quadratic part $\vec x[S]^\transpose A^*[S\times S]\vec x[S]$ (and whose linear and constant coefficients depend on $\vec \xi[T]$). Simply put, this polynomial is obtained from $Q^*$ by plugging in the given values of $\vec{\xi}[i]$ for $i\in T$. Recall that we always have $Q(\vec \xi)=Q^*(\vec \xi)=Q^*_{\vec \xi[T]}(\vec \xi[S])$ (for any $\vec \xi\in \{-1,1\}^n$ with $M\vec \xi=\vec w$).

Now, we claim that for any subset $S'\su S$ of size $|S'|\ge |S|-s/3\ge n-s$, the matrix $A^*[S'\times S']$ cannot be a $(M[[k]\times S'],M[[k]\times S'])$-perturbation of the zero matrix in $\RR^{S'\times S'}$. Indeed, if $A^*[S'\times S']$ was a $(M[[k]\times S'],M[[k]\times S'])$-perturbation of the zero matrix, then from the matrix $A[S'\times S']$ one could obtain the zero matrix by taking an $(M[[k]\times S'],M[[k]\times S'])$-perturbation and changing its diagonal entries. But this means that there is some $(M,M)$-perturbation $A'$ of $A$ such $A'[S'\times S']$ agrees with the zero matrix in $\RR^{S'\times S'}$ in all off-diagonal entries. This means that $A'[S'\times S']$ does not have any nonzero entries outside its diagonal, contradicting our assumption on $A$ made above (coming from condition ($*$) \cref{def:function-f}). This means that for any subset $S'\su S$ of size $|S'|\ge |S|-s/3$, the matrix $A^*[S'\times S']$ is indeed not a $(M[[k]\times S'],M[[k]\times S'])$-perturbation of the zero matrix.

Furthermore $M[[k]\times S]\in\mathcal{H}^{k\times S}(s/3)$ and $\on{rank} A^*[S\times S]\le 4$, so we can apply \cref{prop:low-rank-case} with $r=5$ to the matrix $M[[k]\times S]$, the vector $\vec{w}-M[[k]\times T]\vec \xi[T]\in \RR^k$, and the quadratic polynomial $Q^*_{\vec \xi[T]}$ in the variables $\vec{\xi}[i]$ for $i\in S$. The conclusion of the proposition then gives
\begin{align*}
\Pr\Bigl[Q(\vec \xi)=0\text{ and }M\vec \xi=\vec w\,\Big|\,\vec \xi[T]\Bigr]&=\Pr\Bigl[Q_{\vec{\xi}[T]}^*(\vec \xi[S])=0\text{ and }M[[k]\times S]\vec \xi[S]=\vec{w}-M[[k]\times T]\vec \xi[T]\,\Big|\,\vec \xi[T]\Bigr]\\
&\le \left(\frac{\lfloor s/3\rfloor}{2^{75}(k+5)^2}\right)^{-(k+1)/2}\le s_*^{-(k+1)/2}.
\end{align*}
for every possible outcome of $\vec \xi[T]$. Hence $\Pr[Q(\vec \xi)=0\text{ and }M\vec \xi=\vec w]\le s_*^{-(k+1)/2}$, which in particular proves \cref{eq:recursion-to-show}.

\textbf{Step 2: Decoupling.} From now on, we may assume that $\on{rank} B[S\times S]\ge 3$ for any matrix $B \in \RR^{n\times n}$ which can be obtained from an $(M,M)$-perturbation of $A$ by changing its diagonal entries, and any set $S$ of size $|S|\ge n-2s/3$. This means in particular that the assumption in \cref{lem:matrix-splitting} is satisfied for $s/2\ge 4k+8$, and therefore \cref{lem:matrix-splitting} gives us a partition $[n]=I\cup J$ with $|I|\le s/2$, such that
\[\Bigl(M[[k]\times I],\,M[[k]\times J],\,A[J\times I]\Bigr)\in\mathcal{M}_2^{k,I,J}\Bigl(\lfloor s/(8k+16)\rfloor \Bigr).\]
Recalling the second part of \cref{rem:M-implies-H}, this implies 
\begin{equation}
    \Bigl(M[[k]\times I],\,M[[k]\times J],\,-2A[J\times I]\Bigr)\in \mathcal{M}_2^{k,I,J}\Bigl(\lfloor s/(8k+16)\rfloor \Bigr)\su \mathcal{M}_2^{k,I,J}\Bigl(6\cdot \lfloor s/(48k+96)\rfloor\Bigr)
    \label{eq:splitting-condition}
\end{equation}
(which in particular means that $6\cdot \lfloor s/(48k+96)\rfloor\le |I|\le |J|$).

Let $\pvec{\xi}[I]$ be an independent copy of $\vec{\xi}[I]$ (i.e., consider independent random variables $\pvec{\xi}[i]\in \{-1,1\}$ for $i\in I$, independent from all entries of $\vec \xi$). Now, let us extend $\pvec{\xi}[I]$ to a vector $\pvec{\xi}\in \RR^n$ by defining $\pvec{\xi}[j]=\vec{\xi}[j]$ for all $j\in J$. Note that then $\pvec{\xi}$ consists of the data $(\pvec{\xi}[I],\vec{\xi}[J])$, while $\vec \xi$ consists of the data $(\vec{\xi}[I],\vec{\xi}[J])$. By decoupling (\cref{lem:decoupling})
we have
\begin{align}
&\Pr[Q(\vec{\xi})=0\text{ and }M\vec{\xi}=\vec{w}]^{2}\notag\\
&\quad\quad \le\Pr[Q(\vec{\xi})=Q(\pvec{\xi})=0\text{ and }M\vec{\xi}=M\pvec{\xi}=\vec{w}]\notag\\
 &\quad\quad =\Pr[Q(\vec{\xi})=0\text{ and }Q(\vec{\xi})-Q(\pvec{\xi})=0\text{ and }M\vec{\xi}=\vec{w}\text{ and }M(\vec{\xi}-\pvec{\xi})=\vec 0]\notag\\
 &\quad\quad =\Pr\Bigl[Q(\vec{\xi})=0\text{ and }Q(\vec{\xi})-Q(\pvec{\xi})=0\text{ and }M\vec{\xi}=\vec{w}\text{ and }M[[k]\times I](\vec{\xi}[I]-\pvec{\xi}[I])=\vec 0\Bigr].\label{eq:recursion-decoupling}
\end{align}
Note that the event $M[[k]\times I](\vec{\xi}[I]-\pvec{\xi}[I])=\vec 0$ depends only on the outcomes of $\vec{\xi}[I]$ and $\pvec{\xi}[I]$ (and not on the outcome of $\vec{\xi}[J]$). We will later bound the probability of this event using \cref{cor:halasz-sub-version}. For fixed outcomes of $\vec{\xi}[I]$ and $\pvec{\xi}[I]$, we can express the conditions $Q(\vec{\xi})=0$ and $Q(\vec{\xi})-Q(\pvec{\xi})=0$ and $M\vec{\xi}=\vec{w}$ in terms of $\vec \xi [J]$; namely we can write them in the form $Q_{\vec \xi[I]}(\vec{\xi}[J])=0$ and $M_{\vec \xi[I],\pvec{\xi}[I]}\vec\xi[J]=\vec{w}_{\vec \xi[I],\pvec{\xi}[I]}$ for some quadratic polynomial $Q_{\vec \xi[I]}$ and some matrix $M_{\vec \xi[I],\pvec{\xi}[I]}$ (which depend on the outcomes of $\vec{\xi}[I]$ and $\pvec{\xi}[I]$).

More specifically, for a given outcome of $\vec{\xi}[I]$,  the event
\begin{equation}\label{eq:recursion-rewrite-1}
\{ Q(\vec{\xi})=0\}\quad \text{can be written as}\quad \{Q_{\vec \xi[I]}(\vec{\xi}[J])=0\},
\end{equation}
where $Q_{\vec \xi[I]}$ is the quadratic polynomial in the variables $x_j$ for $j\in J$ obtained from $Q$ by replacing each variable $x_i$ for $i\in I=[n]\setminus J$ by the given value of $\vec\xi[i]$. Then, by definition, we have $Q_{\vec \xi[I]}(\vec{\xi}[J])=Q(\vec\xi)$ for any outcome of $\vec{\xi}[J]$. Also note that the quadratic part of the polynomial $Q_{\vec \xi[I]}$ is given by $\vec{x}[J]^{\transpose}A[J\times J]\vec{x}[J]$.

Furthermore, for given outcomes of $\vec{\xi}[I]$ and $\pvec{\xi}[I]$, the event
\begin{equation}\label{eq:recursion-rewrite-2}
\{ M\vec{\xi}=\vec{w}\text{ and }Q(\vec{\xi})-Q(\pvec{\xi})=0\}\quad \text{can be written as}\quad \{M_{\vec \xi[I],\pvec{\xi}[I]}\vec{\xi}[J]=\vec{w}_{\vec \xi[I],\pvec{\xi}[I]}\}
\end{equation}
where the matrix $M_{\vec \xi[I],\pvec{\xi}[I]}\in \RR^{(k+1)\times J}$ and the vector $\vec{w}_{\vec \xi[I],\pvec{\xi}[I]}\in \RR^{k+1}$ are defined as follows: When plugging in the given values for $\vec{\xi}[i]$ for $i\in I$, we can interpret $M\vec{\xi}=\vec{w}$ as a system of linear equations in the variables $\vec{\xi}[j]$ for $j\in J=[n]\setminus I$. This system of linear equations has the form $M[[k]\times J]\vec{\xi}[J]=\vec w-M[[k]\times I](\vec{\xi}[I]-\pvec{\xi}[I])$. Furthermore, when plugging the given values for $\vec{\xi}[i]$  and $\pvec{\xi}[i]$ for $i\in I$ into $Q(\vec{\xi})-Q(\pvec{\xi})=0$, we obtain another linear equation in the variables $\vec{\xi}[j]$ for $j\in J$ (indeed, the quadratic terms in the variables $\vec{\xi}[j]$ for $j\in J$ cancel out in the difference $Q(\vec{\xi})-Q(\pvec{\xi})$). The coefficient vector of this linear equation is precisely $2A[J\times I](\vec{\xi}[I]-\pvec{\xi}[I])$. Appending this additional linear equation to our previous system of linear equations in the variables $\vec{\xi}[j]$ for $j\in J$, we obtain a system of $k+1$ equations that we can express in the form $M_{\vec \xi[I],\pvec{\xi}[I]}\vec{\xi}[J]=\vec{w}_{\vec \xi[I],\pvec{\xi}[I]}$ for a matrix $M_{\vec \xi[I],\pvec{\xi}[I]}\in \RR^{(k+1)\times J}$ and a vector $\vec{w}_{\vec \xi[I],\pvec{\xi}[I]}\in \RR^{k+1}$. Note that the first $k$ rows of the matrix $M_{\vec \xi[I],\pvec{\xi}[I]}\in \RR^{(k+1)\times J}$ are given by $M[[k]\times J]$, and the last row is given by (the transpose of) the vector $2A[J\times I](\vec{\xi}[I]-\pvec{\xi}[I])$.

For given outcomes of $\vec{\xi}[I]$ and $\pvec{\xi}[I]$, we would like to bound the probability of having $Q_{\vec \xi[I]}(\vec{\xi}[J])=0$ and $M_{\vec \xi[I],\pvec{\xi}[I]}\vec\xi[J]=\vec{w}_{\vec \xi[I],\pvec{\xi}[I]}$ (subject to the randomness of $\vec\xi[J]$). If $M_{\vec \xi[I],\pvec{\xi}[I]}\in \mathcal{H}^{(k+1)\times J}(s')$ for some suitable $s'$, then we will be able to bound this probability by $f(k+1,s')$; see \cref{def:function-f}. In the next step, we first handle the case where $M_{\vec \xi[I],\pvec{\xi}[I]}\not\in \mathcal{H}^{(k+1)\times J}(s')$.

\textbf{Step 3: Failure of the Hal\'asz condition.} We define $s'=\lfloor s/(48k+96)\rfloor$, then $k\le 6s'\le |I|$ (recalling \cref{eq:splitting-condition} and $s>(k+2)^{500}$). Our goal in this step is to bound the probability that the outcomes of $\vec{\xi}[I]$ and $\pvec{\xi}[I]$ are such that $M[[k]\times I](\vec{\xi}[I]-\pvec{\xi}[I])=\vec 0$ and $M_{\vec \xi[I],\pvec{\xi}[I]}\not\in \mathcal{H}^{(k+1)\times J}(s')$. To do so, let us condition on an arbitrary outcome of $\vec\xi[I]$. Let us apply \cref{key-lemma-corollary} with $0\le k\le 6s'\le |I|\le |J|$, the random vector $\pvec{\xi}[I]$, the matrices $T=M[[k]\times I]$, $U=M[[k]\times J]$ and $-2A[J\times I]$ (noting that then $(T,U,-2A[J\times I])\in \mathcal{M}_2^{k,I,J}(6s')$ by \cref{eq:splitting-condition}), as well as the vectors $\vec y=M[[k]\times I]\vec \xi[I]$ and $\vec b=-2A[J\times I]\vec \xi[I]$. Note that then the random matrix $U'_{\vec\xi}$ in \cref{key-lemma-corollary} is precisely the matrix $M_{\vec \xi[I],\pvec{\xi}[I]}$ (namely, the matrix obtained from $U=M[[k]\times J]$ by appending the vector $-2A[J\times I]\pvec\xi[I]-\vec b=2A[J\times I](\vec\xi[I]-\pvec{\xi}[I])$ as an additional row). So, we obtain 
\begin{equation}\label{eq:recursion-bound-Halasz-failure}
\Pr\left[M[[k]\times I](\vec{\xi}[I]-\pvec{\xi}[I])=\vec 0\text{ and }M_{\vec \xi[I],\pvec{\xi}[I]}\notin \mathcal H^{(k+1)\times J}(s') \,\middle|\,\vec{\xi}[I]\right]\le \left(\frac{6s'}{10^{61}(k+2)^{20}}\right)^{-(k+2)/2}\le s_*^{-(k+2)/2}
\end{equation}
for any outcome of $\vec\xi[I]$ (here, the probability is with respect to the randomness of $\pvec\xi[I]$).  In the last step, we used that $s'=\lfloor s/(48k+96)\rfloor$ and therefore $s'/(10^{61}(k+2)^{20})\ge s/(10^{63}(k+2)^{21})\ge s/(k+2)^{273}>s_*$ (since $k+2\ge 2$).

\textbf{Step 4: Bounding the main term.} Recall from Step 2 that $\on{rank} B[S\times S]\ge 3$ for any matrix $B \in \RR^{n\times n}$ which can be obtained from an $(M,M)$-perturbation of $A$ by changing its diagonal entries, and any subset $S\su J$ of size $|S|\ge |J|-s'$ (here we are using that $|J|-s'
\ge n-2s/3$ as $s'\le s/6$ and $|J|=n-|I|\ge n-s/2$). Recalling that the matrix $M_{\vec \xi[I],\pvec{\xi}[I]}$ is obtained from $M[[k]\times J]$ by adding one row, this implies that $\on{rank} B[S\times S]\ge 1$ for any matrix $B \in \RR^{J\times J}$ obtained from an $(M_{\vec \xi[I],\pvec{\xi}[I]},M_{\vec \xi[I],\pvec{\xi}[I]})$-perturbation of $A[J\times J]$ by changing its diagonal entries, and any subset $S\su J$ of size $|S|\ge |J|-s'$. Thus, $A'[S\times S]$ must have at least one nonzero entry outside the diagonal for every subset $S\subseteq J$ of size $|S|\ge |J|-s'$ and every $(M_{\vec \xi[I],\pvec{\xi}[I]},M_{\vec \xi[I],\pvec{\xi}[I]})$-perturbation $A'$ of $A[J\times J]$.

This means that whenever we have $M_{\vec \xi[I],\pvec{\xi}[I]}\in \mathcal{H}^{(k+1)\times J}(s')$, the
quadruple $(|J|,Q_{\vec \xi[I]},M_{\vec \xi[I],\pvec{\xi}[I]},\vec{w}_{\vec \xi[I],\pvec{\xi}[I]})$ satisfies the conditions in \cref{def:function-f}, with parameters $k+1$ and $s'$ (recall that $Q_{\vec \xi[I]}$ has quadratic part $\vec{x}[J]^{\transpose}A[J\times J]\vec{x}[J]$). Hence, for any outcomes of $\vec{\xi}[I]$ and $\pvec{\xi}[I]$ such that $M_{\vec \xi[I],\pvec{\xi}[I]}\in \mathcal{H}^{(k+1)\times J}(s')$, we have the conditional probability bound
\begin{align}
    &\Pr\left[Q(\vec{\xi})=0\text{ and }M\vec{\xi}=\vec{w}\text{ and }Q(\vec{\xi})-Q(\pvec{\xi})=0\,\middle|\,\vec{\xi}[I],\pvec{\xi}[I]\right]\notag\\
    &\quad\quad=\Pr\left[Q_{\vec \xi[I]}(\vec{\xi}[J])=0\text{ and }M_{\vec \xi[I],\pvec{\xi}[I]}\vec{\xi}[J]=\vec{w}_{\vec \xi[I],\pvec{\xi}[I]}\,\middle|\,\vec{\xi}[I],\pvec{\xi}[I]\right]\le f(k+1,s')\le f(k+1,s_*)\label{eq:recursion-bound-main-term}
\end{align}
(for the first step, recall \cref{eq:recursion-rewrite-1} and \cref{eq:recursion-rewrite-2}, and for the last step note that $s'=\lfloor s/(48k+96)\rfloor\ge s/(k+2)^{500}=s_*$).

\textbf{Step 5: Concluding.} For ease of notation, let us abbreviate $\mathcal H^{(k+1)\times J}(s')$ by $\mathcal{H}$. By \cref{eq:recursion-decoupling} we have
\begin{align*}
&\Pr[Q(\vec{\xi})=0\text{ and }M\vec{\xi}=\vec{w}]^{2}\\
 &\quad \le \Pr\Bigl[Q(\vec{\xi})=0\text{ and }Q(\vec{\xi})-Q(\pvec{\xi})=0\text{ and }M\vec{\xi}=\vec{w}\text{ and }M[[k]\times I](\vec{\xi}[I]-\pvec{\xi}[I])=\vec 0\Bigr]\\
 &\quad \le\Pr\Bigl[Q(\vec{\xi})=0\text{ and }Q(\vec{\xi})-Q(\pvec{\xi})=0\text{ and }M\vec{\xi}=\vec{w}\text{ and }M[[k]\times I](\vec{\xi}[I]-\pvec{\xi}[I])=\vec 0\text{ and }M_{\vec \xi[I],\pvec{\xi}[I]}\in \mathcal H\Bigr]\\
 &\qquad\qquad\qquad+[Q(\vec{\xi})=0\text{ and }M\vec{\xi}=\vec{w}\text{ and }M[[k]\times I](\vec{\xi}[I]-\pvec{\xi}[I])=\vec 0\text{ and }M_{\vec \xi[I],\pvec{\xi}[I]}\notin \mathcal H]\\
 &\quad \le\Pr\left[M[[k]\times I](\vec{\xi}[I]-\pvec{\xi}[I])=\vec 0\right]\!\sup_{\substack{\vec{\xi}[I],\pvec{\xi}[I]\\M_{\vec \xi[I],\pvec{\xi}[I]}\in \mathcal H}}\!\!\Pr\left[Q(\vec{\xi})=0\text{ and }M\vec{\xi}=\vec{w}\text{ and }Q(\vec{\xi})-Q(\pvec{\xi})=0\,\middle|\,\vec{\xi}[I],\pvec{\xi}[I]\right]\\
 &\qquad\qquad\qquad+\Pr\left[Q(\vec{\xi})=0\text{ and }M\vec{\xi}=\vec{w}\right]\sup_{\vec{\xi}[I]}\Pr\left[M[[k]\times I](\vec{\xi}[I]-\pvec{\xi}[I])=\vec 0\text{ and }M_{\vec \xi[I],\pvec{\xi}[I]}\notin \mathcal H \,\middle|\,\vec{\xi}[I]\right].
\end{align*}
Note that $M[[k]\times I]\in \mathcal H^{k\times I}(6s')$ by \cref{eq:splitting-condition} and \cref{rem:M-implies-H}. Therefore, by \cref{cor:halasz-sub-version} (applied with $\vec w=M[[k]\times I]\pvec{\xi}[I]$ for any fixed outcome of $\pvec{\xi}[I]$) we have $\Pr\big[M[[k]\times I](\vec{\xi}[I]-\pvec{\xi}[I])=\vec 0\big]\le (6s'/k)^{-k/2}\le s_*^{-k/2}$ if $k\ge 1$ (recalling that $s'=\lfloor s/(48k+96)\rfloor$ and $s_*=s/(k+2)^{500}$). If $k=0$, then we trivially have $\Pr\big[M[[k]\times I](\vec{\xi}[I]-\pvec{\xi}[I])=\vec 0\big]\le s_*^{-k/2}$, so this inequality holds in either case. Plugging this observation, as well as \cref{eq:recursion-bound-main-term} and \cref{eq:recursion-bound-Halasz-failure} into the above chain of inequalities, we obtain
\[\Pr[Q(\vec{\xi})=0\text{ and }M\vec{\xi}=\vec{w}]^{2}\le s_*^{-k/2}\cdot f(k+1,s_*)+\Pr[Q(\vec{\xi})=0\text{ and }M\vec{\xi}=\vec{w}]\cdot s_*^{-(k+2)/2}\]
Applying \cref{lem:simple-inequality}, this yields
\[\Pr[Q(\vec{\xi})=0\text{ and }M\vec{\xi}=\vec{w}]\le s_{*}^{-(k+2)/2}+s_{*}^{-k/4}\cdot f(k+1,s_*)^{1/2},\]
implying the desired bound in \cref{eq:recursion-to-show}.
\end{proof}

\section{Deducing a non-recursive bound}\label{sec:calculation-deduction}

In this section, we prove \cref{thm:main}. First, from the recursive bound for $f(k,s)$ in \cref{thm:recursion}, one can deduce the following non-recursive bound.

\begin{corollary}\label{cor:induction}
For any integers $0\le k\le \ell$ and any real $s>0$ we have
\begin{equation}\label{eq:complicated-inductive-bound}
f(k,s)\le (s_{k,\ell})^{-\ell/2^{\ell-k+1}}\prod_{j=k}^{\ell-1}(s_{k,j})^{-j/2^{j-k+2}}+\sum_{i=k}^{\ell-1}(s_{k,i})^{-(i+2)/2^{i-k+1}}\prod_{j=k}^{i-1}(s_{k,j})^{-j/2^{j-k+2}},
\end{equation}
where for $i=k,\dots,\ell$ we define
\[
s_{k,i}=\frac{s}{(i+2)^{500(i-k+1)}}.
\]
\end{corollary}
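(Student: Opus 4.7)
The proof is by induction on $m := \ell - k \ge 0$, iterating \cref{thm:recursion}.

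For the base case $m = 0$, the product in the first summand and the whole sum are empty, so the RHS collapses to $(s_{k,k})^{-k/2}$. For $k=0$ this equals $1 \ge f(0,s)$ trivially, and for $k \ge 1$ the bound $f(k,s) \le (s/k)^{-k/2} \le (s/(k+2)^{500})^{-k/2} = (s_{k,k})^{-k/2}$ follows from \cref{cor:halasz-sub-version} together with $k \le (k+2)^{500}$.

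For the inductive step, assume the claim for parameter difference $m-1$ and take $\ell - k = m \ge 1$. If $s_{k,k} \le 1$, the $i = k$ term of the sum equals $(s_{k,k})^{-(k+2)/2} \ge 1 \ge f(k,s)$, so the bound is trivial. Otherwise set $s_* := s_{k,k} > 1$, and \cref{thm:recursion} gives $f(k,s) \le \max\{A, B\}$, where $A = s_*^{-(k+1)/2}$ and $B = s_*^{-(k+2)/2} + s_*^{-k/4}\cdot f(k+1, s_*)^{1/2}$. Apply the inductive hypothesis to $f(k+1, s_*)$ with the same $\ell$. The key substitution is that $(s_*)_{k+1,i} = s/[(k+2)^{500}(i+2)^{500(i-k)}] \ge s/(i+2)^{500(i-k+1)} = s_{k,i}$ for $i \ge k+1$ (since $(i+2)^{500} \ge (k+2)^{500}$), so in the inductive bound one may replace $(s_*)_{k+1,i}$ by the smaller $s_{k,i}$ (all exponents being negative) without invalidating the inequality. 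Then apply $\sqrt{a+b} \le \sqrt a + \sqrt b$ and $\sqrt{\sum a_j} \le \sum \sqrt{a_j}$ to $f(k+1, s_*)^{1/2}$, multiply by $s_*^{-k/4} = (s_{k,k})^{-k/2^{k-k+2}}$ (which exactly supplies the missing $j = k$ factor, with the correct exponent $-k/4$, into each product), and add $s_*^{-(k+2)/2}$ (which is precisely the $i = k$ summand of the outer sum, with empty product equal to $1$). This yields $B \le $ RHS of the corollary.

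The main obstacle is controlling the first term $A = s_*^{-(k+1)/2}$ of the max. If $s_* \le 1$ this is immediate since $A \le s_*^{-(k+2)/2}$, which already appears in the RHS. In the remaining regime $s_* > 1$, one verifies by a direct calculation on exponents that $A$ is dominated by the first summand $s_{k,\ell}^{-\ell/2^{\ell-k+1}} \prod_{j=k}^{\ell-1} s_{k,j}^{-j/2^{j-k+2}}$ of the RHS: an explicit summation of the dyadic exponents shows that the $s$-exponent in this summand equals $(k+1)/2 - 2^{-(m+1)}$, and combining this with the constants $(j+2)^{500(j-k+1)}$ and the standing hypothesis $s \ge (k+2)^{500}$ gives the inequality after elementary estimates (e.g., for $m=1$ one checks $(k+2)^{k+2} \le (k+3)^{2(k+1)}$, which is immediate). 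The technical bookkeeping with the dyadic exponents and the $500$-th power constants is the most error-prone part of the argument, but no conceptually new ideas are required beyond the recursion itself.
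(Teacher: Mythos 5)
Your proposal is correct and follows essentially the same route as the paper's proof: induction on $\ell-k$, base case via \cref{cor:halasz-sub-version}, and inductive step combining \cref{thm:recursion} with the monotonicity $(s_*)_{k+1,i}\ge s_{k,i}$ and the subadditivity of the square root. The only (cosmetic) difference is your treatment of the term $s_*^{-(k+1)/2}$: the paper sidesteps the bookkeeping with the $500$-th power constants by simply noting that $s_{k,j}\le s_*$ for every $j$, so that, since the dyadic exponents sum to $(k+1)/2-2^{-(\ell-k+1)}<(k+1)/2$ (your \cref{lem:sum-identity} computation) and $s_*>1$, domination by the first summand is immediate for all $\ell-k$, not just $\ell-k=1$.
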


In order to analyse the bound on the left-hand side of \cref{eq:complicated-inductive-bound}, we make the following simple observation.

\begin{lemma}\label{lem:sum-identity}
For any integers $0\le k\le i$, we have
\[
\sum_{j=k}^{i-1}\frac{j}{2^{j-k+2}}=\frac{k+1}{2}-\frac{i+1}{2^{i-k+1}}.
\]
\end{lemma}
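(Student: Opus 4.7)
The identity is a routine finite-sum manipulation, and the cleanest route is induction on $i$ with $k$ fixed.

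\textbf{Base case ($i=k$).} The sum on the left is empty, so equals $0$. On the right, $\frac{k+1}{2}-\frac{k+1}{2^{1}}=0$.

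\textbf{Inductive step.} Assuming the identity for some $i\ge k$, adding the term with $j=i$ to both sides shows that the sum up to $j=i$ equals
\[
\frac{k+1}{2}-\frac{i+1}{2^{i-k+1}}+\frac{i}{2^{i-k+2}}=\frac{k+1}{2}-\frac{2(i+1)-i}{2^{i-k+2}}=\frac{k+1}{2}-\frac{i+2}{2^{i-k+2}},
\]
which is exactly the claimed formula with $i$ replaced by $i+1$.

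Alternatively, one can substitute $m=j-k$ and $N=i-k$ to reduce to the standard arithmetico-geometric partial sums $\sum_{m=0}^{N-1}2^{-m}=2-2^{1-N}$ and $\sum_{m=0}^{N-1}m\,2^{-m}=2-(N+1)\,2^{1-N}$; the proof is not expected to require more than a few lines in either case, so there is no genuine obstacle.
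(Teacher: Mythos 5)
Your proof is correct and follows essentially the same route as the paper: the paper also argues by induction (on $i-k$, which with $k$ fixed is the same as your induction on $i$, just peeling off the last term rather than appending the next one), with the same trivial empty-sum base case. The alternative reduction to standard arithmetico-geometric partial sums is also fine but not needed.
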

\begin{proof}
The identity can easily be shown by induction on $i-k$. The base case $i-k=0$ (i.e., $i=k$) is trivial, as both sides are zero. For the induction step, with $i-k\ge 1$, we compute
\[\sum_{j=k}^{i-1}\frac{j}{2^{j-k+2}}=\sum_{j=k}^{i-2}\frac{j}{2^{j-k+2}}+\frac{i-1}{2^{i-k+1}}=\frac{k+1}{2}-\frac{i}{2^{i-k}}+\frac{i-1}{2^{i-k+1}}=\frac{k+1}{2}-\frac{i+1}{2^{i-k+1}}.\qedhere\]
\end{proof}

Using the above observation, let us now deduce \cref{cor:induction} from \cref{thm:recursion} by induction on $\ell-k$ and some tedious but straightforward calculations.

\begin{proof}[Proof of \cref{cor:induction}]
We prove the desired bound by induction on $\ell-k$. If $\ell-k=0$ (i.e., if $\ell=k$), we need to show that $f(k,s)\le (s_{k,k})^{-k/2}$. If $k=0$, this is trivially true as $f(k,s)\le 1=(s_{0,0})^{-0/2}$. To check $f(k,s)\le (s_{k,k})^{-k/2}$ for $k\ge 1$, recall that for every quadruple $(n,Q,M,\vec w)$ in the supremum in the definition of $f(k,s)$ in \cref{def:function-f}, we have $M\in \mathcal{H}^{k\times n}(s)$ and therefore by \cref{cor:halasz-sub-version}
\[\Pr[Q(\vec{\xi})=0\text{ and }M\vec{\xi}=\vec w]\le \Pr[M\vec{\xi}=\vec w]\le (s/k)^{-k/2}\le \left(\frac{s}{(k+2)^{500}}\right)^{-k/2}=(s_{k,k})^{-k/2}.\]
This shows $f(k,s)\le (s_{k,k})^{-k/2}$, as desired.

Let us now assume that $\ell-k\ge 1$ and that we already proved the desired bound for all smaller values of $\ell-k$. Note that now by \cref{thm:recursion} we have
\[f(k,s)\le \max\left\{s_*^{-(k+1)/2}, \quad s_*^{-(k+2)/2}+s_*^{-k/4}\cdot f(k+1,s_*)^{1/2}\right\},\]
with $s_*=s/(k+2)^{500}$. So it suffices to show that both terms in this maximum are bounded by the left-hand side of \cref{eq:complicated-inductive-bound}.

First, note that $s_*=s/(k+2)^{500}\ge s_{k,i}$ for all $i=k,\dots,\ell$. This means that in the case $s_*\le 1$, we also have $s_{k,i}\le 1$ for all $i=k,\dots,\ell$ and hence \cref{eq:complicated-inductive-bound} follows trivially from $f(k,s)\le 1$. We may therefore assume that $s_*> 1$.

For the first term in the maximum above, note that by \cref{lem:sum-identity} applied to $i=\ell$, we have
\[\frac{\ell}{2^{\ell-k+1}}+\sum_{j=k}^{\ell-1}\frac{j}{2^{j-k+2}}=\frac{\ell}{2^{\ell-k+1}}+\frac{k+1}{2}-\frac{\ell+1}{2^{\ell-k+1}}=\frac{k+1}{2}-\frac{1}{2^{\ell-k+1}}<\frac{k+1}{2}\]
Hence
\[s_*^{-(k+1)/2}\le (s_*)^{-\ell/2^{\ell-k+1}}\prod_{j=k}^{\ell-1}(s_*)^{-j/2^{j-k+2}}\le (s_{k,\ell})^{-\ell/2^{\ell-k+1}}\prod_{j=k}^{\ell-1}(s_{k,j})^{-j/2^{j-k+2}},\]
which in particular shows that $s_*^{-(k+1)/2}$ is bounded by the left-hand side of \cref{eq:complicated-inductive-bound}.

To bound the second term, first note that by the inductive assumption we have
\[f(k+1,s_*)\le(s'_{k+1,\ell})^{-\ell/2^{\ell-k}}\prod_{j=k+1}^{\ell-1}(s'_{k+1,j})^{-j/2^{j-k+1}}+\sum_{i=k+1}^{\ell-1}(s'_{k+1,i})^{-(i+2)/2^{i-k}}\prod_{j=k+1}^{i-1}(s'_{k+1,j})^{-j/2^{j-k+1}},
\]
defining
\[
s'_{k+1,i}=\frac{s_*}{(i+2)^{500(i-k)}}=\frac{s}{(i+2)^{500(i-k)}\cdot (k+2)^{500}}.
\]
for $i=k+1,\dots,\ell$. Note that we have $s'_{k+1,i}\ge s_{k,i}$ for all $i=k+1,\dots,\ell$, implying
\[f(k+1,s_*)\le (s_{k,\ell})^{-\ell/2^{\ell-k}}\prod_{j=k+1}^{\ell-1}(s_{k,j})^{-j/2^{j-k+1}}+\sum_{i=k+1}^{\ell-1}(s_{k,i})^{-(i+2)/2^{i-k}}\prod_{j=k+1}^{i-1}(s_{k,j})^{-j/2^{j-k+1}}.\]
As $\sqrt{x+y}\le \sqrt{x}+\sqrt{y}$ for all $x,y\ge 0$, this implies
\[f(k+1,s_*)^{1/2}\le (s_{k,\ell})^{-\ell/2^{\ell-k+1}}\prod_{j=k+1}^{\ell-1}(s_{k,j})^{-j/2^{j-k+2}}+\sum_{i=k+1}^{\ell-1}(s_{k,i})^{-(i+2)/2^{i-k+1}}\prod_{j=k+1}^{i-1}(s_{k,j})^{-j/2^{j-k+2}}.\]
Also using that $s_{k,k}=s_*$, we now obtain
\begin{align*}
&s_*^{-(k+2)/2}+s_*^{-k/4}\cdot f(k+1,s_*)^{1/2}\\
&\qquad\le (s_{k,k})^{-(k+2)/2}+(s_{k,\ell})^{-\ell/2^{\ell-k+1}}(s_{k,k})^{-k/4}\prod_{j=k+1}^{\ell-1}(s_{k,j})^{-j/2^{j-k+2}}\\
&\qquad\qquad\qquad\qquad\qquad+\sum_{i=k+1}^{\ell-1}(s_{k,i})^{-(i+2)/2^{i-k+1}} (s_{k,k})^{-k/4}\prod_{j=k+1}^{i-1}(s_{k,j})^{-j/2^{j-k+2}}\\
&\qquad=(s_{k,k})^{-(k+2)/2}+(s_{k,\ell})^{-\ell/2^{\ell-k+1}}\prod_{j=k}^{\ell-1}(s_{k,j})^{-j/2^{j-k+2}}+\sum_{i=k+1}^{\ell-1}(s_{k,i})^{-(i+2)/2^{i-k+1}}\prod_{j=k}^{i-1}(s_{k,j})^{-j/2^{j-k+2}}\\
&\qquad=(s_{k,\ell})^{-\ell/2^{\ell-k+1}}\prod_{j=k}^{\ell-1}(s_{k,j})^{-j/2^{j-k+2}}+\sum_{i=k}^{\ell-1}(s_{k,i})^{-(i+2)/2^{i-k+1}}\prod_{j=k}^{i-1}(s_{k,j})^{-j/2^{j-k+2}}.
\end{align*}
Hence the second term in the maximum above also satisfies the desired bound, completing the proof.
\end{proof}

Finally, we show how to deduce \cref{thm:main} from the $k=0$ case of \cref{cor:induction}.
\begin{proof}[Proof of \cref{thm:main}]
Let $Q\in \RR[x_1,\dots,x_n]$, $A\in \RR^{n\times n}$ and $s$ be as in \cref{thm:main}. We may assume that $s\ge 4$ (otherwise the bound in \cref{thm:main} holds trivially as long as $C'\ge 2$). Let $k=0$, let $M\in \mathcal{H}^{0\times n}(s)$ be the empty $0\times n$ matrix, and let $\vec w\in \RR^0$ be the empty vector. Note that the quadruple $(n,Q,M,\vec{w})$ satisfies  condition ($*$) in \cref{def:function-f}, because the only $(M,M)$-perturbation of $A$ is the matrix $A'=A$, and by the assumption in \cref{thm:main}, for every subset $S\su [n]$ of size $|S|\ge n-s$, the submatrix $A[S\times S]$ has at least one nonzero entry outside its diagonal. Thus, for a sequence $\vec \xi\in \{-1,1\}^n$ of independent Rademacher random variables, we have
\[\Pr[Q(\vec \xi)=0]=\Pr[Q(\vec{\xi})=0\text{ and }M\vec{\xi}=\vec w]\le f(0,s)\]
and hence by \cref{cor:induction}
\[\Pr[Q(\vec \xi)=0]\le (s_{0,\ell})^{-\ell/2^{\ell+1}}\prod_{j=0}^{\ell-1}(s_{0,j})^{-j/2^{j+2}}+\sum_{i=0}^{\ell-1}(s_{0,i})^{-(i+2)/2^{i+1}}\prod_{j=0}^{i-1}(s_{0,j})^{-j/2^{j+2}}\]
for every integer $\ell\ge 0$ (with $s_{0,i}=s/(i+2)^{500(i+1)}$ for $i=0,\dots,\ell$ as defined in \cref{cor:induction}). Then for every integer $\ell\ge 0$ we obtain
\begin{align*}
\Pr[Q(\vec \xi)=0]&\le \prod_{i=0}^{\ell}(i+2)^{500(i+1)(i+2)/2^{i+1}}\cdot \left(s^{-\ell/2^{\ell+1}}\prod_{j=0}^{\ell-1}s^{-j/2^{j+2}}+\sum_{i=0}^{\ell-1}s^{-(i+2)/2^{i+1}}\prod_{j=0}^{i-1}s^{-j/2^{j+2}}\right)\\
&= \exp\left(500\sum_{i=0}^{\ell}\frac{(i+2)^2\ln (i+2)}{2^{i+1}}\right)\cdot \left(s^{-1/2+1/2^{\ell+1}}+\sum_{i=0}^{\ell-1}s^{-1/2-1/2^{i+1}}\right)
\end{align*}
using \cref{lem:sum-identity} for $k=0$. Noting that the series $\sum_{i=0}^{\infty} (i+2)^2\ln (i+2)/2^{i+1}=\sum_{i=2}^{\infty} i^2\ln i/2^{i-1}$ converges, for all integers $\ell\ge 0$ we obtain
\[\Pr[Q(\vec \xi)=0]\le C_1\cdot \left(s^{-1/2+1/2^{\ell+1}}+\sum_{i=0}^{\ell-1}s^{-1/2-1/2^{i+1}}\right)=C_1\cdot s^{-1/2}\cdot \left(s^{1/2^{\ell+1}}+\sum_{i=0}^{\ell-1}s^{-2^{\ell-i}/2^{\ell+1}}\right)\]
for some absolute constant $C_1\ge 1$. Plugging in $\ell=\lfloor\log\log s\rfloor-1$, we have
\[2\le s^{1/2^{\ell+1}}\le 4\]
and therefore
\[\Pr[Q(\vec \xi)=0]\le C_1\cdot s^{-1/2}\cdot \left(4+\sum_{i=0}^{\ell-1}(1/2)^{2^{\ell-i}}\right)\le \frac{C_1}{\sqrt{s}}\cdot \left(4+\sum_{i=0}^{\ell-1}(1/2)^{\ell-i}\right)\le \frac{5C_1}{\sqrt{s}}.\]
Setting $C'=5C_1$, this gives the desired bound.
\end{proof}

\section{Deduction of main results}\label{sec:arbitrary-distributions}
In this section, we deduce our main theorem (\cref{thm:main-shiny}) and its generalisation to arbitrary distributions (\cref{thm:general-distributions}) from the slightly more technical statement of \cref{thm:main}. Note that \cref{thm:general-distributions} directly implies \cref{thm:main-shiny}, taking $\xi_{1},\dots,\xi_{n}$ to be independent Rademacher random variables, and taking $\delta=1/2$. So, we will just prove \cref{thm:general-distributions} (however, we remark that the proof gets easier and certain steps can be skipped if one is only interested in \cref{thm:main-shiny}).

First, to apply \cref{thm:main} in the general setting of \cref{thm:general-distributions}, we use that any discrete random variable can be expressed in a way such that after some further conditioning one basically obtains a  Rademacher random variable. 
The following lemma strengthens an observation made by Meka,
Nguyen and Vu~\cite{MNV16} for a similar purpose.
\begin{lemma}\label{lem-representation}
For every discrete random variable $\zeta\in \RR$, we can find a representation of the form
\[\zeta=\alpha+\xi \beta\]
for a Rademacher random variable $\xi\in\{-1,1\}$ and a discrete random vector $(\alpha,\beta)\in \RR^2$ which is independent of $\xi$.
Moreover, this representation can be chosen such that the distribution of $(\alpha,\beta)$ satisfies the following two conditions:
\begin{compactenum}
\item [(a)] There is at most one real number $a\in \RR$ with $\Pr[(\alpha,\beta)=(a,0)]>0$
\item [(b)] If there is some value $z\in \RR$ with $\Pr[\zeta=z]>1/2$, then we always have $\alpha+\beta=z$ (for any outcome of the random vector $(\alpha,\beta)\in \RR^2$).
\end{compactenum} 
\end{lemma}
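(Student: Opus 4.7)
Enumerate the support of $\zeta$ as pairwise distinct reals $z_1, z_2, \ldots$ with $p_i := \Pr[\zeta = z_i]$. The key observation is that any discrete distribution on $(\alpha, \beta) \in \RR^2$ determines the law of $\zeta = \alpha + \xi\beta$ by spreading each atom $(a,b)$ with $b \ne 0$ evenly between $a+b$ and $a-b$, while each ``singleton'' atom $(a,0)$ contributes entirely to $a$. Constructing $(\alpha, \beta)$ is therefore equivalent to writing the distribution of $\zeta$ as a nonnegative mixture of ``pair'' components (one for each $\{z_i, z_j\}$ with $i \ne j$) together with at most one ``singleton'' component at some $z_k$. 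The plan splits on whether some value is ``heavy'' (has probability $> 1/2$).

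\textbf{Heavy case.} Suppose some $p_{i_0} > 1/2$ (necessarily unique). I would set $(\alpha, \beta) = (z_{i_0}, 0)$ with probability $2 p_{i_0} - 1 > 0$, and $(\alpha, \beta) = \bigl((z_{i_0} + z_j)/2,\,(z_{i_0} - z_j)/2\bigr)$ with probability $2 p_j$ for each $j \ne i_0$. These probabilities sum to $1$, a direct computation shows $\alpha + \xi\beta \sim \zeta$, every atom satisfies $\alpha + \beta = z_{i_0}$ (yielding (b)), and the only atom with $\beta = 0$ is $(z_{i_0}, 0)$ (yielding (a)).

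\textbf{Non-heavy case.} If $p_i \le 1/2$ for all $i$, I aim to avoid singletons altogether: find nonnegative symmetric weights $\{Q_{\{i,j\}}\}_{i \ne j}$ with $\sum_{j \ne i} Q_{\{i,j\}} = 2 p_i$ for each $i$, then sample an unordered pair $\{z_i, z_j\}$ with probability $Q_{\{i,j\}}$ and set $(\alpha, \beta) = \bigl((z_i + z_j)/2,\,(z_i - z_j)/2\bigr)$. For finite support, existence of $Q$ is equivalent to the classical statement that the probability vector $p$ lies in the convex hull of $\{(e_i + e_j)/2 : i \ne j\}$ iff $\max_i p_i \le 1/2$; the nontrivial direction follows from a short Farkas computation, since the support function $y \mapsto \max_{\{i,j\}}(y_i + y_j)/2$ of this convex hull equals the mean of the top two coordinates of $y$, which upper-bounds $\sum_i y_i p_i$ whenever $\max p_i \le 1/2$. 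For countable support with $p_1 < 1/2$, I would apply the finite case to a top-$N$ truncation (valid once $q_N = \sum_{i \le N} p_i > 2 p_1$) and extract a weak-$*$ limit of the resulting measures $Q^{(N)}$, using tightness of probability measures on $\mb{N} \times \mb{N}$ with prescribed marginals; the edge case $p_1 = 1/2$ is handled directly by the ``star'' weights $Q_{\{1,j\}} = 2 p_j$ for all $j \ne 1$. The main obstacle will be justifying this countable limit, namely checking that it preserves the marginals and places no mass on the diagonal (so that no singleton atom sneaks in); but this is a standard compactness argument.
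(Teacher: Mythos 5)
Your proposal is correct. The heavy case ($\Pr[\zeta=z]>1/2$) is essentially identical to the paper's: you pair the majority value with every other value and keep one singleton atom at $(z,0)$, which gives (a) and (b) exactly as in the paper's Case 1 (the paper runs this for $\Pr[\zeta=z]\ge 1/2$, but that difference is immaterial). In the non-heavy case, however, you take a genuinely different route. The paper picks a median $x$ of $\zeta$ and explicitly pairs a sample $Y_1$ from $\{\zeta<x\}$ with a sample $Y_2$ from $\{\zeta>x\}$ (plus pairs of $x$ with $Y_1$, and possibly a singleton at $(x,0)$); this is a short, fully explicit construction that treats finite and countable support uniformly and needs no existence argument. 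You instead prove that when no atom exceeds $1/2$ the law of $\zeta$ is an exact mixture of two-point uniform distributions: for finite support via the support-function/Farkas computation (which is correct --- $\sum_i y_ip_i\le p_{i^*}y_{(1)}+(1-p_{i^*})y_{(2)}\le (y_{(1)}+y_{(2)})/2$ when $p_{i^*}\le 1/2$), and for countable support via truncation, tightness and a weak limit, with the $p_1=1/2$ edge case handled by the star weights. The limiting step you flag is indeed routine: pointwise subsequential limits exist, tightness (uniform over valid truncations since $q_N\ge q_{N_0}$) makes the limit a probability measure with no diagonal mass, and Fatou plus the fact that both the limiting marginals and the targets $2p_i$ sum to $2$ forces the marginals to be exactly $2p_i$. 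What your approach buys is a slightly stronger structural conclusion (no $\beta=0$ atom at all in the non-heavy case); what it costs is LP duality and a compactness argument where the paper's median trick gives the lemma in a few lines.
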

\begin{proof}
    If the random variable $\zeta$ is constant, i.e., if $\Pr[\zeta=z]=1$ for some $z\in \RR$, we can define the random vector $(\alpha,\beta)\in \RR^2$ to always take the constant value $(z,0)$. So let us from now on assume that $\zeta$ is not constant.
    
    \textbf{Case 1: There is a majority outcome.} First, we consider the case where $\Pr[\zeta=z]\ge 1/2$ for some $z\in \RR$. Let $\rho=\Pr[\zeta\ne z]\le 1/2$. Then, we can take the random vector $(\alpha,\beta)\in \RR^2$ to be equal to $(z,0)$ with probability $1-2\rho$ and equal to $((z+Y)/2,(z-Y)/2)$ with probability $2\rho$, where $Y$ is a sample from the distribution of $\zeta$ conditioned on the event $\zeta\ne z$. Note that we then always have $\alpha+\beta=z$ (as $\alpha+\beta=z+0=z$ or $\alpha+\beta=(z+Y)/2+(z-Y)/2=z$). Also note for all $a\in \RR$ with $a\ne z$ we have $\Pr[(\alpha,\beta)=(a,0)]=0$, since $Y$ never takes the value $z$ and so we can only have $\beta=0$ when $\alpha=z$. Now, taking $\xi\in\{-1,1\}$ to be a Rademacher random variable that is independent of the random vector $(\alpha,\beta)$, the expression $\alpha+\xi \beta$ evaluates to $z\pm 0=z$ with probability $1-2\rho$, to $(z+Y)/2+(z-Y)/2=z$ with probability $\rho$ and to $(z+Y)/2-(z-Y)/2=Y$ with probability $\rho$. Hence the distribution of $\alpha+\xi \beta$ coincides with the distribution of $\zeta$. We can therefore find a coupling of $\zeta$ with $(\alpha,\beta)$ and $\xi$ such that $(\alpha,\beta)\in \RR^2$ and $\xi\in\{-1,1\}$ are independent and $\zeta=\alpha+\xi \beta$.
    
    \textbf{Case 2: There is no majority outcome.} Now, let us consider the case where $\Pr[\zeta=z]< 1/2$ for all $z\in \RR$. Let $x$ be a median of $\zeta$ (meaning that $\Pr[\zeta\ge x]\ge 1/2$ and $\Pr[\zeta\le x]\ge 1/2$; for example we can take $x=\sup \{x\in \RR:\Pr[\zeta\ge x]\ge 1/2\}$).
    As $\Pr[\zeta= x]<1/2$, we can see that $0<\Pr[\zeta< x]\le 1/2$ and $0<\Pr[\zeta> x]\le 1/2$.

    Now, let $\rho_1=\Pr[\zeta< x]$ and $\rho_2=\Pr[\zeta> x]$. Let us assume without loss of generality that $\rho_1\ge \rho_2$ (the case $\rho_2\ge \rho_1$ is analogous). Let $Y_1$ (respectively, $Y_2$) be a sample from the distribution of $\zeta$ conditioned on the event $\zeta< x$ (respectively, the event $\zeta>x$). We can now take the random vector $(\alpha,\beta)\in \RR^2$ to be equal to $(x,0)$ with probability $1-2\rho_1$, equal to $((x+Y_1)/2,(x-Y_1)/2)$ with probability $2\rho_1-2\rho_2$, and equal to $((Y_2+Y_1)/2,(Y_2-Y_1)/2)$ with probability $2\rho_2$. Note that we can only have $\beta=0$ when $\alpha=x$, since we always have $Y_1<x$ and $Y_2>x$. Now, taking $\xi\in\{-1,1\}$ to be a Rademacher random variable that is independent of the random vector $(\alpha,\beta)\in \RR^2$, the expression $\alpha+\xi \beta$ evaluates to $x\pm 0=x$ with probability $1-2\rho_1$, to $(x+Y_1)/2+(x-Y_1)/2=x$ with probability $\rho_1-\rho_2$, to $(x+Y_1)/2-(x-Y_1)/2=Y_1$ with probability $\rho_1-\rho_2$, to $(Y_2+Y_1)/2-(Y_2-Y_1)/2=Y_1$ with probability $\rho_2$, and to $(Y_2+Y_1)/2+(Y_2-Y_1)/2=Y_2$ with probability $\rho_2$. So all in all, $\alpha+\xi \beta$ evaluates to $x$ with probability $1-\rho_1-\rho_2=\Pr[\zeta= x]$, to $Y_1$ with probability $\rho_1$ and to $Y_2$ with probability $\rho_2$. Hence the distribution of $\alpha+\xi \beta$ agrees with the distribution of $\zeta$, and we can again find the desired coupling such that $\zeta=\alpha+\xi \beta$.
\end{proof}

In the setting of \cref{lem-representation}, note that if there is a real number $a\in \RR$ with $\Pr[(\alpha,\beta)=(a,0)]>0$, then we have $\Pr[\beta=0]=\Pr[(\alpha,\beta)=(a,0)]\le \Pr[\zeta=a]$ (here we used (a)). On the other hand, if there is no such $a\in \RR$, we clearly have $\Pr[\beta=0]=0$. Thus, in any case we can conclude that
\begin{equation}\label{eq:prob-beta-zero}
    \Pr[\beta=0] \le \sup_{z\in \RR}\Pr[\zeta=z].
\end{equation}

We will also need a generalisation of the Erd\H os--Littlewood--Offord theorem to arbitrary discrete random variables (i.e., not just for Rademacher random variables). The following theorem follows directly from the result of \cite{Kol58}, and can also be deduced from the ordinary Erd\H os--Littlewood--Offord theorem.

\begin{theorem}\label{thm:general-ELO}
    Fix $\delta>0$, and let $X_1,\dots,X_t\in \RR$ be independent discrete random variables satisfying $\sup_{z\in \RR}\Pr[X_i=z]\le 1-\delta$ for all $i=1,\dots,t$. Then we have
    \[
\sup_{z\in \RR}\Pr[X_1+\dots+X_t=z]\le\frac{C_\delta'}{\sqrt{t}},
\]
for some constant $C_\delta'$ only depending on $\delta$.
\end{theorem}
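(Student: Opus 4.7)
The plan is to reduce to the Rademacher case via the representation \cref{lem-representation}, which has already been proved earlier in the section. Using that lemma, write $X_i = \alpha_i + \xi_i \beta_i$ for each $i$, where $\xi_i\in\{-1,1\}$ is a Rademacher variable independent of the discrete vector $(\alpha_i,\beta_i)\in\RR^2$, and where we arrange the $\xi_i$ and $(\alpha_i,\beta_i)$ to be jointly independent across all $i$. Then for any fixed $z\in\RR$,
\[
\Pr[X_1+\dots+X_t=z] = \EE\!\left[\Pr\!\left[\sum_{i=1}^t \xi_i\beta_i = z - \sum_{i=1}^t \alpha_i \,\Big|\, (\alpha_i,\beta_i)_{i=1}^t\right]\right],
\]
so the task reduces to bounding the inner probability, which is an anticoncentration statement for a signed Rademacher sum with deterministic coefficients $\beta_i$.

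Next, for any fixed outcome of $(\alpha_i,\beta_i)_{i=1}^t$, let $S=\{i\in[t]:\beta_i\ne 0\}$. Conditioning on this outcome and on the values $\xi_i$ for $i\notin S$ (which contribute nothing to $\sum \xi_i\beta_i$), the remaining problem is to control the point probability of $\sum_{i\in S}\xi_i\beta_i$, a linear combination of independent Rademacher variables with nonzero coefficients. By the classical Erd\H os--Littlewood--Offord theorem \cref{eq:elo}, this point probability is at most $O(1/\sqrt{|S|})$ whenever $|S|\ge 1$ (and is trivially at most $1$ otherwise). Hence
\[
\Pr\!\left[\sum_{i=1}^t \xi_i\beta_i = z - \sum_{i=1}^t \alpha_i \,\Big|\, (\alpha_i,\beta_i)_{i=1}^t\right] \;\le\; \frac{C}{\sqrt{\max(|S|,1)}}
\]
for some absolute constant $C$.

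Finally, I bound $|S|$ from below with high probability over $(\alpha_i,\beta_i)$. By \cref{eq:prob-beta-zero} we have $\Pr[\beta_i=0]\le \sup_{z\in\RR}\Pr[X_i=z]\le 1-\delta$, so each indicator $\mathbbm{1}[\beta_i\ne 0]$ has mean at least $\delta$, and the indicators are independent across $i$. A standard Hoeffding or Chernoff bound then yields $\Pr[|S|<\delta t/2]\le \exp(-c\delta^2 t)$ for an absolute constant $c>0$. Splitting according to whether $|S|\ge \delta t/2$ and combining with the previous paragraph gives
\[
\Pr[X_1+\dots+X_t=z] \;\le\; \frac{C}{\sqrt{\delta t/2}} + \exp(-c\delta^2 t) \;\le\; \frac{C'_\delta}{\sqrt{t}},
\]
for a constant $C'_\delta$ depending only on $\delta$ (the exponential term is absorbed into the $1/\sqrt t$ rate, at the cost of a $\delta$-dependent factor; the inequality is trivial for $t\le 1/\delta^2$ by enlarging $C'_\delta$). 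There is no genuine obstacle here: the only subtlety is ensuring that, after conditioning on the auxiliary variables $(\alpha_i,\beta_i)$, the $\xi_i$ remain mutually independent Rademacher variables, which is guaranteed by the independence structure in \cref{lem-representation}.
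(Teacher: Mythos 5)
Your proof is correct. One point of comparison: the paper does not actually prove \cref{thm:general-ELO} at all --- it invokes the classical result of Doeblin--L\'evy--Kolmogorov \cite{Kol58} and merely remarks that the statement ``can also be deduced from the ordinary Erd\H os--Littlewood--Offord theorem,'' without carrying out that deduction. Your argument is a clean, self-contained realization of exactly that deduction: you reuse \cref{lem-representation} and the bound \cref{eq:prob-beta-zero} (which do appear in the paper before the theorem, so citing them is legitimate), condition on the auxiliary vectors $(\alpha_i,\beta_i)$ so that the remaining randomness is a Rademacher sum with coefficients $\beta_i$, apply \cref{eq:elo} on the set $S$ of indices with $\beta_i\ne 0$, and use a Chernoff bound together with $\Pr[\beta_i\ne 0]\ge\delta$ to guarantee $|S|\ge \delta t/2$ up to an exponentially small error, which is then absorbed into the $C'_\delta/\sqrt t$ rate. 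This is in fact the same ``decompose each variable into a Rademacher part plus Chernoff'' pattern the paper itself uses in Case 1 of the proof of \cref{prop:general-distributions-assumption}, so your route fits naturally into the paper's toolkit; what the paper's citation buys is brevity, while your proof buys self-containedness and makes the $\delta$-dependence of the constant explicit. The only details worth being careful about --- that the $\xi_i$ remain i.i.d.\ Rademacher after conditioning on all $(\alpha_i,\beta_i)$, and that the exponential term $\exp(-c\delta^2 t)$ is indeed $O_\delta(1/\sqrt t)$ uniformly in $t$ --- are both handled correctly in your write-up.
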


Let us first prove \cref{thm:general-distributions} under the additional assumption that all point probabilities of the variables $\zeta_1,\dots,\zeta_n$ are at most $1-\delta$. This does not require the full strength of \cref{lem-representation} (we will not need (b) in \cref{lem-representation}).

\begin{proposition}\label{prop:general-distributions-assumption}
The statement of \cref{thm:general-distributions} holds under the additional assumption that we have $\sup_{z\in \RR}\Pr[\zeta_i=z]\le 1-\delta$ for all $i=1,\dots,n$.
\end{proposition}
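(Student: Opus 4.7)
The plan is to apply \cref{lem-representation} to each $\zeta_i$, writing $\zeta_i = \alpha_i + \xi_i\beta_i$ with $\xi_i\in\{-1,1\}$ Rademacher and $(\alpha_i,\beta_i)$ a discrete random vector, all independent across $i$ and with $\xi_i$ independent of $(\alpha_i,\beta_i)$ for each $i$. Let $\omega = ((\alpha_i,\beta_i))_{i=1}^n$; conditioning on $\omega$, the polynomial $Q(\zeta_1,\dots,\zeta_n)$ becomes a Rademacher quadratic polynomial $\tilde Q_\omega(\xi_1,\dots,\xi_n)$ whose quadratic-part matrix is $A_\omega = D_\beta A D_\beta$ (with $A$ the quadratic part of $Q$ and $D_\beta = \mathrm{diag}(\beta_1,\dots,\beta_n)$). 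I intend to apply \cref{thm:main-shiny} to $\tilde Q_\omega$ for each $\omega$, obtaining $\Pr[\tilde Q_\omega(\vec\xi) = z\mid \omega]\le C/\sqrt{\tilde m(\omega)}$ where $\tilde m(\omega)$ is the robust dependence of $\tilde Q_\omega$ on the Rademacher variables, and then integrate over $\omega$.

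To lower-bound $\tilde m(\omega)$: if $\tilde Q_\omega$ becomes constant after fixing $\xi_j=\epsilon_j$ for $j\in S\subseteq N(\omega) := \{i:\beta_i\ne 0\}$, then $Q$ is constant on the fixing box given by $R_i = \{\alpha_i+\epsilon_i\beta_i\}$ for $i\in S$, $R_i = \{\alpha_i\}$ for $i\notin S\cup N(\omega)$, and $R_i = \{\alpha_i-\beta_i,\alpha_i+\beta_i\}$ for $i\in N(\omega)\setminus S$. The hypothesis of Proposition~\ref{prop:general-distributions-assumption} forces at least $m$ indices to satisfy $\Pr[\zeta_i\in R_i]\le 1-\delta$; using the extra assumption $\sup_z\Pr[\zeta_i=z]\le 1-\delta$ every singleton $R_i$ is counted automatically, yielding
\[|S| + \bigl|[n]\setminus N(\omega)\bigr| + \bigl|N^{-}(\omega)\setminus S\bigr| \;\ge\; m,\]
where $N^{-}(\omega) = \{i\in N(\omega) : \Pr[\zeta_i\in\{\alpha_i\pm\beta_i\}]\le 1-\delta\}$. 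Combined with the elementary bound $\Pr[\beta_i\ne 0]\ge \delta$ coming from (the unlabeled inequality after) \cref{lem-representation} and standard concentration for sums of independent Bernoulli$(\ge\delta)$ indicators, integration over $\omega$ is then set up to produce the desired $C_\delta/\sqrt m$ estimate.

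The main obstacle is that the naive consequence $\tilde m(\omega)\ge m - (|[n]\setminus N(\omega)| + |N^{-}(\omega)|)$ can be vacuous when many indices are ``automatically counted'' in the hypothesis (e.g.\ when a $\zeta_i$ is concentrated on two values so that the pair $\{\alpha_i\pm\beta_i\}$ essentially exhausts its support). To close this gap I expect one must invoke \cref{thm:main} in place of (or in addition to) \cref{thm:main-shiny}, exploiting the identity that $A_\omega[T\times T]$ has a nonzero off-diagonal entry exactly when $A[(T\cap N(\omega))\times (T\cap N(\omega))]$ does, and extracting a robust-rank property of $A$ restricted to $N(\omega)$ directly from the fixing-box hypothesis via well-chosen pair-valued fixing boxes. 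A case split according to whether $|[n]\setminus N(\omega)| + |N^{-}(\omega)|$ is above or below a $\delta$-dependent fraction of $m$, handling the two regimes by \cref{thm:main-shiny} and \cref{thm:main} respectively, should then deliver the claimed bound.
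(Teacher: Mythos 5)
Your first half runs parallel to the paper's ``quadratic-rich'' case (represent $\zeta_i=\alpha_i+\xi_i\beta_i$ via \cref{lem-representation}, note $\Pr[\beta_i\neq 0]\ge\delta$, and feed the resulting Rademacher quadratic into \cref{thm:main}/\cref{thm:main-shiny}), and that part can be made to work. But the complementary regime is a genuine gap, and the fix you sketch cannot close it. The problem is that conditioning on \emph{all} of $\omega=((\alpha_i,\beta_i))_i$ can destroy every bit of anticoncentration even though the hypothesis of the proposition holds with $m=n$. Concretely, take $Q(\vec x)=\sum_{i=1}^n(x_i-1)^2$ and $\zeta_i$ uniform on $\{0,1,2\}$ (so $\sup_z\Pr[\zeta_i=z]=1/3\le 1-\delta$ for, say, $\delta=1/3$). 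Any fixing box forces each $R_i$ to be a singleton or $\{0,2\}$, both of probability at most $2/3=1-\delta$, so the fixing-box hypothesis holds with $m=n$, and the conclusion is true since $Q(\vec\zeta)$ is essentially binomial. Yet the representation of \cref{lem-representation} gives the pair $\{\alpha_i-\beta_i,\alpha_i+\beta_i\}=\{0,2\}$ when $\beta_i\neq 0$, on which $(y-1)^2$ is constant; hence for \emph{every} outcome of $\omega$ the polynomial $\tilde Q_\omega(\xi_1,\dots,\xi_n)$ is a constant, $\tilde m(\omega)=0$, and no bound of the form $C/\sqrt{\tilde m(\omega)}$ can be integrated. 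Your proposed rescue via \cref{thm:main} and a ``robust-rank property of $A$ restricted to $N(\omega)$'' is also void here, because every off-diagonal entry of $A$ is zero; with general distributions the square terms $x_i^2$ are \emph{not} constants (unlike the Rademacher case), so a polynomial whose quadratic part is purely diagonal can still be the whole story, and its anticoncentration lives in the distributions of the $\zeta_i$ themselves, which your full conditioning on $\omega$ freezes.

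The paper avoids this by splitting on the size of a maximum matching of nonzero off-diagonal coefficients of $Q$ (in the original variables, before any Rademacher reduction). If the matching has size $\ge m/4$, it conditions on $\omega$ and applies \cref{thm:main} essentially as you do, using a Chernoff bound for the number of matched pairs with $\beta_{i_h}\beta_{j_h}\neq 0$. If the matching is smaller, it conditions only on the at most $m/2$ variables $\vec\zeta[J]$ touched by the matching; by maximality the remaining polynomial decomposes as $\sum_{i\in I}P^{(i)}_{\vec\zeta[J]}(\zeta_i)$, a sum of independent single-variable quadratics, and a fixing-box construction with the (possibly two-point) sets $R_i=(P^{(i)}_{\vec\zeta[J]})^{-1}(z_i)\cap\operatorname{supp}(\zeta_i)$ shows that at least $m/2$ of the summands satisfy $\sup_z\Pr[P^{(i)}_{\vec\zeta[J]}(\zeta_i)=z]\le 1-\delta$; then the general-distribution linear Littlewood--Offord theorem (\cref{thm:general-ELO}) finishes. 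Some ingredient of this kind, retaining the full randomness of the $\zeta_i$ in the quadratic-poor case rather than reducing to $\xi_i$, is what your outline is missing.
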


\begin{proof}
    For ease of notation, we write $\vec \zeta=(\zeta_1,\dots,\zeta_n)$. Let $C'\ge 1$ be an absolute constant such that \cref{thm:main} holds, and let $C_\delta=\max(8C'/\delta,2C'_\delta)$ for the constant $C'_\delta$ in \cref{thm:general-ELO}. Note that the claimed probability bound is trivially true if $\sqrt{m}\le C_\delta$, so we may assume that $m\ge (C_\delta)^2\ge 64/\delta^2$.
    
    Recall that $Q\in \RR[x_1,\dots,x_n]$ is a quadratic polynomial, and that we are assuming that for any fixing box $R_1\times \dots\times R_n$ of $Q$ (where $R_1,\dots,R_n$ are nonempty subsets of the supports of $\zeta_1,\dots,\zeta_n$, respectively) there are at least $m$ indices $i\in \{1,\dots,n\}$ with $\Pr[\zeta_i\in R_i]\le 1-\delta$. Note that this assumption is not affected by changing the constant term of $Q$. It therefore suffices to prove
    \begin{equation}\label{eq:general-dist-prop-to-show}
    \Pr[Q(\vec \zeta)=0]\le\frac{C_\delta}{\sqrt{m}}.
    \end{equation}
    Indeed, for any $z\in \RR$, applying this inequality to the polynomial $Q-z$ gives $\Pr[Q(\vec \zeta)=z]\le C_\delta/\sqrt{m}$, as claimed. To prove \cref{eq:general-dist-prop-to-show}, we distinguish two cases.

    \textbf{Case 1: Quadratic anticoncentration.} First, consider the case that for some $\ell\ge m/4$ there exist distinct indices $i_1,\dots,i_\ell,j_1,\dots,j_\ell\in [n]$ such that for each $h=1,\dots,\ell$ the coefficient of $x_{i_h}x_{j_h}$ in the quadratic polynomial $Q$ is nonzero (and let $c_h\ne 0$ denote this coefficient). In this case, our goal is to apply \cref{thm:main}.

    For each $i=1,\dots,n$, let us represent the random variable $\zeta_i$ as $\zeta_i=\alpha_i+\xi_i \beta_i$, for a Rademacher random variable $\xi_i\in\{-1,1\}$ and a random vector $(\alpha_i,\beta_i)\in \RR^2$, as in \cref{lem-representation}, in such a way that $\xi_1,\dots,\xi_n$ and $(\alpha_1,\beta_1),\dots,(\alpha_n,\beta_n)$ are all mutually independent. Note that by \cref{eq:prob-beta-zero} and our assumption in the proposition, we have
    \[\Pr[\beta_i=0] \le \sup_{z\in \RR}\Pr[\zeta_i=z]\le 1-\delta\]
    for $i=1,\dots,n$. Thus, for each $h=1,\dots,\ell$, we obtain
    \[\Pr[\beta_{i_h}\beta_{j_h}\ne 0] =\Pr[\beta_{i_h}\ne 0]\cdot \Pr[\beta_{i_h}\ne 0]\ge \delta^2,\]
    and furthermore the events $\beta_{i_h}\beta_{j_h}\ne 0$ are independent for all $h=1,\dots,\ell$. Thus, by a Chernoff bound (see for example \cite[Theorem~2.1]{JLR}), we have
    \begin{equation}\label{eq:few-activated}
        \Pr\bigl[\text{fewer than }\delta^2\ell/2\text{ indices }h\in \{1,\dots,\ell\}\text{ satisfy }\beta_{i_h}\beta_{j_h}\ne 0\bigr]\le e^{-\delta^2\ell/8}\le e^{-\delta^2m/32}\le\frac{32}{\delta^2m}\le\frac{4/\delta}{\sqrt{m}},
    \end{equation}
    recalling $\ell\ge m/4$ and $m\ge 64/\delta^2$ (i.e., $
    \delta \sqrt{m}\ge 8$). Conditioning on any outcomes of $(\alpha_1,\beta_1),\dots,(\alpha_n,\beta_n)$, such that $\beta_{i_h}\beta_{j_h}\ne 0$ for at least $\delta^2\ell/2$ indices $h\in \{1,\dots,\ell\}$, we can interpret $Q(\zeta_1,\dots,\zeta_n)$ as a quadratic polynomial in the independent Rademacher random variables $\xi_1,\dots,\xi_n$ by plugging in $\zeta_i=\alpha_i+\xi_i \beta_i$ for $i=1,\dots,n$. Note that for $h=1,\dots,\ell$ the coefficient of $\xi_{i_h}\xi_{j_h}$ in this quadratic polynomial is $c_h\beta_{i_h}\beta_{j_h}$. Hence there are at least $\delta^2\ell/2\ge \delta^2m/8$ indices $h\in \{1,\dots,\ell\}$ such that the coefficient of $\xi_{i_h}\xi_{j_h}$ in this quadratic polynomial is nonzero, and so this quadratic polynomial satisfies the condition in \cref{thm:main} for any positive integer $s<\delta^2m/8$ (indeed, for at least $\delta^2m/8$ indices $h\in \{1,\dots,\ell\}$ the $(i_h,j_h)$-entry of the matrix $A$ appearing in this condition is nonzero, and for every subset $S\su [n]$ of size $|S|\ge n-s>n-\delta^2m/8$, the submatrix $A[S\times S]$ must contain one of these nonzero entries). Taking $s=\lceil\delta^2m/8\rceil-1\ge \delta^2m/8-1\ge \delta^2m/16$ (recalling that $m\ge 64/\delta^2$), by \cref{thm:main} we obtain
    \[\Pr\bigl[Q(\vec \zeta)=0\,\big|\,(\alpha_1,\beta_1),\dots,(\alpha_n,\beta_n)\bigr]\le \frac{C'}{\sqrt{\delta^2m/16}}=\frac{4C'/\delta}{\sqrt{m}}\]
    when conditioning on any outcomes of $(\alpha_1,\beta_1),\dots,(\alpha_n,\beta_n)$ such that $\beta_{i_h}\beta_{j_h}\ne 0$ for at least $\delta^2\ell/2$ indices $h\in \{1,\dots,\ell\}$. All in all, together with \cref{eq:few-activated}, this yields
     \[\Pr[Q(\vec \zeta)=0]\le \frac{4C'/\delta}{\sqrt{m}}+\frac{4/\delta}{\sqrt{m}}\le \frac{8C'/\delta}{\sqrt{m}}\le \frac{C_\delta}{\sqrt{m}},\]
     showing the desired bound \cref{eq:general-dist-prop-to-show}.

\textbf{Case 2: Linear anticoncentration.} From now on we may assume that the condition in Case 1 does not hold. For the maximum possible $\ell$, consider distinct indices $i_1,\dots,i_\ell,j_1,\dots,j_\ell\in [n]$ such that for $h=1,\dots,\ell$ the coefficient of $x_{i_h}x_{j_h}$ in $Q$ is nonzero. By our assumption for this case, we have $\ell<m/4$. Let $J=\{i_1,\dots,i_\ell,j_1,\dots,j_\ell\}$ and $I=[n]\setminus J$. Note  that then, by the maximality of $\ell$, for any distinct $i,i'\in I$, the coefficient of $x_ix_{i'}$ in $Q$ is zero.

Our plan is to condition on an arbitrary outcome of $\vec \zeta[J]$, and to apply \cref{thm:general-ELO} in the resulting conditional probability space (only using the randomness of $\zeta_i$ for $i\in I$).

For any outcome of $\vec \zeta[J]$, we can interpret $Q(\vec \zeta)$ as a polynomial in the remaining variables $\zeta_i$ for $i\in I$ (with coefficients depending on $\vec\zeta[J]$). Writing $Q_{\vec \zeta[J]}$ for this polynomial, we always have $Q(\vec \zeta)=Q_{\vec \zeta[J]}(\vec \zeta[I])$. For any distinct $i,i'\in I$, the coefficient of $\zeta_i\zeta_{i'}$ in $Q_{\vec \zeta[J]}$ is zero, so $Q_{\vec \zeta[J]}$ can be written as a sum $\sum_{i\in I} P^{(i)}_{\vec \zeta[J]}(\zeta_i)$, where for each $i\in I$, the summand $P^{(i)}_{\vec \zeta[J]}(\zeta_i)$ is a quadratic polynomial in the single variable $\zeta_i$ (with coefficients depending on $\vec \zeta[J]$). We now have
\[\Pr\Bigl[Q(\vec \zeta)=0\,\Big|\,\vec \zeta[J]\Bigr]=\Pr\Bigl[Q_{\vec \zeta[J]}(\vec \zeta[I])=0\,\Big|\,\vec \zeta[J]\Bigr]=\Pr\Biggl[\,\sum_{i\in I} P^{(i)}_{\vec \zeta[J]}(\zeta_i)=0\,\Bigg|\,\vec \zeta[J]\Biggr]\]
for every outcome of $\vec \zeta[J]$.

For any outcome of $\vec \zeta[J]$, let $T_{\vec \zeta[J]}\su I=[n]\setminus J$ be the set of indices $i\in I$ with $\sup_{z\in\RR}\Pr\bigl[P^{(i)}_{\vec \zeta[J]}(\zeta_i)=z\,\big|\,\vec \zeta[J]\bigr]\le 1-\delta$. We claim that we must always have $|T_{\vec \zeta[J]}|\ge m/2$, due to the assumption in \cref{thm:general-distributions} concerning fixing boxes. 
Indeed, suppose for the purpose of contradiction that there is an outcome $\vec w=(w_j)_{j\in J}\in \RR^J$ of $\vec \zeta[J]$ such that $|T_{\vec w}|< m/2$. Then, we have $|I\setminus T_{\vec w}|> n-m$ (recalling that $|I|=n-|J|=n-2\ell>n-m/2$).
We can construct a fixing box $R_1\times \dots\times R_n$ for the polynomial $Q$ as follows:
\begin{compactitem}
    \item For $j\in J$, take $R_j=\{w_j\}$;
    \item For $t\in T_{\vec w}$, take $R_t=\{y_t\}$ for some arbitrary element $y_t$ of the support of $\zeta_t$;
    \item For $i\in I\setminus T_{\vec w}$, take $z_i\in \RR$ such that $\Pr\bigl[P^{(i)}_{\vec w}(\zeta_i)=z_i\bigr]>1-\delta$ (such a value $z_i$ exists by the definition of $T_{\vec w}$), and let $R_i$ be the set of all $y$ in the support of $\zeta_i$ such that $P_{\vec w}^{(i)}(y)=z_i$ (i.e., $R_i=(P_{\vec w}^{(i)})^{-1}(z_i)\cap \operatorname{supp}(\zeta_i)$).
\end{compactitem}
Note that $Q$ is constant on $R_1\times \dots\times R_n$: indeed, for any $(\zeta_1,\dots,\zeta_n)\in R_1\times\dots\times R_n$ we have \[Q(\zeta_1,\dots,\zeta_n)=Q_{\vec w}(\vec \zeta[I])=\sum_{i\in I} P_{\vec w}^{(i)}(\zeta_i)=\sum_{t\in T_{\vec w}}P_{\vec w}^{(t)}(y_t)+\sum_{i\in I\setminus T_{\vec w}}z_i.\] So, $R_1\times \dots\times R_n$ is indeed a fixing box of $Q$. On the other hand we have $\Pr[\zeta_i\in R_i]=\Pr\bigl[P^{(i)}_{\vec \zeta[J]}(\zeta_i)=z_i\bigr]> 1-\delta$ for all $i\in I\setminus T_{\vec w}$. As $|I\setminus T_{\vec w}|> n-m$, this means that there are strictly fewer than $m$ indices $i\in \{1,\dots,n\}$ with $\Pr[\zeta_i\in R_i]\le 1-\delta$, contradicting our assumption.

We have established that $|T_{\vec \zeta[J]}|\ge m/2$ for any outcome of $\vec \zeta[J]$. 
For any outcomes of $\vec \zeta[J]$ and $\vec\zeta[I\setminus T_{\vec \zeta[J]}]$, we now have
\begin{align*}\Pr\Biggl[\,\sum_{i\in I} P_{\vec \zeta[J]}^{(i)}(\zeta_i)=0\,\Bigg|\,\vec \zeta[J],\vec\zeta[I\setminus T_{\vec \zeta[J]}]\Biggr]&=\Pr\Biggl[\,\sum_{i\in T_{\vec \zeta[J]}} P_{\vec \zeta[J]}^{(i)}(\zeta_i)=-\!\!\!\sum_{i\in I\setminus T_{\vec \zeta[J]}} P_{\vec \zeta[J]}^{(i)}(\zeta_i)\,\Bigg|\,\vec \zeta[J],\vec\zeta[I\setminus T_{\vec \zeta[J]}]\Biggr]\\
&\le \frac{C_\delta'}{\sqrt{|T_{\vec \zeta[J]}|}}\le  \frac{C_\delta'}{\sqrt{m/2}} \le\frac{2C_\delta'}{\sqrt{m}}\end{align*}
by \cref{thm:general-ELO}. To be precise, for any possible outcomes of $\vec \zeta[J]$ and $\vec\zeta[I\setminus T_{\vec \zeta[J]}]$, we apply \cref{thm:general-ELO} in the conditional probability space given these outcomes, with the random variables $X_i=P_{\vec \zeta[J]}^{(i)}(\zeta_i)$ for $i\in T_{\vec \zeta[J]}$, noting that by the definition of $T_{\vec \zeta[J]}$ we then have
\[\sup_{z\in \RR}\Pr\Bigl[X_i=z\,\Big|\,\vec \zeta[J],\vec\zeta[I\setminus T_{\vec \zeta[J]}]\Bigr]\le 1-\delta\] for each $i\in T_{\vec \zeta[J]}$. So overall we obtain
\[\Pr\Bigl[Q(\vec \zeta)=0\,\Big|\,\vec \zeta[J]\Bigr]=\Pr\Biggl[\,\sum_{i\in I} P_{\vec \zeta[J]}^{(i)}(\zeta_i)=0\,\Bigg|\,\vec \zeta[J]\Biggr]\le \frac{2C_\delta'}{\sqrt{m}}\le\frac{C_\delta}{\sqrt{m}}\]
for any outcome of $\vec \xi[J]$. This implies the desired bound in \cref{eq:general-dist-prop-to-show}.
\end{proof}

Finally, we deduce the full statement of \cref{thm:general-distributions} from \cref{prop:general-distributions-assumption}.
\begin{proof}[Proof of \cref{thm:general-distributions}]
    We may assume without loss of generality that $0<\delta<1/2$. Let   $C_\delta$ be a constant such that \cref{prop:general-distributions-assumption} holds (i.e., such that the statement in \cref{thm:general-distributions} holds under the additional assumption that $\sup_{z\in \RR}\Pr[\zeta_i=z]\le 1-\delta$ for all $i=1,\dots,n$). Let us also write $\vec \zeta=(\zeta_1,\dots,\zeta_n)$.
    
    Let $J\su [n]$ be the set of indices $j$ for which $\Pr[\zeta_j=z_j]> 1-\delta$ holds for some $z_j\in \RR$ (such a $z_j$ is unique if it exists). For each $j\in J$, let us represent the random variable $\zeta_j$ as $\zeta_j=\alpha_j+\xi_j \beta_j$, for a Rademacher random variable $\xi_j\in\{-1,1\}$ and a random vector $(\alpha_j,\beta_j)\in \RR^2$, as in \cref{lem-representation}. We do this in such a way that the random variables $\xi_j$ and the random vectors  $(\alpha_j,\beta_j)$ are all mutually independent for all $j\in J$. Note that we always have $\alpha_j+\beta_j=z_j$ for all $j\in J$ (by (b) in \cref{lem-representation}, recalling that $\Pr[\zeta=z_j]> 1-\delta>1/2$).

Now, our plan is to condition on arbitrary outcomes of $(\alpha_j,\beta_j)$ for $j\in J$, and prove the desired bound using the randomness of $\xi_j$ for $j\in J$, and the randomness of $\zeta_i$ for $i\notin J$, applying \cref{prop:general-distributions-assumption}.

For each $j\in J$ and each outcome of $(\alpha_j,\beta_j)$, the conditional distribution of $\zeta_j$ given our outcome of $(\alpha_j,\beta_j)$ is described by $\Pr[\zeta_j=\alpha_j+\beta_j\,|\,(\alpha_j,\beta_j)]=\Pr[\zeta_j=\alpha_j-\beta_j\,|\,(\alpha_j,\beta_j)]=1/2$ if $\beta_j\ne 0$, and
$\Pr[\zeta_j=z_j\,|\,(\alpha_j,\beta_j)]=1$  if $\beta_j=0$ (then $\alpha_j-\beta_j=\alpha_j+\beta_j=z_j$, so $\zeta_j$ is constant).
Note that conditioning on outcomes of $(\alpha_j,\beta_j)$ for $j\in J$ does not change the distribution of $\zeta_i$ for $i\notin J$, and does not change the fact that the random variables $\zeta_1,\dots,\zeta_n$ are independent.

For any outcome of $\vec \beta[J]$ (i.e., for any outcomes of $\beta_j$ for $j\in J$), let $H_{\vec \beta[J]}\su J$ be the set of indices $j\in J$ such that $\beta_j=0$, and let $I_{\vec \beta[J]}=[n]\setminus H_{\vec \beta[J]}$ (i.e., $I_{\vec \beta[J]}$ is the subset of indices for which $\zeta_i$ ``still has some randomness'' after conditioning on the outcomes of $(\alpha_j,\beta_j)$ for $j\in J$). Note that we always have $[n]\setminus J\su I_{\vec \beta[J]}$. Furthermore note that for any outcome of $((\alpha_j,\beta_j))_{j\in J}$, we have
\[\sup_{z\in \RR}\Pr\Bigl[\zeta_i=z\,\Big|\,\bigl((\alpha_j,\beta_j)\bigr)_{j\in J}\Bigr]\le 1-\delta\]
for all $i\in I_{\vec \beta[J]}$ (here we are using that $\delta<1/2$) and
\[\Pr\Bigl[\zeta_i=z_i\,\Big|\,\bigl((\alpha_j,\beta_j)\bigr)_{j\in J}\Bigr]=1\]
for all $i\in H_{\vec \beta[J]}$.

Now, for any outcome of $\vec \beta[J]$ (which determines $H_{\vec \beta[J]}$ and $I_{\vec \beta[J]}$), we have $Q(\vec \zeta)=Q_{\vec\beta[J]}(\vec \zeta[I_{\vec \beta[J]}])$, where $Q_{\vec\beta[J]}$ is the polynomial in the entries of $\vec \zeta[I_{\vec \beta[J]}]$ obtained from $Q(\vec \zeta)$ by substituting $\zeta_i$ with $z_i$ for all $i\in H_{\vec \beta[J]}$ (recall that we always have $\zeta_i=z_i$ for all $i\in H_{\vec \beta[J]}$).

We claim that, if we condition on any outcomes of $(\alpha_j,\beta_j)$ for $j\in J$,
then with respect to the resulting conditional probability space, $Q_{\vec\beta[J]}(\vec \zeta_i[I_{\vec \beta[J]}])$ satisfies the fixing box assumption in \cref{thm:general-distributions} (and therefore satisfies the assumptions of \cref{prop:general-distributions-assumption}). Indeed, for any outcome of $((\alpha_j,\beta_j))_{j\in J}$, consider a fixing box $\prod_{i\in I_{\vec \beta[J]}} R_i$ for $Q_{\vec\beta[J]}(\vec \zeta[I_{\vec \beta[J]}])$, and
suppose for the purpose of contradiction that there are fewer than $m$ indices $i\in I_{\vec \beta[J]}$ with $\Pr[\zeta_i\in R_i\,|\,((\alpha_j,\beta_j))_{j\in J}]\le 1-\delta$. Then, we can extend our fixing box for $Q_{\vec\beta[J]}$ to a fixing box $R_1\times \dots\times R_n$ for $Q(\zeta_1,\dots,\zeta_n)$ by simply taking $R_j=\{z_j\}$ for all $j\in H_{\vec \beta[J]}\su J$. Note that for each $j\in H_{\vec \beta[J]}\su J$ we have $\Pr[\zeta_j\in R_j]=\Pr[\zeta_j=z_j]>1-\delta$. Also, for $j\in J\setminus H_{\vec \beta[J]}=J\cap I_{\vec \beta[J]}$ (i.e., for $j\in J$ with $\beta_j\ne 0$), we can only have $\Pr[\zeta_j\in R_j]\le 1-\delta$ if $z_j\not\in R_j$ (as $\Pr[\zeta_j=z_j]> 1-\delta$), and in this case we also have $\Pr[\zeta_j\in R_j\,|\,(\alpha_j,\beta_j)]\le 1/2\le 1-\delta$ (as $\Pr[\zeta_j=z_j\,|\,(\alpha_j,\beta_j)]=\Pr[\zeta_j=\alpha_j+\beta_j\,|\,(\alpha_j,\beta_j)]=1/2$). Finally, note that conditioning on $((\alpha_j,\beta_j))_{j\in J}$ does not change the distributions of $\zeta_i$ for $i\in [n]\setminus J\su I_{\vec \beta[J]}$.
So every index $i\in [n]$ with $\Pr[\zeta_i\in R_i]\le 1-\delta$ has the property that $i\in I_{\vec \beta[J]}$ and $\Pr[\zeta_i\in R_i\,|\,((\alpha_j,\beta_j))_{j\in J}]\le 1-\delta$. Hence, there are fewer than $m$ indices $i\in [n]$ with $\Pr[\zeta_i\in R_i]\le 1-\delta$, contradicting our assumption on $Q(\zeta_1,\dots,\zeta_n)$.
    
Given the above discussion, for any outcomes of $(\alpha_j,\beta_j)$ for $j\in J$, we can apply \cref{prop:general-distributions-assumption}, to obtain
    \[\sup_{z\in\RR} \Pr\Bigl[Q(\zeta_1,\dots,\zeta_n)=z\,\Big|\,\bigl((\alpha_j,\beta_j)\bigr)_{j\in J}\Bigr]\le \frac{C_\delta}{\sqrt{m}}.\]
    The desired desired unconditional probability bound follows.
\end{proof}

\section{Concluding remarks}\label{subsec:further-directions}
In this paper we have obtained essentially optimal bounds for the quadratic Littlewood--Offord problem. There are many interesting directions for further research.

\textbf{Immediate generalisations.}
\cref{thm:main-shiny} is only about point concentration, and it would be interesting
to prove a counterpart for small-ball concentration; i.e., under which
assumptions on $Q$ can we prove that
\[
\sup_{z\in\RR}\Pr[|Q(\xi_{1},\dots,\xi_{n})-z|\le1]\le O\left(\frac{1}{\sqrt{n}}\right)?
\]
It would also be nice to prove a generalisation of \cref{thm:main-shiny} to polynomials of
degree greater than 2. Generalising the conjecture of Nguyen and Vu,
we believe the statement of \cref{thm:main-shiny} should hold whenever $Q$ is a polynomial
of bounded degree (note that without a bounded-degree assumption we cannot hope for sensible anticoncentration bounds; consider for example the \emph{parity
function} $Q(x_{1},\dots,x_{n})=x_{1}x_{2}\dots x_{n}$).

It is sometimes the case that techniques for point concentration can be straightforwardly
adapted for small-ball concentration, and techniques for quadratic
polynomials can be straightforwardly adapted for higher-degree polynomials (in particular,
the previous bounds of Meka--Nguyen--Vu \cite{MNV16} and Kane \cite{Kan14}, mentioned in the introduction, handle small ball concentration for polynomials
of any bounded degree). The high-level strategy of the proof of \cref{thm:main-shiny} (as sketched in \cref{sec:outline}) makes sense in a very general context, but when trying to generalise \cref{thm:main-shiny} in the obvious ways, one runs into some subtle technical issues (see \cref{rem:high-degree-issues,rem:small-ball-issues}). We think it would be very interesting to investigate this further.

It would also be desirable to generalise our ``geometric'' theorem (\cref{thm:quadratic-geometric}) to geometric objects other than quadrics inside affine-linear subspaces. For example, we make the following conjecture (closely related to higher-degree generalisations of the Littlewood--Offord problem, and closely related to the directions in \cite{FKS23}).

\begin{conjecture}Let $0\le d<r$ and $q$ be integers. Let $\mathcal{Z}\su\RR^{r}$ be an algebraic variety of dimension $d$, with degree at most $q$. Consider vectors $\vec{a}_{1},\dots,\vec{a}_{n}\in\mathbb{R}^{r}$ such that for some positive integer $t$, one can form $t$ disjoint bases from the vectors $\vec{a}_{1},\dots, \vec{a}_n$. Let $(\xi_1,\dots,\xi_n)\in\{-1,1\}^{n}$ be a sequence of independent Rademacher random variables. Then 
\[
\Pr\left[\xi_{1}\vec{a}_{1}+\dots+\xi_{n}\vec{a}_{n}\in\mathcal{Z}\right]\le\frac{C_{d,r,q}}{t^{(r-d)/2}}
\]
for some $C_{d,r,q}$ only depending on $d,r,q$.
\end{conjecture}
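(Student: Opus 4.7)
The plan is to generalise the inductive decoupling scheme of \cref{thm:quadratic-geometric} by induction on the dimension $d$, allowing the degree bound $q$ to grow between inductive steps. In the base case $d=0$, the variety $\mathcal{Z}$ consists of at most $q$ points, so applying Hal\'asz' theorem (\cref{cor:halasz}) to each and taking a union bound yields $\Pr[\vec X\in\mathcal{Z}]\le q\cdot t^{-r/2}$; that is, $C_{0,r,q}=q$ suffices.

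For the inductive step I would first reduce to the case that $\mathcal{Z}$ is irreducible, by a union bound over irreducible components (losing only a factor of at most $q$, since the degrees of the components sum to at most $q$). Next, as in Step~2 of the proof of \cref{thm:quadratic-geometric}, I would dispose of the translation-invariant case: if $\mathcal{Z}+\RR\vec v = \mathcal{Z}$ for some nonzero $\vec v$, then projecting via a linear map $\phi\colon \RR^{r}\to\RR^{r-1}$ with $\ker\phi=\mathrm{span}(\vec v)$ yields a variety $\phi(\mathcal{Z})$ of dimension $d-1$ and degree at most $q$, and the images $\phi(\vec a_i)$ still contain $t$ disjoint bases of $\RR^{r-1}$, so the inductive hypothesis applies. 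In the main case ($\mathcal{Z}$ irreducible, not translation-invariant), I would split $[n]=I\cup J$ so that each part supports $t/2$ disjoint bases, set $\vec X_I=\sum_{i\in I}\xi_i\vec a_i$ and $\vec X_J=\sum_{j\in J}\xi_j\vec a_j$ with $\vec X_I'$ an independent copy of $\vec X_I$, and apply the decoupling lemma (\cref{lem:decoupling}) to obtain
\begin{align*}
\Pr[\vec X\in\mathcal{Z}]^2
&\le \Pr\bigl[\vec X_J\in(\mathcal{Z}-\vec X_I)\cap(\mathcal{Z}-\vec X_I'),\ \vec X_I\ne \vec X_I'\bigr]\\
&\quad + \Pr\bigl[\vec X\in\mathcal{Z},\ \vec X_I=\vec X_I'\bigr].
\end{align*}
The second term is bounded by $\Pr[\vec X\in\mathcal{Z}]\cdot(t/2)^{-r/2}$ via \cref{cor:halasz}, while the first is controlled by conditioning on outcomes $\vec x\ne \vec y$ for $(\vec X_I,\vec X_I')$ and applying the inductive hypothesis to the intersection $(\mathcal{Z}-\vec x)\cap(\mathcal{Z}-\vec y)$. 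Combining via \cref{lem:simple-inequality} then closes the induction.

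The main obstacle is the algebro-geometric input. One needs to show that for any $\vec x\ne \vec y$, the intersection $(\mathcal{Z}-\vec x)\cap(\mathcal{Z}-\vec y)$ is a variety of dimension at most $d-1$ and degree at most $q^2$. Irreducibility of $\mathcal{Z}$ together with non-translation-invariance implies $\mathcal{Z}-\vec x\ne \mathcal{Z}-\vec y$, so $(\mathcal{Z}-\vec x)\cap(\mathcal{Z}-\vec y)$ is a proper subvariety of the irreducible variety $\mathcal{Z}-\vec x$ and hence has dimension strictly less than $d$; the degree bound $q^2$ follows from B\'ezout's theorem. Iterating the decoupling $d$ times, the degree grows doubly exponentially as $q,q^2,q^4,\dots,q^{2^d}$, which forces $C_{d,r,q}$ to depend doubly exponentially on $d$---acceptable for the conjecture as stated, though far weaker than the explicit control available in \cref{thm:quadratic-geometric}. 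A secondary concern is choosing the notion of ``degree'' so that the B\'ezout-type bound handles non-transverse or non-reduced intersections cleanly; taking ``degree'' to mean the sum of the degrees of the projective closures of the irreducible components of the reduced variety should suffice throughout the induction.
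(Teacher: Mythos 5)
Your induction does not close quantitatively once the codimension $r-d$ is at least $2$; note that the paper itself leaves this statement as an open conjecture, for essentially this reason. Concretely: after the decoupling step, your main term is bounded, via the inductive hypothesis applied to $(\mathcal{Z}-\vec{x})\cap(\mathcal{Z}-\vec{y})$ (dimension at most $d-1$, degree at most $q^2$), by a quantity of order $(t/2)^{-(r-(d-1))/2}=(t/2)^{-(r-d+1)/2}$. Feeding this and the error term into \cref{lem:simple-inequality} gives $\Pr[\vec{X}\in\mathcal{Z}]\le (t/2)^{-r/2}+O\bigl((t/2)^{-(r-d+1)/4}\bigr)$, whereas the target exponent is $(r-d)/2$. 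Since $(r-d+1)/4\ge(r-d)/2$ holds only when $r-d\le 1$, the induction closes only in the hypersurface case $d=r-1$; already for $d=1$, $r=3$ you obtain roughly $t^{-3/4}$ where $t^{-1}$ is needed. This is exactly the square-root loss of naive decoupling, which your scheme does not compensate.

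What rescues the quadric case in \cref{thm:quadratic-geometric} is structure that your induction discards. There the variety is a quadric on a $(d+1)$-dimensional affine subspace $\mathcal{W}$, and the key point (Step 5 of that proof) is that the intersection $(\mathcal{Z}-\vec{X}_I)\cap(\mathcal{Z}-\vec{X}_I')$ can only be nonempty if $\vec{X}_I-\vec{X}_I'$ lies in the $(d+1)$-dimensional linear space $\tilde{\mathcal{W}}$, an event of probability at most $(t/2)^{-(r-d-1)/2}$ by \cref{cor:halasz}; multiplying this linear-containment factor by the inductive bound $(t/2)^{-(r-d+1)/2}$ produces the needed $(t/2)^{-(r-d)}$ \emph{before} the square root is taken. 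That extra factor exists because the difference of two translates of a quadratic polynomial is affine-linear, so the intersection of two translates of the quadric sits inside a hyperplane of $\tilde{\mathcal{W}}$. For a degree-$q$ variety the corresponding difference has degree $q-1$, the intersection of two translates is in general not contained in any proper affine subspace (for a sphere, for instance, the set of difference vectors with nonempty intersection is full-dimensional, so there is no free codimension gain at all), and extracting the analogous factor would amount to an anticoncentration statement for degree-$(q-1)$ varieties---essentially the conjecture itself one degree down, which is the obstruction described in \cref{rem:high-degree-issues}. Your algebro-geometric inputs are fine (the base case, the union bound over irreducible components, the fact that invariance under a single translation upgrades to invariance under the whole line, and the B\'ezout-type degree control), but without a replacement for the linear-containment step the proposed induction cannot reach the conjectured exponent.
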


\textbf{The Gotsman--Linial conjecture.}
The \emph{Gotsman--Linial conjecture} is a conjecture in Boolean
analysis which generalises nearly all polynomial Littlewood--Offord-type
theorems. To state this conjecture we need to introduce some notation.

The $i$-th\emph{ influence }of a Boolean function $F:\{-1,1\}^{n}\to\{-1,1\}$
is defined as
\[
\on{Inf}_{i}(F)=\Pr[F(\xi_{1},\dots,\xi_{i-1},\xi_{i},\xi_{i+1},\dots\xi_{n})\ne F(\xi_{1},\dots,\xi_{i-1},-\xi_{i},\xi_{i+1},\dots\xi_{n})]
\]
(i.e., the probability that changing the $i$th bit changes the output
of the function). The \emph{total influence} (also sometimes called
the \emph{average sensitivity}) $\on{Inf}(F)$ of $F$ is $\on{Inf}_{1}(F)+\dots+\on{Inf}_{n}(F)$. For a function $Q:\{-1,1\}^n\to \mathbb{R}$, the \emph{threshold function} $F_{Q}:\{-1,1\}^{n}\to\{-1,1\}$ of
$Q$ is the Boolean function that detects whether $Q(x_{1},\dots,x_{n})\ge0$
or not. If $Q$ is a degree-$d$ polynomial, we say $F_{Q}$ is a
degree-$d$ threshold function.

Gotsman and Linial~\cite{GL94} made a notorious conjecture on the highest
total influence that an $n$-variable degree-$d$ polynomial threshold
function can have (namely, they conjectured that the total influence
is maximised when $Q(x_{1},\dots,x_{n})$ is a certain product of $d$ terms
of the form $(x_{1}+\dots+x_{n}+a)$). Unfortunately, the precise
form of this conjecture has been falsified~\cite{Cha18,KMW} (it holds
for the linear case $d=1$ but already fails in the quadratic case
$d=2$).

Nonetheless, it still seems plausible that for every $n$-variable degree-$d$
polynomial the total influence of threshold function can be bounded by $O(d\sqrt{n})$,
or at least by $C_{d}\sqrt{n}$ for some constant $C_{d}$ depending on $d$
(conjectures in this direction are sometimes variously called the
\emph{weak} Gotsman--Linial conjecture). If such a bound were to
hold, it would be an easy exercise to deduce \cref{thm:main-shiny} (and the generalisations
discussed above). It would be interesting to investigate whether the techniques in this paper can be used to make progress on (at least the quadratic form of the) weak Gotsman--Linial conjecture.

\textbf{Inverse theory.} For both the linear and quadratic Littlewood--Offord problems, one
cannot hope for a general bound stronger than $O(1/\sqrt{n})$. However,
it is natural to investigate assumptions under which one can prove
stronger bounds.

In the linear case (studying random variables of the form $X=a_{1}\xi_{1}+\dots+a_{n}\xi_{n}$),
this has been an enormously successful direction of research. Early
highlights include Stanley's solution of the so-called \emph{Erd\H os--Moser
problem}~\cite{Sta80} (introducing tools from algebraic topology to prove an optimal
bound under the assumption that all of the coefficients $a_{1},\dots,a_n$
are distinct), and a paper of Hal\'asz~\cite{Hal77} which introduced
Fourier-analytic methods to prove stronger bounds when $a_{1},\dots,a_n$
are in a certain sense ``additively unstructured''. Perhaps most
famously, Tao and Vu~\cite{TV09} proved a so-called \emph{Inverse
Littlewood--Offord theorem}, which shows that $\sup_{z\in\RR}\Pr[X=z]$
is \emph{extremely} small (smaller than $n^{-C}$ for any constant
$C$) unless $a_{1},\dots,a_n$ have very special additive structure (roughly
speaking, most of $a_{1},\dots,a_n$ lie inside a \emph{generalised arithmetic
progression}). This had a number of important applications in random
matrix theory~\cite{TV07,TV09a,TV09}. The inverse theory of the linear
Littlewood--Offord theorem is now essentially complete, thanks to \emph{optimal inverse theorems} of Nguyen and Vu~\cite{NV11} and Rudelson
and Vershinyn~\cite{RV08} (see also \cite{TV10}) that give a precise quantification of the
extent to which anticoncentration is controlled by the additive structure
of $a_{1},\dots,a_n$.

In the quadratic case (studying random variables of the form $X=Q(\xi_{1},\dots,\xi_{n})$
for a quadratic polynomial $Q$), much less is known.
An analogue of the original Tao--Vu inverse theorem was proved by
Nguyen~\cite{Ngu12}, but it is not clear how an optimal inverse theorem
should even be formulated. Such a theorem would have to somehow incorporate
the additive information that is relevant for the linear Littlewood--Offord
problem, in addition to algebraic considerations (e.g., whether $Q$
factorises into linear factors or not). See \cite{Cos13,Kan17,KS20} for some progress and conjectures
related to algebraic aspects of the inverse theory of the quadratic Littlewood--Offord problem.

\bibliographystyle{amsplain_initials_nobysame_nomr}
\bibliography{main.bib}

\providecommand{\bysame}{\leavevmode\hbox to3em{\hrulefill}\thinspace}
\providecommand{\MR}{\relax\ifhmode\unskip\space\fi MR }
\providecommand{\MRhref}[2]{%
  \href{http://www.ams.org/mathscinet-getitem?mr=#1}{#2}
}
\providecommand{\href}[2]{#2}
\begin{thebibliography}{10}

\bibitem{AE14}
L.~Addario-Berry and L.~Eslava, \emph{Hitting time theorems for random
  matrices}, Combin. Probab. Comput. \textbf{23} (2014), no.~5, 635--669.

\bibitem{AHKT20}
N.~Alon, D.~Hefetz, M.~Krivelevich, and M.~Tyomkyn, \emph{Edge-statistics on
  large graphs}, Combin. Probab. Comput. \textbf{29} (2020), no.~2, 163--189.

\bibitem{BG17}
T.~D. Browning and A.~Gorodnik, \emph{Power-free values of polynomials on
  symmetric varieties}, Proc. Lond. Math. Soc. (3) \textbf{114} (2017), no.~6,
  1044--1080.

\bibitem{Cha18}
B.~Chapman, \emph{The {G}otsman--{L}inial conjecture is false}, Proceedings of
  the {T}wenty-{N}inth {A}nnual {ACM}-{SIAM} {S}ymposium on {D}iscrete
  {A}lgorithms, SIAM, Philadelphia, PA, 2018, pp.~692--699.

\bibitem{Cos13}
K.~P. Costello, \emph{Bilinear and quadratic variants on the
  {L}ittlewood-{O}fford problem}, Israel J. Math. \textbf{194} (2013), no.~1,
  359--394.

\bibitem{CTV06}
K.~P. Costello, T.~Tao, and V.~Vu, \emph{Random symmetric matrices are almost
  surely nonsingular}, Duke Math. J. \textbf{135} (2006), no.~2, 395--413.

\bibitem{CV10}
K.~P. Costello and V.~Vu, \emph{On the rank of random sparse matrices}, Combin.
  Probab. Comput. \textbf{19} (2010), no.~3, 321--342.

\bibitem{CV08}
K.~P. Costello and V.~H. Vu, \emph{The rank of random graphs}, Random
  Structures Algorithms \textbf{33} (2008), no.~3, 269--285.

\bibitem{Cro11}
E.~Croot, \emph{Fourier proof of the classical {L}ittlewood--{O}fford
  inequality}, \url{https://ecroot.math.gatech.edu/littlewood_offord.pdf}.,
  2011.

\bibitem{dlPG12}
V.~De~la Pena and E.~Gin{\'e}, \emph{Decoupling: from dependence to
  independence}, Springer Science \& Business Media, 1999.

\bibitem{Doe39}
W.~Doeblin, \emph{Sur les sommes d’un grand nombre de variables
  al{\'e}atoires ind{\'e}pendantes}, Bull. Sci. Math \textbf{63} (1939), no.~2,
  23--32.

\bibitem{DL36}
W.~Doeblin and P.~L{\'e}vy, \emph{Calcul des probabilit{\'e}s. sur les sommes
  de variables al{\'e}atoires ind{\'e}pendantes’a dispersions born{\'e}es
  inf{\'e}rieurement}, CR Acad. Sci \textbf{202} (1936), 2027--2029.

\bibitem{Erd45}
P.~Erd\H{o}s, \emph{On a lemma of {L}ittlewood and {O}fford}, Bull. Amer. Math.
  Soc. \textbf{51} (1945), 898--902.

\bibitem{FJZ22}
A.~Ferber, V.~Jain, and Y.~Zhao, \emph{On the number of {H}adamard matrices via
  anti-concentration}, Combin. Probab. Comput. \textbf{31} (2022), no.~3,
  455--477.

\bibitem{FKSS23}
A.~Ferber, M.~Kwan, A.~Sah, and M.~Sawhney, \emph{Singularity of the {$k$}-core
  of a random graph}, Duke Math. J. \textbf{172} (2023), no.~7, 1293--1332.

\bibitem{FKS21b}
J.~Fox, M.~Kwan, and L.~Sauermann, \emph{Anti-concentration for subgraph counts
  in random graphs}, Ann. Probab. \textbf{49} (2021), no.~3, 1515--1553.

\bibitem{FKS23}
J.~Fox, M.~Kwan, and H.~Spink, \emph{Geometric and o-minimal
  {L}ittlewood--{O}fford problems}, Ann. Probab. \textbf{51} (2023), no.~1,
  101--126.

\bibitem{FS20}
J.~Fox and L.~Sauermann, \emph{A completion of the proof of the edge-statistics
  conjecture}, Adv. Comb. (2020), Paper No. 4, 52.

\bibitem{GKSS}
M.~Glasgow, M.~Kwan, A.~Sah, and M.~Sawhney, \emph{The exact rank of sparse
  random graphs}, arXiv preprint arXiv:2303.05435.

\bibitem{GL94}
C.~Gotsman and N.~Linial, \emph{Spectral properties of threshold functions},
  Combinatorica \textbf{14} (1994), no.~1, 35--50.

\bibitem{Hal77}
G.~Hal\'{a}sz, \emph{Estimates for the concentration function of combinatorial
  number theory and probability}, Period. Math. Hungar. \textbf{8} (1977),
  no.~3-4, 197--211.

\bibitem{HB02}
D.~R. Heath-Brown, \emph{The density of rational points on curves and
  surfaces}, Ann. of Math. (2) \textbf{155} (2002), no.~2, 553--595.

\bibitem{HO}
R.~Howard, \emph{Estimates on the concentration function of sets in
  $\mathbb{R}^d$: Notes on lectures of {O}skolkov},
  \url{https://people.math.sc.edu/howard/Notes/concentration.pdf}, 2000.

\bibitem{JLR}
S.~Janson, T.~{\L}uczak, and A.~Rucinski, \emph{Random graphs},
  Wiley-Interscience Series in Discrete Mathematics and Optimization,
  Wiley-Interscience, New York, 2000.

\bibitem{Jon78}
L.~Jones, \emph{On the distribution of sums of vectors}, SIAM J. Appl. Math.
  \textbf{34} (1978), no.~1, 1--6.

\bibitem{Jus22}
T.~Ju{\v{s}}kevi{\v{c}}ius, \emph{The sharp form of the {K}olmogorov--{R}ogozin
  inequality and a conjecture of {L}eader--{R}adcliffe}, arXiv preprint
  arXiv:2201.09861.

\bibitem{Kan17}
D.~Kane, \emph{A structure theorem for poorly anticoncentrated polynomials of
  {G}aussians and applications to the study of polynomial threshold functions},
  Ann. Probab. \textbf{45} (2017), no.~3, 1612--1679.

\bibitem{Kan14}
D.~M. Kane, \emph{The correct exponent for the {G}otsman--{L}inial conjecture},
  Comput. Complexity \textbf{23} (2014), no.~2, 151--175.

\bibitem{KMW}
H.~W. Kim, C.~Maldonado, and J.~Wellens, \emph{On graphs and the
  {G}otsman--{L}inial conjecture for $d=2$}, arXiv preprint arXiv:1709.06650.

\bibitem{Kol58}
A.~Kolmogorov, \emph{Sur les propri\'{e}t\'{e}s des fonctions de concentrations
  de {M}. {P}. {L}\'{e}vy}, Ann. Inst. H. Poincar\'{e} \textbf{16} (1958),
  27--34.

\bibitem{KSSS23}
M.~Kwan, A.~Sah, L.~Sauermann, and M.~Sawhney, \emph{Anticoncentration in
  {R}amsey graphs and a proof of the {E}rd{\H{o}}s--{M}c{K}ay conjecture},
  Forum Math. Pi \textbf{11} (2023), Paper No. e21.

\bibitem{KS20}
M.~Kwan and L.~Sauermann, \emph{An algebraic inverse theorem for the quadratic
  {L}ittlewood-{O}fford problem, and an application to {R}amsey graphs},
  Discrete Anal. (2020), Paper No. 12, 34.

\bibitem{KST19}
M.~Kwan, B.~Sudakov, and T.~Tran, \emph{Anticoncentration for subgraph
  statistics}, J. Lond. Math. Soc. (2) \textbf{99} (2019), no.~3, 757--777.

\bibitem{LR94}
I.~Leader and A.~J. Radcliffe, \emph{Littlewood-{O}fford inequalities for
  random variables}, SIAM J. Discrete Math. \textbf{7} (1994), no.~1, 90--101.

\bibitem{LO43}
J.~E. Littlewood and A.~C. Offord, \emph{On the number of real roots of a
  random algebraic equation. {III}}, Rec. Math. [Mat. Sbornik] N.S.
  \textbf{12(54)} (1943), 277--286.

\bibitem{MMNT19}
A.~Martinsson, F.~Mousset, A.~Noever, and M.~Truji\'{c}, \emph{The
  edge-statistics conjecture for {$\ell\ll k^{6/5}$}}, Israel J. Math.
  \textbf{234} (2019), no.~2, 677--690.

\bibitem{MNV16}
R.~Meka, O.~Nguyen, and V.~Vu, \emph{Anti-concentration for polynomials of
  independent random variables}, Theory Comput. \textbf{12} (2016), Paper No.
  11, 16.

\bibitem{NV11}
H.~Nguyen and V.~Vu, \emph{Optimal inverse {L}ittlewood--{O}fford theorems},
  Adv. Math. \textbf{226} (2011), no.~6, 5298--5319.

\bibitem{Ngu12}
H.~H. Nguyen, \emph{Inverse {L}ittlewood--{O}fford problems and the singularity
  of random symmetric matrices}, Duke Math. J. \textbf{161} (2012), no.~4,
  545--586.

\bibitem{NV13}
H.~H. Nguyen and V.~H. Vu, \emph{Small ball probability, inverse theorems, and
  applications}, Erd\H{o}s centennial, Bolyai Soc. Math. Stud., vol.~25,
  J\'{a}nos Bolyai Math. Soc., Budapest, 2013, pp.~409--463.

\bibitem{Odl88}
A.~M. Odlyzko, \emph{On subspaces spanned by random selections of {$\pm 1$}
  vectors}, J. Combin. Theory Ser. A \textbf{47} (1988), no.~1, 124--133.

\bibitem{Pil95}
J.~Pila, \emph{Density of integral and rational points on varieties}, no. 228,
  1995, Columbia University Number Theory Seminar (New York, 1992), pp.~4,
  183--187.

\bibitem{RV13}
A.~Razborov and E.~Viola, \emph{Real advantage}, ACM Trans. Comput. Theory
  \textbf{5} (2013), no.~4, Art. 17, 8.

\bibitem{RS96}
J.~Rosi\'{n}ski and G.~Samorodnitsky, \emph{Symmetrization and concentration
  inequalities for multilinear forms with applications to zero-one laws for
  {L}\'{e}vy chaos}, Ann. Probab. \textbf{24} (1996), no.~1, 422--437.

\bibitem{RV08}
M.~Rudelson and R.~Vershynin, \emph{The {L}ittlewood--{O}fford problem and
  invertibility of random matrices}, Adv. Math. \textbf{218} (2008), no.~2,
  600--633.

\bibitem{Sta80}
R.~P. Stanley, \emph{Weyl groups, the hard {L}efschetz theorem, and the
  {S}perner property}, SIAM J. Algebraic Discrete Methods \textbf{1} (1980),
  no.~2, 168--184.

\bibitem{TV06}
T.~Tao and V.~Vu, \emph{Additive combinatorics}, Cambridge Studies in Advanced
  Mathematics, vol. 105, Cambridge University Press, Cambridge, 2006.

\bibitem{TV07}
T.~Tao and V.~Vu, \emph{On the singularity probability of random {B}ernoulli
  matrices}, J. Amer. Math. Soc. \textbf{20} (2007), no.~3, 603--628.

\bibitem{TV09a}
T.~Tao and V.~Vu, \emph{From the {L}ittlewood--{O}fford problem to the circular
  law: universality of the spectral distribution of random matrices}, Bull.
  Amer. Math. Soc. (N.S.) \textbf{46} (2009), no.~3, 377--396.

\bibitem{TV10}
T.~Tao and V.~Vu, \emph{A sharp inverse {L}ittlewood--{O}fford theorem}, Random
  Structures Algorithms \textbf{37} (2010), no.~4, 525--539.

\bibitem{TV09}
T.~Tao and V.~H. Vu, \emph{Inverse {L}ittlewood--{O}fford theorems and the
  condition number of random discrete matrices}, Ann. of Math. (2) \textbf{169}
  (2009), no.~2, 595--632.

\end{thebibliography}

\end{document}